\theoremstyle{plain} 
\newtheorem{theorem}{\indent\sc Theorem}[section] 
\newtheorem{lemma}[theorem]{\indent\sc Lemma}
\newtheorem{corollary}[theorem]{\indent\sc Corollary}
\newtheorem{proposition}[theorem]{\indent\sc Proposition}
\newtheorem{claim}[theorem]{\indent\sc Claim}
\theoremstyle{definition} 
\newtheorem{definition}[theorem]{\indent\sc Definition}
\newtheorem{remark}[theorem]{\indent\sc Remark}
\newtheorem{example}[theorem]{\indent\sc Example}
\begin{document}

\title[Ricci curvature]{Spectral convergence under bounded Ricci curvature} % �_���^�C�g�� []���͒��p

\author[Shouhei Honda]{Shouhei Honda} 

\subjclass[2000]{Primary 53C20.}

\keywords{Gromov-Hausdorff convergence, Ricci curvature, Geometric measure theory.}

\address{ 
Mathematical Institute \endgraf
Tohoku University \endgraf 
Sendai 980-8578 \endgraf  
Japan
}
\email{shonda@m.tohoku-u.ac.jp}

\maketitle
\begin{abstract}
For a noncollapsed Gromov-Hausdorff convergent sequence of Riemannian manifolds with a uniform bound of Ricci curvature,
we establish two spectral convergence.
One of them is on the Hodge Laplacian acting on differential one-forms.
The other is on the connection Laplacian acting on tensor fields of every type, which include all differential forms.
These are sharp generalizations of Cheeger-Colding's spectral convergence of the Laplacian acting on functions to the cases of tensor fields and differential forms.
These spectral convergence have two direct corollaries.
One of them is to give new bounds on such eigenvalues, in terms of bounds on volume, diameter and the Ricci curvature.
The other is that we show the upper semicontinuity of the first Betti numbers with respect to the Gromov-Hausdorff topology, and give the equivalence between the continuity of them and the existence of a uniform spectral gap.
On the other hand we also define measurable curvature tensors of the noncollapsed Gromov-Hausdorff limit space of a sequence of  Riemannian manifolds with a uniform bound of Ricci curvature, which include Riemannian curvature tensor, the Ricci curvature, and the scalar curvature.
As fundamental properties of our Ricci curvature, we show that the Ricci curvature coincides with the difference between the Hodge Laplacian and the connection Laplacian, and is compatible with Gigli's one and Lott's Ricci measure. 
Moreover we prove a lower bound of the Ricci curvature is compatible with a reduced Riemannian curvature dimension condition.
We also give a positive answer to Lott's question on the behavior of the scalar curvature with respect to the Gromov-Hausdorff topology by using our scalar curvature.
\end{abstract}
\tableofcontents
\section{Introduction}
In this paper we study the spectral behavior of the Hodge Laplacian and the connection Laplacian with respect to the Gromov-Hausdorff topology.
The spectral behavior of the Laplacian $\Delta$ acting on functions was first studied in \cite{fu} by Fukaya.
He gave the definition of measured Gromov-Hausdorff convergence and proved the continuity of $k$-th eigenvalues $\lambda_k$ of $\Delta$:
\begin{align}\label{nn9}
\lambda_k(X_i) \to \lambda_k(X)
\end{align}
as $i \to \infty$ for a measured Gromov-Hausdorff convergent sequence 
\[\left(X_i, \frac{H^n}{H^n(X_i)}\right) \stackrel{GH}{\to} (X, \upsilon)\]
under a uniform  bound of the sectional curvature:
\begin{align}\label{sectionalc}
|\mathrm{Sec}_{X_i}|\le C <\infty,
\end{align}
where $X_i$ is an $n$-dimensional closed Riemannian manifold, $H^n$ is the $n$-dimensional Hausdorff measure, and $(X, \upsilon)$ is a compact metric measure space. 
He also conjectured that the spectral convergence (\ref{nn9}) holds even if we replace the assumption (\ref{sectionalc}) by a uniform lower bound of the Ricci curvature:
\[\mathrm{Ric}_{X_i} \ge C'>-\infty.\]
This conjecture was proved in \cite{ch-co3} by Cheeger-Colding and is extended to the case of $RCD$-spaces in \cite{gms} by Gigli-Mondino-Savar\'e. 
Moreover similar spectral convergence of other differential operators acting on functions, which include the Schr\"odinger operator, the $p$-Laplacian, and the $\overline{\partial}$-Laplacian, are also known.
See \cite{FHS, hoch, hoell}.

It is worth pointing out that the spectral convergence (\ref{nn9}) holds even if the sequence is collapsed (i.e. $\liminf_{i \to \infty}\mathrm{dim}\,X_i>\mathrm{dim}\,X$ holds).
However for a collapsed sequence the spectral convergence of the Hodge Laplacian does not hold in general.
We give an example.

Let us consider the following setting which is a collapsing of normalized flat tori:
\[\left( \mathbf{S}^1(1) \times \mathbf{S}^1(r), \frac{H^2}{4\pi^2r}\right) \stackrel{GH}{\to} \left(\mathbf{S}^1(1), \frac{H^1}{2\pi}\right) \]
as $r \to 0$, where $\mathbf{S}^1(r)$ denotes the circle of the length $2\pi r$.
Since Hodge theory states that the first Betti number $b_1$ coincides with the dimension of the space of harmonic $1$-forms, we see that $b_1(\mathbf{S}^1(1) \times \mathbf{S}^1(r))=2$ yields $\lambda_2^{H, 1}(\mathbf{S}^1(1) \times \mathbf{S}^1(r))=0$ and that $b_1(\mathbf{S}^1(1))=1$ yields $\lambda_2^{H, 1}(\mathbf{S}^1(1))>0$,
where $\lambda_k^{H, 1}$ denote the $k$-th eigenvalue, counted with multiplicities,  of the Hodge Laplacian $\Delta_{H, 1}$ acting on differential one-forms.
In particular 
\[\lim_{r \to 0}\lambda_2^{H, 1}\left(\mathbf{S}^1(1) \times \mathbf{S}^1(r)\right)<\lambda_2^{H, 1}\left(\mathbf{S}^1(1)\right).\]
Note that since the spaces above are flat, Bochner's formula yields that the Hodge Laplacian coincides with the connection Laplacian:
\[\Delta_{C, 1}:=\nabla^*\nabla,\]
where $\nabla$ is the covariant derivative and $\nabla^*$ is the adjoint operator.
This example tells us that the noncollapsing assumption is essential in order to establish the spectral convergence of the Hodge Laplacian. 
See for instance \cite{lott, lott2} by Lott for studies of the spectral convergence of differential forms in this direction.

Our setting in this paper is the following.

Let $n \in \mathbf{N}$, $d, v \in (0, \infty)$, $K_1, K_2 \in \mathbf{R}$,
let $\mathcal{M}:=\mathcal{M}(n, d, K_1, K_2, v)$ be the set of (isometry classes of) $n$-dimensional closed Riemannian manifolds $M$ with $\mathrm{diam}\,M \le d, H^n(M) \ge v$ and
\begin{align}
K_1 \le \mathrm{Ric}_M \le K_2,
\end{align}
where $\mathrm{diam}\,M$ denotes the diameter of $M$, and let $\overline{\mathcal{M}}:=\overline{\mathcal{M}(n, d, K_1, K_2, v)}$ be the set of Gromov-Hausdorff limits of sequences in  $\mathcal{M}$. Gromov's compactness theorem in \cite{gr} states that $\overline{\mathcal{M}}$ is compact with respect to the Gromov-Hausdorff topology.
Moreover Cheeger-Colding's volume convergence theorem in \cite{ch-co1} (see also \cite{Colding}) with Bishop's inequality yields the following.
\begin{itemize} 
\item For every $X \in \overline{\mathcal{M}}$, 
\[v \le H^n(X) \le C(n, K_1, d)<\infty,\]
where $C(n, K_1, d)$ is a positive constant depending only on $n, K_1, d$.
\item If $X_i$ Gromov-Hausdorff converges to $X$ (we denote it by $X_i \stackrel{GH}{\to} X$ for short) in $\overline{\mathcal{M}}$, then $(X_i, H^n) \stackrel{GH}{\to} (X, H^n)$ in the measured Gromov-Hausdorff sense.
Thus we always consider $H^n$ as a standard measure on $X$. 
\end{itemize}

A main result of this paper is the following.
\begin{theorem}[Spectral convergence of $\Delta_{H, 1}$ and $\Delta_{C, 1}$]\label{spectr}
Let $X \in \overline{\mathcal{M}}$. We have the following.
\begin{enumerate}
\item{(Ricci curvature)}  There exists a canonical symmetric $L^{\infty}$- tensor 
\[\mathrm{Ric}_X \in L^{\infty}(T^*X \otimes T^*X),\]
called Ricci curvature,  with
\[K_1 \le \mathrm{Ric}_X \le K_2\]
on $X$ a.e. sense (we give the precise meaning of `canonical' in subsection 1.2). 
\item{(Weitzenb\"ock formula)}
Let us denote by $\mathcal{D}^2(\Delta_{H, 1}, X)$ and $\mathcal{D}^2(\Delta_{C, 1}, X)$ the domains of the Hodge Laplacian $\Delta_{H, 1}$ and the connection Laplacian $\Delta_{C, 1}$ acting on differential one-forms on $X$ as an $RCD(K_1, \infty)$-space defined in \cite{gigli}  by Gigli, respectively.
Then we have $\mathcal{D}^2(\Delta_{H, 1}, X)=\mathcal{D}^2(\Delta_{C, 1}, X)$.
Moreover
\begin{align}\label{chara}
\Delta_{H, 1}\omega =\Delta_{C, 1}\omega +\mathrm{Ric}_X(\omega^*, \cdot)
\end{align}
for every $\omega \in \mathcal{D}^2(\Delta_{H, 1}, X)=\mathcal{D}^2(\Delta_{C, 1}, X)$,
where the $L^2$-differential one-form $\mathrm{Ric}_X(\omega^*, \cdot ) \in L^2(T^*X)$ is defined by satisfying
\[\int_X\langle \mathrm{Ric}_X(\omega^*, \cdot ), \eta \rangle dH^n=\int_X\langle \mathrm{Ric}_X, \omega \otimes \eta \rangle dH^n\]
for every $L^2$-differential one-form $\eta \in L^2(T^*X)$.
\item{(Finite dimensionalities of eigenspaces)} For every $\alpha \ge 0$, the sets
\[E^{H, 1}_{\alpha}:=\left\{\omega \in \mathcal{D}^2(\Delta_{H, 1}, X); \Delta_{H, 1}\omega =\alpha \omega \right\},\]
\[E^{C, 1}_{\alpha}:=\left\{\omega \in \mathcal{D}^2(\Delta_{C, 1}, X); \Delta_{C, 1}\omega =\alpha \omega \right\} \]
are finite dimensional.
\item{(Discreteness and unboundedness of spectrums)} Let us consider the spectrums of $\Delta_{H, 1}$ and $\Delta_{C, 1}$ as follows:
\[S^{H, 1}:=\left\{ \lambda \in \mathbf{R}; E^{H, 1}_{\lambda} \neq \{0\} \right\},\]
\[S^{C, 1}:=\left\{ \lambda \in \mathbf{R}; E^{C, 1}_{\lambda} \neq \{0\}  \right\},\]
respectively.
Then these are nonnegetive, discrete, and unbounded, where we say that a subset $A$ of $\mathbf{R}$ is discrete if there is no $a \in \mathbf{R}$ such that 
\[\left(B_r(a) \setminus \{a\}\right) \cap A \neq \emptyset\]
holds for every $r>0$.
\item{(Spectral convergence)} Let us denote the spectrums of $\Delta_{H, 1}$ and $\Delta_{C, 1}$ by
\[0\le \lambda_1^{H, 1} \le \lambda_2^{H, 1} \le \cdots \to \infty\]
and
\[0\le \lambda_1^{C, 1}\le \lambda_2^{C, 1} \le \cdots \to \infty\]
counted with multiplicities, respectively.
If $X_i \stackrel{GH}{\to} X$ in $\overline{\mathcal{M}}$, then 
\[\lambda_k^{H, 1}(X_i) \to \lambda_k^{H, 1}(X)\]
and
\[\lambda_k^{C, 1}(X_i) \to \lambda_k^{C, 1}(X)\]
hold for every $k \ge 1$.
\end{enumerate}
\end{theorem}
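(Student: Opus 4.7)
The plan is to prove parts (i)--(v) in order, each building on the previous. The underlying machinery is a theory of $L^p$-convergence of tensor fields along a noncollapsed Gromov--Hausdorff sequence with two-sided Ricci bounds, compatible with Gigli's intrinsic calculus on $\mathrm{RCD}(K_1,\infty)$-spaces.

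For (i), each $M_i$ carries a smooth Ricci tensor with $\|\mathrm{Ric}_{M_i}\|_{L^\infty}\le\max(|K_1|,|K_2|)$, so I would construct $\mathrm{Ric}_X$ as a weak-$\ast$ $L^\infty$ limit of $\mathrm{Ric}_{M_i}$ transported via $\varepsilon$-Gromov--Hausdorff approximations and paired with an $L^1$-dense family of symmetric tensor fields on $X$; weak-$\ast$ semicontinuity preserves the pointwise bounds. Canonicity is established a posteriori by identifying this limit with the operator difference $\Delta_{H,1}-\Delta_{C,1}$ arising in (ii), which is intrinsic to the metric measure structure. For (ii), the smooth Bochner identity on each $M_i$ is passed to the limit by testing against a dense family of 1-forms on $X$: the Ricci term survives thanks to $L^\infty$ weak-$\ast$ convergence of Ricci combined with $L^2$-strong convergence of the forms, and the two domains coincide because the $L^\infty$-bound on $\mathrm{Ric}$ makes its contribution a bounded $L^2$-perturbation.

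Parts (iii) and (iv) reduce to showing that $\Delta_{C,1}$ has compact resolvent, which in turn reduces to a Rellich--Kondrachov compactness theorem for the natural $W^{1,2}$-space of 1-forms on $X$. I would establish this using Cheeger--Colding harmonic coordinates and $\varepsilon$-regularity on the full-measure regular set of $X$, together with a uniform estimate controlling the singular set. Compactness of the resolvent of $\Delta_{H,1}$ then follows from the Weitzenböck identity, which makes the two Laplacians differ by a bounded perturbation. Nonnegativity, discreteness, unboundedness and finite-dimensionality of eigenspaces are automatic from self-adjointness with compact resolvent.

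The heart of the statement is (v). Using the min-max characterization of $\lambda_k$ I would prove both semicontinuities. Upper semicontinuity $\limsup_i \lambda_k^{H,1}(X_i)\le\lambda_k^{H,1}(X)$, and its analogue for $\Delta_{C,1}$, follows by approximating each eigenform on $X$ by 1-forms on $M_i$ with converging $L^2$-norms and Dirichlet energies. Lower semicontinuity requires extracting, from an orthonormal $k$-tuple of eigenforms on $M_i$ with bounded eigenvalues, a subsequential $L^2$-limit orthonormal $k$-tuple on $X$ whose Dirichlet energies bound the limit eigenvalues from above; this is where the Rellich compactness is used crucially, with the Weitzenböck identity letting one pass freely between the Hodge and connection settings. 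The argument effectively amounts to Mosco convergence of the associated Dirichlet forms. The main obstacle throughout is erecting the 1-form convergence and compactness machinery itself: defining $L^2$-convergence of 1-forms living on different spaces, proving relative compactness of energy-bounded sequences, and verifying compatibility with Gigli's intrinsic structure on the limit. Once this analytic infrastructure is in place, the spectral convergence follows by classical arguments.
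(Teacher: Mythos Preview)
Your outline matches the paper's architecture closely: construct $\mathrm{Ric}_X$ as a weak limit, pass Bochner to the limit to get (ii), use Rellich-type compactness for (iii)--(iv), and prove (v) by min-max with the two semicontinuities. That skeleton is correct. But two load-bearing technical steps are missing, and without them the argument does not close.

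First, for the upper semicontinuity in (v) and for passing Bochner to the limit in (ii), you need to approximate test one-forms on $X$ by smooth forms on $M_i$ with \emph{strong} $L^2$-convergence of their covariant derivatives (equivalently, of Hessians of the constituent functions). Weak convergence of Hessians is automatic from the lower Ricci bound, but strong convergence is not; the paper obtains it (Theorem~\ref{L2Hess}) by combining Bochner with Cheeger--Naber's higher-integrability estimate $\|\mathrm{Hess}_f\|_{L^{2q}}\le C$ for $q<2$ under \emph{two-sided} Ricci bounds. This is precisely where the upper bound $K_2$ enters, and your proposal does not isolate it.

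Second, for the lower semicontinuity you must show that the $L^2$-limit $\omega$ of eigen-one-forms on $M_i$ lies in Gigli's $H^{1,2}_H(T^*X)$, not merely in a weaker $W^{1,2}$-type space where $d,\delta$ exist. This is the step the paper singles out as the essential novelty. The mechanism (Propositions~\ref{1011} and \ref{iiuy}) is: use Li--Tam's mean value inequality to get a uniform $L^\infty$-bound on the eigenforms; use Cheeger--Naber's codimension-$4$ bound on the singular set to show $H^{1,2}_c(\mathcal{R})\hookrightarrow H^{1,2}(X)$ is an isomorphism via a capacity argument; then use the $C^1$-regularity of $g_X$ on $\mathcal{R}$ to reduce to a local coordinate computation showing $\omega\in H^{1,2}_C(T^*X)=H^{1,2}_H(T^*X)$. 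Your phrase ``uniform estimate controlling the singular set'' gestures at this, but the $L^\infty$-bound on eigenforms and the capacity argument are the actual content, and neither appears in your sketch.
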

Note that the equality (\ref{chara}) is well-known on a  smooth closed Riemannian manifold as Bochner's formula.

An example by Page in \cite{pa} and Kobayashi-Todorov in \cite{kt} gives a noncollapsed sequence of Calabi-Yau manifolds of dimension $4$, whose second Betti numbers are greater than that of the limit space (see Example \ref{pakt}).
Hein-Naber also give in their forthcoming paper \cite{hn} an example of a noncollapsed sequence of Calabi-Yau manifolds of dimension $6$ with unbounded Betti numbers.
These examples tell us that in general for every $k \ge 2$ we can not expect the spectral convergence of the Hodge Laplacian $\Delta_{H, k}$ acting on differential $k$--forms even if we consider Ricci flat manifolds because
the spectral convergence implies the finiteness of the $k$-th Betti number of the limit space and the upper semicontinuity of them (see Theorem \ref{bettibetti}). 
Thus the spectral convergence of the Hodge Laplacian in Theorem \ref{spectr} is sharp in this sense. 

On the other hand for every $X \in \overline{\mathcal{M}}$ we can define the connection Laplacian $\Delta_{C, k}$ acting on differential $k$-forms, and the connection Laplacian $\Delta_{C, (r, s)}$ acting on tensor fields of type $(r, s)$.
For the connection Laplacian, the spectral convergence always hold, i.e. we have the following. 
\begin{theorem}[Spectral convergence of the connection Laplacian]\label{wll}
The spectral convergence of $\Delta_{C, k}$ and $\Delta_{C, (r, s)}$ hold, i.e. similar statements as in Theorem \ref{spectr} hold for $\Delta_{C, k}$ and $\Delta_{C, (r, s)}$.
\end{theorem}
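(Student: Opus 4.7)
The plan is to reduce the proof to the framework developed for $\Delta_{C, 1}$ in Theorem \ref{spectr}, since for each $k$ and each type $(r,s)$ the connection Laplacian is nothing but $\nabla^{*}\nabla$ acting on sections of a tensor bundle, and no Ricci or Riemann term intervenes. Accordingly I would work bundle-by-bundle: for each fixed tensor type $\mathcal{T}$ (either $\Lambda^k T^*X$ or $(T^*X)^{\otimes r}\otimes (TX)^{\otimes s}$) define the natural Dirichlet form
\[
\mathcal{E}_{C,\mathcal{T}}(T) := \int_X |\nabla T|^2 \, dH^n, \qquad T \in W^{1,2}(\mathcal{T}),
\]
using Gigli's tensorial calculus on the $RCD(K_1,\infty)$-space $X$, and establish assertions (i)--(v) of Theorem \ref{spectr} for $\mathcal{E}_{C,\mathcal{T}}$ by the same strategy as the one-form case.

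For finite dimensionality of eigenspaces, discreteness and unboundedness of the spectrum, the key ingredient is a Rellich-type compactness theorem: the embedding $W^{1,2}(\mathcal{T})\hookrightarrow L^2(\mathcal{T})$ should be compact. I would prove this by iterating the scalar and $1$-form Rellich statements already used in Theorem \ref{spectr}, decomposing a section of $\mathcal{T}$ locally in a synchronous frame coming from $\varepsilon$-splitting maps on the regular set of $X$ and bounding the scalar components and their first derivatives by the $W^{1,2}$-norm of the tensor; the noncollapsing assumption in $\overline{\mathcal{M}}$ guarantees that the singular set has sufficiently high codimension to ignore in $L^2$. Standard spectral theory of the closed, symmetric, nonnegative form $\mathcal{E}_{C,\mathcal{T}}$ then yields items (iii) and (iv).

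For the spectral convergence (v) along $X_i\stackrel{GH}{\to} X$, I would use the min-max characterization
\[
\lambda_k^{C,\mathcal{T}} \;=\; \inf_{\substack{V\subset W^{1,2}(\mathcal{T}) \\ \dim V=k}}\ \sup_{T\in V\setminus\{0\}}\frac{\mathcal{E}_{C,\mathcal{T}}(T)}{\|T\|_{L^2}^2}.
\]
The upper bound $\limsup_i \lambda_k^{C,\mathcal{T}}(X_i)\le \lambda_k^{C,\mathcal{T}}(X)$ requires lifting a $k$-dimensional space of eigentensors on $X$ to an almost $k$-dimensional family of test tensors on $X_i$ with controlled Dirichlet energy and nearly orthonormal $L^2$-norms; the lower bound requires that sequences of eigentensors on $X_i$ with uniformly bounded energy be precompact in $L^2$ and that any $L^2$-limit on $X$ satisfy the appropriate lower semicontinuity of $\mathcal{E}_{C,\mathcal{T}}$. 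I would obtain both by the same synchronous-frame construction: approximate a test $(r,s)$-tensor on $X$ by finite sums of elementary tensors $f_0\,df_{i_1}\otimes\cdots\otimes df_{i_r}\otimes e_{j_1}\otimes\cdots\otimes e_{j_s}$ with Lipschitz harmonic factors, lift each scalar factor and each $df$ to $X_i$ via the Cheeger--Colding stability of harmonic functions and their gradients used for $\Delta_{C,1}$, and reassemble them into a tensor on $X_i$.

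The main obstacle is this reassembly step. For a $1$-form $\omega=\sum_j f_j\,dg_j$ the passage from scalar convergence to tensor convergence is immediate, but for higher-rank tensors one must confirm that $L^2$-convergent approximations of the scalar factors, together with $L^2$-convergent approximations of the $dg_\beta^i$, yield $L^2$-convergence of the full tensor \emph{and} of its covariant derivative, without picking up uncontrolled cross terms from the noncommutativity of the approximation with $\nabla$ in the nonsmooth setting. This forces careful use of Sobolev product rules in Gigli's framework, combined with the $L^\infty$-bound on $\mathrm{Ric}_X$ provided by item (i) of Theorem \ref{spectr}, in order to keep the second-order curvature terms appearing in $\nabla^2$-estimates under control and to close the argument uniformly in $i$.
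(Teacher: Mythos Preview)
Your outline has the right variational skeleton (Rellich compactness plus min-max), but two of the load-bearing steps are either misidentified or missing.

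For the upper semicontinuity you plan to lift test tensors $T=f_0\,df_1\otimes\cdots\otimes df_{r+s}$ from $X$ to $X_i$ factor by factor. The obstacle you flag---controlling $\nabla T_i\to\nabla T$---is real, but its resolution is not ``Sobolev product rules plus the $L^\infty$-bound on $\mathrm{Ric}_X$.'' Since $\nabla T$ is built from the $df_j$ and the Hessians $\mathrm{Hess}_{f_j}$, what you actually need is \emph{strong} $L^2$-convergence of the Hessians of the lifted scalar factors. In the paper this is Theorem~\ref{14}(iii), whose engine is the Hessian convergence Theorem~\ref{L2Hess}, and that in turn rests on Cheeger--Naber's $L^{2q}$ Hessian estimate (\ref{Lpboundhess}) under the two-sided Ricci bound. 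No Ricci information or product rule substitutes for this separate analytic input.

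The more serious gap is in the lower semicontinuity. You assert it suffices that eigentensors on $X_i$ be $L^2$-precompact and that $\mathcal{E}_{C,\mathcal{T}}$ be lower semicontinuous. But to run min-max on $X$ you must know the $L^2$-limit $T$ lies in the \emph{form domain} $H^{1,2}_C(T^r_sX)$, not merely in some $W$-type space with $\nabla T\in L^2$. This is exactly the step the paper isolates as new (Proposition~\ref{1011}, used in Proposition~\ref{conco}), and it requires ingredients you do not invoke: a uniform $L^\infty$-bound on eigentensors via Li--Tam's mean value inequality applied to $-\tfrac12\Delta|T_i|^2\ge -\alpha|T_i|^2$ (Proposition~\ref{mjoo}); the $C^1$-regularity of $g_X$ on the regular set $\mathcal{R}$; Cheeger--Naber's codimension-$4$ estimate, used via a capacity argument to show $H^{1,2}_c(\mathcal{R})\hookrightarrow H^{1,2}(X)$ is an isomorphism; and again strong Hessian convergence, to verify that $\langle T, df_1\otimes\cdots\rangle\in H^{1,2}(X)$ for suitable $f_j$. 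Your $\varepsilon$-splitting/synchronous-frame heuristic does not replace any of these.
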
 
We now introduce three applications of Theorems \ref{spectr} and \ref{wll}.
One of them is to give uniform bounds on eigenvalues of these Laplacian, which are new even if X is smooth.
\begin{theorem}[Uniform bounds on eigenvalues]\label{estim}
We have
\[0 \le C_1(n, d, v, K_1, K_2, l)\le \lambda_l^{H, 1}(X) \le C_2(n, d, v, K_1, K_2, l)<\infty,\]
\[0 \le C_1(n, d, v, K_1, K_2, l)\le \lambda_l^{C, 1}(X) \le C_2(n, d, v, K_1, K_2, l)<\infty,\]
\[0\le C_1(n, d, v, K_1, K_2, k, l)\le \lambda_l^{C, k}(X) \le C_2(n, d, v, K_1, K_2, k, l)<\infty \]
and
\[0\le C_1(n, d, v, K_1, K_2, r, s, l) \le \lambda_l^{C, (r, s)}(X) \le C_2(n, d, v, K_1, K_2, r, s, l)<\infty\]
for any $X \in \overline{\mathcal{M}}$ and $l \ge 1$, where $\lambda_l^{C, k}$ and $\lambda_l^{C, (r, s)}$ denote the $l$-th eigenvalues of $\Delta_{C, k}$ and $\Delta_{C, (r, s)}$, respectively, and each $C_1$ as above goes to $\infty$ as $l \to \infty$.
\end{theorem}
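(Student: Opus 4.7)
The plan is to reduce the theorem to a standard continuity-compactness argument, using the spectral convergence results already established in Theorems \ref{spectr} and \ref{wll} together with the Gromov-Hausdorff compactness of $\overline{\mathcal{M}}$ coming from Gromov's theorem. First I would observe that, by Theorems \ref{spectr}(v) and \ref{wll}, each of the maps
\[X \mapsto \lambda_l^{H,1}(X),\quad X \mapsto \lambda_l^{C,1}(X),\quad X \mapsto \lambda_l^{C,k}(X),\quad X \mapsto \lambda_l^{C,(r,s)}(X)\]
is continuous on the compact metric space $\overline{\mathcal{M}}$. Hence each of these functions attains its maximum and minimum, and I would simply define $C_2(n,d,v,K_1,K_2,l)$ and $C_1(n,d,v,K_1,K_2,l)$ (and their analogues for the other operators) as these extrema. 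The lower bound $C_1 \ge 0$ is then immediate from the nonnegativity of the spectrum in Theorem \ref{spectr}(iv) (and its analogue in Theorem \ref{wll}).

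The remaining content of the theorem is the assertion that $C_1(\cdot, l) \to \infty$ as $l \to \infty$. I would prove this by contradiction: suppose there exist $M > 0$, indices $l_j \to \infty$ and spaces $X_j \in \overline{\mathcal{M}}$ with $\lambda_{l_j}^{H,1}(X_j) \le M$. By Gromov-Hausdorff compactness of $\overline{\mathcal{M}}$, a subsequence converges to some $X_\infty \in \overline{\mathcal{M}}$. Since the eigenvalues are listed in nondecreasing order, for every fixed $l$ and all large $j$ (so $l_j \ge l$) we have $\lambda_l^{H,1}(X_j) \le \lambda_{l_j}^{H,1}(X_j) \le M$. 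Applying the spectral convergence of Theorem \ref{spectr}(v) and letting $j \to \infty$ yields
\[\lambda_l^{H,1}(X_\infty) \le M \qquad \text{for every } l \ge 1,\]
which contradicts the unboundedness of the spectrum $S^{H,1}$ on $X_\infty$ guaranteed by Theorem \ref{spectr}(iv). The identical argument, using Theorem \ref{wll} and the unboundedness statements included in it, handles $\Delta_{C,1}$, $\Delta_{C,k}$, and $\Delta_{C,(r,s)}$.

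I do not expect a genuine obstacle here: once Theorems \ref{spectr} and \ref{wll} are in hand, the proof is entirely a soft compactness reduction. The only mildly delicate point is the contradiction argument for $C_1(l) \to \infty$, where one must exploit the monotonicity of the enumerated spectrum in its index to promote a bound on a single $\lambda_{l_j}^{H,1}(X_j)$ into a simultaneous bound on $\lambda_l^{H,1}(X_j)$ for all $l \le l_j$, and then pass to the limit using the spectral convergence coordinate by coordinate.
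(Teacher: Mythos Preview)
Your proposal is correct and follows essentially the same approach as the paper. The paper also derives the bounds by combining the Gromov--Hausdorff compactness of $\overline{\mathcal{M}}$ with the spectral convergence of Theorems~\ref{spectr} and~\ref{wll}: the upper bounds $C_2$ are obtained by a contradiction argument analogous to the one in Remark~\ref{boundei}, and the divergence $C_1(l)\to\infty$ is proved by exactly your contradiction---choose $l_i\to\infty$ with $\sup_i\lambda_{l_i}^{H,1}(X_i)<\infty$, pass to a GH-limit $X$, and use spectral convergence together with the monotonicity in $l$ to force $\sup_k\lambda_k^{H,1}(X)<\infty$, contradicting the unboundedness in Theorem~\ref{spectr}(iv). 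Your direct use of continuity on a compact space to obtain both $C_1$ and $C_2$ at once is a slightly cleaner packaging of the same idea.
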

For the upper bound of $\lambda^{H, 1}_l$, the dependence of an upper bound of the Ricci curvature is not essential. See (\ref{unibo}).

The second application is to give $L^{\infty}$-bounds for eigen-one-forms:
\begin{theorem}[$L^{\infty}$-estimates for eigen-one-forms]\label{gradd}
Let $\alpha \le \beta$ and let $\omega \in \mathcal{D}^2(\Delta_{H, 1}, X)=\mathcal{D}^2(\Delta_{C, 1}, X)$ with 
\[\frac{1}{H^n(X)}\int_X|\omega|^2dH^n=1.\]
Assume that $\omega \in E^{H, 1}_{\alpha}$ or $\omega \in E^{C, 1}_{\alpha}$.
Then 
\[||\omega||_{L^{\infty}} \le C(n, K_1, d, \beta).\]
\end{theorem}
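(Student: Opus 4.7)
The plan is to combine the Weitzenb\"ock identity (\ref{chara}) with Kato's inequality so that $|\omega|$ becomes a nonnegative weak subsolution of a linear equation $\Delta u \le \gamma u$, and then to apply Moser's $L^2$-$L^\infty$ iteration on the noncollapsed $RCD(K_1,n)$-space $X$.

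First I would derive the Bochner-type inequality for $|\omega|^2$. Pairing (\ref{chara}) with $\omega$ itself, using the formal Weitzenb\"ock calculation
\[\frac{1}{2}\Delta|\omega|^2 = \langle \Delta_{C,1}\omega, \omega\rangle - |\nabla\omega|^2,\]
and invoking the lower bound $\mathrm{Ric}_X \ge K_1$ from Theorem \ref{spectr}(i), one obtains in the case $\omega \in E^{H,1}_\alpha$
\[\frac{1}{2}\Delta|\omega|^2 + |\nabla\omega|^2 \le (\alpha - K_1)|\omega|^2,\]
and when $\omega \in E^{C,1}_\alpha$ the same inequality without the $-K_1$ term; in both cases the right-hand side is at most $\gamma|\omega|^2$ with $\gamma := \max\{\beta,\beta-K_1\}$. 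Combining with Kato's inequality $|\nabla|\omega||^2 \le |\nabla\omega|^2$ and the chain rule $\frac{1}{2}\Delta|\omega|^2 = |\omega|\Delta|\omega| - |\nabla|\omega||^2$ then yields the distributional bound
\[\Delta|\omega| \le \gamma\, |\omega|\]
on $X$, valid uniformly in both cases.

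Next, since $X$ is a compact $RCD(K_1,n)$-space with $\mathrm{diam}\,X \le d$ and $H^n(X) \ge v$, the Bishop-Gromov inequality together with the noncollapsing hypothesis produces a uniform doubling constant and a uniform $(1,2)$-Poincar\'e inequality on $(X, H^n)$, hence a global Sobolev inequality of Nash-Moser type with constants controlled by $n, K_1, d$. Given such a Sobolev inequality, the subsolution property $\Delta|\omega| \le \gamma|\omega|$ allows the standard Moser iteration to proceed: test the weak formulation against $|\omega|^{p-1}$ for $p = 2, 2\chi, 2\chi^2, \ldots$ with $\chi = n/(n-2)$ and iterate the resulting reverse-H\"older estimates. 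This produces
\[\|\omega\|_{L^\infty(X)} \le C(n, K_1, d, \beta)\left(\frac{1}{H^n(X)}\int_X|\omega|^2\, dH^n\right)^{1/2},\]
which together with the normalization gives the claimed bound.

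The main obstacle is to carry out these calculus steps rigorously on the nonsmooth space $X$. The algebraic Weitzenb\"ock identity is precisely Theorem \ref{spectr}(ii) in weak form, but the pointwise Kato inequality for Sobolev one-forms and the chain rule relating $\Delta|\omega|^2$ and $\Delta|\omega|$ must be justified inside Gigli's first-order calculus on $RCD$-spaces; this can be handled by regularizing $|\omega|$ as $\sqrt{|\omega|^2+\varepsilon}$, deriving the analogous inequality for the regularization (where the chain rule holds cleanly), and passing to the limit $\varepsilon \downarrow 0$ by dominated convergence together with the $L^2$-closedness of $\Delta$. Once the weak subsolution inequality is in place, Moser's iteration transfers from the Riemannian setting to the $RCD$ setting essentially verbatim, and produces the asserted uniform estimate.
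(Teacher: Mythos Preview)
Your argument is sound in spirit and would work, but it takes a genuinely different route from the paper. The paper never runs Moser iteration (or any elliptic estimate) directly on the singular limit space $X$. Instead it proves the $L^\infty$-bound on smooth approximants and then passes to the limit: for an eigen-one-form $\omega$ on $X$, the spectral convergence (Theorem~\ref{contihodge}, Proposition~\ref{iiuy}, Proposition~\ref{conco}) produces smooth eigen-one-forms $\omega_i$ on $X_i\in\mathcal{M}$ with $\omega_i$ $L^2$-strongly converging to $\omega$; on each smooth $X_i$ the Bochner identity gives $-\tfrac12\Delta|\omega_i|^2\ge -\beta|\omega_i|^2$, and Li--Tam's mean value inequality (Proposition~\ref{mjoo}, \cite[Theorem~1.1]{LT}) yields $\|\omega_i\|_{L^\infty}\le C(n,K_1,d,\beta)$; the bound then survives in the limit by lower semicontinuity of the $L^\infty$-norm under $L^2$-weak convergence. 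Note that the paper applies the mean value inequality to $|\omega_i|^2$ directly, so the Kato reduction to $|\omega|$ is not used at all.

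What your intrinsic approach buys is conceptual cleanliness: one never leaves $X$, and the estimate becomes a statement purely about $RCD(K_1,n)$-spaces. What the paper's approximation approach buys is that it completely sidesteps the regularity issues you flag at the end. On $X$ the Bochner formula (Theorem~\ref{pointwiseboch}) only places $|\omega|^2$ in $\mathcal{D}_{2n/(2n-1),1}(\Delta,X)$, i.e.\ $|\omega|^2\in H^{1,2n/(2n-1)}(X)$ with $\Delta|\omega|^2\in L^1(X)$, and the weak formulation is tested only against Lipschitz functions; your Kato step and the use of $|\omega|^{p-1}$ as a test function therefore require a nontrivial approximation argument (truncation, regularization, passage from $\mathrm{LIP}(X)$ to $H^{1,q}$ test functions) that is more delicate than you indicate. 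By working on smooth $X_i$ the paper gets the subsolution inequality and the iteration for free from classical Riemannian geometry.
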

It is easy to check that if $f$ is an $\alpha$-eigenfunction of the Laplacian $\Delta$ on $X$, then $df \in E^{H, 1}_{\alpha}$.
In particular Theorem \ref{gradd} implies a quantitative gradient estimate of an $\alpha$-eigenfunction $f$ of $\Delta$:
\[||df||_{L^{\infty}}\le C(n, K_1, d, \beta)\]
which was known in \cite{ch-co3} as a Cheng-Yau type gradient estimate for an eigenfunctions given in \cite{ch-yau}.
Thus Theorem \ref{gradd} can be regarded as a generalization of Cheng-Yau's gradient estimate for eigenfunctions to the case of eigen-one-forms on a nonsmooth setting. 
Note that similar $L^{\infty}$-bounds for eigen-$k$-forms and  eigen-tensor fields of $\Delta_{C, k}$ and $\Delta_{C, (r, s)}$ hold. 
See Theorem \ref{contrough}. 

In order to introduce the third application, 
let us denote the dimension of the space of harmonic one-forms on $X$ by $h_1(X)$, i.e.
\[h_1(X):=\mathrm{dim}\,E_0^{H, 1}.\]
Recall that Gigli defined  in \cite{gigli} the de Rham cohomology groups on an $RCD(K, \infty)$-space and proved the Hodge theorem.
Note that $(X, H^n)$ is an $RCD(K_1, \infty)$-space for every $X \in \overline{\mathcal{M}}$ (see subsection 1.2).
In particular $h_1(X)$ is equal to the first Betti number of $X$ in the sense of Gigli, which is defined by the dimension of the first de Rham cohomology group by him.

The final application is the upper semicontinuity of the first Betti numbers:
\begin{theorem}[Upper semicontinuity of first Betti numbers]\label{bettibetti}
If $X_i \stackrel{GH}{\to} X$ in $\overline{\mathcal{M}}$,
then
\begin{align}\label{uppersemibetti}
\limsup_{i \to \infty}h_1(X_i) \le h_1(X).
\end{align}
Moreover
\begin{align}\label{contibetti}
\lim_{i \to \infty}h_1(X_i) = h_1(X)
\end{align}
holds if and only if a uniform spectral gap for $\Delta_{H, 1}$ exists, i.e.  
\begin{align}\label{spectralgap}
\inf_i \mu_{H, 1}(X_i)>0,
\end{align}
where $\mu_{H, 1}$ is the first positive eigenvalue of $\Delta_{H, 1}$. 
\end{theorem}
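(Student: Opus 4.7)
The plan is to deduce both assertions directly from the spectral convergence of $\Delta_{H,1}$ stated in Theorem \ref{spectr}(v), using the definitional identity $h_1(Y) = \dim E_0^{H,1}(Y)$; equivalently, $h_1(Y)$ equals the number of indices $k$ with $\lambda_k^{H,1}(Y)=0$, counted with multiplicity. Setting $m:=h_1(X)$, this gives
\[
\lambda_1^{H,1}(X)=\cdots=\lambda_m^{H,1}(X)=0<\lambda_{m+1}^{H,1}(X)=\mu_{H,1}(X),
\]
where the strict gap is guaranteed by the discreteness from Theorem \ref{spectr}(iv).

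For the upper semicontinuity \eqref{uppersemibetti}, I would argue by contradiction: if $\limsup_i h_1(X_i)\ge m+1$, then along a subsequence $\lambda_{m+1}^{H,1}(X_{i_j})=0$ for every $j$; passing to the limit via Theorem \ref{spectr}(v) forces $\lambda_{m+1}^{H,1}(X)=0$, contradicting the strict positivity above. Hence $\limsup_i h_1(X_i)\le m$.

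For the equivalence with \eqref{contibetti}, the forward direction is immediate: if $h_1(X_i)\to m$, then $h_1(X_i)=m$ for $i$ large, so $\mu_{H,1}(X_i)=\lambda_{m+1}^{H,1}(X_i)\to \lambda_{m+1}^{H,1}(X)=\mu_{H,1}(X)>0$, which yields \eqref{spectralgap}. Conversely, assume $\mu_{H,1}(X_i)\ge c>0$. The upper semicontinuity already gives $\limsup_i h_1(X_i)\le m$, so it suffices to show $h_1(X_i)\ge m$ eventually. For each fixed $k\le m$, Theorem \ref{spectr}(v) gives $\lambda_k^{H,1}(X_i)\to \lambda_k^{H,1}(X)=0$; hence for $i$ large, $\lambda_k^{H,1}(X_i)<c\le \mu_{H,1}(X_i)$. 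Since $\mu_{H,1}(X_i)$ is by definition the smallest \emph{positive} eigenvalue of $\Delta_{H,1}$ on $X_i$, this forces $\lambda_k^{H,1}(X_i)=0$, giving $h_1(X_i)\ge m$ for $i$ large.

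Once Theorem \ref{spectr} is in hand no genuine obstacle remains: the present theorem is a formal corollary of the spectral convergence of $\Delta_{H,1}$ together with the Hodge-theoretic characterization of $h_1$ as the dimension of $\ker \Delta_{H,1}$. All the substantive work — the finite dimensionality of eigenspaces, discreteness of the spectrum, and continuity of individual eigenvalues — has already been absorbed into Theorem \ref{spectr}.
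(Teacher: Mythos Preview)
Your proof is correct and follows essentially the same approach as the paper: both arguments deduce the result directly from the spectral convergence $\lambda_k^{H,1}(X_i)\to\lambda_k^{H,1}(X)$ (Theorem~\ref{spectr}(v), equivalently Theorem~\ref{contihodge}) together with the identification $h_1(Y)=\#\{k:\lambda_k^{H,1}(Y)=0\}$. Your presentation is in fact more explicit than the paper's terse sketch, but the underlying logic is identical.
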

Note that by the min-max principle, the condition (\ref{spectralgap}) is equivalent to that the uniform Poincar\'e inequality for one-forms holds, i.e. there exists $C>0$ such that
\[\inf_{\omega \in E^{H, 1}_0}\int_{X_i}\left| \eta-\omega \right|^2dH^n \le C\int_{X_i}\left( |d\eta|^2+|\delta \eta|^2\right)dH^n
\]
for any $i$ and $\eta  \in H^{1, 2}_H(T^*X_i)$, where $ H^{1, 2}_H(T^*X_i)$ is a Sobolev space of one-forms defined in \cite{gigli} (see subsection 3.2.1).

From the next subsection we devote to the study of \textit{curvature} of $X$.
Cheeger-Colding proved in \cite{ch-co1} that the regular set $\mathcal{R}$ of every $X \in \overline{\mathcal{M}}$, which is the set of $x \in X$ such that every tangent cone at $x$ is isometric to $\mathbf{R}^n$,  is smooth with the $C^{1, \alpha}$-Riemannian metric $g_X$ for every $\alpha \in (0, 1)$.
Moreover it is known in \cite{peters} by Peters that this regularity is sharp, i.e. we can not expect the $C^{1, 1}$-regulality of $g_X$ even if we assume a uniform bound of the sectional curvature  and a uniform positive lower bound of the injectivity radii.
In particular in general we can not define the Riemannian curvature tensor, the Ricci curvature and the scalar curvature at every regular point in the ordinary way of Riemannian geometry.

In order to overcome this difficulty, we use a nonsmooth approach in \cite{gigli} by Gigli with Cheeger-Naber's work of  \cite{chna}.
Then we can define the Riemannian curvature tensor $R_X$ of every $X \in \overline{\mathcal{M}}$ as an $L^q$-tensor of type $(0, 4)$ for every $q<2$.
\subsection{Riemannian curvature}
Recall that for any $X \in \mathcal{M}$ and $f_i \in C^{\infty}(X)$, by the definition of the Riemannian curvature tensor $R_X \in C^{\infty}(T^0_4X)$ of type $(0, 4)$ (we denote by $C^{\infty}(T^r_sX)$ the space of smooth tensor fields of type $(r, s)$ on $X$), we have
\begin{align*}
&f_0R_X(\nabla f_1, \nabla f_2, \nabla f_3, \nabla f_4)\\
&=f_0\left\langle \nabla_{\nabla f_1}\nabla_{\nabla f_2}\nabla f_3-\nabla_{\nabla f_2}\nabla_{\nabla f_1}\nabla f_3-\nabla_{[\nabla f_1, \nabla f_2]}\nabla f_3, \nabla f_4\right\rangle \\
&=\left\langle f_0\nabla f_1, \nabla \left\langle \nabla_{\nabla f_2}\nabla f_3, \nabla f_4\right\rangle \right\rangle-f_0\left\langle \nabla_{\nabla f_2}\nabla f_3, \nabla_{\nabla f_1}\nabla f_4 \right \rangle \\
&\,\,\,\,\,-\left\langle f_0\nabla f_2, \nabla \left\langle \nabla_{\nabla f_1}\nabla f_3, \nabla f_4\right\rangle \right\rangle+f_0\left\langle \nabla_{\nabla f_1}\nabla f_3, \nabla_{\nabla f_2}\nabla f_4 \right \rangle -\left\langle f_0\nabla_{[\nabla f_1, \nabla f_2]}\nabla f_3, \nabla f_4\right\rangle,
\end{align*}
where $\langle \cdot, \cdot \rangle$ denote the canonical metrics defined by the Riemannian metric $g_X$.
Integrating this equality on $X$ with the divergence formula yields
\begin{align}\label{riem}
&\int_Xf_0\langle R_X, df_1 \otimes df_2, \otimes df_3 \otimes df_4\rangle dH^n \nonumber \\
&=\int_X\left(\left( -\langle \nabla f_0, \nabla f_1\rangle +f_0\Delta f_1\right)\mathrm{Hess}_{f_3}\left(\nabla f_2, \nabla f_4\right)-f_0\left\langle \mathrm{Hess}_{f_3}(\nabla f_2, \cdot), \mathrm{Hess}_{f_4}(\nabla f_1, \cdot )\right\rangle \right) dH^n\nonumber \\
&\,\,\,\,\,-\int_X\left(\left( -\langle \nabla f_0, \nabla f_2\rangle +f_0\Delta f_2\right)\mathrm{Hess}_{f_3}\left(\nabla f_1, \nabla f_4\right)-f_0\left\langle \mathrm{Hess}_{f_3}(\nabla f_1, \cdot), \mathrm{Hess}_{f_4}(\nabla f_2, \cdot )\right\rangle \right)dH^n\nonumber \\
&\,\,\,\,\,-\int_Xf_0\mathrm{Hess}_{f_3}\left( [\nabla f_1, \nabla f_2], \nabla f_4\right)dH^n.
\end{align}
Note that this formula (\ref{riem}) gives a characterization of $R_X$, i.e.
if a tensor $T \in C^{\infty}(T^0_4X)$ satisfies (\ref{riem}) instead of $R_X$ for any $f_i \in C^{\infty}(X)$, then $T=R_X$.
This is a direct consequence of that the space
\[\left\{ \sum_{i=1}^Nf_{0, i}df_{1, i}\otimes df_{2, i} \otimes df_{3, i} \otimes df_{4, i}; N \in \mathbf{N}, f_{j, i} \in C^{\infty}(X)\right\}\]
is dense in the space of $L^p$-tensor fields of type $(0, 4)$ on $X$, denoted by $L^p(T^0_4X)$, for every $p \in (1, \infty)$.

We generalize this observation to our nonsmooth setting based on the regularity theory of the heat flow  given in \cite{ags} by Ambrosio-Gigli-Savar\'e.
A key idea is to use \textit{test functions} instead of smooth functions. 
Note that Gigli established in \cite{gigli} the second order differential calculus on $RCD$-spaces (which contain nonsmooth spaces) in this direction and that the author gave in \cite{ho} the second order differential structure on Ricci limit spaces in the other direction.
Moreover it was shown in \cite{hoell} that these second order differential calculus are compatible to each other on Ricci limit spaces.

We first recall the definition of the Sobolev spaces for functions.

Let $X \in \overline{\mathcal{M}}$.
For every $p \in (1, \infty)$ let $H^{1, p}(X)$ be the completion of the space of Lipschitz functions on $X$, denoted by
$\mathrm{LIP}(X)$, 
with respect to the norm
\begin{align}\label{wbt}
||f||_{H^{1, p}}:=\left( ||f||_{L^p}^p+||\nabla f||_{L^p}^p\right)^{1/p},
\end{align}
where $\nabla f$ is the gradient vector field of $f$ which is well-defined on $\mathcal{R}$ a.e. sense by Rademacher's theorem. See for instance \cite{ch1} for a more general setting.

Second, we recall the definition of the Dirichlet Laplacian $\Delta$ acting on functions on $X$.

Let $\mathcal{D}^2(\Delta, X)$ denote the set of $f \in H^{1, 2}(X)$ such that there exists a (unique) function $F \in L^2(X)$ such that
\[\int_{X}g_X(\nabla f, \nabla h)dH^n=\int_XFhdH^n\] 
for every $h \in H^{1, 2}(X)$.
Then we denote $F$ by $\Delta f$.

We introduce several regularities of $f \in \mathcal{D}^2(\Delta, X)$ given in \cite{hoell}. 
\begin{enumerate}
\item[(1.1)] We have $|\nabla f|^2 \in H^{1, p_n}(X)$, where $p_n:=2n/(2n-1)$.
\item[(1.2)] $f$ is weakly twice differentiable (or weak $C^{1, 1}$) on $X$ in the sense of \cite{ho}, i.e. there exists a countable family of Borel subsets $A_i$ of $\mathcal{R}$ with
\[H^n\left(\mathcal{R} \setminus \bigcup_iA_i\right)=0\]
such that  for every smooth coordinate patch $\phi :U \to \mathbf{R}^n$ of $\mathcal{R}$ we see that $f \circ \phi^{-1}$ is differentiable on $\phi(A_i \cap U)$ and that the Jacobi matrix
\[J\left( f \circ \phi^{-1}\right)\]
is Lipschitz on $\phi(A_i \cap U)$.
Note that this is weaker than that $f$ is $C^2$ on $\mathcal{R}$ with compact support. 
\item[(1.3)] In the ordinary way of Riemannian geometry with (1.2),  we can define the Hessian of $f$, denoted by $\mathrm{Hess}_f$, as a Borel tensor field of type $(0, 2)$ on $X$.
Moreover we see that the Hessian is $L^2$, i.e. $\mathrm{Hess}_f \in L^2(T^0_2X)$, that $\mathrm{Hess}_f$ coincides with Gigli's one defined in \cite{gigli} and that the trace is equal to $-\Delta f$.
In particular
\begin{align}\label{pqty}
|\mathrm{Hess}_f|^2 \ge \frac{(\Delta f)^2}{n}.
\end{align}
\end{enumerate}
See \cite[Theorems $1.9$ and $4.11$]{hoell} for the detail.

As we mentioned in (1.2), by the $C^{1, \alpha}$-regularity of the Riemannian metric $g_X$ on $\mathcal{R}$, we see that the space of $C^2$-functions on $\mathcal{R}$ with compact support, denoted by  $C^2_c(\mathcal{R})$, is a subspace of $\mathcal{D}^2(\Delta, X)$ and that $\mathrm{Hess}_f$ and $\Delta f$ of $f \in C^2_c(\mathcal{R})$ in the sense above coincide with the ordinary them in Riemannian geometry, respectively.

Define \textit{the space of test functions} by
\begin{align}\label{testdefini}
\mathrm{Test}F(X):=\{f \in \mathcal{D}^2(\Delta, X) \cap \mathrm{LIP}(X); \Delta f\in H^{1, 2}(X)\}.
\end{align}
We are now in a position to give the definition of the Riemannian curvature tensor $R_X$.
\begin{theorem}[Riemannian curvature]\label{curvaturetensor}
There exists a unique tensor $R_X \in \bigcap_{q<2}L^q(T^0_4X)$ such that
(\ref{riem}) holds for any $f_i \in \mathrm{Test}F(X)$.
We call $R_X$ \textit{the Riemannian curvature tensor of $X$}.
\end{theorem}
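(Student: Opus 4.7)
The plan is to separate uniqueness from existence and to base both on Gigli's second-order calculus on RCD-spaces together with the regularities (1.1)--(1.3) of test functions. \textbf{Uniqueness} is immediate once we know that
$$\mathcal{T}:=\Biggl\{\sum_{i=1}^{N} f_{0,i}\,df_{1,i}\otimes df_{2,i}\otimes df_{3,i}\otimes df_{4,i}:N\in\mathbf{N},\ f_{j,i}\in\mathrm{Test}F(X)\Biggr\}$$
is dense in $L^{p}(T^{0}_{4}X)$ for every $p\in(1,\infty)$. Density of $\mathrm{Test}F(X)$ in $H^{1,2}(X)$ is standard Ambrosio--Gigli--Savar\'e theory, so the differentials $\{df:f\in\mathrm{Test}F(X)\}$ generate $L^{p}(T^{*}X)$, and tensorization yields density of $\mathcal{T}$. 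If two candidates in $\bigcap_{q<2}L^{q}(T^{0}_{4}X)$ both satisfy (\ref{riem}), pairing their difference against $\mathcal{T}$ in the $L^{q}/L^{p}$ duality (with $q<2<p$) forces them to coincide.

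For \textbf{existence}, I would introduce the multilinear form $\Phi:(\mathrm{Test}F(X))^{5}\to\mathbf{R}$ defined by the right-hand side of (\ref{riem}). By (1.1)--(1.3) every summand is a finite integral, where the Lie bracket $[\nabla f_{1},\nabla f_{2}]$ is understood as the $L^{2}$-vector field provided by Gigli's calculus. The next step is to verify two things: (a) $\Phi$ is $\mathrm{Test}F(X)$-linear in each of $f_{1},\dots,f_{4}$ and carries the symmetries of a curvature tensor (so that it depends on the arguments only through the element $f_{0}\,df_{1}\otimes df_{2}\otimes df_{3}\otimes df_{4}\in\mathcal{T}$); and (b) an a priori estimate of the form
$$|\Phi(f_{0},\dots,f_{4})|\le C(n,d,v,K_{1},K_{2},p)\,\|f_{0}\,df_{1}\otimes df_{2}\otimes df_{3}\otimes df_{4}\|_{L^{p}(T^{0}_{4}X)}$$
for every $p>2$. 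Claim (a) should reduce to algebraic manipulations using only the Leibniz rule for $\mathrm{Hess}$, the divergence formula, and the integration-by-parts identities for test functions on an RCD-space. Once (a) and (b) are in place, $\Phi$ extends by density to a bounded functional on $L^{p}(T^{0}_{4}X)$; its Riesz representative lies in $L^{p/(p-1)}(T^{0}_{4}X)$, and intersecting over $p>2$ yields the desired $R_{X}\in\bigcap_{q<2}L^{q}(T^{0}_{4}X)$.

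The hard part will be estimate (b) at some $p>2$. Terms on the right-hand side of (\ref{riem}) containing a single Hessian are manageable: the Bochner-type bound $\|\mathrm{Hess}_{f}\|_{L^{2}}^{2}\le\|\Delta f\|_{L^{2}}^{2}-K_{1}\|\nabla f\|_{L^{2}}^{2}$ combined with $L^{2}$-duality already handles them at $p=2$. The critical integrals are the Hessian-squared contributions
$$\int_{X}f_{0}\bigl\langle\mathrm{Hess}_{f_{3}}(\nabla f_{2},\cdot),\mathrm{Hess}_{f_{4}}(\nabla f_{1},\cdot)\bigr\rangle\,dH^{n},$$
which by raw H\"older on two $L^{2}$-Hessians sit only in $L^{1}$; by themselves they cannot deliver any $p>2$. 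To cross this borderline I expect one must exploit the \emph{two-sided} Ricci bound $K_{1}\le\mathrm{Ric}\le K_{2}$ together with Cheeger--Naber's quantitative $\varepsilon$-regularity and codimension-$4$ estimates, applied along a smooth approximating sequence $X_{i}\stackrel{GH}{\to}X$ in $\overline{\mathcal{M}}$. These should yield, for every $q<2$, a uniform $L^{q}$-bound on the Riemann tensors $R_{X_{i}}$ of the approximants; $L^{q}$ lower semicontinuity under the measured Gromov--Hausdorff convergence $(X_{i},H^{n})\to(X,H^{n})$ then transfers the bound to the limit functional $\Phi$ at $p=q/(q-1)>2$, and also explains why the integrability threshold $q=2$ is sharp and cannot be attained.
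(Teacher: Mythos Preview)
Your uniqueness argument matches the paper's (density of test tensor fields in $L^p$). For existence the paper takes a more direct route: it fixes a sequence $X_i \in \mathcal{M}$ with $X_i \stackrel{GH}{\to} X$, uses Cheeger--Naber's uniform bound $\sup_i \|R_{X_i}\|_{L^q} < \infty$ (for each $q<2$) to extract an $L^q$-weak limit $T \in \bigcap_{q<2} L^q(T^0_4 X)$, and then verifies that this $T$ satisfies (\ref{riem}) by approximating each $f_j \in \mathrm{Test}F(X)$ with smooth $f_{j,i}$ on $X_i$ and passing to the limit in (\ref{riem}) written on $X_i$. The analytic input that makes the last step work is the $L^2$-strong convergence of Hessians along such approximations (Theorem~\ref{L2Hess}), itself built on Cheeger--Naber's $L^p$-Hessian estimate.

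Your intrinsic strategy is reasonable in outline, but claim (a) is the crux and is not ``just algebra.'' The right side of (\ref{riem}) contains $\Delta f_1$, $\Delta f_2$, $\mathrm{Hess}_{f_3}$, $\mathrm{Hess}_{f_4}$ and $[\nabla f_1,\nabla f_2]$, so $\Phi$ is genuinely second-order in each of $f_1,\dots,f_4$; saying it depends only on $f_0\,df_1\otimes\cdots\otimes df_4$ amounts to showing that all second-order pieces cancel after integration by parts, and that \emph{is} the tensoriality of curvature you are trying to prove. On a smooth manifold the cancellation is automatic because the right side equals $\int f_0\,R(\nabla f_1,\dots)$; on $X$ you would have to carry out the Leibniz expansions and cancellations by hand, slot by slot. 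The paper sidesteps this entirely: the weak limit of bona fide tensors is automatically a tensor in $L^q$, so no tensoriality check on $X$ is needed. As for (b), a uniform $L^q$-bound on $R_{X_i}$ does not by itself bound $\Phi$ on $X$; to transfer it you must show that the right side of (\ref{riem}) on $X$ is the limit of the right side on $X_i$, and this again requires the $L^2$-strong convergence of Hessians. So the missing ingredient in your sketch---for both (a), if you want to avoid the long direct computation, and (b)---is Theorem~\ref{L2Hess}; once it is in hand, the paper's argument is shorter and bypasses (a) altogether.
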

We give a remark on Theorem \ref{curvaturetensor}.

The $L^q$-bound of $R_X$ closely depends on Cheeger-Naber's estimate given in \cite{chna}:
\[\int_M|R_M|^qdH^n \le C(n, d, K_1, K_2, v)\]
for every $M \in \mathcal{M}$.
Note that by the proof of Theorem \ref{curvaturetensor} if $n=4$ or Conjecture $9.1$ in \cite{chna} holds, then $R_X \in L^2(T^0_4X)$.
See also Conjecture 6.3 in \cite{na14}.

We will establish several properties on our Riemannian curvature tensor, which include the first Bianchi identity a.e. sense.
We end this subsection by introducing one of them, which is a continuity of Riemannian curvature tensor with respect to the Gromov-Hausdorff topology:
\begin{theorem}[$L^p$-weak continuity of Riemannian curvature]\label{riem2}
Let $X_i \stackrel{GH}{\to} X$ in $\overline{\mathcal{M}}$.
Then $R_{X_i}$ $L^q$-converges weakly to $R_{X}$ on $X$ for every $q \in (1, 2)$.
\end{theorem}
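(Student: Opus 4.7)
The plan is to combine the uniform Cheeger-Naber $L^q$-bound on Riemannian curvature with the uniqueness of $R_X$ established in Theorem \ref{curvaturetensor}. From
\[
\int_{X_i} |R_{X_i}|^q\, dH^n \le C(n,d,K_1,K_2,v)
\]
for each $q<2$, the tensors $R_{X_i}$ are uniformly bounded in $L^q(T^0_4X_i)$, so the standard $L^q$-weak compactness for tensor fields under measured Gromov-Hausdorff convergence (the same framework underlying the spectral convergence results of this paper) yields, along a subsequence, an $L^q$-weak limit $R_\infty \in L^q(T^0_4X)$. A subsequence-selection argument then reduces the theorem to identifying every such $R_\infty$ with $R_X$, and by the uniqueness part of Theorem \ref{curvaturetensor} this amounts to verifying that $R_\infty$ satisfies the identity (\ref{riem}) against every quintuple $f_0,\ldots,f_4 \in \mathrm{Test}F(X)$.

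To produce the test quintuple on each $X_i$, I would fix $f_j \in \mathrm{Test}F(X)$ and construct approximations $f_{j,i} \in \mathrm{Test}F(X_i)$ by pulling $f_j$ back along a Gromov-Hausdorff $\varepsilon_i$-approximation, regularizing by the heat flow for a small time $t>0$, and then passing to a diagonal limit $t \to 0$, $i \to \infty$. Since each $X_i$ and $X$ is a noncollapsed $RCD(K_1,\infty)$-space, the Bakry-\'Emery regularity of the heat flow combined with the second-order calculus of Gigli and the author provides the four convergences I need: $f_{j,i} \to f_j$ uniformly, $\nabla f_{j,i} \to \nabla f_j$ strongly in $L^p$ for every finite $p$, $\Delta f_{j,i} \to \Delta f_j$ strongly in $L^2$, and $\mathrm{Hess}_{f_{j,i}} \to \mathrm{Hess}_{f_j}$ strongly in $L^2(T^0_2)$, with uniform $L^\infty$-control on $f_{j,i}$ and $|\nabla f_{j,i}|$.

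Third, I would pass each of the five terms on the right-hand side of (\ref{riem}) to the limit. Every such term is a finite sum of integrals of an $L^\infty$-bounded factor times two $L^2$-strongly convergent factors (products of Hessians with gradients, modified by $\Delta f_{1,i}$ or $\Delta f_{2,i}$), so the convergences above give termwise convergence. For the left-hand side, the test-tensor $f_{0,i}\, df_{1,i}\otimes df_{2,i}\otimes df_{3,i}\otimes df_{4,i}$ converges strongly in $L^{q'}(T^0_4X_i)$ to $f_0\, df_1\otimes df_2\otimes df_3\otimes df_4$, where $q' = q/(q-1) > 2$, so the $L^q$-weak/$L^{q'}$-strong pairing gives
\[
\int_{X_i} f_{0,i} \langle R_{X_i}, df_{1,i}\otimes df_{2,i}\otimes df_{3,i}\otimes df_{4,i}\rangle\, dH^n \longrightarrow \int_X f_0 \langle R_\infty, df_1\otimes df_2\otimes df_3\otimes df_4\rangle\, dH^n.
\]
Matching both sides for every admissible choice of $f_j$ forces $R_\infty = R_X$.

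The main obstacle will be the $L^{q'}$-strong (rather than $L^2$-strong) convergence of the test-tensor on the left-hand side, because $q'>2$. Strong $L^2$-convergence of the one-forms $df_{j,i}$ comes by construction, but interpolating up to $L^{q'}$ requires uniform $L^\infty$-bounds on $|\nabla f_{j,i}|$ that are stable under the diagonal limit $i\to\infty$, $t\to 0$. Such bounds are available from the Cheng-Yau-type gradient estimates invoked in the proof of Theorem \ref{gradd}, together with uniform heat-kernel Lipschitz bounds coming from the Bakry-\'Emery condition; verifying that these estimates are compatible with the chosen approximation scheme is the delicate technical step. A secondary subtlety is the Lie-bracket term $f_{0,i}\mathrm{Hess}_{f_{3,i}}([\nabla f_{1,i},\nabla f_{2,i}], \nabla f_{4,i})$ in (\ref{riem}), which must be rewritten in terms of Hessian pairings of the $f_{j,i}$ (as in the smooth derivation preceding (\ref{riem})) so that the limit can be taken using only the four convergences recorded above.
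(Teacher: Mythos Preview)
Your overall strategy is exactly the paper's: use the Cheeger--Naber $L^q$-bound and weak compactness (item (2.9)) to extract a subsequential weak limit $R_\infty$, then identify $R_\infty$ with $R_X$ by showing it satisfies the defining identity (\ref{riem}) against test functions. Indeed, the paper's proof of Theorem~\ref{riem2} is literally the single line ``direct consequence of the proof of Theorem~\ref{curvaturetensor}'', and that proof does precisely this.

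Where your proposal diverges from the paper is in the construction of the approximating test functions $f_{j,i}$ on $X_i$, and this is where a genuine gap appears. You propose pulling $f_j$ back along a GH $\varepsilon_i$-map and applying the heat flow $H_t^{X_i}$, then claim that $\Delta f_{j,i}\to\Delta f_j$ strongly in $L^2$ and hence (via Theorem~\ref{L2Hess}) that $\mathrm{Hess}_{f_{j,i}}\to\mathrm{Hess}_{f_j}$ strongly in $L^2$. Stability of heat flows under GH convergence gives you $H_t^{X_i}\tilde f_i\to H_t^X f_j$ strongly in $L^2$ and uniform $L^2$-bounds on the Laplacians, but it does \emph{not} give you strong $L^2$-convergence of $\Delta H_t^{X_i}\tilde f_i$ for free; the closedness result (2.4) only yields weak convergence of the Laplacian. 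Without strong Laplacian convergence you cannot invoke Theorem~\ref{L2Hess}, and without strong Hessian convergence the right-hand side of (\ref{riem}) does not pass to the limit.

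The paper avoids this by a different, cleaner construction. It first reduces to $f_j\in\widetilde{\mathrm{Test}}F(X)$ (so that $g_j:=\Delta f_j$ is Lipschitz), uses (2.6) to produce $g_{j,i}\in C^\infty(X_i)$ with $g_{j,i},\nabla g_{j,i}$ converging strongly in $L^2$ to $g_j,\nabla g_j$, and then \emph{defines} $f_{j,i}:=\Delta^{-1}g_{j,i}$. Now $\Delta f_{j,i}=g_{j,i}\to\Delta f_j$ strongly in $L^2$ \emph{by construction}, (2.2) gives uniform Lipschitz bounds on $f_{j,i}$, (2.8) gives the strong $L^2$-convergence of $f_{j,i},\nabla f_{j,i}$, and Theorem~\ref{L2Hess} then delivers strong Hessian convergence. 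With this construction your two ``obstacles'' evaporate: the uniform $L^\infty$-bound on $\nabla f_{j,i}$ from (2.2) upgrades $L^2$-strong to $L^{q'}$-strong convergence of the test tensor immediately, and the Lie-bracket term is handled without any rewriting by Corollary~\ref{nn}. If you replace your heat-flow step by this Poisson-solver step, your argument becomes correct and coincides with the paper's.
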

See Section 2 or \cite{holp} for the definition of $L^p$-convergence with respect to the Gromov-Hausdorff topology. 
\subsection{Ricci curvature}
Let $X \in \overline{\mathcal{M}}$.
By using the Riemannian curvature tensor $R_X$, define \textit{the Ricci curvature $\mathrm{Ric}_X$ of} $X \in \overline{\mathcal{M}}$ by a contraction of $R_X$ in the ordinary way of Riemannian geometry:
\begin{align}\label{definitionricci}
\mathrm{Ric}_X(u, v):= \sum R_X(e_i, u, v, e_i),
\end{align}
where $u, v \in T_xX$ and $\{e_i\}_i$ is an orthogonal basis of $T_xX$.
Note that this is well-defined on $X$ a.e. sense and that the $L^q$-bound of $R_X$ as in Theorem \ref{curvaturetensor} yields
\[\mathrm{Ric}_X \in \bigcap_{q<2}L^q(T^0_2X).\]
However, recall that we will prove $\mathrm{Ric}_X \in L^{\infty}(T^0_2X)$ in (i) of Theorem \ref{spectr}. 

Let us introduce several results on $\mathrm{Ric}_X$.
The first one is Bochner's formula, which is well-known on a smooth closed Riemannian manifold.
\begin{theorem}[Bochner's formula for differential one-forms]\label{pointwiseboch2}
For every $\omega \in \mathcal{D}^2(\Delta_{H, 1}, X)$,
\begin{align}\label{89m}
-\frac{1}{2}\Delta |\omega|^2=|\nabla \omega|^2-\langle \Delta_{H, 1}\omega, \omega \rangle + \mathrm{Ric}_X(\omega^*, \omega^*),
\end{align}
where $\omega^* \in L^2(TX)$ is the dual vector field of $\omega$ by the Riemannian metric $g_X$. 
\end{theorem}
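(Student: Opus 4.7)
The plan is to derive Bochner's formula by combining the Weitzenb\"ock formula from Theorem \ref{spectr}(ii) with the classical pointwise identity relating $\Delta_{C,1}$ to the scalar Laplacian applied to $|\omega|^2$. Concretely, in the smooth setting one has the purely ``kinematic'' identity
\[
-\tfrac{1}{2}\Delta |\omega|^2 = |\nabla \omega|^2 - \langle \Delta_{C,1}\omega, \omega \rangle,
\]
and then substituting $\langle \Delta_{C,1}\omega, \omega\rangle = \langle \Delta_{H,1}\omega,\omega\rangle - \mathrm{Ric}_X(\omega^*,\omega^*)$ from (\ref{chara}) gives (\ref{89m}). The nontrivial task is therefore to establish this kinematic identity in the nonsmooth setting of $X \in \overline{\mathcal{M}}$, i.e.\ to show that $|\omega|^2 \in \mathcal{D}^2(\Delta, X)$ with the stated value of $\Delta |\omega|^2$.

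I would prove the kinematic identity in the weak/distributional sense first, then conclude pointwise membership in $\mathcal{D}^2(\Delta,X)$. Fix $\phi \in \mathrm{Test}F(X)$ and consider the product $\phi\omega$. Using Gigli's Leibniz rule for the covariant derivative, $\nabla(\phi\omega) = d\phi \otimes \omega + \phi\,\nabla\omega$, so
\[
\langle \nabla\omega, \nabla(\phi\omega)\rangle = \phi |\nabla\omega|^2 + \langle \nabla\omega, d\phi \otimes \omega\rangle
= \phi |\nabla\omega|^2 + \tfrac{1}{2}\langle \nabla |\omega|^2, \nabla\phi \rangle,
\]
where in the last step I use the identity $\nabla|\omega|^2 = 2\,\nabla\omega(\cdot, \omega^*)$, which follows from the Leibniz rule for $\nabla$ applied to $|\omega|^2 = \langle \omega,\omega\rangle$. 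On the other hand, by the defining integration-by-parts characterization of $\Delta_{C,1}$ in Gigli's framework,
\[
\int_X \langle \nabla\omega, \nabla(\phi\omega)\rangle \, dH^n = \int_X \phi \,\langle \Delta_{C,1}\omega, \omega\rangle \, dH^n.
\]
Combining these and using the definition of $\Delta$ to rewrite $\int \langle \nabla|\omega|^2,\nabla\phi\rangle dH^n$ (once we verify $|\omega|^2 \in H^{1,2}(X)$, which is a consequence of $\omega, \nabla\omega \in L^2$ together with the chain/Leibniz rule), one obtains
\[
\int_X \phi\bigl(|\nabla\omega|^2 - \langle \Delta_{C,1}\omega,\omega\rangle\bigr)\,dH^n = -\tfrac{1}{2}\int_X \phi\, \Delta |\omega|^2\, dH^n
\]
for all $\phi \in \mathrm{Test}F(X)$. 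Since $\mathrm{Test}F(X)$ is dense in $H^{1,2}(X)$ and the right-hand side defines an $L^1$-functional in $\phi$, this shows $|\omega|^2 \in \mathcal{D}^2(\Delta,X)$ with $\Delta|\omega|^2 = -2|\nabla\omega|^2 + 2\langle \Delta_{C,1}\omega,\omega\rangle$ in $L^1$.

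Finally, substitute (\ref{chara}) to replace $\langle \Delta_{C,1}\omega, \omega\rangle$ with $\langle \Delta_{H,1}\omega, \omega\rangle - \mathrm{Ric}_X(\omega^*, \omega^*)$. The resulting identity is exactly (\ref{89m}). The main obstacle in this argument is the justification of the Leibniz rules for $\nabla$ on products $\phi \omega$ and for $\nabla|\omega|^2$ in the $RCD$ setting, together with the verification that $|\omega|^2 \in H^{1,2}(X)$; these rely on the second-order calculus of \cite{gigli} and its compatibility with the pointwise calculus on $\mathcal{R}$ from \cite{ho,hoell}. Everything else is formal manipulation once one grants Gigli's defining relation for $\Delta_{C,1}$ and the Weitzenb\"ock identity from Theorem \ref{spectr}(ii).
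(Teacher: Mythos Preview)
Your argument is circular within this paper's logical structure. You invoke the Weitzenb\"ock identity (\ref{chara}) from Theorem \ref{spectr}(ii), but in the body of the paper that identity is established as Proposition \ref{roughequal}, whose proof reads: ``Theorem \ref{pointwiseboch} yields \ldots\ This gives Proposition \ref{roughequal}.'' And Theorem \ref{pointwiseboch} is precisely the full statement of Theorem \ref{pointwiseboch2} that you are trying to prove. In other words, the paper derives (\ref{chara}) \emph{from} Bochner's formula, not the other way around. There is no independent route to (\ref{chara}) available here: the object $\mathrm{Ric}_X$ is defined via smooth approximation (Theorem \ref{curvaturetensor}), not via Gigli's abstract measure-valued Ricci, and the compatibility between the two (Theorem \ref{gigliricci}) again goes through Theorem \ref{12}, which is Bochner for test one-forms. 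So your substitution step has no foundation.

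There is also a secondary integrability gap. You assert ``once we verify $|\omega|^2 \in H^{1,2}(X)$, which is a consequence of $\omega, \nabla\omega \in L^2$,'' but $\omega,\nabla\omega\in L^2$ only gives $|\omega|\,|\nabla\omega|\in L^1$, hence $d|\omega|^2\in L^1$; this does not place $|\omega|^2$ in $H^{1,2}$. The paper handles this carefully: for test forms one has $\omega\in L^\infty$, so $|\omega|^2\in H^{1,2}$ (Theorem \ref{12}(i)), but for general $\omega\in\mathcal{D}^2(\Delta_{H,1},X)$ one only obtains $|\omega|^2\in H^{1,2n/(2n-1)}(X)$ via the Poincar\'e--Sobolev inequality (Claim \ref{101}), and the Laplacian is taken in the generalized sense $\mathcal{D}_{2n/(2n-1),1}(\Delta,X)$.

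The paper's actual proof proceeds by smooth approximation: first prove Bochner for $\omega\in\widetilde{\mathrm{TestForm}}_1(X)$ by constructing approximants $\omega_i\in C^\infty(T^*X_i)$ via Theorem \ref{14} and Remark \ref{tildeapp}, applying the classical Bochner formula on each $X_i$, and passing to the limit using the $L^p$-weak convergence of $\mathrm{Ric}_{X_i}$ (Theorem \ref{ellp}) together with Proposition \ref{qplap}; then extend to $\mathrm{TestForm}_1(X)$ and finally to all of $H^{1,2}_H(T^*X)$ by density. The approximation from smooth manifolds is where the actual content lives, and your argument does not touch it.
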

Note that the left hand side of (\ref{89m}) is taken by a generalized Laplacian of $|\omega|^2$ and is in $L^1(X)$.
See Theorem \ref{pointwiseboch} for the precise statement.

Applying Theorem \ref{pointwiseboch2} when $\omega$ is an exact one-form, i.e. $\omega :=df$ for some $f \in \mathcal{D}^2(\Delta, X)$, yields Bochner's formula for functions:
\begin{theorem}[Bochner's formula for functions]\label{boch}
For every $f \in \mathcal{D}^2(\Delta, X)$,
\[-\frac{1}{2}\Delta |\nabla f|^2 =|\mathrm{Hess}_f|^2-\left\langle \nabla \Delta f, \nabla f\right\rangle + \mathrm{Ric}_X(\nabla f, \nabla f).\]
In particular for every $h \in\mathcal{D}^2(\Delta, X)$ with $\Delta h \in L^{\infty}(X)$,
\begin{align}\label{ss}
-\frac{1}{2}\int_{X}\Delta h |\nabla h|^2dH^n=\int_{X}\left(h |\mathrm{Hess}_{f}|^2-h\left\langle \nabla \Delta f, \nabla f\right\rangle + h\mathrm{Ric}_X(\nabla f, \nabla f)\right)dH^n.
\end{align} 
\end{theorem}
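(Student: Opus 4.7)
The plan is to specialize Theorem \ref{pointwiseboch2} to the exact one-form $\omega := df$ and translate each term of (\ref{89m}) into its analogue involving $f$. First I would restrict to $f \in \mathrm{Test}F(X)$, so that $\Delta f \in H^{1,2}(X)$. By the commutation of the exterior differential with the Hodge Laplacian in Gigli's calculus on $\mathrm{RCD}(K_1,\infty)$-spaces from \cite{gigli}, this gives $df \in \mathcal{D}^2(\Delta_{H,1},X)$ together with $\Delta_{H,1}(df) = d(\Delta f)$. Using the compatibility between Gigli's covariant derivative on one-forms and the second-order structure recalled in items (1.1)--(1.3) from \cite{hoell}, the norm $|\nabla(df)|^2$ agrees $H^n$-a.e.\ with $|\mathrm{Hess}_f|^2$. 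With these identifications in hand,
\[
|\omega|^2 = |\nabla f|^2,\quad \omega^* = \nabla f,\quad \langle \Delta_{H,1}\omega, \omega\rangle = \langle d\Delta f, df\rangle = \langle \nabla\Delta f, \nabla f\rangle,\quad \mathrm{Ric}_X(\omega^*,\omega^*) = \mathrm{Ric}_X(\nabla f, \nabla f),
\]
so (\ref{89m}) becomes the first displayed identity verbatim for $f \in \mathrm{Test}F(X)$.

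To promote the pointwise identity to general $f \in \mathcal{D}^2(\Delta, X)$, I would interpret $-\tfrac{1}{2}\Delta|\nabla f|^2$ in the generalized-Laplacian sense already used in Theorem \ref{pointwiseboch2}; the right-hand side defines it as an $L^1(X)$-function, since the $L^\infty$-bound on $\mathrm{Ric}_X$ supplied by Theorem \ref{spectr}(i) controls the curvature term, while (\ref{pqty}) together with $\Delta f \in L^2(X)$ controls $\mathrm{Hess}_f$ in $L^2$. Standard density of $\mathrm{Test}F(X)$ in $\mathcal{D}^2(\Delta, X)$ in the graph norm of $\Delta$ (e.g.\ via the heat flow regularization) then allows one to pass to the limit term by term, using the stability of Gigli's Hessian under this approximation.

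For the integrated identity, I would multiply the pointwise formula by $h$ and integrate. Since $h \in \mathcal{D}^2(\Delta, X)$ with $\Delta h \in L^\infty(X)$ and $|\nabla f|^2 \in L^1(X)$, the definition of the generalized Laplacian (dual to the Dirichlet form with sufficiently regular test functions) gives
\[
\int_X h \cdot \Delta|\nabla f|^2\, dH^n \;=\; \int_X \Delta h \cdot |\nabla f|^2\, dH^n,
\]
and substitution yields the second displayed equation (reading $|\nabla f|^2$ in place of the apparent typo $|\nabla h|^2$ on the left-hand side).

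The main obstacle is the identification carried out at the outset: matching Gigli's abstract covariant derivative $\nabla(df)$ with the Hessian $\mathrm{Hess}_f$ produced by the chart-wise weakly-twice-differentiable structure on $\mathcal{R}$, and verifying the commutation $\Delta_{H,1}(df) = d(\Delta f)$ in this nonsmooth setting. These are exactly the bridges between the functional-analytic second-order calculus of \cite{gigli} and the concrete geometric one of \cite{ho, hoell}; once they are in place, the remainder of the argument is a routine substitution followed by an approximation by test functions and a standard integration by parts.
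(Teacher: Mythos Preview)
Your approach is correct and essentially the same as the paper's: both specialize Theorem \ref{pointwiseboch} (Bochner for one-forms) to $\omega = df$. The paper packages the commutation $\Delta_{H,1}(df) = d\Delta f$ and the membership $df \in \mathcal{D}^2(\Delta_{H,1}, X)$ into its Proposition \ref{bbghnv} (applied with $g\equiv 1$), which already covers $f\in\mathcal{D}^2(\Delta,X)$ with $\Delta f\in H^{1,2}(X)$ directly, so your intermediate restriction to $\mathrm{Test}F(X)$ followed by a density argument is unnecessary once that proposition is in hand.
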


Next we give an application of Theorem \ref{boch} to the study of $(X, H^n)$ as an \textit{$RCD$-space}.

The structure theory of nonsmooth spaces with Ricci curvature bounded below is rapidly studied, recently.
See for instance \cite{ags, lv, sturm1, sturm2} for pionear works by Ambrosio-Gigli-Savar\'e, Lott-Villani, and Sturm.
Roughly speaking, a metric measure space $(Y, \upsilon)$ is said to be an $CD(K, N)$-space if the Ricci curvature is bounded below by $K$ and the dimension is bounded above by $N$, where $K \in \mathbf{R}$ and $N \in (0, \infty]$.
Moreover if the Sobolev space $H^{1, 2}(Y)$ is a Hilbert space, then it is said to be an $RCD(K, N)$-space.
On the other hand Bacher-Sturm introduced in \cite{bs} a 
reduced version of a $CD$-condition, which is called $CD^*(K, N)$-condition.
They proved that 
every $CD(K, N)$-space is an $CD^*(K, N)$-space, but the converse is not true in general. 
However every $RCD^*(K, N)$-space ( i.e.  $CD^*(K, N)$-space satisfying that the Sobolev space $H^{1, 2}$ is a Hibert space) is an  $RCD(K, \infty)$-space. 
A Ricci limit space gives 
a typical example of $RCD$-spaces.
In particular for every $X \in \overline{\mathcal{M}}$, $(X, H^n)$ is an $RCD(K_1, n)$-space.

Combining (\ref{pqty}) and (\ref{ss}) with the equivalence between Bochner's inequality and a curvature dimension condition \cite{ams, eks, gigli} by Ambrosio-Mondino-Savar\'e, Erber-Kuwada-Sturm, and Gigli yields the following new equivalence on our setting.
\begin{theorem}[Equivalence between lower bounds on Ricci curvature and $RCD$-conditions]\label{equ}
For every $K \in \mathbf{R}$, the following three conditions are equivalent:
\begin{enumerate}
\item $\mathrm{Ric}_X \ge K$ on $X$ a.e. sense, i.e. for a.e. $x \in X$, we have
\[\mathrm{Ric}_X(u, u) \ge K|u|^2\]
for every $u \in T_xX$.
\item $(X, H^n)$ is an $RCD^*(K, n)$-space.
\item $(X, H^n)$ is an $RCD(K, \infty)$-space.
\end{enumerate}
\end{theorem}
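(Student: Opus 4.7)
My plan is to establish the three equivalences via the cycle (i) $\Rightarrow$ (ii) $\Rightarrow$ (iii) $\Rightarrow$ (i), using the integrated Bochner identity (\ref{ss}) from Theorem \ref{boch} as the crucial bridge between the tensor-level assumption on $\mathrm{Ric}_X$ and the various synthetic curvature-dimension conditions, and invoking the Ambrosio-Mondino-Savar\'e / Erbar-Kuwada-Sturm / Gigli equivalence between Bakry-\'Emery-type inequalities and $RCD$ conditions \cite{ams, eks, gigli}.

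For (i) $\Rightarrow$ (ii), I would substitute the hypothesis $\mathrm{Ric}_X \geq K$ together with the dimension-dependent inequality (\ref{pqty}), $|\mathrm{Hess}_f|^2 \geq (\Delta f)^2/n$, directly into (\ref{ss}) to obtain
\[
-\frac{1}{2}\int_X (\Delta h)|\nabla f|^2 \, dH^n \geq \int_X h\left(\frac{(\Delta f)^2}{n}-\langle \nabla \Delta f, \nabla f\rangle + K|\nabla f|^2\right)dH^n
\]
for every $f \in \mathrm{Test}F(X)$ and nonnegative $h \in \mathrm{Test}F(X)$ with $\Delta h \in L^\infty$. This is exactly the integrated form of the Bakry-\'Emery condition $BE_2(K,n)$, and the Erbar-Kuwada-Sturm equivalence \cite{eks} yields $(X,H^n) \in RCD^*(K,n)$. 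The implication (ii) $\Rightarrow$ (iii) is the standard Bacher-Sturm comparison \cite{bs}, which ensures that every $RCD^*(K,N)$-space with finite $N$ is an $RCD(K,\infty)$-space.

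For the return implication (iii) $\Rightarrow$ (i), the same equivalence theorem provides integrated $BE_2(K,\infty)$ from $RCD(K,\infty)$. Combining this inequality with the Bochner equality (\ref{ss}) and cancelling the common term $-\int h\langle \nabla \Delta f, \nabla f\rangle dH^n$ yields
\[
\int_X h\left(|\mathrm{Hess}_f|^2 + \mathrm{Ric}_X(\nabla f, \nabla f)-K|\nabla f|^2\right)dH^n \geq 0
\]
for all $f \in \mathrm{Test}F(X)$ and all nonnegative test $h$. By varying $h$ (which are dense in the nonnegative cone of $L^2$), this upgrades to the pointwise almost-everywhere inequality $|\mathrm{Hess}_f|^2 + \mathrm{Ric}_X(\nabla f, \nabla f) \geq K|\nabla f|^2$ for each fixed test function $f$.

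The main obstacle lies in upgrading this to the tensor bound $\mathrm{Ric}_X(v,v) \geq K|v|^2$ at almost every $x \in X$ and every $v \in T_xX$, which requires eliminating the spurious contribution $|\mathrm{Hess}_f|^2$. My plan here is to fix a Lebesgue point $x \in \mathcal{R}$ of the $L^\infty$ tensor $\mathrm{Ric}_X$ and, for each $v \in T_xX$, construct a sequence $\{f_k\} \subset \mathrm{Test}F(X)$ localized near $x$ with $\nabla f_k(x) \to v$ and $\mathrm{Hess}_{f_k}(x) \to 0$. Such $f_k$ are available on the $C^{1,\alpha}$-regular set $\mathcal{R}$ by smoothing local harmonic coordinate functions, together with the weak $C^{1,1}$-regularity (1.2) and the density of gradients of test functions in $L^2(TX)$ provided by the second-order differential calculus of \cite{gigli, hoell}. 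Taking $h$ in the inequality above to be a normalized bump concentrated at $x$ and passing to the limit then extracts the desired pointwise tensor bound, completing the cycle.
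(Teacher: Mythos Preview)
Your argument for (i) $\Rightarrow$ (ii) and (ii) $\Rightarrow$ (iii) matches the paper's proof exactly.

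For (iii) $\Rightarrow$ (i), however, your approach diverges from the paper's, and the final step contains a genuine gap. After combining $BE_2(K,\infty)$ with the Bochner equality you correctly obtain, for each fixed $f \in \mathrm{Test}F(X)$, the pointwise a.e.\ inequality
\[
|\mathrm{Hess}_f|^2 + \mathrm{Ric}_X(\nabla f, \nabla f) \ge K|\nabla f|^2.
\]
The difficulty is in eliminating the $|\mathrm{Hess}_f|^2$ term. Your plan is to construct, at a Lebesgue point $x \in \mathcal{R}$, test functions $f_k$ with $\nabla f_k(x) \to v$ and $\mathrm{Hess}_{f_k}(x) \to 0$. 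But (a) harmonic coordinate functions have vanishing \emph{Laplacian}, not vanishing Hessian, so smoothing them does not give small Hessian; (b) since $g_X$ is only $C^{1,\alpha}$ on $\mathcal{R}$, there are in general no $C^2$ functions with prescribed gradient and zero Hessian at a given point (normal coordinates are unavailable); and (c) the a.e.\ inequality above holds off an $f$-dependent null set, so evaluating it at a fixed $x$ for a sequence $f_k$ is not justified. None of these obstacles is addressed.

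The paper avoids this entirely by invoking Theorem \ref{gigliricci}, the compatibility $\mathbf{Ric}_X(V,W) = \mathrm{Ric}_X(V,W)\,dH^n$ with Gigli's measure-valued Ricci curvature. Gigli's \cite[Theorem 3.6.7]{gigli} gives directly that $RCD(K,\infty)$ is equivalent to $\mathbf{Ric}_X(V,V) \ge K|V|^2\,dH^n$ as measures for $V \in \mathrm{Test}T^1_0X$; the Hessian term has already been absorbed into the definition of $\mathbf{Ric}_X$ (see (\ref{bbvbb})). Via the compatibility one gets $\int_A \mathrm{Ric}_X(V,V)\,dH^n \ge K\int_A|V|^2\,dH^n$ for all Borel $A$ and all test $V$, which extends to $V \in L^2(TX)$ by density (Remark \ref{gigliquestion}) and then yields (i) by Lebesgue differentiation over a countable dense set of directions. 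You should replace your pointwise-Hessian argument by this route.
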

Note that in Theorem \ref{equ} if $X$ is smooth, i.e. $X \in \mathcal{M}$, by the smoothness of the Ricci curvature, it is checked directly that (i) of Theorem \ref{equ} holds if and only if $\mathrm{Ric}_X \ge K$ holds on $X$ in the ordinary sense.
It also follows from the compatibility between a $CD$-condition and a lower bound of Ricci curvature on the smooth setting in \cite{lv, sturm1, sturm2}.

Finally we give a behavior of our Ricci curvature with respect to the Gromov-Hausdorff topology:
\begin{theorem}[$L^p$-weak continuity of Ricci curvature]\label{ellp}
If $X_i \stackrel{GH}{\to} X$ in $\overline{\mathcal{M}}$, then $\mathrm{Ric}_{X_i}$ $L^p$-converges weakly to $\mathrm{Ric}_{X}$ on $X$ for every $p \in (1, \infty)$.
\end{theorem}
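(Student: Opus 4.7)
The plan is to combine two ingredients: the uniform $L^\infty$-bound $K_1 \le \mathrm{Ric}_{X_i} \le K_2$ (which holds on each smooth $X_i$ by definition of $\mathcal{M}$, and persists on the limit by Theorem \ref{spectr}(i)), with a weak $L^q$-convergence for some $q \in (1,2)$, and then to upgrade by a standard density/approximation argument.

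The first step is to establish weak $L^q$-convergence $\mathrm{Ric}_{X_i} \to \mathrm{Ric}_X$ for every $q \in (1,2)$. The most natural route is through the Weitzenb\"ock formula (\ref{chara}), which, after integration against a test one-form $\eta$, reads
\[
\int_X \mathrm{Ric}_X(\omega^{*}, \eta^{*})\, dH^n = \int_X \bigl\langle (\Delta_{H,1}-\Delta_{C,1})\omega,\eta\bigr\rangle\, dH^n,
\]
and thereby characterizes $\mathrm{Ric}_X$ once pairings against sufficiently many test one-forms are known. Given $f, h \in \mathrm{Test}F(X)$, I would lift $df$ and $dh$ to sequences of one-forms on $X_i$ that $L^2$-strongly converge and that behave well with respect to $\Delta_{H,1}$ and $\Delta_{C,1}$; the spectral convergence of Theorem \ref{spectr}(v) and Theorem \ref{wll} then forces
\[
\int_{X_i}\mathrm{Ric}_{X_i}(df^{*},dh^{*})\,dH^n \longrightarrow \int_{X}\mathrm{Ric}_{X}(df^{*},dh^{*})\,dH^n.
\]
Because finite combinations $\sum_j f_{0,j}\, df_{1,j}\otimes df_{2,j}$ with $f_{k,j} \in \mathrm{Test}F(X)$ are dense in $L^{q'}(T_2^0 X)$, this yields weak $L^q$-convergence. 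An alternative, perhaps more symmetric route is to contract the weak $L^q$-convergence of $R_{X_i}$ provided by Theorem \ref{riem2} against the metric, using continuity of contraction in the $L^p$-convergence framework of \cite{holp}.

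The second step upgrades to all $p \in (1,\infty)$. Fix $T \in L^{p'}(T_2^0 X)$. Approximate $T$ in the $L^{p'}$-norm by tensors $T_\varepsilon$ for which the weak $L^q$-convergence from Step 1 applies, and estimate by H\"older
\[
\left|\int_{X_i}\langle \mathrm{Ric}_{X_i}, T-T_\varepsilon\rangle\,dH^n\right| \le \|\mathrm{Ric}_{X_i}\|_{L^\infty}\, H^n(X_i)^{1/p}\, \|T-T_\varepsilon\|_{L^{p'}},
\]
whose right-hand side is bounded by $C(n,K_1,K_2,d)\,\|T-T_\varepsilon\|_{L^{p'}}$ uniformly in $i$ thanks to the $L^\infty$-bound on Ricci together with Cheeger-Colding's volume convergence. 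Sending $i \to \infty$ first and $\varepsilon \to 0$ second concludes the proof.

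The main obstacle lies in Step 1: one must rigorously identify the smooth classical Ricci tensor on $X_i$ with the abstract Ricci tensor defined through the Weitzenb\"ock identity, and then pass to the limit while upgrading convergence of pairings against eigen-one-forms to convergence against a full dense family of test tensors. This requires combining min-max/Mosco-type approximations for $\Delta_{H,1}$ and $\Delta_{C,1}$ with the density results of \cite{hoell, gigli}. Once Step 1 is secured, the remainder is a routine density/interpolation argument driven by the uniform $L^\infty$-bound on Ricci.
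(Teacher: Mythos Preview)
Your primary Route A in Step 1 is circular. The Weitzenb\"ock formula (\ref{chara}) and the spectral convergence results in Theorem \ref{spectr}(v) and Theorem \ref{wll} are all proved \emph{after} Theorem \ref{ellp} in the paper, and in fact depend on it: the proof of Proposition \ref{iiuy} (the key step for spectral convergence) explicitly invokes Theorem \ref{ellp}, and Corollary \ref{aavb} (needed to identify $H^{1,2}_H$ with $H^{1,2}_C$) rests on Theorem \ref{12}, which in turn requires the $L^\infty$-bound on $\mathrm{Ric}_X$ that is established simultaneously with Theorem \ref{ellp}. So you cannot use the Weitzenb\"ock identity or spectral convergence as inputs here.

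Your alternative Route B, however, is exactly the paper's argument: apply Theorem \ref{riem2} to get $R_{X_i}\to R_X$ weakly in $L^q$ for $q\in(1,2)$, then use that contraction preserves $L^q$-weak convergence in the noncollapsed setting (\cite[Proposition~3.72]{holp}) to obtain $\mathrm{Ric}_{X_i}\to\mathrm{Ric}_X$ weakly in $L^q$. Your Step 2 upgrade via the uniform $L^\infty$-bound is then the correct way to reach all $p\in(1,\infty)$; the paper appeals to the same bound (obtained from lower semicontinuity of norms, (2.10)) but does not spell out this interpolation. One minor point: for $X_i\in\overline{\mathcal{M}}\setminus\mathcal{M}$ the bound $K_1\le\mathrm{Ric}_{X_i}\le K_2$ is not ``by definition'' but is itself part of what is being established, so one should first run the argument with smooth approximants $X_i\in\mathcal{M}$ to secure the $L^\infty$-bound on every element of $\overline{\mathcal{M}}$, and only then treat general sequences.
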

\subsection{Scalar curvature}
Let us define the scalar curvature $s_X$ of $X \in \overline{\mathcal{M}}$ by taking the trace of $\mathrm{Ric}_X$:
\[s_X(x):=\sum_{i=1}^n\mathrm{Ric}_X(e_i, e_i),\]
where $\{e_i\}_i$ is an orthogonal basis of $T_xX$.
Note that this is well-defined on $X$ a.e. sense and that 
by (i) of Theorem \ref{spectr} we have $s_X \in L^{\infty}(X)$ with
\[K_1n \le s_{X}(x) \le K_2n\]
a.e. $x \in X$.

We will also  give several results on the scalar curvature which include the Gauss-Bonnet formula if $n=2$, and the Chern-Gauss-Bonnet formula if $n=4$ on a suitable setting (Proposition \ref{gauss}).

We here introduce one of them on a behavior of our scalar curvature which is similar to Theorem \ref{ellp}:
\begin{theorem}[$L^p$-weak continuity of scalar curvature]\label{contsca}
If $X_i \stackrel{GH}{\to} X$ in $\overline{\mathcal{M}}$, then $s_{X_i}$ $L^p$-converges weakly to $s_X$ on $X$ for every $p \in (1, \infty)$, i.e. $\sup_i\|s_{X_i}\|_{L^p}<\infty$ is satisfied for any $p \in (1, \infty)$ and 
\[\lim_{i \to \infty}\int_{B_r(x_i)}s_{X_i}dH^n=\int_{B_r(x)}s_XdH^n\]
holds for any $r>0$ and $x_i \stackrel{GH}{\to} x$. 
In particular the total scalar curvature of $X_i$ converges to that of $X$:
\[\lim_{i \to \infty} \int_{X_i}s_{X_i}dH^n=\int_Xs_XdH^n.\]
\end{theorem}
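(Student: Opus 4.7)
The strategy is to deduce Theorem \ref{contsca} from Theorem \ref{ellp} via the pointwise identity
\[
s_X = \langle \mathrm{Ric}_X, g_X\rangle,
\]
where $g_X$ denotes the Riemannian metric viewed as a canonical symmetric $(0,2)$-tensor with constant pointwise norm $\sqrt{n}$, and $\langle \cdot, \cdot\rangle$ is the pointwise inner product on $T^0_2 X$ induced by $g_X$. Equivalently, tracing a $(0,2)$-tensor is realized as pairing with $g_X$. Thus scalar curvature convergence should follow by combining the weak $L^p$-convergence of $\mathrm{Ric}_{X_i}$ provided by Theorem \ref{ellp} with the strong $L^q$-convergence of $g_{X_i}$.

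First, the uniform $L^p$-bound is immediate: by (i) of Theorem \ref{spectr} applied to both $X_i$ and the limit $X$, one has $|\mathrm{Ric}_{X_i}| \le \sqrt{n}\max(|K_1|,|K_2|)$ almost everywhere, hence $|s_{X_i}| \le n\max(|K_1|,|K_2|)$ almost everywhere, which combined with the uniform upper bound $H^n(X_i)\le C(n,K_1,d)$ yields $\sup_i \|s_{X_i}\|_{L^p} < \infty$ for every $p \in (1, \infty)$. Next, I would verify that $g_{X_i}$ converges strongly in $L^q$ to $g_X$ for every $q \in (1, \infty)$ in the sense of \cite{holp}; this is essentially a matter of unpacking definitions, because the metric tensor is a canonical object of constant pointwise norm $\sqrt{n}$, and its duality pairing with any strongly $L^{q'}$-convergent test tensor field is computable intrinsically via the Riemannian structure on the regular set $\mathcal{R}$. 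In particular, for $x_i \stackrel{GH}{\to} x$ and $r > 0$, the truncation $\chi_{B_r(x_i)}g_{X_i}$ converges strongly in $L^q$ to $\chi_{B_r(x)}g_X$ for all but countably many $r$, using Cheeger--Colding's noncollapsed volume convergence of balls.

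Applying the weak-strong pairing principle of $L^p$-convergence, for $p,q \in (1,\infty)$ with $1/p+1/q=1$,
\[
\int_{B_r(x_i)} s_{X_i}\, dH^n = \int_{X_i}\langle \mathrm{Ric}_{X_i}, \chi_{B_r(x_i)}g_{X_i}\rangle\, dH^n \longrightarrow \int_X \langle \mathrm{Ric}_X, \chi_{B_r(x)}g_X\rangle\, dH^n = \int_{B_r(x)} s_X\, dH^n
\]
for all but countably many $r$. Extension to every $r > 0$ follows by a standard approximation using the uniform $L^\infty$-bound on $s_{X_i}$ and the one-sided continuity of $r \mapsto H^n(B_r(x))$, and the total scalar curvature statement is the case of $r$ exceeding all the diameters. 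The main obstacle, rather than being a deep analytic estimate, is the careful identification of $g_{X_i}$ as a strongly $L^q$-convergent sequence within the \cite{holp} framework; once this structural step is in place, the rest is the weak-strong pairing together with the already established $L^p$-weak convergence of Ricci.
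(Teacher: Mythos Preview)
Your proposal is correct and is essentially the same approach as the paper's. The paper invokes \cite[Proposition~3.72]{holp}, which states that in the noncollapsed setting every contraction of an $L^p$-weakly convergent sequence of tensor fields is again $L^p$-weakly convergent, and applies it to $\mathrm{Ric}_{X_i}$ (whose $L^p$-weak convergence is Theorem~\ref{ellp}); your argument unpacks precisely this contraction as the pairing $s=\langle \mathrm{Ric},g\rangle$ together with the $L^q$-strong convergence of $g_{X_i}$ (\cite[Theorem~1.2]{holp}) and the weak--strong pairing, which is exactly how that proposition is proved.
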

Note that this $L^p$-weak continuity is sharp in some sense. See Example \ref{notsca}.

From now on we apply Theorem \ref{contsca} in order to answer a question by Lott in \cite{lott}.
For that we recall the following Lott's result.
\begin{theorem}[Stability of lower bounds on scalar curvature under bounded sectional curvature \cite{lott}]\label{ypk}
Let $X_i$ be a sequence of $n$-dimensional closed Riemannian manifolds with 
\begin{align}\label{secbou}
\sup_i||\mathrm{Sec}_{X_i}||_{L^{\infty}}<\infty,
\end{align}
let $X$ be the noncollapsed Gromov-Hausdorff limit, and let $C \in \mathbf{R}$ with
\[s_{X_i}\ge C\]
on $X_i$ for every $i$,
where $\mathrm{Sec}_{X_i}$ is the sectional curvature of $X_i$.
Then we have
\[s_{X} \ge C\]
on $X$.
\end{theorem}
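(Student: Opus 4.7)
The plan is to deduce Theorem \ref{ypk} as an essentially formal consequence of Theorem \ref{contsca}, trading the stronger sectional curvature hypothesis for the two-sided Ricci hypothesis that governs the rest of the paper. First I would observe that $|\mathrm{Sec}_{X_i}|\le C_0$ automatically yields the two-sided Ricci bound $|\mathrm{Ric}_{X_i}|\le (n-1)C_0$, while the noncollapsing of the Gromov--Hausdorff limit $X$ together with Gromov's compactness theorem and Bishop--Gromov comparison supplies uniform diameter and volume bounds. One is thereby placed in the setting $X_i\in\mathcal{M}(n,d,-(n-1)C_0,(n-1)C_0,v)$ and $X\in\overline{\mathcal{M}}$, where all of the machinery developed in the paper is available.

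Next I would apply Theorem \ref{contsca} directly to the scalar curvatures: $s_{X_i}$ converges $L^p$-weakly to $s_X$ for every $p\in(1,\infty)$, and in particular
\[
\int_{B_r(x)} s_X\,dH^n=\lim_{i\to\infty}\int_{B_r(x_i)} s_{X_i}\,dH^n
\]
for every $r>0$ and every $x_i\stackrel{GH}{\to} x$. Combining the pointwise hypothesis $s_{X_i}\ge C$ with Cheeger--Colding's volume continuity immediately gives the ball-average bound
\[
\int_{B_r(x)} s_X\,dH^n\ge C\cdot H^n(B_r(x)).
\]

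To finish, since $s_X\in L^{\infty}(X)$ by Theorem \ref{spectr}(i) and $(X,H^n)$ is doubling (again by Bishop--Gromov), the Lebesgue differentiation theorem upgrades the ball-average bound to $s_X(x)\ge C$ at $H^n$-a.e.\ $x\in X$, which is the natural interpretation of $s_X\ge C$ in this framework. I expect no serious obstacle once Theorems \ref{spectr}(i) and \ref{contsca} are granted: the reduction is purely formal, and the entire geometric content has been absorbed into the $L^p$-weak continuity of the scalar curvature. This is precisely what makes Theorem \ref{contsca} the right tool for answering Lott's question, in contrast to Lott's original approach in \cite{lott}, which must instead proceed through the $C^{1,\alpha}$-Cheeger--Gromov convergence of metrics and a distributional interpretation of the scalar curvature that is only available under the stronger sectional curvature assumption.
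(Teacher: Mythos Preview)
Your argument is correct, but note that the paper does not actually prove Theorem~\ref{ypk}: it is quoted as Lott's result from \cite{lott}, introduced with the phrase ``we recall the following Lott's result.'' What the paper \emph{does} prove is the strengthening you have in fact established---Corollary~\ref{nn6} (Lott's question \textbf{(Q0)}) and part (iii) of Theorem~\ref{stabilityricci}---and it does so by exactly the route you outline: pass the pointwise bound through the $L^p$-weak convergence of scalar curvature (Theorem~\ref{contsca}), integrate over small balls, use volume continuity, and apply the Lebesgue differentiation theorem. The paper's written proof of Theorem~\ref{stabilityricci} spells this out only for the sectional-curvature case (i) and declares the scalar case ``similar''; your write-up is that similar proof.

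One small quibble of presentation: you say that ``Gromov's compactness theorem \ldots\ supplies uniform diameter and volume bounds,'' but that is backwards. The uniform diameter bound is already implicit in the hypothesis that the $X_i$ Gromov--Hausdorff converge to a compact space $X$ (so $\mathrm{diam}\,X_i\to\mathrm{diam}\,X$), and the uniform volume lower bound is precisely the meaning of ``noncollapsed.'' Gromov's compactness theorem plays no role here. This does not affect the validity of the argument.

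Your final paragraph is on the mark: Lott's original proof in \cite{lott} passes through $C^{1,\alpha}$ Cheeger--Gromov convergence of metrics, which genuinely requires the sectional-curvature bound, whereas the present paper's machinery (Theorem~\ref{contsca}) needs only two-sided Ricci bounds---and you have correctly exploited this to recover Theorem~\ref{ypk} as the special case.
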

Note that it was known that the scalar curvature is well-defined on the setting as in Theorem \ref{ypk}, 
that the upper bound version of Theorem \ref{ypk} also holds,  and that Lott discussed a similar stability in a collapsing case \cite{lott}. 

He also gave the following question:
\begin{enumerate}
\item[\textbf{(Q0)}] Does the same conclusion as in Theorem \ref{ypk} hold even if we replace the assumption (\ref{secbou}) by 
a uniform bound of Ricci curvature;
\[\sup_i||\mathrm{Ric}_{X_i}||_{L^{\infty}}<\infty?\] 
\end{enumerate}

Applying Theorem \ref{contsca} yields the following. 
\begin{corollary}[Stability of bounds of scalar curvature with bounded Ricci curvature]\label{nn6}
Lott's question \textbf{(Q0)} has a positive answer.
\end{corollary}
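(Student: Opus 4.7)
The plan is to obtain Corollary \ref{nn6} as an essentially immediate consequence of Theorem \ref{contsca} together with the Lebesgue differentiation theorem, once we verify that the sequence can be placed inside some $\overline{\mathcal{M}}(n,d,K_1,K_2,v)$.

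First I would check this membership. The hypothesis $\sup_i\|\mathrm{Ric}_{X_i}\|_{L^{\infty}}<\infty$ provides constants $K_1,K_2\in\mathbf{R}$ with $K_1\le\mathrm{Ric}_{X_i}\le K_2$ uniformly in $i$. Gromov-Hausdorff convergence to the compact limit $X$ yields a uniform diameter bound $d$. Finally, since the limit is noncollapsed, $\dim X=n$, and Cheeger-Colding's volume convergence theorem gives $H^n(X_i)\to H^n(X)>0$, so that eventually $H^n(X_i)\ge v$ for some $v>0$. Thus we may apply Theorem \ref{contsca} to $\{X_i\}$.

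Next, fix $x\in X$ and $r>0$ and choose $x_i\in X_i$ with $x_i\stackrel{GH}{\to}x$. Using the pointwise hypothesis $s_{X_i}\ge C$ on $X_i$, the volume convergence $H^n(B_r(x_i))\to H^n(B_r(x))$, and the ball-integral convergence supplied explicitly by Theorem \ref{contsca}, I would write
\[\int_{B_r(x)} s_X\,dH^n=\lim_{i\to\infty}\int_{B_r(x_i)} s_{X_i}\,dH^n\ge \lim_{i\to\infty}C\cdot H^n(B_r(x_i))=C\cdot H^n(B_r(x)).\]
Dividing through by $H^n(B_r(x))$ produces the averaged lower bound $\frac{1}{H^n(B_r(x))}\int_{B_r(x)}s_X\,dH^n\ge C$ valid for every $x\in X$ and every $r>0$.

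Finally I would let $r\to 0^+$ and invoke the Lebesgue differentiation theorem on $(X,H^n)$, which applies because $(X,H^n)$ is doubling (a consequence of Bishop-Gromov on the approximating manifolds, preserved in the limit via Cheeger-Colding volume convergence). This gives $s_X(x)\ge C$ for $H^n$-a.e. $x\in X$, i.e.\ $s_X\ge C$ in the a.e.\ sense used throughout the paper. I do not expect any serious obstacle: the substantive analytic content is packaged inside Theorem \ref{contsca}, while ball-volume convergence and the doubling property needed for Lebesgue differentiation are standard for spaces in $\overline{\mathcal{M}}$.
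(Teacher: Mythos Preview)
Your proposal is correct and follows essentially the same approach as the paper. The paper's own proof simply states that the corollary follows from Theorem \ref{contsca}; you have spelled out the details, and the ball-average argument combined with the Lebesgue differentiation theorem is precisely the mechanism the paper uses in the closely related proof of Theorem \ref{stabilityricci}.
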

We discuss a refinement of Corollary \ref{nn6} in the next subsection.
\subsection{Stability}
In this section we discuss stabilities of curvature bounds with respect to the Gromov-Hausdorff topology.

Theorem \ref{ypk} and Corollary \ref{nn6} give stabilities on scalar curvature.
We recall other well-known two stabilities:
\begin{enumerate}
\item{(Stability of sectional curvature bounds \cite{bgp})} The Gromov-Hausdorff limit space of a sequence of Alexandrov (or CAT, respectively) space of curvature  bounded below (or above, respectively) by a real number $K$ is also an Alexandrov (or CAT, respectively) space of curvature bounded below (or above, respectively) by $K$.
\item{(Stability of lower Ricci curvature bounds \cite{ags, bs, eks, lv, sturm1, sturm2})} The measured Gromov-Hausdorff limit space of a sequence of $RCD(K, N)$-spaces (or $RCD^*(K, N)$, or $CD(K, N)$, or $CD^*(K, N)$-spaces, respectively) is also an $RCD(K, N)$-space (or $RCD^*(K, N)$, or $CD(K, N)$, or $CD^*(K, N)$-space, respectively).
\end{enumerate}
From now on we introduce new stabilities of curvature bounds on our setting.
In order to give the precise statement let us define the sectional curvature 
\[\mathrm{Sec}_X:\mathrm{Gr}_2(TX) \to \mathbf{R}\]
of $X \in \overline{\mathcal{M}}$ by the ordinary way of Riemannian geometry:
\[\mathrm{Sec}_X(\sigma):=R_X(u, v, v, u),\]
where $\mathrm{Gr}_2(V)$ denotes the set of $2$-dimensional subspaces of a vector space $V$ and $u, v \in T_xX$ is an orthogonal basis of $\sigma$.
Note that for a.e. $x \in X$ and every $\sigma \in \mathrm{Gr}_2(T_xX)$, $\mathrm{Sec}_X(\sigma)$ is well-defined.
 
For a Borel subset $A$ of $X$, we say that 
\[\mathrm{Sec}_X\ge K\]
on $A$ a.e. sense if for a.e. $x \in A$, we have
\[\mathrm{Sec}_X(\sigma)\ge K\]
for every $\sigma \in \mathrm{Gr}_2(T_xX)$.
As with the case of Ricci curvature it is easy to check that if $X$ is smooth and $A$ is an open subset of $X$, then $\mathrm{Sec}_X \ge K$ on $A$ is satisfied a.e. sense  if and only if  $\mathrm{Sec}_X \ge K$ on $A$ is satisfied in the  ordinary sense.

Similarly we define bounds of sectional curvature, of Ricci curvature, and of scalar curvature a.e. sense.

We are now in a position to give new stabilities.
\begin{theorem}[Stabilities of curvature bounds]\label{stabilityricci}
Let $X_i \stackrel{GH}{\to} X$ in $\overline{\mathcal{M}}$, let $r>0$, let $K \in \mathbf{R}$, and let $x_i \stackrel{GH}{\to} x$, where $x_i \in X_i$ and $x \in X$.
Then we have the following:
\begin{enumerate}
\item If $\mathrm{Sec}_{X_i} \ge K$ (or $\mathrm{Sec}_{X_i} \le K$, respectively) on $B_r(x_i)$ a.e. sense for every $i$, then $\mathrm{Sec}_X \ge K$ (or $\mathrm{Sec}_{X} \le K$, respectively) on $B_r(x)$ a.e. sense.
\item  If $\mathrm{Ric}_{X_i} \ge K$ (or $\mathrm{Ric}_{X_i} \le K$, respectively) on $B_r(x_i)$ a.e. sense for every $i$, then $\mathrm{Ric}_X \ge K$ (or $\mathrm{Ric}_{X} \le K$, respectively) on $B_r(x)$ a.e. sense.
\item  If $s_{X_i} \ge K$ (or $s_{X_i} \le K$, respectively) on $B_r(x_i)$ a.e. sense for every $i$, then $s_X \ge K$ (or $s_{X} \le K$, respectively) on $B_r(x)$ a.e. sense.
\end{enumerate}
\end{theorem}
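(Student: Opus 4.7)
My plan is to deduce each of (i)--(iii) from the corresponding $L^p$-weak continuity result (Theorems \ref{riem2}, \ref{ellp}, \ref{contsca}) via a weak--strong duality: the set of tensor fields satisfying a pointwise curvature inequality on a ball is a convex, $L^p$-weakly closed cone, hence stable under the passage $X_i \stackrel{GH}{\to} X$.

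I handle (iii) first. Fix $\varepsilon>0$ and pick a continuous nonnegative cutoff $\phi$ on $X$ with $\mathrm{supp}\,\phi \subset B_r(x)$. Using the Gromov--Hausdorff $\varepsilon$-approximations, lift $\phi$ to continuous nonnegative $\phi_i$ on $X_i$ with $\mathrm{supp}\,\phi_i \subset B_r(x_i)$ and $\phi_i \to \phi$ uniformly. The assumption $s_{X_i} \geq K$ on $B_r(x_i)$ yields $\int_{X_i}\phi_i(s_{X_i}-K)\,dH^n \geq 0$, and Theorem \ref{contsca} passes this to $\int_X\phi(s_X-K)\,dH^n \geq 0$. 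Letting $\phi$ range over all such cutoffs gives $s_X \geq K$ a.e.\ on $B_r(x)$; the upper bound is identical.

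For (ii) I reduce the pointwise bound $\mathrm{Ric}_X \geq K g_X$ on $B_r(x)$ to the scalar inequalities
\[
\int_X\phi\bigl(\mathrm{Ric}_X(\nabla f, \nabla f) - K|\nabla f|^2\bigr)\,dH^n \geq 0
\]
for every nonnegative continuous $\phi$ supported in $B_r(x)$ and every $f$ ranging over a countable family in $\mathrm{Test}F(X)$ whose gradients span $T_xX$ at a.e.\ $x$; such a family exists since gradient fields of test functions are dense in $L^2(TX)$ (the density statement recalled after (\ref{riem})). For each such $(\phi, f)$ I construct approximants $\phi_i$ and $f_i \in \mathrm{Test}F(X_i)$ by Gromov--Hausdorff lifting and heat-flow regularization on $X_i$, arranged so that $\phi_i \to \phi$ uniformly, $\|\nabla f_i\|_{L^\infty}$ is uniformly bounded (Lipschitz bounds are preserved by Bakry--\'Emery regularization), and $\nabla f_i \to \nabla f$ strongly in $L^{p'}$ for every $p'<\infty$ (strong $L^2$-convergence of gradients plus the $L^\infty$-bound gives $L^{p'}$-strong convergence by dominated convergence). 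Then $\phi_i\,\nabla f_i \otimes \nabla f_i$ converges strongly in $L^{p'}$, and pairing against the $L^p$-weak convergence of $\mathrm{Ric}_{X_i}$ from Theorem \ref{ellp} passes the $X_i$-inequality to $X$; the upper bound is symmetric.

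Finally (i) proceeds by the same scheme applied to pairs $(f,g) \in \mathrm{Test}F(X)$. By the characterization of $R_X$ in Theorem \ref{curvaturetensor}, together with the observation that a.e.\ pointwise bounds on $\mathrm{Sec}_X$ over $\mathrm{Gr}_2(T_xX)$ are detected on simple bivectors $\nabla f \wedge \nabla g$ drawn from a suitable countable family, it suffices to pass
\[
\int_{X_i}\phi_i\bigl(R_{X_i}(\nabla f_i,\nabla g_i,\nabla g_i,\nabla f_i) - K|\nabla f_i \wedge \nabla g_i|^2\bigr)\,dH^n \geq 0
\]
to the limit. Here $R_{X_i}$ converges only $L^q$-weakly with $q<2$ (Theorem \ref{riem2}), so the strong convergence of the fourfold tensor product must take place in $L^{q'}$ with $q' = q/(q-1) > 2$; this still follows from the $L^\infty$-gradient bound on test functions and $L^2$-strong convergence of gradients. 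The main obstacle in the whole plan is precisely this construction of test-function approximants $f_i, g_i \in \mathrm{Test}F(X_i)$ with uniformly Lipschitz, strongly-converging gradients, since the functions in $\mathrm{Test}F(X)$ on the limit are defined only via the nonsmooth second-order calculus of \cite{gigli, hoell}; the heat-flow regularization on $X_i$, combined with the gradient and Hessian estimates already exploited in the proofs of the $L^p$-weak continuity theorems, should be the natural tool.
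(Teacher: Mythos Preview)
Your approach is correct in spirit and ultimately works, but it is more elaborate than the paper's and the ``main obstacle'' you flag at the end is in fact entirely avoidable.

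The paper's proof (given for the lower sectional bound in (i), the other cases being identical) does not restrict to gradient vector fields of test functions at all. It fixes \emph{arbitrary} $V, W \in L^{\infty}(TX)$ and invokes \cite[Proposition~3.55]{holp} to obtain $V_i, W_i \in L^{\infty}(TX_i)$ with $\sup_i(\|V_i\|_{L^\infty}+\|W_i\|_{L^\infty})<\infty$ that $L^p$-converge strongly to $V, W$ for every $p\in(1,\infty)$. Then the fourfold tensor product $V_i\otimes W_i\otimes W_i\otimes V_i$ is uniformly bounded in $L^\infty$ and $L^{q'}$-strongly convergent for every $q'$, so it pairs against the $L^q$-weak convergence of $R_{X_i}$ from Theorem~\ref{riem2}. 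Instead of continuous cutoffs, the paper tests over small balls $B_t(y)\subset B_r(x)$, lets $i\to\infty$, and then $t\to 0$ via the Lebesgue differentiation theorem. This is exactly the same device already used in the proof of Proposition~\ref{bianchi}.

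By contrast, your route via gradients of test functions forces you to (a) argue that a countable family $\{\nabla f_k\}$ not only spans but is fiberwise dense in $T_yX$ at a.e.\ $y$ (spanning alone does not propagate a quadratic inequality to all directions), and (b) construct approximants $f_i$ on $X_i$ with uniformly Lipschitz, $L^2$-strongly convergent gradients. Step (b) is precisely (2.6) in the paper, so your ``obstacle'' is already resolved in the toolkit; but the paper's use of general $L^\infty$ vector fields makes both (a) and (b) unnecessary. The payoff of your approach is nil here; the paper's is both shorter and sidesteps the fiberwise density argument you would otherwise have to supply.
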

\subsection{Compatibility}
In this section we discuss compatibilities of our curvature on several settings.

Let $X \in \overline{\mathcal{M}}$.
We first introduce the compatibility with the smooth setting:
\begin{proposition}[Compatibility with the smooth setting]\label{curvaturecompatibility}
If an open subset $U$ of $\mathcal{R}$ satisfies that $(U, g_X|_U)$ is a $C^{\infty}$-Riemannian manifold, then the Riemannian curvature tensor $R_X$ as in Theorem \ref{curvaturetensor} coincides with the ordinary one on $U$ a.e. sense.
In particular similar compatibilities hold for the Ricci curvature and the scalar curvature.
\end{proposition}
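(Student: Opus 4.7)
The plan is to invoke the characterization of $R_X$ supplied by Theorem \ref{curvaturetensor}: namely, $R_X$ is the unique $L^q$-tensor of type $(0,4)$ satisfying the integral identity (\ref{riem}) against arbitrary quadruples of test functions. Since the classical Riemannian curvature tensor $R^{sm}$ of the smooth manifold $(U, g_X|_U)$ satisfies that same identity against any quintuple of smooth compactly supported functions on $U$ (this is exactly the derivation of (\ref{riem}) reproduced in the excerpt, applied inside $U$), it suffices to produce, from any $f_i \in C_c^{\infty}(U)$, elements of $\mathrm{Test}F(X)$ that agree with $f_i$ and whose Hessians and gradients agree with the classical ones on $U$.

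First I would verify that the extension-by-zero of any $f \in C_c^{\infty}(U)$ lies in $\mathrm{Test}F(X)$. Lipschitz regularity on $X$ is immediate. For membership in $\mathcal{D}^2(\Delta,X)$, note that $\nabla f$ has compact support inside $U$, so for every $h \in H^{1,2}(X)$ integration by parts on $U$ gives
\[\int_X g_X(\nabla f, \nabla h)\, dH^n = \int_U (\Delta_U f)\, h\, dH^n = \int_X F\, h\, dH^n,\]
with $F$ the classical Laplacian of $f$ extended by zero. Since $F \in C_c^{\infty}(U) \subset H^{1,2}(X)$, this gives $f \in \mathrm{Test}F(X)$. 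Moreover, by the identification in (1.3) of the Hessian of a function in $\mathcal{D}^2(\Delta,X)$ with the Riemannian Hessian on $\mathcal{R}$ (from \cite{hoell}), the nonsmooth Hessian of $f$ equals $\mathrm{Hess}_f^{sm}$ on $U$. Consequently every ingredient appearing in (\ref{riem}) — gradients, Laplacians, Hessians, brackets — matches the classical one pointwise on $U$ when $f_i \in C_c^{\infty}(U)$.

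Subtracting the two versions of (\ref{riem}) (the one satisfied by $R_X$ on $X$, and the classical one satisfied by $R^{sm}$ on $U$) therefore gives
\[\int_U f_0\, \bigl\langle R_X - R^{sm},\, df_1 \otimes df_2 \otimes df_3 \otimes df_4\bigr\rangle\, dH^n = 0\]
for all $f_0, f_1, f_2, f_3, f_4 \in C_c^{\infty}(U)$. Varying $f_0$ over $C_c^{\infty}(U)$ yields pointwise a.e.\ vanishing of the bracket. To conclude $R_X = R^{sm}$ a.e.\ on $U$, I would fix a smooth chart and take the $f_1, \ldots, f_4$ to be local coordinate functions (suitably cut off) so that the $df_i$ span $T^*U$ at every point of a smaller ball; the resulting tensor products $df_{i_1} \otimes df_{i_2} \otimes df_{i_3} \otimes df_{i_4}$ then form a pointwise basis of $T^0_4 U$, forcing $R_X - R^{sm} = 0$ a.e.\ on the chart. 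A covering of $U$ by such charts finishes the proof. The compatibility for $\mathrm{Ric}_X$ and $s_X$ follows immediately because they are pointwise contractions of $R_X$, and contraction commutes with the identification on $U$.

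The main obstacle I anticipate is not the density argument but the careful verification that the zero-extended $C_c^{\infty}(U)$-function really is a test function on $X$ in the precise sense of (\ref{testdefini}), together with the identification of its nonsmooth Hessian and Laplacian with their classical counterparts on $U$; this is technical but should reduce to already-established results of \cite{gigli, hoell}, applied locally on the open set $U$ (which is not the entire ambient space, so some care is needed to ensure the extension by zero preserves the Sobolev regularity of $\Delta f$).
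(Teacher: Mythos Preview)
Your proposal is correct and follows essentially the same route as the paper: verify $C_c^\infty(U)\subset\mathrm{Test}F(X)$ with the nonsmooth Laplacian and Hessian agreeing with the classical ones (the paper records this already in the paragraph after (1.3)), plug such functions into (\ref{riem}) for both $R_X$ and the classical $R^{sm}$, and conclude equality a.e.\ on $U$. The only cosmetic difference is the final step: the paper invokes the density of $\{\sum f_0\,df_1\otimes\cdots\otimes df_4 : f_i\in C_c^\infty(U)\}$ in $L^q(T^0_4U)$ to get $R_X=R^{sm}$ directly, whereas you first vary $f_0$ and then argue via coordinate frames---both arguments are equivalent.
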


Next we discuss compatibilities with nonsmooth settings.
For that we first recall \textit{Gigli's Ricci curvature} on our setting.

He defined the Ricci curvature $\mathbf{Ric}_X$ on an $RCD(K, \infty)$-space as a measure valued bilinear continuous map:
\[\mathbf{Ric}_X:H^{1, 2}_H(TX) \times H^{1, 2}_H(TX) \to \mathrm{Meas}(X),\]
based on Bochner's formula, where $H^{1, 2}_H(TX)$ is a Sobolev space of vector fields on $X$ defined in \cite{gigli} and $\mathrm{Meas}(X)$ is the Banach space of finite signed Radon measures on $X$ equipped with the total cariation norm.
See \cite{gigli} or subsections 3.2.1 and 3.4.3 for the precise definitions.

On the other hand, by the $L^{\infty}$-bound on Ricci curvature $\mathrm{Ric}_X$ we can define a bilinear continuous map
\[\mathrm{Ric}_X:H^{1, 2}_H(TX) \times H^{1, 2}_H(TX) \to L^1(X)\]
by 
\[(V, W) \mapsto \mathrm{Ric}_X(V, W).\]

Then we have the following compatibility.
\begin{theorem}[Compatibility with Gigli's Ricci curvature]\label{gigliricci}
We see that Gigli's Ricci curvature $\mathbf{Ric}_X$ coincides with our Ricci curvature $\mathrm{Ric}_X$ via  the canonical inclusion 
\[\imath: L^1(X) \hookrightarrow \mathrm{Meas}(X)\]
defined by
\[\imath (f)(A):=\int_AfdH^n,\]
i.e.
\[\mathbf{Ric}_X(V, W)(A)=\int_A\mathrm{Ric}_X(V, W)dH^n\]
holds for any Borel subset $A$ of $X$ and $V, W \in H^{1, 2}_H(TX)$.
\end{theorem}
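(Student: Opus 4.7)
My plan is to verify the identity on a dense class of test vector fields using Bochner's formula and then to extend by continuity. First I would recall from \cite{gigli} that $\mathbf{Ric}_X$ is defined, on the dense test class $\mathcal{V}\subset H^{1,2}_H(TX)$ of vector fields of the form $V=\sum_i g_i\nabla f_i$ with $f_i, g_i\in\mathrm{Test}F(X)$, by the Bochner identity read as an equality of Radon measures,
\[
\mathbf{Ric}_X(V,V):=-\tfrac{1}{2}\mathbf{\Delta}|V|^2-\bigl(|\nabla V|^2-\langle\Delta_{H,1}V^\flat,V^\flat\rangle\bigr)H^n,
\]
where $\mathbf{\Delta}|V|^2$ is Gigli's measure-valued Laplacian and $V^\flat$ the metric dual one-form of $V$; polarization and $RCD(K_1,\infty)$-continuity then extend $\mathbf{Ric}_X$ to a bilinear continuous map $H^{1,2}_H(TX)\times H^{1,2}_H(TX)\to\mathrm{Meas}(X)$.

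Second I would apply Theorem \ref{pointwiseboch2}: for every $V$ with $V^\flat\in\mathcal{D}^2(\Delta_{H,1},X)$ we have
\[
\mathrm{Ric}_X(V,V)=-\tfrac{1}{2}\Delta|V|^2-|\nabla V|^2+\langle\Delta_{H,1}V^\flat,V^\flat\rangle\qquad H^n\text{-a.e.},
\]
with every term in $L^1(X)$ and $\Delta|V|^2$ understood as the generalized Laplacian from Theorem \ref{pointwiseboch2}. For $V\in\mathcal{V}$, $|V|^2\in H^{1,2}(X)\cap L^\infty(X)$, and by the compatibility of the second order calculi of \cite{ho,hoell} with Gigli's \cite{gigli} already invoked in the proof of Theorem \ref{pointwiseboch2}, this generalized Laplacian $\Delta|V|^2$ coincides with the $H^n$-density of $\mathbf{\Delta}|V|^2$. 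Comparing the two displayed identities gives $\imath(\mathrm{Ric}_X(V,V))=\mathbf{Ric}_X(V,V)$ for every $V\in\mathcal{V}$, and polarization of the symmetric bilinear forms on both sides upgrades this to $\imath\circ\mathrm{Ric}_X=\mathbf{Ric}_X$ on $\mathcal{V}\times\mathcal{V}$.

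Finally I would extend the equality to all of $H^{1,2}_H(TX)\times H^{1,2}_H(TX)$ by density. The $L^\infty$-bound $K_1\le\mathrm{Ric}_X\le K_2$ from Theorem \ref{spectr}(i) makes the bilinear map $(V,W)\mapsto\mathrm{Ric}_X(V,W)$ continuous from $L^2(TX)\times L^2(TX)$ to $L^1(X)$, so composition with the continuous embedding $\imath\colon L^1(X)\hookrightarrow\mathrm{Meas}(X)$ produces a continuous bilinear map into $\mathrm{Meas}(X)$ which agrees with Gigli's continuous map $\mathbf{Ric}_X$ on the dense subset $\mathcal{V}\times\mathcal{V}$, and hence everywhere. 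The main obstacle is the Laplacian identification in step two: verifying that the generalized Laplacian of $|V|^2$ appearing in Theorem \ref{pointwiseboch2} is precisely the density of Gigli's $\mathbf{\Delta}|V|^2$. This should reduce to comparing the integration-by-parts characterizations of both objects tested against $\mathrm{Test}F(X)$ and invoking the compatibility of Sobolev spaces and Hessians between \cite{ho,hoell} and \cite{gigli}; the remaining verifications (density of $\mathcal{V}$, bicontinuity of both maps) are routine once this compatibility is in place.
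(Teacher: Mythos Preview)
Your proposal is correct and follows essentially the same route as the paper: establish the identity on the test class via the pointwise Bochner formula, identify the two Laplacians of $|V|^2$, and extend by bilinear continuity. The only refinement is that what you flag as the ``main obstacle'' is dispatched in one line in the paper via Theorem~\ref{12}(i), which gives $|\omega|^2\in\mathcal{D}_{2,1}(\Delta,X)$ for $\omega\in\mathrm{TestForm}_1(X)$ and hence $\mathbf{\Delta}|\omega|^2=\Delta|\omega|^2\,dH^n$ directly from the defining integration-by-parts identities.
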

By Theorem \ref{gigliricci} we can give positive answers to several questions in \cite{gigli} by Gigli on our setting.
For example we see that $\mathbf{Ric}_X$ can be extended to a  bilinear continuous map
\[L^2(X) \times L^2(X) \to \mathbf{R},\]
canonically.
This gives a positive answer to a his question stated in 156 page of \cite{gigli}.
See Remark \ref{gigliquestion} for the detail.

We end this subsection by introducing the other compatibility with \textit{Lott's Ricci measure}.
In \cite{lott2} he defined the Ricci measure on a $C^{1, 1}$-manifold with a measurable Riemannian metric and a tame connection.
Note that by the $C^{1, \alpha}$-regularity of the Riemannian metric $g_X$ on $\mathcal{R}$, the triple $(\mathcal{R}, g_X|_{\mathcal{R}}, \nabla)$ gives such an example, where $\nabla$ is the Levi-Civita connection.

Then we have the following.
\begin{theorem}[Compatibility with Lott's Ricci measure]\label{comlott}
Our Ricci curvature $\mathrm{Ric}_X$ coincides with Lott's Ricci measure on $\mathcal{R}$.
\end{theorem}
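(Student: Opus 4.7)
The plan is to reduce the identification to a local statement on coordinate patches of $\mathcal{R}$ and then invoke smooth approximation of the $C^{1,\alpha}$ Riemannian metric together with Proposition \ref{curvaturecompatibility}. Both objects are local by construction: Lott's Ricci measure on $(\mathcal{R}, g_X|_{\mathcal{R}}, \nabla)$ is defined chart by chart in \cite{lott2}, while our tensor $\mathrm{Ric}_X$ restricted to an open subset of $\mathcal{R}$ is determined by the contraction (\ref{definitionricci}) of $R_X$, whose pairings against tensors supported in such an open set are controlled by the defining formula (\ref{riem}) applied to test functions in $\mathrm{Test}F(X)$ (of which there are enough, by a cutoff-and-extension argument using Gigli's calculus). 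Hence it suffices to fix a relatively compact $U \Subset \mathcal{R}$ sitting in a harmonic coordinate chart and to show that the two objects agree when paired with compactly supported smooth vector fields $V, W$ on $U$.

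On such a $U$ I would approximate $g_X|_U$ by a family $g_\varepsilon$ of smooth metrics obtained by componentwise mollification in harmonic coordinates, so that $g_\varepsilon \to g_X$ in $C^{1,\alpha'}$ for every $\alpha' < \alpha$ and the Christoffel symbols $\Gamma_\varepsilon$ converge to those of $g_X$ in $L^p_{\mathrm{loc}}$ for every $p<\infty$. For each $\varepsilon>0$ the smooth Ricci tensor of $g_\varepsilon$ coincides with both our Ricci curvature of the smooth Riemannian manifold $(U, g_\varepsilon)$ (by Proposition \ref{curvaturecompatibility}) and with Lott's Ricci measure for $g_\varepsilon$ (since on a smooth metric Lott's construction reduces to the classical coordinate formula for Ricci integrated against $\mathrm{dvol}_{g_\varepsilon}$). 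The identification then amounts to showing that each of the two constructions is continuous under $g_\varepsilon \to g_X$ in the above topology, and equating the resulting limits.

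For Lott's side, the continuity is essentially built into the construction in \cite{lott2}: after integration by parts in the classical identity $R_{ij} = \partial_k\Gamma^k_{ij} - \partial_j\Gamma^k_{ki} + \Gamma^k_{kl}\Gamma^l_{ij} - \Gamma^k_{jl}\Gamma^l_{ki}$ one obtains a pairing of $\partial g$ with second derivatives of the smooth test field plus a quadratic-in-$\Gamma$ term paired with smooth data, both of which depend continuously on $g_\varepsilon$ in the stated topology. For our side, I would write $\int_U \mathrm{Ric}_X(V, W)\,dH^n$ via (\ref{riem}) and (\ref{definitionricci}) applied to test functions approximating $V$ and $W$ in the harmonic chart, and invoke the continuity of Gigli's second-order calculus developed in \cite{gigli,ho,hoell} under the variation of the metric.

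The main obstacle is matching the lowest-regularity $\Gamma\otimes\Gamma$ terms on Lott's side with the Hessian-of-test-function terms in (\ref{riem}), since both objects are only in $L^p$ for finite $p$. To handle this I would, on each side, perform enough integrations by parts (using the compact support of $V, W$ in $\mathcal{R}$ and suitable smooth cutoffs) so that every low-regularity factor is paired with a smooth test object; strong $L^p$-convergence then suffices to pass to the limit. Once both sides are expressed in this way, the uniqueness clause of Theorem \ref{curvaturetensor} applied to the common limit on $U$ forces the equality $\int_A \mathrm{Ric}_X(V,W)\,dH^n = \mathbf{Ric}^{\text{Lott}}(V,W)(A)$ for every Borel $A\subset U$, which by localization yields the claim on all of $\mathcal{R}$.
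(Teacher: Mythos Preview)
Your approach differs substantially from the paper's and carries a gap at the key step. The paper does not mollify the metric. Instead it uses that Lott's Ricci measure on $\mathcal{R}$ is characterized by an integrated-by-parts identity involving only \emph{first} covariant derivatives of compactly supported test vector fields $V_k = f_k \nabla g_k$ (with $g_k \in C^2_c(\mathcal{R})$, $f_k$ locally Lipschitz). Since such $g_k$ lie in $\mathcal{D}^2(\Delta, X)$, Corollary~\ref{aavb} and Proposition~\ref{bbghnv} give $V_k^* \in H^{1,2}_H(T^*X) = H^{1,2}_C(T^*X)$, and the polarized form of (\ref{nnghh}) yields
\[
\int_X \mathrm{Ric}_X(V_1, V_2)\,dH^n = \int_X \bigl( \langle dV_1^*, dV_2^*\rangle + (\delta V_1^*)(\delta V_2^*) - \langle \nabla V_1, \nabla V_2\rangle \bigr)\,dH^n.
\]
The operators $d$, $\delta$, $\nabla$ on the right are first-order in the metric and, by the compatibility results of \cite{hoell}, coincide on $\mathcal{R}$ with the classical Riemannian operators for the $C^{1}$ metric $g_X|_{\mathcal{R}}$; a direct coordinate computation then identifies the right-hand side with Lott's expression. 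No approximation of the metric and no second-derivative convergence are needed, because the Bochner identity has already absorbed all the second-order content of $\mathrm{Ric}_X$.

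In your argument, Proposition~\ref{curvaturecompatibility} is misapplied. That proposition concerns an open set $U$ on which the \emph{fixed} metric $g_X$ is already smooth, and asserts $R_X|_U = R_{(U,\,g_X|_U)}$; it says nothing about a different smooth metric $g_\varepsilon$ on the same chart. There is no object ``our Ricci curvature of $(U, g_\varepsilon)$'' in this framework, since $\mathrm{Ric}_X$ is attached to the fixed space $X \in \overline{\mathcal{M}}$. What you actually need on your side of the limit is that the classical tensors $\mathrm{Ric}_{g_\varepsilon}$ converge weakly to $\mathrm{Ric}_X|_U$ as $\varepsilon \to 0$, and this is not provided by \cite{gigli,ho,hoell}: those references treat Gromov--Hausdorff convergence of closed spaces in $\overline{\mathcal{M}}$, not local mollification of a $C^{1,\alpha}$ metric on a fixed chart. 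Nor does (\ref{riem}) deliver it directly, since there the test functions are global elements of $\mathrm{Test}F(X)$ and every operator is computed with respect to $g_X$, not $g_\varepsilon$. Filling this gap would essentially amount to re-proving the theorem by other means, and would in any case be far longer than the paper's one-paragraph Bochner argument.
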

\subsection{The structure of the paper}
This paper is organized as follows.

In Section 2, we give a key result on $L^2$-strong convergence of Hessians (Theorem \ref{L2Hess}) in order to prove the existence of our Riemannian curvature tensor, which loosely states that if $\Delta f_i$ $L^2$-converges strongly to $\Delta f$ with respect to the Gromov-Hausdorff convergence $X_i \stackrel{GH}{\to} X$ in $\overline{\mathcal{M}}$, then $\mathrm{Hess}_{f_i}$ $L^2$-converges strongly to $\mathrm{Hess}_f$.
It is worth pointing out that a key estimate to establish this $L^2$-strong convergence is Cheeger-Naber's $L^{p}$-estimate of the Hessian:
\begin{align}\label{Lpboundhess}
\int_M|\mathrm{Hess}_f|^{p}dH^n \le C(n, d, K_1, K_2, v, L, p)
\end{align}
for any $M \in \mathcal{M}$, $p \in [1, 4)$ and $f \in C^{\infty}(M)$ with $|f|+|\Delta f| \le L$ (see \cite[Theorem $7.20$]{chna}).

In Section 3, first, by using the $L^2$-strong convergence of Hessians we show the existence of our curvature and 
give a new smooth approximation theorem (Theorems \ref{14}).
Second, we see that the approximation theorem yields 
several results on our curvature we stated in subsections 1.1-1.5.
Moreover we discuss the $L^p$-strong convergence of our curvature in subsection 3.3.

Section 4 is devoted to their applications which include spectral convergence.
Note that a weak version of the sepctral convergence of $\Delta_{H, 1}$ was shown in \cite[Theorem 1.12]{hoell} under a uniform lower bound of Ricci curvature only.  
An essential difference from the previous result is to prove that the $L^2$-strong limit of differential eigen-one-forms is in $H^{1, 2}_H(T^*X)$ (note that the previous result, \cite[Theorem 1.12]{hoell}, states that it is in $W^{1, 2}_H(T^*X)$).
It is worth pointing out that we introduce four key results in order to prove it. 
The first one is Li-Tam's mean value inequality in \cite{LT}, the second one is the $C^{1}$-regularity of the Riemannian metric on the regular set in \cite{ch-co1} by Cheeger-Colding, the third one is that the Hausdorff dimension of the singular set is at most $n-4$ in \cite{chna} by Cheeger-Naber, and the fourth one is the Rellich compactness theorem for functions and tensor fields with respect to the Gromov-Hausdorff topology in \cite{holp, hoell} by the author.

As other applications, we study noncollapsed K\"ahler Ricci limit spaces with bounded Ricci curvature.
In particular we consider a sequence of compact K\"ahler manifolds $(X_i, g_{X_i}, J_i)$ with a uniform bound of Ricci curvature and a constant multiple of the K\"ahler form is in the first Chern class, and the noncollapsed Gromov-Hausdorff limit $(X, g_X, J)$.
Then we prove the equivalence between $L^p$-strong convergence of Ricci curvature and that of scalar curvature.
Moreover we define the Ricci potential $F_X$ of the limit $(X, g_X, J)$,  prove that Ricci potentials $F_{X_i}$ converges uniformly to $F_X$ on $X$, and show the equivalence between $L^p$-strong convergence of Ricci curvature and $L^2$-strong convergence of Hessians of Ricci potentials (Theorem \ref{equivalence}).
We also establish the Weitzenb\"ock formula on a Fano Ricci limit space with bounded Ricci curvature (Theorem \ref{23}).

In the final section, Section 5, we give several open problems.

\textbf{Acknowledgments.}
The author is grateful to Aaron Naber for his kind communications on examples given in \cite{hn}.
The author would like to express his appreciation to Hausdorff Research Institute for Mathematics (HIM) for warm hospitality.
He got a key idea of the paper 
during the stay of 
the Junior Hausdorff Trimester Program
on ``Optimal Transportation” in HIM.
He is also grateful to the referee for the very thorough reading, and valuable suggestions which are
greatly improved the overall mathematical quality of the paper.
This work was supported  by Grant-in-Aid for Young Scientists (B) $24740046$,  $16K17585$ and Grant-in-Aid for challenging Exploratory Research $26610016$.

\section{$L^2$-strong convergence of Hessians}
We first recall the definition of Gromov-Hausdorff convergence.
For a sequence of compact metric spaces $Y_i$, we say that \textit{$Y_i$ Gromov-Hausdorff converges to a compact metric space $Y$} if there exist a sequence of positive numbers $\epsilon_i$ with $\epsilon_i \to 0$, and a sequence of maps $\phi_i: Y_i \to Y$ such that the following two conditions hold:
\begin{enumerate}
\item{($\phi_i$ is an almost isometric embedding.)}
We have
\[\left|d_Y\left( \phi_i(x), \phi_i(y)\right) - d_{Y_i}\left(x, y\right)\right|<\epsilon_i\]
for any $i$ and $x, y \in Y_i$.
\item{($\phi_i$ is almost surjective.)}
We have
\[Y=B_{\epsilon_i}\left(\phi_i(Y_i)\right),\]
where $B_r(A)$ denotes the open $r$-neighbourhood of $A$.
\end{enumerate} 
Then we denote the convergence by
\[Y_i \stackrel{GH}{\to} Y.\]
For a sequence $y_i \in Y_i$, we say that \textit{$y_i$ Gromov-Hausdorff converges to a point $y \in Y$ (with respect to $Y_i \stackrel{GH}{\to} Y$)} if $\phi_i(y_i) \to y$ in $Y$. Then we denote it by 
\[y_i \stackrel{GH}{\to} y.\]
Moreover for a sequence of Radon measures $\upsilon_i$ on $Y_i$, we say that {$\upsilon_i$ measured Gromov-Hausdorff converges to a Radon measure $\upsilon$ on $Y$ (with respect to $Y_i \stackrel{GH}{\to} Y$)} if 
\[\lim_{i \to \infty}\upsilon_i(B_r(y_i))=\upsilon (B_r(y))\]
for any $r>0$ and $y_i \stackrel{GH}{\to} y$.
Then we denote it by 
\[(Y_i, \upsilon_i) \stackrel{GH}{\to} (Y, \upsilon).\]

Our setting in this section is the following:
\begin{itemize}
\item Let $\{X_i\}_i$ be a sequence in $\mathcal{M}$ and let $X \in \overline{\mathcal{M}}$ be the Gromov-Hausdorff limit.
\end{itemize}
Recall that as we mentioned in Section 1, $(X_i, H^n) \stackrel{GH}{\to} (X, H^n)$ holds.

One of key notions in this paper is \textit{$L^p$-convergence with respect to the Gromov-Hausdorff topology} which were established in \cite{KS, holp} in the cases of functions and tensor fields, respectively.

We first recall it in the case of functions.
Let $p \in (1, \infty)$, let $R>0$ and let $x_i \stackrel{GH}{\to} x$, where $x_i \in X_i$ and $x \in X$.
\begin{definition}[$L^p$-convergence of functions]
Let $f_i$ be a sequence in $L^p(B_R(x_i))$.
\begin{enumerate}
\item[(i)] We say that \textit{$f_i$ $L^p$-converges weakly to an $L^p$-function $f \in L^p(B_R(x))$ on $B_R(x)$} if 
\[\sup_i||f_i||_{L^p}<\infty\]
and
\[\lim_{i \to \infty}\int_{B_r(y_i)}f_i\,dH^n=\int_{B_r(y)}f\,dH^n\]
hold for any $y_i \stackrel{GH}{\to} y$ and $r>0$ with $\overline{B}_r(y) \subset B_R(x)$, where $y_i \in B_R(x_i)$ and $y \in B_R(x)$.
\item[(ii)] We say that \textit{$f_i$ $L^p$-converges strongly to $f \in L^p(B_R(x))$ on $B_R(x)$} if it is an $L^p$-weak convergent sequence on $B_R(x)$ with
\[\limsup_{i \to \infty}\int_{B_R(x_i)}|f_i|^p\,dH^n \le \int_{B_R(x)}|f|^p\,dH^n.\]
\end{enumerate}
\end{definition}

Next we recall the case of tensor fields.
Let 
\[T^r_sX:=\bigotimes_{i=1}^rTX \otimes \bigotimes_{i=1}^s T^*X\]
and let us denote the set of $L^p$-tensor fields of type $(r, s)$ over a Borel subset $A$ of $X$ by $L^p(T^r_sA)$. 
\begin{definition}[$L^p$-convergence of tensor fields]\label{Lpreal}
Let $r, s \in \mathbf{Z}_{\ge 0}$ and let $T_i$ be a sequence in $L^p(T^r_sB_R(x_i))$.
\begin{enumerate}
\item[(i)] We say that \textit{$T_i$ $L^p$-converges weakly to an $L^p$-tensor field $T \in L^p(T^r_sB_R(x))$ on $B_R(x)$} if 
\[\sup_i||T_i||_{L^p}<\infty\]
and 
\begin{align*}
&\lim_{i \to \infty}\int_{B_r(y_i)}\left\langle T_i, \nabla r_{z_i^1} \otimes \cdots \otimes \nabla r_{z_i^r}\otimes dr_{z_i^{r+1}} \otimes \cdots \otimes dr_{z_i^{r+s}}\right\rangle \,dH^n\\
&=\int_{B_r(y)}\left\langle T, \nabla r_{z^1} \otimes \cdots \otimes \nabla r_{z^r}\otimes dr_{z^{r+1}} \otimes \cdots \otimes dr_{z^{r+s}} \right\rangle\,dH^n
\end{align*}
hold for any $y_i, z_i^j \stackrel{GH}{\to} y, z^j$, respectively, and $r>0$ with $\overline{B}_r(y) \subset B_R(x)$,  where $y_i, z_i^j \in B_R(x_i)$,  $y, z^j \in B_R(x)$ and $r_z$ denotes the distance function from $z$.
\item[(ii)] We say that \textit{$T_i$ $L^p$-converges strongly 
to $T \in L^p(T^r_sB_R(x))$ on $B_R(x)$} if it is an $L^p$-weak convergent sequence on $B_R(x)$ with 
\[\limsup_{i \to \infty}\int_{B_R(x_i)}|T_i|^p\,dH^n\le \int_{B_R(x)}|T|^p\,dH^n.\]
\end{enumerate}
\end{definition}

We need the following previous results:
\begin{enumerate}
\item[(2.1)] {(Lipschitz representation)} For any $p \in (1, \infty)$ and  $f \in H^{1, p}(X)$, $\nabla f \in L^{\infty}(TX)$ holds if and only if $f \in \mathrm{LIP}(X)$ holds. Moreover the Lipschitz constant of $f$ is equal to $||\nabla f||_{L^{\infty}}$.
See for instance \cite[Proposition $2.8$]{hoch}.  
\item[(2.2)]{(Lipschitz regularity of solutions of Poisson's equations)} For any $q \in (n, \infty]$ and $f \in \mathcal{D}^2(\Delta, X)$ with $||\Delta f||_{L^{q}} \le L$ we have 
\[||\nabla f||_{L^{\infty}}\le C(n, v, K_1, d, L, q).\] 
See \cite[Theorem $1.2$]{jiang11} (see also \cite[Theorem 3.1]{jiang14}).
\item[(2.3)]{(Rellich compactness theorem)} For every sequence $f_i \in H^{1, p}(B_R(x_i))$ with
\[\sup_i||f_i||_{H^{1, p}}<\infty,\]
there exist a subsequence $i(j)$ and $f \in H^{1, p}(B_R(x))$ such that $f_{i(j)}$ $L^p$-converges strongly to $f$ on $B_R(x)$ and that $\nabla f_{i(j)}$ $L^p$-converges weakly to $\nabla f$ on $B_R(x)$, where $H^{1, p}(U)$ is the Sobolev space on an open subset $U$ of $X$ defined by the completion of the space of locally Lipschitz functions on $U$, denoted by $\mathrm{LIP}_{\mathrm{loc}}(U)$, with respect to the norm
\[||f||_{H^{1, p}}:=\left( ||f||_{L^p}^p+||\nabla f||_{L^p}^p\right)^{1/p}.\]
See \cite[Theorem 4.9]{holp}. 
\item[(2.4)]{(Closedness of the Dirichlet Laplacian)} Let $f_i$ be a sequence in $\mathcal{D}^2(\Delta, X_i)$ with 
\[\sup_i \left( ||f_i||_{L^2}+||\Delta f_i||_{L^2}\right)<\infty,\]
and let $f$ be the $L^2$-weak limit of $f_i$ on $X$. Then we see that $f \in \mathcal{D}^2(\Delta, X)$, that $f_i, \nabla f_i$  $L^2$-converge strongly to $f, \nabla f$ on $X$, respectively and  that $\Delta f_i, \mathrm{Hess}_{f_i}$ $L^2$-converge weakly to $\Delta f, \mathrm{Hess}_f$ on $X$, respectively. See \cite[Theorem $1.3$]{holp}.
\item[(2.5)]{(Equivalence of $L^q$-strong convergence)} For  any sequence $T_i \in L^p(T^r_sB_R(x_i))$ with 
\[\sup_i||T_i||_{L^p}<\infty,\]
and $T \in L^p(T^r_sB_R(x))$, we see that $T_i$ $L^q$-converges strongly to $T$ on $B_R(x)$ for some $q \in (1, p)$ if and only if  $T_i$ $L^q$-converges strongly to $T$ on $B_R(x)$ for every $q \in (1, p)$.
See \cite[Proposition $3.3$]{hoell}.
\item[(2.6)]{(Existence of approximate sequence)} For every $f \in \mathrm{LIP}(X)$, there exists a sequence $f_i \in \mathrm{LIP}(X_i)$ with 
\[\sup_i||\nabla f_i||_{L^{\infty}}<\infty\]
such that $f_i, \nabla f_i$ $L^2$-converge strongly to $f, \nabla f$ on $X$, respectively.
See \cite[Theorem $4.2$]{holip}.
\item[(2.7)]{(Sobolev inequality)} For every $f \in H^{1, 2}(X)$, we have
\[\left(\frac{1}{H^n(X)}\int_X\left| f- \frac{1}{H^n(X)}\int_XfdH^n\right|^{2n/(n-2)}dH^n\right)^{(n-2)/(2n)}\le \frac{C(n, K_1, d)}{H^n(X)}\int_X|\nabla f|^2dH^n.\] 
See \cite[The\'eor\`eme 1.1]{mas} and \cite[Theorem 3.2]{hoell}.
\item[(2.8)]{(Continuity of solutions of Poisson's equations with respect to Gromov-Hausdorff topology)} Let $g_i \in L^2(X_i)$ be a sequence with
\[\int_{X_i}g_idH^n=0,\]
and let $g \in L^2(X)$ be the $L^2$-weak limit on $X$.
Then 
there exists a unique $f \in \mathcal{D}^2(\Delta, X)$ with 
\[\int_XfdH^n=0\]
such that $f=\Delta^{-1}g$, i.e. $\Delta f=g$ holds, and that $\Delta^{-1}g_i, \nabla \Delta^{-1}g_i$ $L^2$-converge strongly to $f, \nabla f$ on $X$, respectively.
See \cite[Theorem 1.1]{hoell}.
\item[(2.9)]{(Compactness of $L^p$-weak convergence)}
For every sequence $T_i \in L^p(T^r_sB_R(x_i))$ with
\[\sup_i||T_i||_{L^p}<\infty,\]
there exist a subsequence $i(j)$ and $T \in L^p(T^r_sB_R(x))$ such that $T_{i(j)}$ $L^p$-converges weakly to $T$ on $B_R(x)$.
See \cite[Proposition 3.50]{holp}.
\item[(2.10)]{(Lower semicontinuity of norms with respect to $L^p$-weak convergence)} 
Let $T_i$ be a sequence in $L^p(T^r_sB_R(x_i))$ and let $T \in L^p(T^r_sB_R(x))$ be the $L^p$-weak limit on $B_R(x)$.
Then
\[\liminf_{i \to \infty}||T_i||_{L^p} \ge ||T||_{L^p}.\]
See \cite[Proposition 3.64]{holp}.
\end{enumerate}
We first prove the following.
\begin{theorem}\label{8}
let $f_i$ be a sequence in $C^{\infty}(X_i)$ with
\[\sup_{i<\infty} \left(||\Delta f_i||_{L^{\infty}} + ||\Delta f_i||_{H^{1, 2}}\right)<\infty,\]
and let $f \in L^2(X)$ be the $L^2$-strong limit on $X$.
Then we see that $f \in \mathcal{D}^2(\Delta, X) \cap \mathrm{LIP}(X)$ and that $\mathrm{Hess}_{f_i}$ $L^{2q}$-converges strongly to $\mathrm{Hess}_{f}$ on $X$ for every $q<2$.
\end{theorem}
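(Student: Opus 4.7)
The plan is to proceed in four steps.

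First, I would establish basic convergence properties. The uniform bound $\sup_i ||\Delta f_i||_{L^{\infty}}<\infty$ together with (2.2) yields $\sup_i ||\nabla f_i||_{L^{\infty}}\le L_0<\infty$, so via (2.1) the $L^2$-strong limit $f$ is Lipschitz; since the $X_i$ have uniformly bounded diameter and the $f_i$ are $L^2$-bounded, one also obtains $\sup_i ||f_i||_{L^{\infty}}<\infty$. The closedness of the Dirichlet Laplacian (2.4) applied to the pair $(f_i,\Delta f_i)$, bounded in $L^2\oplus L^2$, yields $f\in\mathcal{D}^2(\Delta,X)$, $L^2$-strong convergence $\nabla f_i\to\nabla f$, and $L^2$-weak convergence of $\Delta f_i$ and $\mathrm{Hess}_{f_i}$ to $\Delta f$ and $\mathrm{Hess}_f$. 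The Rellich compactness (2.3) applied to $\Delta f_i$, which is uniformly bounded in $H^{1,2}$, upgrades the convergence of Laplacians to $L^2$-strong and shows $\Delta f\in H^{1,2}(X)$; in particular $f\in\mathrm{Test}F(X)$.

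Second, I would improve the convergence of Hessians to $L^p$-weak for every $p\in[1,4)$. The Cheeger-Naber bound (\ref{Lpboundhess}), applied with $L:=\sup_i(||f_i||_{L^{\infty}}+||\Delta f_i||_{L^{\infty}})<\infty$, furnishes $\sup_i ||\mathrm{Hess}_{f_i}||_{L^p}<\infty$ for every $p\in[1,4)$, and (2.9) then produces a subsequential $L^p$-weak limit which must coincide with $\mathrm{Hess}_f$ by the uniqueness from Step 1; hence the full sequence converges $L^p$-weakly.

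Third, and this is the heart of the argument, I would upgrade to $L^2$-strong by proving $||\mathrm{Hess}_{f_i}||_{L^2}\to ||\mathrm{Hess}_f||_{L^2}$; combined with the weak convergence of Step 1 and (2.10), this yields strong convergence by uniform convexity of $L^2$. Integrating the smooth Bochner formula over $X_i$ gives
\[
||\mathrm{Hess}_{f_i}||_{L^2}^2+\int_{X_i}\mathrm{Ric}_{X_i}(\nabla f_i,\nabla f_i)\,dH^n=||\Delta f_i||_{L^2}^2,
\]
whose right-hand side converges by Step 1. Since $K_1\le\mathrm{Ric}_{X_i}\le K_2$ and $\nabla f_i\to\nabla f$ in $L^2$-strong, the Ricci integrals are uniformly bounded, hence along a subsequence they converge to some $L\in\mathbf{R}$. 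On the limit $RCD(K_1,\infty)$-space $(X,H^n)$, Gigli's $H^{2,2}$-calculus supplies for $f\in\mathrm{Test}F(X)$ the Bochner identity
\[
||\mathrm{Hess}_f||_{L^2}^2+\mathbf{Ric}_X(\nabla f,\nabla f)(X)=||\Delta f||_{L^2}^2,
\]
where $\mathbf{Ric}_X$ is Gigli's measure-valued Ricci. The $L^2$-weak lower semicontinuity (2.10) immediately gives $L\le\mathbf{Ric}_X(\nabla f,\nabla f)(X)$, and the reverse inequality, equivalent to the measured-Gromov-Hausdorff stability of the integrated Gigli-Ricci along our sequence, is the main technical obstacle. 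I would prove it by a regular/singular decomposition: using the $C^{1,\alpha}$-convergence of Riemannian metrics on compact subsets of the regular set $\mathcal{R}$, valid under two-sided Ricci bounds by the work of Anderson and Cheeger-Colding, to deduce strong convergence of $\mathrm{Ric}_{X_i}(\nabla f_i,\nabla f_i)$ on $\mathcal{R}$; and combining Cheeger-Naber's Hausdorff-codimension-$4$ bound on the singular set with the uniform $L^p$-bound of Step 2 to make the contribution of a tubular neighborhood of the singular set negligible via a standard cutoff. Since every subsequence admits such a limit, the full sequence satisfies $||\mathrm{Hess}_{f_i}||_{L^2}\to ||\mathrm{Hess}_f||_{L^2}$.

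Finally, for each $p\in(2,4)$ the interpolation (2.5), combining the $L^p$-bound of Step 2 with the $L^2$-strong convergence of Step 3, gives $L^q$-strong convergence for every $q\in(1,p)$; letting $p\uparrow 4$ yields $L^q$-strong convergence for every $q\in(1,4)$, that is, $L^{2q}$-strong convergence for every $q<2$. The main obstacle is the reverse inequality in Step 3: all other conclusions follow cleanly from the listed tools, but the stability of the integrated Ricci rests essentially on the sharp structural theory of Cheeger-Colding and Cheeger-Naber for the noncollapsed bounded-Ricci setting.
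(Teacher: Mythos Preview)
Your Steps 1, 2, and 4 are fine, and the overall strategy of Step 3---compare the smooth integrated Bochner identity on $X_i$ with Gigli's integrated Bochner identity on $X$---is a genuinely different route from the paper's. The paper never uses curvature convergence here; instead it shows directly that $\{|\mathrm{Hess}_{f_i}|^2\}_i$ is $L^1_{\mathrm{loc}}$-weakly upper semicontinuous by expressing $\mathrm{Hess}_f(\nabla g,\nabla h)$ in terms of $\nabla\langle\nabla f,\nabla g\rangle$, etc., reducing to the $L^2$-strong convergence of $\nabla|\nabla f_i|^2$ (Lemma~\ref{3}), then propagating via heat-flow regularization and good cut-offs (Claims~\ref{4}--\ref{7}) to arbitrary local test gradients. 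This yields $\limsup_i\int|\mathrm{Hess}_{f_i}|^2\le\int|\mathrm{Hess}_f|^2$ without any appeal to curvature.

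Your Step 3 has a genuine gap at the ``reverse inequality''. The $C^{1,\alpha}$-convergence of metrics on compact subsets of $\mathcal{R}$ controls only \emph{first} derivatives of $g$ and says nothing about $\mathrm{Ric}$, which involves second derivatives; you cannot deduce even weak convergence of $\mathrm{Ric}_{X_i}$ from $C^{1,\alpha}$-convergence alone. Under two-sided Ricci bounds one does have Anderson's $W^{2,p}$-theory in harmonic coordinates, which would give $L^p$-\emph{weak} convergence of $\mathrm{Ric}_{X_i}$ on compact subsets of $\mathcal{R}$ (not the ``strong convergence'' you claim). But even granting this, you must then identify that weak limit with Gigli's measure-valued $\mathbf{Ric}_X$ and show the latter has no mass on the singular set---and in this paper those facts (Theorems~\ref{ellp}, \ref{gigliricci}, Proposition~\ref{curvaturecompatibility}) are all proved \emph{using} Theorem~\ref{8}. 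So as written your argument is either incomplete (if you try to prove Ricci convergence independently) or circular (if you invoke the paper's later results).
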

\begin{proof}
From (2.1) - (2.4) and (2.10), we see that 
 $f \in \mathcal{D}^2(\Delta, X) \cap \mathrm{LIP}(X)$, 
that $\Delta f \in H^{1, 2}(X) \cap L^{\infty}(X)$, that $f_i, \nabla f_i, \Delta f_i$ $L^2$-converge strongly to $f, \nabla f, \Delta f$ on $X$, respectively and that
$\nabla \Delta f_i, \mathrm{Hess}_{f_i}$ $L^2$-converge weakly to $\nabla \Delta f, \mathrm{Hess}_f$ on $X$, respectively.

In particular  it suffices to check that $\mathrm{Hess}_{f_i}$ $L^{2q}$-converges strongly to $\mathrm{Hess}_f$ on $X$ for every $q<2$.
Let 
\begin{align}
L:=\sup_{i<\infty} \left(||\Delta f_i||_{L^{\infty}} + ||\Delta f_i||_{H^{1, 2}}\right)<\infty.
\end{align}
\begin{lemma}\label{3}
We have the following.
\begin{enumerate}
\item $|\nabla f|^2 \in H^{1, 2q}(X)$ for every $q<2$.
\item $\nabla |\nabla f_i|^2$ $L^{2q}$-converges strongly to $\nabla |\nabla f|^2$ on $X$ for every $q<2$.
\end{enumerate}
\end{lemma}
The proof is as follows.

Recall that Bochner's formula states
\[
-\frac{1}{2}\Delta |\nabla f_i|^2 = |\mathrm{Hess}_{f_i}|^2+ \langle \nabla \Delta f_i, \nabla f_i \rangle + \mathrm{Ric}_{X_i}(\nabla f_i, \nabla f_i).\]
The Lipschitz regularity of solutions of Poisson's equation (2.2) gives
\begin{align*}
\int_{X_i}|\Delta |\nabla f_i|^2|^qdH^n \le 10\int_{X_i}|\mathrm{Hess}_{f_i}|^{2q}dH^n+C(n, d, L, K_1, K_2)
\end{align*}
for any $i<\infty$ and $q \in [1, 2)$.
By the $L^{2q}$-estimate of a Hessian (\ref{Lpboundhess}), we have
\begin{align}\label{chnabound}
\int_{X_i}|\mathrm{Hess}_{f_i}|^{2q}dH^n \le C(n, d, v, K_1, K_2, L, q).
\end{align}
In particular we have 
\begin{align}\label{1}
\int_{X_i}|\Delta |\nabla f_i|^2|^{q}dH^n + \int_{X_i}|\nabla |\nabla f_i|^2|^{2q}dH^n \le C(n, d, v, K_1, K_2, L, q),
\end{align}
where we used the inequalty
\[\left|\nabla |\nabla f_i|^2\right|\le 2|\nabla f_i||\mathrm{Hess}_{f_i}|\le C(n, K_1, d, v, L)|\mathrm{Hess}_{f_i}|.\]
Let $g_i:=|\nabla f_i|^2$ and let $g:=|\nabla f|^2$.
Note that the equivalence of $L^q$-strong convergence (2.5) and the $L^2$-strong convergence of $\nabla f_i$ to $\nabla f$ with $\sup_i\|\nabla f_i\|_{L^{\infty}(X_i)}<\infty$ yield the $L^p$-strong convergence of $g_i$ to $g$ for every $p \in (1, \infty)$.
In particular since (\ref{1}) yields $\sup_i\| g_i\|_{H^{1, 2q}}<\infty$, applying the Rellich compactness theorem (2.3) for $g_i$ proves (i).

By the existence of an approximate sequence (2.6), there exists a sequence $\hat{g}_i \in H^{1, 2}(X_i)$ such that $\hat{g}_i, \nabla \hat{g}_i$ $L^2$-converge strongly to $g, \nabla g$ on $X$, respectively.
It follows from a direct calculation that
\begin{align*}
\int_{X_i}|\nabla \hat{g}_i|^2dH^n-2\int_{X_i}(\Delta g_i)\hat{g}_idH^n &= \int_{X_i}|\nabla g_i|^2dH^n-2\int_{X_i}(\Delta g_i)g_idH^n \nonumber \\
& + \int_{X_i}|\nabla (g_i-\hat{g}_i)|^2dH^n.
\end{align*}
In particular
\begin{align}\label{2}
\int_{X_i}|\nabla \hat{g}_i|^2dH^n-2\int_{X_i}(\Delta g_i)\hat{g}_idH^n \ge \int_{X_i}|\nabla g_i|^2dH^n-2\int_{X_i}(\Delta g_i)g_idH^n.
\end{align}

On the other hand the equivalence of $L^q$-strong convergence (2.5) and the Sobolev inequality (2.7) give
\begin{align*}
&\left|\int_{X_i}(\Delta g_i)\hat{g}_idH^n- \int_{X_i}(\Delta g_i)g_idH^n\right|\\
&\le \left(\int_{X_i}|(\Delta g_i)|^{q}dH^n\right)^{1/q}\left( \int_{X_i}|\hat{g}_i-g_i|^{p}dH^n\right)^{1/p} \to 0
\end{align*}
as $i \to \infty$ for every $q<2$ such that the conjugate exponent $p$ of $q$ satisfies $p<2n/(n-2)$.
Hence letting $i \to \infty$ in (\ref{2}) gives
\begin{align*}
\limsup_{i \to \infty} \int_{X_i}|\nabla g_i|^2dH^n \le \int_{X}|\nabla g|^2dH^n.
\end{align*}
This completes the proof of Lemma \ref{3}.

The next claim is a direct consequence of Lemma \ref{3} and the expression:
\begin{align}
\mathrm{Hess}_f(\nabla g, \nabla h)=\frac{1}{2}\left(\langle \nabla g, \nabla \langle \nabla f, \nabla h \rangle \rangle + 
\langle \nabla h, \nabla \langle \nabla f, \nabla g \rangle \rangle - \langle \nabla f, \nabla \langle \nabla g, \nabla h \rangle \rangle \right).
\end{align}
\begin{claim}\label{10}
Let $g_i, h_i$ be sequences in $C^{\infty}(X_i)$ with
\begin{align}\label{0927}
\sup_{i<\infty}\left( ||\Delta g_i||_{L^{\infty}} + ||\Delta g_i||_{H^{1, 2}} + ||\Delta h_i||_{L^{\infty}} + ||\Delta h_i||_{H^{1, 2}} \right)<\infty,
\end{align}
and let $g, h \in \mathrm{LIP}(X)$ be the $L^2$-strong limits of them on $X$, respectively.
Then $\mathrm{Hess}_{f_i}(\nabla g_i, \nabla h_i)$ $L^2$-converges strongly to  $\mathrm{Hess}_{f}(\nabla g, \nabla h)$ on $X$.
\end{claim}
The following is a direct consequence of the regularity theory of the heat flow from \cite{ags} (thus, it holds on an $RCD$-space).
\begin{claim}\label{4}
Let $L_0>0$ and let $f \in \mathcal{D}^2(\Delta, X) \cap \mathrm{LIP}(X)$ with
\[||\Delta^{\upsilon}f||_{L^{\infty}} \le L_0.\]
Then for every $0<s<1$ we have the following.
\begin{enumerate}
\item $H_sf \in \mathrm{LIP}(X) \cap \mathcal{D}^2(\Delta, X)$ with $||H_sf-f||_{H^{1, 2}}\le \Psi (s; n, L_0)$.
\item $||\Delta(H_sf)||_{L^{\infty}}+||\nabla (H_sf)||_{L^{\infty}} \le C(n, K_1, L_0)$.
\item $\Delta(H_sf) \in \mathrm{LIP}(X)$ with $||\nabla \Delta(H_sf)||_{L^{\infty}} \le C(n, K_1, s, L_0)$.
\end{enumerate}
where $H_s$ is the heat flow and $\Psi (\epsilon_1, \ldots, \epsilon_k; c_1, \ldots, c_l)$ denotes some positively valued function on $\mathbf{R}^k_{\>0} \times \mathbf{R}^l$ satisfying
\[\lim_{\epsilon_1, \ldots, \epsilon_k \to 0}\Psi (\epsilon_1, \ldots, \epsilon_k; c_1, \ldots, c_l)=0\]
for fixed real numbers $c_1, \ldots, c_l$.
In particular for every $p \in (1, \infty)$ we have 
\begin{align*}
||H_sf-f||_{H^{1, p}}\le \Psi (s; n, K_1, L_0, p).
\end{align*}
\end{claim}
We give a proof of (i) only for reader's convenience.
We have
\begin{align*}
\int_X|H_sf-f|^2dH^n&=\int_X\left| \int_0^s\Delta (H_tf)dt\right|^2dH^n\\
&\le \int_Xs\int_0^s(\Delta (H_tf))^2dtdH^n \\
&= s\int_0^s\int_X(\Delta (H_tf))^2dH^ndt \\
&= s\int_0^s\int_X(H_t(\Delta f))^2dH^ndt \\
&\le s\int_0^s \int_X(\Delta f)^2dH^ndt \\
&=s^2||\Delta f||_{L^2}^2.
\end{align*}

On the other hand
\begin{align*}
\int_X|\nabla (H_sf-f)|^2dH^n &=\int_X(H_sf-f)\Delta (H_sf-f)dH^n \\
&\le ||H_sf-f||_{L^2} \times ||\Delta (H_sf-f)||_{L^2} \\
&\le ||H_sf-f||_{L^2} \times 2||\Delta f||_{L^2}.
\end{align*}
From \cite[Theorems $6.1$ and $6.2$]{ags} with these estimates, this completes the proof of (i).

\begin{claim}\label{5}
In Claim \ref{10}, the same conclusion holds even if we relapce the assumption (\ref{0927}) by 
\begin{align}\label{9999990}
L_1:=\sup_{i<\infty}\left( ||\Delta g_i||_{L^{\infty}} + ||\Delta h_i||_{L^{\infty}}\right) < \infty.
\end{align}
\end{claim}
The proof is as follows.

By Claims \ref{10} and \ref{4} we see that $\mathrm{Hess}_{f_i}(\nabla (H_sg_i), \nabla (H_sh_i))$ $L^2$-converges strongly to $\mathrm{Hess}_{f}(\nabla (H_sg), \nabla (H_sh))$ on $X$ for every $s \in (0, 1)$.
On the other hand Claim \ref{4} with (\ref{chnabound}) gives
\begin{align*}
&\int_{X_i}|\mathrm{Hess}_{f_i}(\nabla g_i, \nabla h_i)-\mathrm{Hess}_{f_i}(\nabla (H_sg_i), \nabla (H_sh_i))|^2dH^n\\
&\le \int_{X_i}\left|\mathrm{Hess}_{f_i}\right|^2\left|\nabla g_i \otimes \nabla h_i- \nabla (H_sg_i) \otimes \nabla (H_sh_i)\right|^2dH^n\\
&\le \left( \int_{X_i}\left|\mathrm{Hess}_{f_i}\right|^{2q}dH^n\right)^{1/q}\left( \int_{X_i}\left|\nabla g_i \otimes \nabla h_i- \nabla (H_sg_i) \otimes \nabla (H_sh_i)\right|^{2p}dH^n\right)^{1/p} \\
&\le \Psi(s ; n, d, v, K_1, K_2, L, L_1, q)
\end{align*}
for any $i \le \infty$ and $q \in (1, 2)$, where $p$ is the conjugate exponent of $q$. 
By letting $i \to \infty$ and $s \to 0$,
this completes the proof of Claim \ref{5}.

\begin{claim}\label{6}
Let $g_i$ be a sequence as in Claim \ref{5} (i.e. (\ref{9999990}) is satisfied),  let $R>0$, let $x \in X$, let $x_i \in X_i$ with $x_i \stackrel{GH}{\to} x$, let $k_i$ be a sequence in $C^{\infty}(B_R(x_i))$ with
\begin{align*}
L_2:=\sup_{i<\infty}\left(  ||k_i||_{L^{\infty}(B_R(x_i))} + ||\nabla k_i||_{L^{\infty}(B_R(x_i))} + ||\Delta k_i||_{L^{\infty}(B_R(x_i))} \right)<\infty,
\end{align*}
and let $k \in H^{1, 2}(B_R(x))$ be the $L^2$-strong limit on $B_R(x)$.
Then $\mathrm{Hess}_{f_i}(\nabla k_i, \nabla g_i)$ $L^2$-converges strongly to $\mathrm{Hess}_{f}(\nabla k, \nabla g)$ on $B_R(x)$.
\end{claim}
The proof is as follows.

Fix $\epsilon>0$.
The existence of good cut-off functions constructed in \cite[Theorem $6.33$]{ch-co}  by Cheeger-Colding states that for every $i<\infty$ there exists $\phi_i \in C^{\infty}(X_i)$ such that the following four conditions hold:
\begin{enumerate}
\item $0 \le \phi_i \le 1$.
\item $\phi_i|_{B_{R-2\epsilon}(x_i)} \equiv 1$.
\item $\mathrm{supp}\,\phi_i \subset B_{R-\epsilon}(x_i)$.
\item $|\nabla \phi_i|+|\Delta \phi_i| \le C(n, \epsilon, K_1, R)$.
\end{enumerate}
By the closedness of the Dirichlet Laplacian (2.4), without loss of generality we can assume that the $L^2$-strong limit $\phi \in \mathrm{LIP}(X) \cap \mathcal{D}^2(\Delta, X)$ of $\phi_i$ on $X$ exists.

Then applying
Claim \ref{5} for $h_i:=\phi_ik_i$ gives that $\mathrm{Hess}_{f_i}(\nabla (\phi_ik_i), \nabla g_i)$ $L^2$-converges strongly to $\mathrm{Hess}_{f}(\nabla (\phi k), \nabla g)$ on $X$ (c.f. \cite[Theorem 4.5]{hoell}).
On the other hand applying Claim \ref{5} for $h_i:=\phi_i$ yields that $k_i\mathrm{Hess}_{f_i}(\nabla \phi_i, \nabla g_i)$ $L^2$-converges strongly to $k\mathrm{Hess}_{f}(\nabla \phi, \nabla g)$ on $B_R(x)$.
Since 
\begin{align*}
\mathrm{Hess}_{f_i}(\nabla (\phi_ik_i), \nabla g_i)=\phi_i\mathrm{Hess}_{f_i}(\nabla k_i, \nabla g_i)+k_i\mathrm{Hess}_{f_i}(\nabla \phi_i, \nabla g_i),
\end{align*}
we see that $\phi_i\mathrm{Hess}_{f_i}(\nabla k_i, \nabla g_i)$ $L^2$-converges strongly to $\phi \mathrm{Hess}_{f}(\nabla k, \nabla g)$ on $B_R(x)$.

On the other hand for every $q<2$ the H$\ddot{\text{o}}$lder inequality yields
\begin{align*}
&\int_{B_R(x_i)}|\mathrm{Hess}_{f_i}(\nabla k_i, \nabla g_i)-\phi_i\mathrm{Hess}_{f_i}(\nabla k_i, \nabla g_i)|^2dH^n\\
&\le \int_{B_R(x_i) \setminus B_{R-2\epsilon}(x_i)}|\mathrm{Hess}_{f_i}|^2(1-\phi_i)^2|\nabla k_i|^2|\nabla g_i|^2dH^n\\
&\le (L_1L_2)^2\int_{B_R(x_i) \setminus B_{R-2\epsilon}(x_i)}|\mathrm{Hess}_{f_i}|^2dH^n\\
&\le (L_1L_2)^2(H^n(B_R(x_i) \setminus B_{R-2\epsilon}(x_i)))^{1/p}\left( \int_{X_i}|\mathrm{Hess}_{f_i}|^{2q}dH^n\right)^{1/q}\\
&\le \Psi(\epsilon; n, d, v, R, L, L_1, L_2, q),
\end{align*}
where $p$ is the conjugate exponent of $q$.
This completes the proof of Claim \ref{6}.

\begin{claim}\label{7}
Let $k_i$ be a sequence as in Claim \ref{6}, let $e_i$ be a sequence in $C^{\infty}(B_R(x_i))$ with
\begin{align}
\sup_{i<\infty}\left( ||e_i||_{L^{\infty}(B_R(x_i))} + ||\nabla e_i||_{L^{\infty}(B_R(x_i))} + ||\Delta e_i||_{L^{\infty}(B_R(x_i))}\right) < \infty,
\end{align}
and let $e \in H^{1, 2}(B_R(x))$ be the $L^2$-strong limit on $B_R(x)$.
Then $\mathrm{Hess}_{f_i}(\nabla k_i, \nabla e_i)$ $L^2$-converges strongly to $\mathrm{Hess}_{f}(\nabla k, \nabla e)$ on $B_R(x)$.
\end{claim}
The proof is as follows (and is essentially same to that of Claim \ref{6}).

We will use the same notation as in the proof of Claim \ref{6}.
Claim \ref{6} yields that $\mathrm{Hess}_{f_i}(\nabla k_i, \nabla (\phi_ie_i))$ $L^2$-converges strongly to $\mathrm{Hess}_{f}(\nabla k, \nabla (\phi e))$ on $B_R(x)$ and that
$e_i\mathrm{Hess}_{f_i}(\nabla k_i, \nabla \phi_i)$ $L^2$-converges strongly to $e \mathrm{Hess}_{f}(\nabla k, \nabla \phi)$ on $B_R(x)$.
Since
\begin{align*}
\mathrm{Hess}_{f_i}(\nabla k_i, \nabla (\phi_ie_i))= e_i\mathrm{Hess}_{f_i}(\nabla k_i, \nabla \phi_i)+\phi_i \mathrm{Hess}_{f_i}(\nabla k_i, \nabla e_i),
\end{align*} 
we see that $\phi_i \mathrm{Hess}_{f_i}(\nabla k_i, \nabla e_i)$ $L^2$-converges strongly to $\phi \mathrm{Hess}_{f}(\nabla k, \nabla e)$ on $B_R(x)$.
Then by an argument similar to the proof of Claim \ref{6}, we have Claim \ref{7}.

We need a previous result given in \cite{hoell}.
\begin{claim}{\cite[Definition $3.6$ and Proposition $3.8$]{hoell}}\label{mmmmm}
Let $p>1$, let $F_i$ be a sequence in $L^p(X_i)$ with
\[\sup_i||F_i||_{L^p}<\infty,\]
and let $F \in L^p(X)$.
Assume that $\{F_i\}_i$ is $L^1_{\mathrm{loc}}$-weakly upper semicontinuous at a.e. $x \in X$, i.e. there exists a Borel subset $A$ of $X$ such that the following two conditions hold.
\begin{enumerate}
\item $H^n(X \setminus A)=0$.
\item For any $\epsilon>0$, $z \in A$ and subsequence $i(j)$, there exist a subsequence $j(k)$ of $i(j)$, $r_0>0$ and a convergent sequence $z_{j(k)} \stackrel{GH}{\to} z$ such that
\[\limsup_{k \to \infty}\left(\frac{1}{H^n(B_r(z_{j(k)}))}\int_{B_r(z_{j(k)})}F_{j(k)}dH^n\right)-\frac{1}{H^n(B_r(z))}\int_{B_r(z)}FdH^n \le \epsilon\]
for every $r<r_0$.
\end{enumerate}
Then
\[\limsup_{i \to \infty}\int_{X_i}f_idH^n\le \int_XfdH^n.\]
\end{claim}

We are now in a position to finish the proof of Theorem \ref{8}.

By an argument similar to the proof of \cite[Theorem $6.7$]{hoell} (more precisely, see page $64-65$ in \cite{hoell}) with Claim \ref{7} we see that 
$\{|\mathrm{Hess}_{f_i}|^2\}_i$ is $L^1_{\mathrm{loc}}$-weakly upper semicontinuous at a.e. $x \in X$.
Thus Claim \ref{mmmmm} with (\ref{chnabound}) yields
\begin{align}\label{hesskey}
\limsup_{i \to \infty}\int_{X_i}|\mathrm{Hess}_{f_i}|^2dH^n\le \int_{X}|\mathrm{Hess}_{f}|^2dH^n.
\end{align}
Thus $\mathrm{Hess}_{f_i}$ $L^2$-converges strongly to $\mathrm{Hess}_{f}$ on $X$.
By the equivalence of $L^q$-strong convergence (2.5), this completes the proof of Theorem \ref{8}.
\end{proof}
Theorem \ref{8} also holds even if each $X_i$ is in $\overline{\mathcal{M}}$:
\begin{corollary}\label{ppm}
Let $Y_i \stackrel{GH}{\to} Y$ in $\overline{\mathcal{M}}$, let $f_i$ be a sequence in $\mathcal{D}^2(\Delta, Y_i)$ with  $\Delta f_i \in L^{\infty}(Y_i) \cap H^{1, 2}(Y_i)$ such that
\[\sup_i\left( ||\Delta f_i||_{L^{\infty}}+||\Delta f_i||_{H^{1,2}}\right)<\infty,\]
and let $f \in L^2(Y)$ be the $L^2$-strong limit on $Y$.
Then we see that $f \in \mathcal{D}^2(\Delta, Y) \cap \mathrm{LIP}(Y)$ and that $\mathrm{Hess}_{f_i}$ $L^{2q}$-converges strongly to $\mathrm{Hess}_f$ on $Y$ for every $q \in (1, 2)$.
\end{corollary}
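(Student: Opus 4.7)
The plan is a diagonal reduction to Theorem \ref{8}. By the definition of $\overline{\mathcal{M}}$, for each $i$ fix a sequence $\{X_{i,j}\}_j \subset \mathcal{M}$ with $X_{i,j} \stackrel{GH}{\to} Y_i$ as $j \to \infty$; a standard Cantor diagonalization then produces $j(i)$ with $X_{i,j(i)} \stackrel{GH}{\to} Y$. The goal is to construct smooth $f_{i,j} \in C^{\infty}(X_{i,j})$ to which Theorem \ref{8} can be applied both along each inner sequence $X_{i,j} \stackrel{GH}{\to} Y_i$ (with $L^2$-strong limit $f_i$ on $Y_i$) and along the diagonal $X_{i,j(i)} \stackrel{GH}{\to} Y$ (with $L^2$-strong limit $f$ on $Y$).

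To build $f_{i,j}$, first regularize $f_i$ on $Y_i$: set $\tilde{f}_i^{(s)} := H_s f_i$ for small $s > 0$. Since $\Delta H_s = H_s \Delta$ and the heat flow is a contraction on $L^{\infty}$, $L^2$, and the Cheeger energy, the quantities $\|\Delta \tilde{f}_i^{(s)}\|_{L^{\infty}} + \|\Delta \tilde{f}_i^{(s)}\|_{H^{1,2}}$ stay uniformly bounded in $i$ and $s$, while Claim \ref{4} yields $\tilde{f}_i^{(s)} \in \mathrm{LIP}(Y_i) \cap \mathcal{D}^2(\Delta, Y_i)$ with $\Delta \tilde{f}_i^{(s)}$ Lipschitz and $\tilde{f}_i^{(s)} \to f_i$ quantitatively in $H^{1,2}$ as $s \to 0$. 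Next, apply (2.6) to the Lipschitz function $\Delta \tilde{f}_i^{(s)}$ on $Y_i$ to obtain Lipschitz approximations on $X_{i,j}$, smooth them by short-time heat flow on the smooth manifold $X_{i,j}$ (elliptic regularity gives $C^{\infty}$), subtract the mean, and solve Poisson's equation on $X_{i,j}$ via (2.8). This produces $f_{i,j}^{(s)} \in C^{\infty}(X_{i,j})$ with uniform $L^{\infty}$ and $H^{1,2}$ bounds on $\Delta f_{i,j}^{(s)}$, and with $f_{i,j}^{(s)} \to \tilde{f}_i^{(s)}$ strongly in $L^2$ on $Y_i$ as $j \to \infty$.

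Applying Theorem \ref{8} along $X_{i,j} \stackrel{GH}{\to} Y_i$ gives $\mathrm{Hess}_{f_{i,j}^{(s)}} \to \mathrm{Hess}_{\tilde{f}_i^{(s)}}$ strongly in $L^{2q}$ on $Y_i$ as $j \to \infty$. Choosing $s_i \to 0$ slowly and a diagonal $j=j(i)$ so that $f_{i,j(i)}^{(s_i)}$ is $L^2$-strongly convergent to $f$ on $Y$, Theorem \ref{8} applied to $X_{i,j(i)} \stackrel{GH}{\to} Y$ produces $f \in \mathcal{D}^2(\Delta, Y) \cap \mathrm{LIP}(Y)$ together with $\mathrm{Hess}_{f_{i,j(i)}^{(s_i)}} \to \mathrm{Hess}_f$ strongly in $L^{2q}$ on $Y$. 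To upgrade this to $\mathrm{Hess}_{f_i} \to \mathrm{Hess}_f$ strongly in $L^{2q}$ on $Y$, I would verify (i) $L^{2q}$-weak convergence of $\mathrm{Hess}_{f_i}$ to $\mathrm{Hess}_f$ on $Y$, using uniform $L^{2q}$-bounds inherited from the inner sequences together with (2.9)--(2.10) and closedness of the Hessian via the Bochner identity; and (ii) the norm convergence $\|\mathrm{Hess}_{f_i}\|_{L^{2q}} \to \|\mathrm{Hess}_f\|_{L^{2q}}$, by choosing $j(i)$ large enough that $\bigl|\|\mathrm{Hess}_{f_{i,j(i)}^{(s_i)}}\|_{L^{2q}} - \|\mathrm{Hess}_{f_i}\|_{L^{2q}}\bigr| < 1/i$ and invoking the diagonal strong convergence to force $\|\mathrm{Hess}_{f_{i,j(i)}^{(s_i)}}\|_{L^{2q}} \to \|\mathrm{Hess}_f\|_{L^{2q}}$.

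The main obstacle is to orchestrate the two parameters $s$ and $j$ so that Theorem \ref{8} applies simultaneously at both scales, keeping uniform control of $\|\Delta \cdot\|_{L^{\infty}} + \|\Delta \cdot\|_{H^{1,2}}$ throughout and correctly identifying all relevant $L^2$-strong limits. It is essential that the approximating sequence stays in the smooth class $\mathcal{M}$, since the Cheeger--Naber Hessian bound underlying Theorem \ref{8} is invoked only on smooth manifolds.
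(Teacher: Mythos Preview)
Your diagonal reduction to Theorem \ref{8} is essentially the same strategy as the paper's proof. The paper also fixes smooth $Y_{i,j}\in\mathcal{M}$ with $Y_{i,j}\stackrel{GH}{\to}Y_i$, builds smooth $f_{i,j}\in C^\infty(Y_{i,j})$ via Poisson's equation, applies Theorem \ref{8} along each inner sequence to get $\mathrm{Hess}_{f_{i,j}}\to\mathrm{Hess}_{f_i}$ strongly in $L^{2q}$, extracts a diagonal $j(i)$ with the $L^{2q}$-norms of the Hessians close, and applies Theorem \ref{8} again along the diagonal to conclude.

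The difference is that the paper skips your heat-flow regularization $\tilde f_i^{(s)}=H_sf_i$ on $Y_i$ entirely: since $\Delta f_i\in H^{1,2}(Y_i)\cap L^\infty(Y_i)$, it approximates $\Delta f_i$ directly on $Y_{i,j}$ by (2.6), then truncates at height $\|\Delta f_i\|_{L^\infty}$ to preserve the $L^\infty$ bound, and smooths by heat flow on the smooth manifold. This lands immediately on $f_{i,j}$ whose inner Hessian limit is $\mathrm{Hess}_{f_i}$ itself, not $\mathrm{Hess}_{\tilde f_i^{(s)}}$. That makes step (ii) of your argument trivial, because the inner Theorem \ref{8} already gives $\|\mathrm{Hess}_{f_{i,j}}\|_{L^{2q}}\to\|\mathrm{Hess}_{f_i}\|_{L^{2q}}$.

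In your version there is a small gap at exactly this point: from the inner sequence you only obtain $\|\mathrm{Hess}_{f_{i,j}^{(s)}}\|_{L^{2q}}\to\|\mathrm{Hess}_{\tilde f_i^{(s)}}\|_{L^{2q}}$, so your claim $\bigl|\|\mathrm{Hess}_{f_{i,j(i)}^{(s_i)}}\|_{L^{2q}}-\|\mathrm{Hess}_{f_i}\|_{L^{2q}}\bigr|<1/i$ requires in addition that $\|\mathrm{Hess}_{H_sf_i}\|_{L^{2q}}\to\|\mathrm{Hess}_{f_i}\|_{L^{2q}}$ as $s\to 0$ on the fixed space $Y_i$. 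This is true (pick $q'\in(q,2)$, use the uniform $L^{2q'}$-bound inherited from the inner sequence together with the $L^2$-strong convergence of Hessians coming from Bochner's inequality, and apply (2.5)), but you should say so explicitly. With that patch, your proof is correct; the paper's route simply avoids the detour.
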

\begin{proof}
By the Lipschitz regularity of solutions of Poisson's equations (2.2), the closedness of the Dirichlet Laplacian (2.4) and the lower semicontinuity of norms with respect to $L^p$-weak convergence (2.10), we see that $f \in \mathcal{D}^2(\Delta, Y) \cap \mathrm{LIP}(Y)$, that $f_i, \Delta f_i$ $L^2$-converge strongly to $f, \Delta f$ on $Y$, respectively and that $\mathrm{Hess}_{f_i}$ $L^2$-converges weakly to $\mathrm{Hess}_f$ on $Y$.
Therefore it suffices to check that
\begin{align}\label{8833}
\lim_{i \to \infty}\int_{Y_i}|\mathrm{Hess}_{f_i}|^{2q}dH^n= \int_Y|\mathrm{Hess}_f|^{2q}dH^n
\end{align}
for every $q \in (1, 2)$.
For every $i$ let $Y_{i, j}$ be a sequence in $\mathcal{M}$ such that $Y_{i, j} \stackrel{GH}{\to} Y_i$. 
From the existence of an approximation sequence (2.6), there exist sequences $g_{i, j} \in H^{1, 2}(Y_j)$ with 
\[\int_{Y_j}g_{i, j}dH^n=0\]
such that $g_{i, j}, \nabla g_{i, j}$ $L^2$-converge strongly to $\Delta f_i, \nabla \Delta f_i$ on $Y$, respectively.
Without loss of generality we can assume that 
\[\sup_{j, j}||g_{i, j}||_{L^{\infty}}<\infty\]
because the sequence $\hat{g}_{i, j} \in H^{1, 2}(Y_j) \cap L^{\infty}(Y_j)$ defined by
\begin{align*}
\hat{g}_{i, j}(x):=
\begin{cases} ||\Delta f_i||_{L^{\infty}} \,\,\,\,\,\mathrm{if}\,g_{i, j}(x) \ge ||\Delta f_i||_{L^{\infty}}, \\
g_{i, j}(x) \,\,\,\,\,\mathrm{if}\, |g_{i, j}(x)| <||\Delta f_i||_{L^{\infty}}, \\
-||\Delta f_i||_{L^{\infty}} \,\,\,\,\,\mathrm{if}\, g_{i, j}(x) \le -||\Delta f_i||_{L^{\infty}},
\end{cases}
\end{align*}
satisfies that $\hat{g}_{i, j}, \nabla \hat{g}_{i, j}$ $L^2$-converge strongly to $\Delta f_i, \nabla \Delta f_i$ on $Y$, respectively.

Moreover by the smoothing via the heat flow (c.f. \cite{ags, grigo}), we can assume that  $g_{i, j} \in C^{\infty}(Y_{i, j})$.
Let $f_{i, j}:=\Delta^{-1}g_{i, j} \in C^{\infty}(Y_{i, j})$, where the smoothness of $f_{i, j}$ follows from the elliptic regurality theorem.
Note that by the continuity of solutions of Poisson's equations (2.8), $f_{i, j}, \nabla f_{i, j}$ $L^2$-converge strongly to $f_i, \nabla f_i$ on $Y_i$, respectively. 
Then Theorem \ref{8} yields that
$\mathrm{Hess}_{f_{i, j}}$ $L^{2q}$-converges strongly to $\mathrm{Hess}_{f_i}$ on $Y_i$
for every $q \in (1, 2)$.
Thus
\begin{align*}
\lim_{j \to \infty}\int_{Y_{i, j}}|\mathrm{Hess}_{f_{i, j}}|^{2q}dH^n=\int_{Y_i}|\mathrm{Hess}_{f_i}|^{2q}dH^n.
\end{align*}
Therefore there exists a subsequence $j(i)$ with 
\[\sup_i\left( ||\Delta f_{i, j(i)}||_{L^{\infty}} +||\Delta f_{i, j(i)}||_{H^{1, 2}} \right)<\infty\]
such that $f_{i, j(i)}, \Delta f_{i, j(i)}$ $L^2$ converge strongly to $f, \Delta f$ on $Y$, respectively and that
\[\lim_{i \to \infty}\left| \int_{Y_{i, j(i)}}|\mathrm{Hess}_{f_{i, j(i)}}|^{2q}dH^n-\int_{Y_i}|\mathrm{Hess}_{f_i}|^{2q}dH^n\right|=0.\]
Thus since Theorem \ref{8} yields
\begin{align*}
\lim_{i \to \infty}\int_{Y_{i, j(i)}}|\mathrm{Hess}_{f_{i, j(i)}}|^{2q}dH^n=\int_{Y}|\mathrm{Hess}_{f}|^{2q}dH^n,
\end{align*}
we have
\[\lim_{i \to \infty}\int_{Y_i}|\mathrm{Hess}_{f_i}|^{2q}dH^n=\int_{Y}|\mathrm{Hess}_{f}|^{2q}dH^n.\]
This completes the proof.
\end{proof}
The following is the main result in this section.
\begin{theorem}[$L^2$-strong convergence of Hessians]\label{L2Hess}
Let $Y_i \stackrel{GH}{\to} Y$ in $\overline{\mathcal{M}}$, let $f_i$ be a sequence in $\mathcal{D}^2(\Delta, Y_i)$, and let $f \in \mathcal{D}^2(\Delta, Y)$ be the $L^2$-strong limit of them on $Y$.
Assume that $\Delta f$ is the $L^2$-strong limit of $\Delta f_i$ on $Y$.
Then we see that $\mathrm{Hess}_{f_i}$ $L^2$-converges strongly to $\mathrm{Hess}_{f}$ on $Y$.
\end{theorem}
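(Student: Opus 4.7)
The strategy is to reduce to Corollary \ref{ppm} by approximating the general sequence $f_i$ with the smoother sequence $f_{i,t}:=H_t f_i$ for small $t>0$, and to bound the approximation error via an integrated Bochner identity. By (2.4), $\mathrm{Hess}_{f_i}$ is already $L^2$-weakly convergent to $\mathrm{Hess}_f$, so it suffices to establish the norm upper semicontinuity
\[
\limsup_{i \to \infty} ||\mathrm{Hess}_{f_i}||_{L^2(Y_i)} \le ||\mathrm{Hess}_f||_{L^2(Y)}.
\]

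For each $t>0$, set $f_{i,t}:=H_t f_i$ and $f_t:=H_t f$, where $H_t$ denotes the heat semigroup on $Y_i$ and $Y$, respectively. Since $\Delta$ commutes with $H_t$, we have $\Delta f_{i,t}=H_t\Delta f_i$, and standard $L^\infty$- and Sobolev-smoothing estimates for the heat kernel on the $RCD(K_1,n)$-spaces $Y_i$ (of diameter at most $d$) give
\[
\sup_i\bigl(||\Delta f_{i,t}||_{L^\infty}+||\Delta f_{i,t}||_{H^{1,2}}\bigr) \le C(n,K_1,d,t)\,\sup_i ||\Delta f_i||_{L^2} < \infty
\]
for each fixed $t>0$. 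Continuity of the heat semigroup under mGH-convergence of $RCD$-spaces, combined with the hypothesis that $f_i \to f$ in $L^2$-strong, implies $f_{i,t}\to f_t$ in $L^2$-strong for each fixed $t>0$. Thus Corollary \ref{ppm} applies and yields $\mathrm{Hess}_{f_{i,t}} \to \mathrm{Hess}_{f_t}$ in $L^2$-strong on $Y$, as $i\to\infty$, for every $t>0$.

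The key error estimate is a Hessian energy bound obtained by integrating Bochner's formula: for every $h\in\mathcal{D}^2(\Delta,Y_i)$ with $\Delta h\in H^{1,2}(Y_i)$,
\[
\int_{Y_i}|\mathrm{Hess}_h|^2\,dH^n = \int_{Y_i}(\Delta h)^2\,dH^n - \int_{Y_i}\mathrm{Ric}_{Y_i}(\nabla h,\nabla h)\,dH^n,
\]
and hence, using $\mathrm{Ric}_{Y_i}\ge K_1$,
\[
||\mathrm{Hess}_h||_{L^2}^2 \le ||\Delta h||_{L^2}^2 + |K_1|\,||\nabla h||_{L^2}^2,
\]
which extends by approximation (smoothing via $H_s$) to arbitrary $h\in\mathcal{D}^2(\Delta,Y_i)$. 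Applying this to $h:=f_i-f_{i,t}$, noting $\Delta h=\Delta f_i-H_t\Delta f_i$ and $||\nabla h||_{L^2}^2\le ||h||_{L^2}||\Delta h||_{L^2}$, together with the strong contractivity of $H_t$ on $L^2$, gives
\[
||\mathrm{Hess}_{f_i-f_{i,t}}||_{L^2(Y_i)} \le \varepsilon(t), \qquad \varepsilon(t) \to 0 \text{ as } t\to 0,
\]
uniformly in $i$, since $\sup_i ||\Delta f_i||_{L^2}<\infty$. Combining with the triangle inequality
\[
||\mathrm{Hess}_{f_i}||_{L^2(Y_i)} \le ||\mathrm{Hess}_{f_{i,t}}||_{L^2(Y_i)}+||\mathrm{Hess}_{f_i-f_{i,t}}||_{L^2(Y_i)},
\]
taking $\limsup_{i\to\infty}$ (using the strong convergence of the first term on the right), and finally letting $t\to 0$ (using the same Bochner estimate applied to $f-f_t$ on $Y$, which gives $||\mathrm{Hess}_{f_t}||_{L^2(Y)}\to ||\mathrm{Hess}_f||_{L^2(Y)}$) completes the proof.

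The main obstacle I anticipate is justifying, at the required level of regularity, both the integrated Bochner identity on $Y_i\in\overline{\mathcal{M}}$ and the mGH-continuity of $H_t$ applied to the sequence $f_i$. Both are available in the $RCD$-literature, but the approximation must be arranged so that the smoothed Laplacians $\Delta f_{i,t}$ lie in the regularity class covered by Corollary \ref{ppm} uniformly in $i$, and so that the Bochner-based Hessian bound is legitimate for the difference $f_i-f_{i,t}$ with only $\mathcal{D}^2(\Delta)$-membership on $f_i$.
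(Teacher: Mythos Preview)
Your overall strategy---smooth each $f_i$ by the heat flow so that Corollary \ref{ppm} applies, and control the error $f_i-H_tf_i$ through the integrated Bochner inequality---is the same scheme the paper uses, except that the paper smooths by solving Poisson's equation with a Lipschitz right-hand side approximating $\Delta f$ on the limit $Y$ and then lifting to $Y_j$ via (2.6), rather than by running the heat flow on each $Y_i$. Both routes reduce the problem to the same Bochner estimate
\[
\int_{Y_i}|\mathrm{Hess}_h|^2\,dH^n\ \le\ \int_{Y_i}(\Delta h)^2\,dH^n\ -\ K_1\int_{Y_i}|\nabla h|^2\,dH^n,
\]
available on $RCD$-spaces from \cite[Corollary 3.3.9]{gigli}. (Do not invoke the Bochner \emph{identity} with $\mathrm{Ric}_{Y_i}$ here: on $Y_i\in\overline{\mathcal{M}}$ that identity is only established later in the paper, \emph{using} Theorem \ref{L2Hess}, so citing it at this stage is circular. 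The inequality above is all you need and all the paper uses.)

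There is, however, a genuine gap in your error step. You assert that
\[
\|\Delta f_i-H_t\Delta f_i\|_{L^2(Y_i)}\ \le\ \varepsilon(t)\ \text{ uniformly in }i
\]
``since $\sup_i\|\Delta f_i\|_{L^2}<\infty$'' together with contractivity of $H_t$. This is false: strong continuity $\|g-H_tg\|_{L^2}\to 0$ is \emph{not} uniform over $L^2$-bounded sets (take $g$ to be a high eigenfunction). What saves the argument is the hypothesis you did not use here, namely the $L^2$-\emph{strong} convergence $\Delta f_i\to\Delta f$. Combined with the mGH-stability of the heat flow from \cite{gms}, this yields $H_t^{Y_i}\Delta f_i\to H_t^{Y}\Delta f$ in $L^2$-strong for each fixed $t$, hence
\[
\lim_{i\to\infty}\|\Delta f_i-H_t\Delta f_i\|_{L^2(Y_i)}\ =\ \|\Delta f-H_t\Delta f\|_{L^2(Y)},
\]
and the right-hand side tends to $0$ as $t\to 0$. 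So you only get control of $\limsup_{i\to\infty}$ (not $\sup_i$), but that is exactly what the final triangle-inequality step requires. With this correction your proof goes through; the paper's Lipschitz/Poisson approximation simply avoids invoking \cite{gms} by doing the smoothing on the fixed limit space $Y$ and lifting via (2.6).
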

\begin{proof}
By an argument similar to the proof of Corollary \ref{ppm}, without loss of generality we can assume that $Y_i \in \mathcal{M}$ and that $f_i \in C^{\infty}(Y_i)$.

Let  $g_{i}$ be a sequence in $\mathrm{LIP}(Y)$ with
\[\int_{Y}g_{i}dH^n=0\]
such that $g_{i} \to \Delta f$ in $L^2(Y)$

By the existence of an approximate sequence (2.6) and an argument similar to the proof of Corollary \ref{ppm}, there exists a sequence of $g_{i, j} \in C^{\infty}(Y_j)$ with
\[\sup_j||\nabla g_{i, j}||_{L^{\infty}}<\infty\]
such that $g_{i, j}, \nabla g_{i, j}$ $L^2$-converge strongly to $g_{i}, \nabla g_{i}$ on $Y$, respectively  and that 
\[\int_{Y_j}g_{i, j}dH^n=0.\]

Let $f_{i, j}:=(\Delta)^{-1}g_{i, j} \in C^{\infty}(Y_j)$.
Then Theorem \ref{8} with the continuity of solutions of Poisson's equations (2.8) yields that $\mathrm{Hess}_{f_{i, j}}$ $L^2$-converges strongly to $\mathrm{Hess}_{\Delta^{-1}g_{i}}$ on $Y$.
Integrating the Bochner inequality: 
\[-\frac{1}{2}\Delta |\nabla (f_{i, j}-f_j)|^2 \ge |\mathrm{Hess}_{f_{i, j}-f_j}|^2 -\langle \nabla \Delta (f_{i, j}-f_j), \nabla (f_{i, j}-f_j)\rangle +K_1|\nabla (f_{i, j}-f_j)|^2\]
gives
\[\int_{Y_j}|\mathrm{Hess}_{f_{i, j}-f_j}|^2dH^n \le \int_{Y_j}\left(\Delta (f_{i, j}-f_j))^2 - K_1|\nabla (f_{i, j}-f_j)|^2\right)dH^n.\]
Thus  letting $j \to \infty$ and $i \to \infty$ yields
\begin{align}\label{bg}
&\lim_{i \to \infty}\left(\limsup_{j \to \infty}\int_{Y_j}|\mathrm{Hess}_{f_{i, j}-f_i}|^2dH^n\right) \nonumber \\
&\le \lim_{i \to \infty}\left( \int_{Y}\left(\Delta (\Delta^{-1}g_{i}-f))^2 - K_1|\nabla (\Delta^{-1}g_{i}-f)|^2\right)dH^n\right)=0. 
\end{align}
On the other hand, Gigli's Bochner inequality on an $RCD$-space \cite[Corollary $3.3.9$]{gigli} yields
\[\int_{Y}|\mathrm{Hess}_{\Delta^{-1}g_{i}-f}|^2dH^n \le \int_{Y}\left(\Delta (\Delta^{-1}g_{i}-f))^2 + K_1|\nabla (\Delta^{-1}g_{i}-f)|^2\right)dH^n.\]
In particular
\begin{align}\label{bg1}
\lim_{i \to \infty}\int_Y|\mathrm{Hess}_{\Delta^{-1}g_i}|^2dH^n = \int_Y|\mathrm{Hess}_f|^2dH^n.
\end{align}
(\ref{bg}) and (\ref{bg1}) yield
\[\limsup_{i \to \infty}\int_{Y_i}|\mathrm{Hess}_{f_i}|^2dH^n\le \int_Y|\mathrm{Hess}_f|^2dH^n.\]
This completes the proof. 
\end{proof}
\begin{corollary}\label{nn}
Let $Y_i \stackrel{GH}{\to} Y$ in $\overline{\mathcal{M}}$, let $f_i, g_i$ be sequences in $\mathcal{D}^2(\Delta, Y_i)$ with
\[\sup_i\left( ||\nabla f_i||_{L^{\infty}} + ||\nabla g_i||_{L^{\infty}}\right)<\infty,\]
and let $f, g \in \mathcal{D}^2(\Delta, Y)$ be the $L^2$-strong limits of them on $Y$, respectively.
Assume that $\Delta f_i, \Delta g_i$ $L^2$-converge strongly to $\Delta f, \Delta g$ on $Y$, respectively.
Then $[\nabla f_i, \nabla g_i]$ $L^2$-converges strongly to $[\nabla f, \nabla g]$ on $Y$. 
\end{corollary}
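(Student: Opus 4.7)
The plan is to reduce the statement to Theorem \ref{L2Hess} via the standard identity
\[
\bigl\langle [\nabla f, \nabla g], \nabla \phi \bigr\rangle \;=\; \mathrm{Hess}_g(\nabla f, \nabla \phi) - \mathrm{Hess}_f(\nabla g, \nabla \phi),
\]
which holds on any smooth closed Riemannian manifold for smooth $f, g, \phi$ (by symmetry of the Hessian and the Leibniz rule for the metric), and extends to the nonsmooth limit $Y$ via the weak $C^{1,1}$-calculus recalled in (1.2)-(1.3). Consequently, the musical dual $[\nabla f, \nabla g]^{\flat}$ coincides, as an element of $L^2(T^*Y)$, with the difference $\mathrm{Hess}_g(\nabla f, \cdot) - \mathrm{Hess}_f(\nabla g, \cdot)$, so it suffices to prove that each of these two contractions converges $L^2$-strongly with respect to $Y_i \stackrel{GH}{\to} Y$.

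First, I would apply Theorem \ref{L2Hess} separately to $\{f_i\}$ and $\{g_i\}$ to obtain that $\mathrm{Hess}_{f_i}, \mathrm{Hess}_{g_i}$ converge $L^2$-strongly to $\mathrm{Hess}_f, \mathrm{Hess}_g$ on $Y$. By (2.4), the gradients $\nabla f_i, \nabla g_i$ also converge $L^2$-strongly to $\nabla f, \nabla g$, and they are uniformly bounded in $L^\infty$ by hypothesis. Second, I would deduce from these ingredients that $\mathrm{Hess}_{g_i}(\nabla f_i, \cdot) \to \mathrm{Hess}_g(\nabla f, \cdot)$ $L^2$-strongly (and symmetrically for the other term); the conclusion on $[\nabla f_i, \nabla g_i]^{\flat}$, and hence on $[\nabla f_i, \nabla g_i]$, then follows by linearity and raising the index via the metric.

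The main obstacle is this last step: showing that strong $L^2$-convergence of a $(0,2)$-tensor is preserved under pointwise contraction with a uniformly $L^\infty$-bounded, strongly $L^2$-convergent vector field. Weak $L^2$-convergence of the contractions is straightforward from Definition \ref{Lpreal} by testing against the canonical one-forms $dr_z$. To upgrade to strong convergence, I would apply Claim \ref{mmmmm} to the nonnegative functions
\[
F_i \;:=\; \bigl|\mathrm{Hess}_{g_i}(\nabla f_i, \cdot)\bigr|^2 \;\le\; \|\nabla f_i\|_{L^\infty}^2\,|\mathrm{Hess}_{g_i}|^2,
\]
with candidate limit $F := |\mathrm{Hess}_g(\nabla f, \cdot)|^2$. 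The $L^1_{\mathrm{loc}}$-weak upper semicontinuity of $\{F_i\}$ at a Lebesgue point $x \in \mathcal{R}$ reduces, after rescaling to a Euclidean tangent cone and using the strong $L^2$-convergence $\nabla f_i \to \nabla f$ to replace $\nabla f_i$ by its asymptotic constant value on small balls, to the averaged convergence of $|\mathrm{Hess}_{g_i}|^2$ on small balls around $x$, which is guaranteed by Step 1. This yields $\limsup_i \int_{Y_i} F_i \, dH^n \le \int_Y F \, dH^n$ and hence strong $L^2$-convergence of each contraction, completing the plan.
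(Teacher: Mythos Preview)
Your approach is essentially the same as the paper's: both use the identity $[\nabla f,\nabla g]^{*}=\mathrm{Hess}_{g}(\nabla f,\cdot)-\mathrm{Hess}_{f}(\nabla g,\cdot)$ and then invoke Theorem \ref{L2Hess}. The only difference is in the contraction step: the paper dispatches it in one line by citing \cite[Proposition 3.70]{holp} (strong $L^2$-convergence is preserved under contraction with a uniformly $L^\infty$-bounded, strongly convergent vector field), whereas you sketch a direct argument via Claim \ref{mmmmm}; your route is sound but unnecessarily elaborate here.
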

\begin{proof}
Since
\[[\nabla f_i, \nabla g_i]^*=\mathrm{Hess}_{g_i}(\nabla f_i, \cdot) - \mathrm{Hess}_{f_i}(\nabla g_i, \cdot),\]
Corollary \ref{nn} is a direct consequence of Theorem \ref{L2Hess} and \cite[Proposition $3.70$]{holp}.
\end{proof}
\section{Curvature of limit spaces}
We first recall two approximation theorems by new test functions given in \cite{hoell}.

Let $X \in \overline{\mathcal{M}}$ and let 
\begin{align}\label{3wwc}
\widetilde{\mathrm{Test}}F(X):=\{f \in \mathcal{D}^2(\Delta, X); \Delta f \in \mathrm{LIP}(X)\}.
\end{align}
Note that the Lipschitz regularity of solutions of Poisson's equations (2.2) yields
\[\widetilde{\mathrm{Test}}F(X) \subset \mathrm{Test}F(X)\]
(recall (\ref{testdefini}) for the definition of $\mathrm{Test}F(X)$).
Then the following were proven in \cite{hoell}.
\begin{enumerate}
\item[(3.1)]{(Existence of $H^{2, 2}$-approximation)} For every $f \in \mathrm{LIP}(X) \cap \mathcal{D}^2(\Delta, X)$, there exists a sequence $f_i \in \widetilde{\mathrm{Test}}F(X)$ with 
\[\sup_i||\nabla f_i||_{L^{\infty}}<\infty\]
such that $f_i \to f$ in $H^{1, 2}(X)$ and that $\Delta f_i \to \Delta f$ in $L^2(X)$.
In particular
$\mathrm{Hess}_{f_i} \to \mathrm{Hess}_f$ in $L^2(T^0_2X)$.
Moreover if $f \in \mathrm{Test}F(X)$, then we can assume that $\Delta f_i \to \Delta f$ in $H^{1, 2}(X)$.
See \cite[Proposition $7.5$]{hoell}.
\item[(3.2)]{(Existence of $H^{1, p}$-approximation)} For every $f \in \mathrm{LIP}(X)$, there exists a sequence $f_i \in \widetilde{\mathrm{Test}}F(X)$ with
\[\sup_i||\nabla f_i||_{L^{\infty}}<\infty\]
such that $f_i \to f$ in $H^{1, p}(X)$ for every $p \in (1, \infty)$.
\end{enumerate}
Note that (3.2) essentially follows from (3.1).

For reader's convenience, we give the sketches of these proofs.

We first recall a key point to prove (3.1), which is considering the following approximation:
\[H_{\delta}(\widetilde{H}_{\epsilon}f),\]
where $\widetilde{H}_{\epsilon}$ is a mollified heat flow defined by
\begin{align}\label{mollified}
\widetilde{H}_{\epsilon}f:=\frac{1}{\epsilon}\int_0^{\infty}H_sf\phi (s \epsilon^{-1})ds,
\end{align}
and $\phi$ is a nonnegatively valued smooth function on $(0, 1)$ with
\[\int_0^1\phi (s)ds=1.\]
Then by the regularity theory of the heat flow in \cite[Theorems $6.1$ and $6.2$]{ags},  letting $\delta \to 0$ and $\epsilon \to 0$ for $H_{\delta}(\widetilde{H}_{\epsilon}f)$ gives (3.1). 

In order to get (3.2), we recall that 
\[\sup_{\epsilon<1}||\nabla \widetilde{H}_{\epsilon}f||_{L^{\infty}}<\infty \]
and that $\widetilde{H}_{\epsilon}f \to f$ in $H^{1, 2}(X)$ (see \cite[(3.2.3)]{gigli}).
Since $\widetilde{H}_{\epsilon}f \in \mathrm{LIP}(X) \cap \mathcal{D}^2(\Delta, X)$, applying (3.1) for  $\widetilde{H}_{\epsilon}f$ with the equivalence of $L^p$-strong convergence (2.5) yields (3.2).
\subsection{Riemannian curvature tensor}
We prove Theorem \ref{curvaturetensor}.

\textit{Proof of Theorem \ref{curvaturetensor}.}

We first prove the existence of $R_X$.
Let $X_i \in \mathcal{M}$ with $X_i \stackrel{GH}{\to} X$.
The $L^{q}$-bounds on $R_{X_i}$ in \cite[Theorem 1.8]{chna} by Cheeger-Naber yields  
\[\sup_{i}||R_{X_i}||_{L^q}<\infty\]
for every $q \in (1, 2)$.
Thus by the compactness of $L^q$-weak convergence (2.9), without loss of generality we can assume that there exists $T \in \bigcap_{q<2}L^q(T^4_0X)$ such that $R_{X_i}$ $L^q$-converges weakly to $T$ on $X$ for every $q \in (1, 2)$.
\begin{claim}\label{20}
$T$ satisfies (\ref{riem}) instead of $R_X$ if $f_i \in \widetilde{\mathrm{Test}}F(X)$ holds for every $i$.
\end{claim}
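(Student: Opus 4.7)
The strategy is to lift the identity (\ref{riem}), which holds on each smooth $X_i \in \mathcal{M}$ by standard Riemannian geometry, from $X_i$ to $X$ by approximating the test functions $f_0, \ldots, f_4 \in \widetilde{\mathrm{Test}}F(X)$ by smooth functions on $X_i$ and then invoking the convergence machinery of Section 2. Concretely, the plan is to construct, for each $k \in \{0, 1, 2, 3, 4\}$, a sequence $f_{k, i} \in C^{\infty}(X_i)$ enjoying two properties: a uniform bound
\[\sup_i\bigl(||f_{k, i}||_{L^{\infty}} + ||\nabla f_{k, i}||_{L^{\infty}} + ||\Delta f_{k, i}||_{L^{\infty}}\bigr) < \infty,\]
and $L^2$-strong convergence of $f_{k, i}$, $\nabla f_{k, i}$ and $\Delta f_{k, i}$ to $f_k$, $\nabla f_k$ and $\Delta f_k$ on $X$, respectively. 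This can be arranged exactly as in the proof of Corollary \ref{ppm}: since $f_k \in \widetilde{\mathrm{Test}}F(X)$, the Laplacian $\Delta f_k$ is Lipschitz and in particular $L^{\infty}$; we use (2.6) to produce mean-zero Lipschitz approximants of $\Delta f_k$ on $X_i$, truncate to enforce the $L^{\infty}$-bound, smoothen by the heat flow to land in $C^{\infty}(X_i)$, and then define $f_{k, i}$ by solving the corresponding Poisson equation with mean-zero primitive. The continuity of solutions of Poisson's equation (2.8) supplies $L^2$-strong convergence of $f_{k, i}$ and $\nabla f_{k, i}$, while (2.2) gives the uniform Lipschitz bound. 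Theorem \ref{L2Hess} then yields $L^2$-strong convergence of $\mathrm{Hess}_{f_{k, i}}$ to $\mathrm{Hess}_{f_k}$.

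With these approximants in hand, the identity (\ref{riem}) holds on each smooth $X_i$ with $R_{X_i}$ and $f_{0, i}, \ldots, f_{4, i}$, so the claim reduces to passing to the limit $i \to \infty$ on both sides. On the left hand side, the tensor $f_{0, i}\, df_{1, i} \otimes df_{2, i} \otimes df_{3, i} \otimes df_{4, i}$ is uniformly $L^{\infty}$-bounded and, by (2.5) applied to each gradient factor, $L^{q'}$-converges strongly for every $q' < \infty$; pairing it with the $L^q$-weakly convergent tensor $R_{X_i}$ yields $\int_X f_0 \langle T, df_1 \otimes df_2 \otimes df_3 \otimes df_4 \rangle\, dH^n$ in the limit.

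For the right hand side, the two non-bracket blocks are products of scalar coefficients built from $\nabla f_{k, i}$ and $\Delta f_{k, i}$, each uniformly $L^{\infty}$-bounded and $L^2$-strongly convergent, with Hessian factors contracted against two gradients; since $\mathrm{Hess}_{f_{k, i}}$ converges $L^2$-strongly and the gradients are uniformly $L^{\infty}$ and $L^p$-strongly convergent, each of these terms passes to the limit. The main obstacle is the Lie-bracket term $\int_{X_i} f_{0, i}\,\mathrm{Hess}_{f_{3, i}}([\nabla f_{1, i}, \nabla f_{2, i}], \nabla f_{4, i})\, dH^n$, in which the vector field $[\nabla f_{1, i}, \nabla f_{2, i}]$ is itself built from Hessians and a priori has no obvious $L^2$-strong limit. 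The key input here is Corollary \ref{nn}, which provides exactly the $L^2$-strong convergence of $[\nabla f_{1, i}, \nabla f_{2, i}]$ to $[\nabla f_1, \nabla f_2]$ under the present hypotheses; combined with the $L^2$-strong convergence of $\mathrm{Hess}_{f_{3, i}}$ and the uniform $L^{\infty}$-control of $f_{0, i}$ and $\nabla f_{4, i}$, this term also passes to the limit. Summing the limits of all the blocks on the right hand side reproduces (\ref{riem}) for $f_0, \ldots, f_4$ with $T$ in place of $R_X$, which is the claim.
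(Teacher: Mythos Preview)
Your proposal is correct and follows essentially the same route as the paper: construct smooth approximants $f_{k,i}\in C^\infty(X_i)$ of each $f_k\in\widetilde{\mathrm{Test}}F(X)$ via the Poisson-equation procedure of Corollary \ref{ppm}, invoke (2.2) and (2.8) for the uniform Lipschitz bounds and $L^2$-strong convergence of $f_{k,i}$ and $\nabla f_{k,i}$, apply Theorem \ref{L2Hess} for the Hessians and Corollary \ref{nn} for the Lie bracket, and pass to the limit in (\ref{riem}) on $X_i$. This matches the paper's argument almost verbatim.
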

The proof is as follows.
Let $g_i:=\Delta f_i$.
By an argument similar to the proof of Corollary \ref{ppm}, there exist sequences $g_{i, j} \in C^{\infty}(X_j)$ with
\[\sup_{j}||\nabla g_{i, j}||_{L^{\infty}}<\infty\]
such that $g_{i, j}, \nabla g_{i, j}$ $L^2$-converge strongly to $g_i, \nabla g_i$ on $X$, respectively and that
\[\int_{X_j}g_{i, j}dH^n=0.\]
Let $f_{i, j}=\Delta^{-1}g_{i, j} \in C^{\infty}(X_j)$.
From the Lipschitz regurality and the continuity of solutions of Poisson's equations (2.2) and (2.8), we see that 
\[\sup_{j}||\nabla f_{i, j}||_{L^{\infty}}<\infty\]
 and that  $f_{i, j}, \nabla f_{i, j}$ $L^2$-converge strongly $f_i, \nabla f_i$ on $X$, repsectively.
Note that (\ref{riem}) gives
\begin{align}\label{aav}
&\int_{X_j}f_{0, j}\langle R_{X_j}, df_{1, j} \otimes df_{2, j}, \otimes df_{3, j} \otimes df_{4, j}\rangle dH^n \nonumber \\
&=\int_{X_j}\left(\left( -\langle \nabla f_{0, j}, \nabla f_{1, j}\rangle + f_{0, j}\Delta f_{1, j}\right)\mathrm{Hess}_{f_{3, j}}\left(\nabla f_{2, j}, \nabla f_{4, j}\right)\right)dH^n \nonumber \\
&\,\,\,\,\,-\int_{X_j}f_{0, j}\left\langle \mathrm{Hess}_{f_{3, j}}(\nabla f_{2, j}, \cdot), \mathrm{Hess}_{f_{4, j}}(\nabla f_{1, j}, \cdot )\right\rangle  dH^n\nonumber \\
&\,\,\,\,\,-\int_{X_j}\left(\left( -\langle \nabla f_{0, j}, \nabla f_{2, j}\rangle +f_{0, j}\Delta f_{2, j}\right)\mathrm{Hess}_{f_{3, j}}\left(\nabla f_{1, j}, \nabla f_{4, j}\right)\right)dH^n \nonumber \\
&\,\,\,\,\,+\int_{X_j}f_{0, j}\left\langle \mathrm{Hess}_{f_{3, j}}(\nabla f_{1, j}, \cdot), \mathrm{Hess}_{f_{4, j}}(\nabla f_{2, j}, \cdot )\right\rangle dH^n\nonumber \\
&\,\,\,\,\,-\int_{X_j}f_{0, j}\mathrm{Hess}_{f_{3, j}}\left( [\nabla f_{1, j}, \nabla f_{2, j}], \nabla f_{4, j}\right)dH^n \nonumber.
\end{align}
Thus letting $j \to \infty$ with Theorem \ref{L2Hess} and Corollary \ref{nn} yields Claim \ref{20}.

Claim \ref{20} with the existence of an $H^{2, 2}$-approximate sequence (3.1) and Corollary \ref{nn} yields that $T$ satisfies (\ref{riem})  instead of $R_X$ for any $f_i \in \mathrm{Test}F(X)$.
Thus we have the existence of $R_X$.

Next we prove the uniqueness of $R_X$.
For that it suffices to check that for any $q \in (1, \infty)$ and $R \in L^q(T^r_sX)$, if 
\[\int_X\langle R, T\rangle dH^n=0\]
for every $T \in \mathrm{Test}T^r_sX$, then $R=0$, where
\begin{align}\label{testtensor}
\mathrm{Test}T^r_sX:=\left\{\sum_{i=1}^Nf_{i, 0}\nabla f_{i, 1}\otimes \cdots \otimes \nabla f_{i, r}  \otimes df_{i, r+1} \otimes \cdots \otimes df_{i, r+s}; N \in \mathbf{N}, f_{i, j} \in \mathrm{Test}F(X)\right\}.
\end{align}

However this is a direct consequence of the following:
\begin{claim}\label{21}
$\mathrm{Test}T^r_sX$ is dense in $L^q(T^r_sX)$ for every $q \in (1, \infty)$.
\end{claim}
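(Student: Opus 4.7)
The plan is to split the approximation into two stages: first approximate a general $L^q$-tensor by tensors whose factors are Lipschitz, then upgrade those Lipschitz factors to test functions by means of (3.2). Introduce the intermediate space
\begin{equation*}
\mathrm{LipT}^r_s X := \left\{ \sum_{i=1}^N f_{0,i}\, \nabla f_{1,i}\otimes\cdots\otimes\nabla f_{r,i}\otimes df_{r+1,i}\otimes\cdots\otimes df_{r+s,i} : N\in\mathbf{N},\ f_{j,i}\in \mathrm{LIP}(X)\right\}.
\end{equation*}
The two statements to chain are (i) that $\mathrm{LipT}^r_s X$ is dense in $L^q(T^r_s X)$ for every $q\in(1,\infty)$, and (ii) that every element of $\mathrm{LipT}^r_s X$ is an $L^q$-limit (for every such $q$) of elements of $\mathrm{Test}T^r_s X$.

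For (i), the cleanest route is through Gigli's module framework \cite{gigli}: by definition the $L^q$-cotangent module $L^q(T^*X)$ is the $L^q$-closure of the $L^\infty$-module generated by differentials of Lipschitz functions, and the iterated tensor product $L^q(T^r_s X)$ inherits this density as a tensor product of $L^q$-modules. A more hands-on alternative on $\overline{\mathcal{M}}$ uses the $C^{1,\alpha}$-regularity of $g_X$ on the regular set $\mathcal{R}$ together with a Cheeger-Colding partition of unity subordinate to a cover of $\mathcal{R}$ by charts in which gradients of distance functions form a Lipschitz frame; this reduces an arbitrary $T\in L^q(T^r_s X)$ locally to combinations of simple tensors in this frame with $L^q$ coefficients, which may themselves be approximated in $L^q$ by Lipschitz functions. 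The singular set has zero $H^n$-measure and so contributes nothing.

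For (ii), apply (3.2) to each Lipschitz factor $f_{j,i}$ of a given element of $\mathrm{LipT}^r_s X$: there exist sequences $f_{j,i,k}\in\widetilde{\mathrm{Test}}F(X)\subset\mathrm{Test}F(X)$ with $\sup_k\|\nabla f_{j,i,k}\|_{L^\infty}<\infty$ and $f_{j,i,k}\to f_{j,i}$ in $H^{1,p}(X)$ for every $p\in(1,\infty)$. A telescoping expansion of the difference of two tensor products, combined with H\"older's inequality using the uniform Lipschitz bounds on all factors except one and $L^p$-strong convergence of the remaining factor (for $p$ chosen suitably large relative to $q$), yields convergence of the approximating tensors to the original element of $\mathrm{LipT}^r_s X$ in $L^q(T^r_s X)$ for every $q\in(1,\infty)$. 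Chaining (i) and (ii) completes the proof. The main obstacle is (i), the Lipschitz-tensor density on a nonsmooth space: one must either invoke the abstract first-order differential calculus of \cite{gigli} or handle the geometry near the singular set carefully. Step (ii) is essentially automatic once (i) is in hand, since (3.2) supplies precisely the two properties---uniform Lipschitz bound and $H^{1,p}$-convergence---needed to commute the test-function approximation past tensor products.
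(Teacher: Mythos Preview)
Your proposal is correct and follows essentially the same two-step approach as the paper: first approximate an arbitrary $L^q$-tensor by the Lipschitz-type tensors $\mathrm{LipT}^r_sX$ (the paper's $\mathrm{LIP}(TX)$ in the case $r=1$, $s=0$), then replace each Lipschitz factor by a test function via (3.2). The only difference is emphasis: the paper dismisses step (i) as ``easy to check'' while you supply two justifications (Gigli's module framework or the $C^{1,\alpha}$-chart structure on $\mathcal{R}$), and your telescoping/H\"older argument in step (ii) spells out what the paper leaves implicit.
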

Claim \ref{21} follows directly from the regularity theory of the heat flow in \cite{ags}.
For reader's convenience, we give a proof in the case when $r=1, s=0$. 

Since it is easy to check that the space
\[\mathrm{LIP}(TX):=\left\{ \sum_{i=1}^Nf_i\nabla g_i; N \in \mathbf{N}, f_i, g_i \in \mathrm{LIP}(X)\right\}\]
is dense in $L^q(TX)$, it suffices to check that $\mathrm{Test}T^0_1X$ is dense in $\mathrm{LIP}(TX)$ with respect to the $L^q$-norm.
However this is a direct consequence of applying the existence of an $H^{1, p}$-approximate sequence (3.2) to $f_i, g_i$.
Thus we have Claim \ref{21}.

This completes the proof of Theorem \ref{curvaturetensor}.
$\,\,\,\,\,\,\,\Box$

\textit{Proof of Theorem \ref{riem2}.}

This is a direct consequence of the proof of Theorem \ref{curvaturetensor}.
$\,\,\,\,\,\,\,\Box$

We end this subsection by giving several results on $R_X$ which include the symmetries and the first Bianchi identity.

For any $V, W, Z \in L^{\infty}(TX)$, we define $R_X(V, W)Z \in \bigcap_{q<2}L^q(TX)$ by satisfying
\[\langle R_X(V, W)Z, Y\rangle =R_X(V, W, Z, Y)\]
on $X$ a.e. sense for every $Y \in L^{\infty}(TX)$. 
\begin{proposition}\label{bianchi}
For any $V, W, Z, Y \in L^{\infty}(TX)$, we have the following:
\begin{enumerate}
\item $R_X(V, W)Z=-R_X(W, V)Z$ on $X$  a.e. sense.
\item $R_X(V, W)Z+R_X(W, Z)V+R_X(Z, V)W=0$ on $X$  a.e. sense.
\item $\langle R_X(V, W)Z, Y\rangle+\langle R_X(V, W)Y, Z\rangle =0$ on $X$  a.e. sense.
\item $\langle R_X(V, W)Z, Y\rangle=\langle R_X(Z, Y)V, W\rangle$ on $X$  a.e. sense.
\end{enumerate}
\end{proposition}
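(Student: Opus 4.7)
The plan is to reduce each of (i)--(iv) to the classical Riemannian case via Theorem \ref{riem2}. Pick a sequence $X_i \in \mathcal{M}$ with $X_i \stackrel{GH}{\to} X$; by Theorem \ref{riem2}, $R_{X_i}$ $L^q$-converges weakly to $R_X$ on $X$ for every $q \in (1,2)$. Each $R_{X_i}$ satisfies (i)--(iv) pointwise on $X_i$ since these are the classical symmetries of the smooth Riemannian curvature tensor. The task is therefore purely to pass these linear tensor identities through the weak limit.

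The key technical observation is that, for every permutation $\sigma \in S_4$, the reordering operation $T \mapsto T^\sigma$ on $L^q$-tensors of type $(0,4)$ is continuous under the $L^q$-weak convergence of Definition \ref{Lpreal}. Indeed, the family of pairing tensors $\nabla r_{z^1} \otimes \cdots \otimes \nabla r_{z^4}$ is closed under relabeling of base points, so
\[
\int_{B_r(y_i)} \bigl\langle T_i^\sigma, \nabla r_{z^1_i} \otimes \cdots \otimes \nabla r_{z^4_i}\bigr\rangle dH^n = \int_{B_r(y_i)} \bigl\langle T_i, \nabla r_{z^{\sigma^{-1}(1)}_i} \otimes \cdots \otimes \nabla r_{z^{\sigma^{-1}(4)}_i}\bigr\rangle dH^n,
\]
and the right-hand side converges by the very definition of the $L^q$-weak convergence of $T_i$. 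Hence if $T_i \to T$ $L^q$-weakly, then $T_i^\sigma \to T^\sigma$ $L^q$-weakly.

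With this in hand, form $S_i^{(\mathrm{i})} := R_{X_i} + R_{X_i}^{\sigma_{12}}$ for (i), the cyclic sum $R_{X_i}(V,W,Z,\cdot) + R_{X_i}(W,Z,V,\cdot) + R_{X_i}(Z,V,W,\cdot)$ for (ii), and $R_{X_i} + R_{X_i}^{\sigma_{34}}$ for (iii). Each such $S_i$ vanishes identically on $X_i$ by the classical theory, so $\|S_i\|_{L^q(X_i)} = 0$. By the permutation-continuity above, $S_i$ $L^q$-converges weakly to the corresponding expression built from $R_X$; by lower semicontinuity of the $L^q$-norm under weak convergence (item (2.10)), the weak limit has zero $L^q$-norm and so vanishes $H^n$-a.e. on $X$. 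This gives (i), (ii), (iii). Identity (iv) is then a purely algebraic consequence of (i)--(iii) via the standard four-line computation (expressing $\langle R(V,W)Z,Y\rangle - \langle R(Z,Y)V,W\rangle$ as a cyclic sum of Bianchi identities), and the $L^\infty$-duality formulation $R_X(V,W)Z$ transfers the pointwise identities in $R_X$ to pointwise identities for the vector-field version.

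The argument is essentially routine once Theorem \ref{riem2} and Claim \ref{21} are available; the only point requiring a moment's verification is the permutation-continuity of the specific $L^q$-weak convergence, and this unpacks immediately from Definition \ref{Lpreal}. Thus I do not anticipate any serious obstacle beyond bookkeeping.
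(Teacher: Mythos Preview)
Your proof is correct and, in essence, a cleaner variant of the paper's argument. Both use Theorem \ref{riem2} to inherit the classical curvature symmetries from the smooth approximants $X_i\in\mathcal{M}$, but the mechanisms for passing to the limit differ. The paper fixes $V,W,Z,Y\in L^\infty(TX)$, lifts them to $L^\infty$ approximants $V_i,W_i,Z_i,Y_i$ on $X_i$ via \cite[Proposition 3.56]{holp}, applies the smooth identity to the integrals $\int_{B_r(x_i)}\langle R_{X_i}(V_i,W_i)Z_i,Y_i\rangle\,dH^n$, and then invokes Lebesgue differentiation. You instead exploit that the permutation action $T\mapsto T^\sigma$ is continuous under the $L^q$-weak convergence of Definition \ref{Lpreal} (which is immediate from the form of the test tensors $\nabla r_{z^1}\otimes\cdots\otimes\nabla r_{z^4}$), so the tensor combinations $R_{X_i}+R_{X_i}^{\sigma_{12}}$ etc.\ converge weakly to the corresponding combinations of $R_X$, and lower semicontinuity (2.10) forces the limit to vanish. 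Your route avoids both the vector-field approximation and the Lebesgue differentiation step, and it yields the identities at the level of the tensor $R_X\in L^q(T^0_4X)$ directly, from which the stated vector-field formulations follow pointwise a.e.

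One minor notational slip: in your description of the combination for (ii) you write $R_{X_i}(V,W,Z,\cdot)+\cdots$, which mixes vector fields on $X$ with tensors on $X_i$; what you intend is the purely tensorial cyclic sum $R_{X_i}+R_{X_i}^{(123)}+R_{X_i}^{(132)}$ on $X_i$. With that read correctly, the argument goes through without change.
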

\begin{proof}
We give a proof of (i) only because the proofs in other cases are similar.
By the Lebesgue differentiation theorem, it suffices to check that 
\begin{align}\label{hho}
\int_{B_r(x)}\langle R_X(V, W)Z, Y\rangle dH^n=-\int_{B_r(x)}\langle R_X(W, V)Z, Y\rangle dH^n
\end{align}
for any $x \in X$ and $r>0$.

Let $X_i$ be a sequence in $\mathcal{M}$ such that $X_i \stackrel{GH}{\to} X$.
From the existence of $L^p$-strong approximate sequence in \cite[Proposition $3.56$]{holp}, there exist sequences $V_i, W_i, Z_i, Y_i \in L^{\infty}(TX_i)$ with
\[\sup_i (||V_i||_{L^{\infty}}+||W_i||_{L^{\infty}}+||Z_i||_{L^{\infty}}+||Y_i||_{L^{\infty}})<\infty\]
such that $V_i, W_i, Z_i, Y_i$ $L^p$-converge strongly to $V, W, Z, Y$ on $X$ for every $p \in (1, \infty)$, respectively.

Let $r>0$ and let $x_i \stackrel{GH}{\to} x$.
Then since (i) holds on a smooth Riemannian manifold, we have
\[\int_{B_r(x_i)}\langle R_{X_i}(V_i, W_i)Z_i, Y_i\rangle dH^n=-\int_{B_r(x_i)}\langle R_{X_i}(W_i, V_i)Z_i, Y_i\rangle dH^n\]
for every $i$.
Thus
letting $i \to \infty$ with Theorem \ref{riem2} yields (\ref{hho}).
\end{proof}

\subsection{Ricci tensor}
Recall that our Ricci curvature $\mathrm{Ric}_X$ of $X \in \overline{\mathcal{M}}$ is defined by (\ref{definitionricci}).

\textit{Proofs of Theorem \ref{ellp} and of $L^{\infty}$-bound on Ricci curvature.}

Recall that every contraction of every $L^p$-weak convergent sequence of tensor fields is also an $L^p$-weak convergent sequence on a noncollapsed setting. See \cite[Proposition $3.72$]{holp}.
This result with Theorem \ref{riem2} and the lower semicontinuity of norms with respect to $L^p$-weak convergence (2.10) (c.f. \cite[Proposition 3.64]{holp}) gives Theorem \ref{ellp} and that $\mathrm{Ric}_X \in L^{\infty}(T^0_2X)$.
$\,\,\,\,\,\,\,\Box$

\subsubsection{Approximation theorems}
Main purpose of this subsection is to establish approximation theorems for Gigli's test differential forms:
\[\mathrm{TestForm}_k(X):=\left\{\sum_{i=1}^Nf_{0, i}df_{1, i} \wedge \cdots \wedge df_{k, i}; N \in \mathbf{N}, f_{j, i} \in \mathrm{Test}F(X)\right\}\]
and test tensor fields $\mathrm{Test}T^r_sX$ 
with respect to the Gromov-Hausdorff topology.
In order to give the precise statements, we give a quick introduction of Gigli's Sobolev spaces $H^{1, 2}_H(\bigwedge^kT^*X), H^{1, 2}_C(\bigwedge^kT^*X)$ for differential forms defined in \cite{gigli}, which will play fundamental roles in the study of the spectral convergence in Section 4.

For every $\omega \in \mathrm{TestForm}_k(X)$, we define $d \omega \in \mathrm{TestForm}_{k+1}(X), \delta \omega \in L^2(\bigwedge^{k-1}T^*X)$ and $\nabla \omega \in L^2(T^{0}_{k+1}X)$ as follows:
\begin{itemize}
\item If 
\[\omega=\sum_{i=1}^Nf_{0, i}df_{1, i} \wedge \cdots \wedge df_{k, i}\]
for some $f_{j, i} \in \mathrm{Test}F(X)$,
then
\[d\omega=\sum_{i=1}^Ndf_{0, i} \wedge df_{1, i} \wedge \cdots \wedge df_{k, i}.\]
\item $\delta \omega$ can be characterized by satisfying
\[\int_X\langle \delta \omega, \eta \rangle dH^n=\int_X\langle \omega, d\eta \rangle dH^n\]
for every $\eta \in \mathrm{TestForm}_{k-1}(X)$.
\item If $k=1$ and 
\[\omega=\sum_{i=1}^Nf_idg_i\]
for some $f_i, g_i \in \mathrm{Test}F(X)$,
then
\[\nabla \omega=\sum_{i=1}^N\left(dg_i\otimes df_i + f_i\mathrm{Hess}_{g_i}\right).\]
Then $\nabla$ is well-defined for test differential $k$-forms by satisfying Leibniz's rule.
\end{itemize}
Then the Sobolev spaces $H^{1, 2}_H(\bigwedge^kT^*X)$ and $H^{1, 2}_C(\bigwedge^kT^*X)$ are defined by the completions of $\mathrm{TestForm}_k(X)$ with respect to the norms
\[||\omega||_{H^{1, 2}_H}:=\left( ||\omega ||_{L^2}^2+||d\omega ||_{L^2}^2 + ||\delta \omega ||_{L^2}^2\right)^{1/2}\]
and
\[||\omega||_{H^{1, 2}_C}:=\left( ||\omega ||_{L^2}^2+||\nabla \omega ||_{L^2}^2\right)^{1/2},\]
respectively.

Similarly we can define the Sobolev space $H^{1, 2}_C(T^r_sX)$ for tensor fields of type $(r, s)$ by the completion of $\mathrm{Test}T^r_sX$ with respect to the norm
\[||T||_{H^{1, 2}_C}:=\left( ||T||_{L^2}^2+||\nabla T||_{L^2}^2\right)^{1/2}.\]
Note that these definitions are equivalent versions of the original them by Gigli.
See Section 3 in \cite{gigli} or subsection 2.5.4 in \cite{hoell} for the details.
\begin{remark}
As we mentioned in Section 1, we can find another approach in order to define the covariant derivative, the exterior derivative and so on via a  rectifiable coordinate system in \cite{ho}. 
A main result of \cite{hoell} states that these are compatible with Gigli's them as above.
\end{remark}

Next we recall the Hodge Laplacian, denoted by $\Delta_{H, k}$, acting on differential $k$-forms on $X$ defined in \cite{gigli}  by Gigli . 

Let $\mathcal{D}^2(\Delta_{H, k}, X)$ be the set of $\omega \in H^{1, 2}_H(\bigwedge^kT^*X)$ such that there exists 
$\eta \in L^2(\bigwedge^kT^*X)$ such that
\[\int_X\langle \eta, \xi \rangle dH^n=\int_X\left(\langle d\omega, d\eta \rangle +\langle \delta \omega, \delta \eta \rangle \right)dH^n\]
for every $\xi \in \mathrm{TestForm}_k(X)$.
Since $\eta$ is unique if it exists, we denote it by $\Delta_{H, k}\omega$.

Gigli proved in \cite{gigli} that
\[\mathrm{TestForm}_k(X) \subset \mathcal{D}^2(\Delta_{H, k}, X).\]
Moreover if $k=1$ and 
\[\omega=fdg \in \mathrm{TestForm}_k(X)\]
for some $f, g \in \mathrm{Test}F(X)$, then
\begin{align}\label{hodgeformula}
\Delta_{H, 1}\omega = fd\Delta g+\Delta fdg-2\mathrm{Hess}_g(\nabla f, \cdot )
\end{align}
and
\begin{align}\label{deltaformula1}
\delta \omega= -\langle df, dg \rangle +f\Delta g.
\end{align}
See \cite[Propositions $3.5.12$ and $3.6.1$]{gigli} or \cite[Remark $4.3$]{holp}.

Let us denote by $C^{\infty}(T^r_sM)$ and $C^{\infty}(\bigwedge^kT^*M)$ the spaces of smooth tensor fields of type $(r, s)$ and of smooth differential $k$-forms on a smooth closed Riemannian manifold $M$, respectively.
The following is the main result in this subsection.
\begin{theorem}[Smooth approximation]\label{14}
Let $X_i$ be a sequence in $\mathcal{M}$, let $X \in \overline{\mathcal{M}}$ be the Gromov-Hausdorff limit.
Then we have the following.
\begin{enumerate}
\item Let $\omega \in \mathrm{TestForm}_k(X)$.
Then there exists a sequence $\omega_{i} \in C^{\infty}(\bigwedge^kT^*X_{i})$ with
\[\sup_i(||\omega_{i}||_{L^{\infty}}+||d\omega_{i}||_{L^{\infty}})<\infty\]
such that
$\omega_{i}$, $d\omega_{i}$, $\nabla d\omega_{i}$, $\delta \omega_{i}$, $\nabla \omega_{i}$ $L^2$-converge strongly to $\omega$, $d\omega$, $\nabla d\omega$, $\delta \omega$, $\nabla \omega$ on $X$, respectively.
\item Let $\omega \in \mathrm{TestForm}_1(X)$.
Then
there exists a sequence $\omega_{i} \in C^{\infty}(T^*X_{i})$ with
\[\sup_i\left( ||\omega_{i}||_{L^{\infty}}+||d\omega_{i}||_{L^{\infty}} \right)<\infty\]
such that 
$\omega_{i}$, $d\omega_{i}$, $\nabla d\omega_{i}$, $\delta \omega_{i}$, $\nabla \delta \omega_{i}$, $\nabla \omega_{i}$, $\Delta_{H, 1} \omega_{i}$ $L^2$-converge strongly to $\omega$, $d\omega$, $\nabla d\omega$, $\delta \omega$, $\nabla \delta \omega$, $\nabla \omega$, $\Delta_{H, 1} \omega$ on $X$, respectively.
\item Let $T \in \mathrm{Test}T^r_sX$.
Then there exists a sequence $T_{i} \in C^{\infty}(T^r_sX_{i})$ with 
\[\sup_i| |T_{i}||_{L^{\infty}}<\infty\]
such that $T_{i}, \nabla T_{i}$ $L^2$-converge strongly to $T, \nabla T$ on $X$, respectively.
\end{enumerate}
\end{theorem}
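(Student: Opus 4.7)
The plan is to reduce Theorem~\ref{14} to a single approximation lemma for test \emph{functions}: for each $f\in\mathrm{Test}F(X)$, produce $f_i\in C^{\infty}(X_i)$ with
\[
\sup_i\bigl(\|f_i\|_{L^\infty}+\|\nabla f_i\|_{L^\infty}+\|\Delta f_i\|_{L^\infty}\bigr)<\infty
\]
such that $f_i,\nabla f_i,\Delta f_i$ converge $L^2$-strongly to $f,\nabla f,\Delta f$ on $X$ and $\mathrm{Hess}_{f_i}\to\mathrm{Hess}_f$ $L^2$-strongly; for part~(ii) I additionally need $\nabla\Delta f_i\to\nabla\Delta f$ $L^2$-strongly. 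Granted such approximants, I define $\omega_i$ and $T_i$ on $X_i$ by the same algebraic expression as $\omega$ and $T$ with each test function replaced by its approximant. Every quantity to be controlled---$d\omega,\delta\omega,\nabla d\omega,\nabla\omega,\nabla\delta\omega,\Delta_{H,1}\omega,T,\nabla T$---is polynomial in the ingredient functions, their gradients, their Hessians, and (for~(ii)) $\nabla\Delta f$, via the Leibniz rule combined with (\ref{hodgeformula})--(\ref{deltaformula1}). The required convergences then follow from the stability of $L^2$-strong convergence under products with uniformly $L^\infty$-bounded factors.

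To construct the $f_i$ I mimic the proof of Corollary~\ref{ppm}. Since $f\in\mathrm{Test}F(X)$ we have $\Delta f\in L^\infty(X)\cap H^{1,2}(X)$; using~(2.6) I pick $\widetilde g_i\in\mathrm{LIP}(X_i)$ with $\widetilde g_i\to\Delta f$ $L^2$-strongly, truncate at $\pm\|\Delta f\|_{L^\infty}$ (preserving the $L^2$-strong convergence and the $L^\infty$ bound), smooth via the heat flow, and subtract the mean to obtain $g_i\in C^\infty(X_i)$ with $\int_{X_i}g_i=0$. Setting $f_i:=\Delta^{-1}g_i$, the continuity of solutions of Poisson's equations~(2.8) yields the $L^2$-strong convergence of $f_i$ and $\nabla f_i$ to $f$ and $\nabla f$, the Lipschitz regularity~(2.2) bounds $\|\nabla f_i\|_{L^\infty}$ uniformly, and Theorem~\ref{L2Hess} upgrades the $L^2$-strong convergence of $\Delta f_i=g_i$ to the $L^2$-strong convergence of $\mathrm{Hess}_{f_i}$ to $\mathrm{Hess}_f$. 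This settles parts~(i) and~(iii).

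For part~(ii) I strengthen the approximation of $\Delta f$ from $L^2$ to $H^{1,2}$. Because $\Delta f\in H^{1,2}(X)$, (2.6) also produces $\widetilde g_i$ with $\widetilde g_i\to\Delta f$ and $\nabla\widetilde g_i\to\nabla\Delta f$ strongly in $L^2$; a $1$-Lipschitz truncation at $\pm\|\Delta f\|_{L^\infty}$ followed by heat-flow smoothing preserves both the $L^\infty$ bound and the $H^{1,2}$-strong convergence. Consequently $\Delta f_i=g_i$ converges to $\Delta f$ in $H^{1,2}$, so $\nabla\Delta f_i\to\nabla\Delta f$ $L^2$-strongly, and substitution into (\ref{hodgeformula})--(\ref{deltaformula1}) delivers the strong convergence of $\delta\omega_i$, $\nabla\delta\omega_i$, $\Delta_{H,1}\omega_i$.

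The main obstacle is the \emph{strong} (not merely weak) $L^2$-convergence of Hessians, which is exactly the content of Theorem~\ref{L2Hess}: weak $L^2$-convergence would be useless here, since products of weakly convergent sequences need not converge even weakly. A minor secondary point is checking that the truncation in the $H^{1,2}$-approximation of $\Delta f$ for part~(ii) preserves gradient convergence, which is standard but requires a small amount of care.
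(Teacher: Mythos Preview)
Your proposal is correct and follows essentially the same strategy as the paper: build smooth approximants on $X_i$ for each test function by solving Poisson equations $f_i=\Delta^{-1}g_i$ with $g_i$ approximating $\Delta f$ (via (2.6), truncation, heat-flow smoothing), invoke (2.2), (2.8) and Theorem~\ref{L2Hess} to control $f_i,\nabla f_i,\Delta f_i,\mathrm{Hess}_{f_i}$, and then assemble $\omega_i$ and $T_i$ algebraically. The only organizational difference is that the paper first handles the subclass $\widetilde{\mathrm{Test}}F(X)$ (where $\Delta f$ is Lipschitz, so (2.6) directly yields gradient convergence of the approximants of $\Delta f$) and then reduces the general case to it via the $H^{2,2}$-approximation~(3.1), whereas you do the approximation in one step; both routes use the same ingredients and the same truncation-preserves-$H^{1,2}$-convergence argument already appearing in Corollary~\ref{ppm}.
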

\begin{proof}
We give a proof of (i) only because the proofs in other cases are similar.

Without loss of generality we can assume that
\[\omega=f_0 df_1 \wedge \cdots \wedge df_k\]
for some $f_i \in \mathrm{Test}F(X)$.

We first prove the following:
\begin{claim}\label{m88}
(i) holds if $f_i \in \widetilde{\mathrm{Test}}F(X)$ holds for every $i$ (recall (\ref{3wwc}) for the definition of $\widetilde{\mathrm{Test}}F(X)$).
\end{claim}
The proof is as follows.

By an argument similar to the proof of Corollary \ref{ppm}, there exist sequences $g_{i, j} \in C^{\infty}(X_j)$ with \[\mathrm{sup}_{i, j}(||\nabla g_{i, j}||_{L^{\infty}}+||g_{i, j}||_{L^{\infty}})<\infty\]
such that $g_{i, j}, \nabla g_{i, j}$ $L^2$-converge strongly to $\Delta f_i, \nabla \Delta f_i$ on $X$, respectively and that
\[\int_{X_j}g_{i, j}dH^n=0.\]
Let $f_{i, j}:=\Delta^{-1}g_{i, j} \in C^{\infty}(X_j)$.

By the Lipschitz regularity and the continuity of solutions of Poisson's equations (2.2) and (2.8), we see that 
\begin{align}\label{byt}
\sup_{i, j}(||f_{i, j}||_{L^{\infty}}+||\nabla f_{i, j}||_{L^{\infty}})<\infty
\end{align}
and
that $f_{i, j}, \nabla f_{i, j}$ $L^2$-converge strongly to $f_i, \nabla f_i$ on $X$, respectively.

Let 
\[\omega_j:=f_{0, j}df_{1, j} \wedge \cdots \wedge df_{k, j} \in C^{\infty}(\bigwedge^kT^*X_j).\]
Then (\ref{byt}), Theorem \ref{L2Hess} and Corollary \ref{nn} yield that
\[\sup_{j}(||\omega_j||_{L^{\infty}}+||d\omega_j||_{L^{\infty}})<\infty.\]
and that $\mathrm{Hess}_{f_{i, j}}, [\nabla f_{i, j}, \nabla f_{l, j}]$ $L^2$-converge strongly to $\mathrm{Hess}_{f_i},  [\nabla f_i, \nabla f_l]$ on $X$, respectively.
Thus by \cite[Proposition 3.5.12]{gigli} we see that $\omega_j, d\omega_j, \nabla d\omega_j, \delta \omega_j, \nabla \omega_j$ $L^2$-converge strongly to $\omega, d\omega, \nabla d\omega, \delta \omega, \nabla \omega$ on $X$, respectively.
This completes the proof of Claim \ref{m88}.

We are now in a position to finish the proof of (i).
From the existence of an $H^{2, 2}$-approximation (3.1), there exist sequences $f_i^j \in \widetilde{\mathrm{Test}}F(X)$ with 
\[\sup_{i, j}||\nabla f_i^j||_{L^{\infty}}<\infty\]
such that $f_i^j, \Delta f_i^j \to f_i, \Delta f_i$ in $H^{1, 2}(X)$, respectively.

Let
\[\omega^j:=f_0^j df_1^j \wedge \cdots \wedge df_k^j.\]
Then we see that 
\[\sup_j(||\omega^{j}||_{L^{\infty}}+||d\omega^{j}||_{L^{\infty}})<\infty\]
and that $\omega^j, d\omega^j, \nabla d\omega^j, \delta \omega^j, \nabla \omega^j$ $L^2$-converge strongly to $\omega, d\omega, \nabla d\omega, \delta \omega, \nabla \omega$ on $X$, respectively.
Thus applying Claim \ref{m88} for $\omega^j$ gives (i).
\end{proof}
\begin{remark}\label{tildeapp}
By the proof of Theorem \ref{14}, we also have the following approximation.
\begin{enumerate}
\item Let
\begin{align*}
\widetilde{\mathrm{TestForm}}_k(X)&:=\left\{\sum_{i=1}^Nf_{0, i}df_{1, i} \wedge \cdots \wedge df_{k, i}; N \in \mathbf{N}, f_{j, i} \in \widetilde{\mathrm{Test}}F(X)\right\} \\
&\subset \mathrm{TestForm}_k(X).
\end{align*}
In (i) of Theorem \ref{14}, if $\omega \in \widetilde{\mathrm{TestForm}}_k(X)$, then we can take an approximate sequence $\omega_{i}$ as in the conclusion with an additional property:
\[\sup_i\left( ||\nabla d\omega_{i}||_{L^{2q}}+||\nabla \omega_{i}||_{L^{2q}}\right)<\infty\]
for every $q \in (1, 2)$.
Moreover if $k=1$, then we can also assume
\[\sup_i\left(||\nabla \delta \omega_{i}||_{L^{\infty}}+||\Delta_{H, 1}\omega_{i}||_{L^{2q}}\right)<\infty.\]
\item Let
\begin{align*}
&\widetilde{\mathrm{Test}}T^r_sX\\
&:=\left\{\sum_{i=1}^Nf_{i, 0}\nabla f_{i, 1}\otimes \cdots \otimes \nabla f_{i, r}  \otimes df_{i, r+1} \otimes \cdots \otimes df_{i, r+s}; N \in \mathbf{N}, f_{i, j} \in \widetilde{\mathrm{Test}}F(X)\right\}\\
& \subset \mathrm{Test}T^r_sX
\end{align*}
(see (\ref{testtensor}) for the definition of $\mathrm{Test}T^r_sX$).
In (iii) of Theorem \ref{14}, if $T \in \widetilde{\mathrm{Test}}T^r_sX$, then we can take an approximate sequence $T_{i}$ as in the conclusion with an additional property:
\[\sup_i ||\nabla T_{i}||_{L^{2q}}<\infty \]
for every $q \in (1, 2)$.
\end{enumerate}
\end{remark}
\subsubsection{Bochner's formula for differential one-forms}
In this section we prove Bochner's formula for differential one-forms (Theorem \ref{pointwiseboch2}) by using Theorem \ref{14}.

For any $p \in (1, \infty)$ and $q \in [1, \infty]$, let $\mathcal{D}_{p, q}(\Delta, X)$ be the set of $f \in H^{1, p}(X)$ such that there exists $g \in L^q(X)$ satisfying
\begin{align*}
\int_X\langle \nabla f, \nabla h\rangle dH^n=\int_XghdH^n
\end{align*}
for every $h \in \mathrm{LIP}(X)$.
Since $g$ is unique if it exists, we denote it by $\Delta f$.
Note that by the definition we have $\mathcal{D}^2(\Delta, X)=\mathcal{D}_{2, 2}(\Delta, X)$.

\begin{proposition}\label{qplap}
Let $X_i \stackrel{GH}{\to} X$ in $\overline{\mathcal{M}}$, let $p, q \in (1, \infty)$, let $f_i$ be a sequence in $\mathcal{D}_{p, q}(\Delta, X_i)$ with
\[\sup_i \left( ||f_i||_{H^{1, p}}+||\Delta f_i||_{L^q}\right)<\infty,\]
and let $f$ be the $L^p$-weak limit on $X$.
Then $f \in \mathcal{D}_{p, q}(\Delta, X)$ and $\Delta f_i$ $L^q$-converges weakly to $\Delta f$ on $X$.
\end{proposition}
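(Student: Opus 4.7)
The plan is to combine the two compactness principles already recorded in the paper with a standard integration-by-parts duality argument. First I would use the Rellich compactness theorem (2.3), applied to the bounded sequence $\{f_i\} \subset H^{1,p}(X_i)$: it produces a subsequence (not relabeled) and some $\tilde{f} \in H^{1,p}(X)$ such that $f_i \to \tilde{f}$ strongly in $L^p$ and $\nabla f_i \to \nabla \tilde{f}$ weakly in $L^p$. Since $L^p$-weak limits are unique and $f$ is the $L^p$-weak limit of $f_i$ by hypothesis, $\tilde{f}=f$; in particular $f \in H^{1,p}(X)$ and $\nabla f_i \to \nabla f$ $L^p$-weakly. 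Next, the compactness of $L^q$-weak convergence (2.9) applied to $\{\Delta f_i\}$, which is $L^q$-bounded by hypothesis, gives a further subsequence and some $g \in L^q(X)$ along which $\Delta f_i \to g$ $L^q$-weakly.

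Then I would identify $g$ with the Laplacian of $f$ in the sense of $\mathcal{D}_{p,q}(\Delta,X)$ by duality. Given any $h \in \mathrm{LIP}(X)$, invoke the existence of an approximate sequence (2.6) to get $h_i \in \mathrm{LIP}(X_i)$ with $\sup_i \|\nabla h_i\|_{L^\infty} < \infty$ (hence also $\sup_i \|h_i\|_{L^\infty} < \infty$ by compactness of the $X_i$) such that $h_i, \nabla h_i$ converge $L^2$-strongly to $h, \nabla h$ respectively. Combining the uniform $L^\infty$-bounds with the equivalence of $L^r$-strong convergence (2.5) upgrades these to $L^r$-strong convergence for every $r \in (1,\infty)$; in particular $h_i \to h$ strongly in $L^{q'}$ and $\nabla h_i \to \nabla h$ strongly in $L^{p'}$, where $p',q'$ are the conjugate exponents. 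On each $X_i$ the definition of $\mathcal{D}_{p,q}(\Delta,X_i)$ gives
\[
\int_{X_i}\langle \nabla f_i,\nabla h_i\rangle\,dH^n = \int_{X_i}(\Delta f_i)\,h_i\,dH^n,
\]
and the standard weak$\times$strong pairing lets me pass to the limit on both sides, yielding
\[
\int_X \langle \nabla f,\nabla h\rangle\,dH^n = \int_X g\,h\,dH^n.
\]
As $h \in \mathrm{LIP}(X)$ is arbitrary, this shows $f \in \mathcal{D}_{p,q}(\Delta,X)$ with $\Delta f = g$.

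Finally, since $g = \Delta f$ is uniquely determined by $f$ and not by the extracted subsequence, the usual subsequence principle upgrades the $L^q$-weak convergence $\Delta f_i \to \Delta f$ from a subsequence to the full sequence: every subsequence of $\{\Delta f_i\}$ has a further $L^q$-weakly convergent subsequence whose limit must coincide with $\Delta f$ by the argument above. I expect the main subtlety to be bookkeeping the exponents in the weak$\times$strong pairing, namely verifying that strong $L^2$-convergence of $h_i$ and $\nabla h_i$ can be bootstrapped to strong convergence in the conjugate exponents $L^{q'}$ and $L^{p'}$ — this is precisely where the uniform $L^\infty$-bound on $(h_i,\nabla h_i)$ together with property (2.5) is essential, since $p,q$ are arbitrary in $(1,\infty)$ and no Sobolev-type improvement is otherwise available.
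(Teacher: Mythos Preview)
Your proposal is correct and follows essentially the same approach as the paper's proof: Rellich compactness (2.3) for $f$ and $\nabla f$, $L^q$-weak compactness (2.9) for $\Delta f_i$, the approximation (2.6) to produce test Lipschitz functions $h_i$ on $X_i$, and then passing to the limit in the defining identity, with a subsequence argument to upgrade to the full sequence. The only organizational difference is that the paper fixes both $h$ and an arbitrary subsequence $i(j)$ before extracting the $L^q$-weak limit, whereas you extract the weak limit $g$ first and then vary $h$; both are equivalent.
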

\begin{proof}
The proof is essentially same to that of \cite[Theorem $4.1$]{holp}.
For reader's convenience we give a proof.

The Rellich compactness theorem (2.3) yields that $f \in H^{1, p}(X)$ and that $\nabla f_i$ $L^p$-converges weakly to $\nabla f$ on $X$.

Let $h \in \mathrm{LIP}(X)$ and let $i(j)$ be a subsequence of $\mathbf{N}$.
By the compactness of $L^q$-weak convergence (2.9), there exist a subsequence $j(k)$ of $i(j)$ and  the $L^q$-weak limit $F$ of $\Delta f_{j(k)}$ on $X$.
By the existence of an approximate sequence of Lipschitz functions (2.6), there exists a sequence $h_{j(k)} \in \mathrm{LIP}(X_{j(k)})$ with
\[\sup_k||\nabla h_{j(k)}||_{L^{\infty}}<\infty \]
such that $h_{j(k)}, \nabla h_{j(k)}$ $L^r$-converge strongly to $h, \nabla h$ on $X$ for every $r \in (1, \infty)$, respectively.
Since
\[\int_{X_{j(k)}}\langle \nabla h_{j(k)}, \nabla f_{j(k)}\rangle dH^n=\int_{X_{j(k)}}h_{j(k)}\Delta f_{j(k)}dH^n,\]
letting $k \to \infty$ yields
\[\int_X\langle \nabla h, \nabla f\rangle dH^n=\int_XhFdH^n.\]
Since $h$ and $i(j)$ are arbitrary, this completes the proof.
\end{proof}

The following is Bochner's formula for test differential one-forms on $X$.
\begin{theorem}[Bochner's formula for test differential one-forms]\label{12}
Let $\omega \in \mathrm{TestForm}_1(X)$.
Then we have the following:
\begin{enumerate}
\item $|\omega|^2 \in \mathcal{D}_{2, 1}(\Delta, X)$.
\item $|\omega|^2 \in \mathcal{D}_{2, q}(\Delta, X)$ for every $q<2$ if $\omega \in \widetilde{\mathrm{TestForm}}_1(X)$.
\item We have
\begin{align}\label{100}
-\frac{1}{2}\Delta |\omega|^2=|\nabla \omega|^2-\langle \Delta_{H, 1}\omega, \omega \rangle + \langle \mathrm{Ric}_X, \omega \otimes \omega \rangle.
\end{align}
\end{enumerate}
\end{theorem}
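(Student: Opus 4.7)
The plan is to reduce each assertion to the classical pointwise Bochner formula on smooth Riemannian manifolds, and push the resulting integrated identity through the Gromov--Hausdorff limit using the smooth approximation of Theorem~\ref{14}. Fix $X_i \in \mathcal{M}$ with $X_i \stackrel{GH}{\to} X$ and, given $\omega \in \mathrm{TestForm}_1(X)$, apply Theorem~\ref{14}(ii) to produce smooth $\omega_i \in C^\infty(T^*X_i)$ with $\sup_i(\|\omega_i\|_{L^\infty} + \|d\omega_i\|_{L^\infty}) < \infty$ such that $\omega_i, d\omega_i, \delta\omega_i, \nabla\omega_i, \Delta_{H,1}\omega_i$ all converge $L^2$-strongly to the corresponding objects on $X$. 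On each smooth $X_i$ the classical pointwise Bochner identity
\[
-\frac{1}{2}\Delta |\omega_i|^2 \;=\; |\nabla \omega_i|^2 \;-\; \langle \Delta_{H,1}\omega_i, \omega_i\rangle \;+\; \langle \mathrm{Ric}_{X_i}, \omega_i \otimes \omega_i\rangle
\]
holds, so for any $h \in \mathrm{LIP}(X)$ with approximants $h_i \in \mathrm{LIP}(X_i)$ produced by (2.6), integration by parts on $X_i$ yields
\[
\int_{X_i} \langle \nabla h_i, \nabla |\omega_i|^2\rangle\, dH^n \;=\; -2\int_{X_i} h_i\bigl(|\nabla \omega_i|^2 - \langle \Delta_{H,1}\omega_i, \omega_i\rangle + \langle \mathrm{Ric}_{X_i}, \omega_i \otimes \omega_i\rangle\bigr)\, dH^n.
\]

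I would then pass to the limit $i \to \infty$ on both sides. On the left, writing $\nabla|\omega_i|^2 = 2(\nabla\omega_i)(\omega_i^*, \cdot)$, the $L^2$-strong convergence of $\nabla\omega_i$ together with the uniform $L^\infty$-bound on $\omega_i$ (which by (2.5) upgrades $\omega_i$ to $L^p$-strong convergence for every finite $p$) delivers $L^2$-strong convergence of $\nabla|\omega_i|^2$ to $\nabla|\omega|^2$, so the LHS converges to $\int_X \langle \nabla h, \nabla|\omega|^2\rangle\, dH^n$. On the right, the first two terms $h_i|\nabla\omega_i|^2$ and $h_i\langle \Delta_{H,1}\omega_i, \omega_i\rangle$ converge $L^1$-strongly by the same strong-times-bounded pattern. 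The delicate term is the Ricci one: I would rewrite it as $\int_{X_i}\langle \mathrm{Ric}_{X_i}, h_i\omega_i\otimes\omega_i\rangle\,dH^n$ and couple the $L^p$-weak convergence $\mathrm{Ric}_{X_i} \to \mathrm{Ric}_X$ of Theorem~\ref{ellp} with the $L^{p'}$-strong convergence of $h_i\omega_i\otimes\omega_i$ (which follows from the uniform $L^\infty$-bounds and (2.5)) for any conjugate pair $(p,p')$. The limiting identity
\[
\int_X \langle \nabla h, \nabla|\omega|^2\rangle\, dH^n \;=\; -2\int_X h\bigl(|\nabla\omega|^2 - \langle \Delta_{H,1}\omega, \omega\rangle + \langle \mathrm{Ric}_X, \omega \otimes \omega\rangle\bigr)\, dH^n
\]
then holds for every $h \in \mathrm{LIP}(X)$. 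Since $\omega \in L^\infty(X)$ (being a test form), $\nabla\omega, \Delta_{H,1}\omega \in L^2(X)$, and $\mathrm{Ric}_X \in L^\infty(T^0_2X)$ (from the proof of Theorem~\ref{ellp} established just above), the bracket on the right lies in $L^1(X)$, proving (i) and (iii) simultaneously from the very definition of $\mathcal{D}_{2,1}(\Delta, X)$.

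For (ii), when $\omega \in \widetilde{\mathrm{TestForm}}_1(X)$, I would instead use the sharper bounds in Remark~\ref{tildeapp}(i): the approximants $\omega_i$ may now be chosen with $\sup_i(\|\nabla\omega_i\|_{L^{2q}} + \|\Delta_{H,1}\omega_i\|_{L^{2q}}) < \infty$ for every $q \in (1,2)$. The lower semicontinuity (2.10) then gives $\nabla\omega, \Delta_{H,1}\omega \in L^{2q}(X)$, so $|\nabla\omega|^2 \in L^q$, $\langle \Delta_{H,1}\omega, \omega\rangle \in L^{2q} \subset L^q$, and $\langle \mathrm{Ric}_X, \omega\otimes\omega\rangle \in L^\infty \subset L^q$; feeding this into the identity of (iii) upgrades $|\omega|^2$ to $\mathcal{D}_{2,q}(\Delta, X)$. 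The main obstacle throughout is the Ricci term in the limit passage: since $\mathrm{Ric}_{X_i}$ is only weakly (and not strongly) $L^p$-convergent to $\mathrm{Ric}_X$, all of the compactness in its contraction must come from the test tensor $h_i\omega_i\otimes\omega_i$, which in turn is precisely where the full strength of Theorem~\ref{14} --- itself resting on the $L^2$-strong convergence of Hessians of Theorem~\ref{L2Hess} and the underlying Cheeger--Naber $L^q$-Hessian estimate --- enters the argument.
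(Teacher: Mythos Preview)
Your proof is correct and takes essentially the same approach as the paper: approximate by smooth forms on smooth $X_i$ via Theorem~\ref{14}, apply the classical pointwise Bochner formula, and pass to the Gromov--Hausdorff limit using the $L^p$-weak convergence of $\mathrm{Ric}_{X_i}$ from Theorem~\ref{ellp} paired against the $L^{p'}$-strong convergence of $h_i\omega_i\otimes\omega_i$. The paper organizes the argument slightly differently---first establishing (ii) and (iii) for $\omega \in \widetilde{\mathrm{TestForm}}_1(X)$ via Proposition~\ref{qplap}, and then deducing the general case (i) and (iii) by the internal $H^{2,2}$-approximation (3.1)---whereas you prove (i) and (iii) in one step for general $\omega$ and then read (ii) off from the formula; but the substance is the same.
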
 
\begin{proof}
Let $X_i$ be a seuqnece in $\mathcal{M}$ such that $X_i \stackrel{GH}{\to} X$ in $\overline{\mathcal{M}}$.

We first check (ii).
Assume that $\omega \in \widetilde{\mathrm{TestForm}}_1(X)$.
Then (ii) of Theorem \ref{14} with (i) of Remark \ref{tildeapp} yields that there exists a sequence  $\omega_{i} \in C^{\infty}(T^*X_{i})$ with 
\[\sup_i\left( ||\omega_{i}||_{L^{\infty}}+||\nabla \omega_{i}||_{L^{2q}}+||\Delta_{H, 1}\omega_{i}||_{L^{2q}}\right)<\infty\]
for every $q \in (1, 2)$ such that $\omega_{i}, \nabla \omega_{i}, \Delta_{H, 1}\omega_{i}$ $L^2$-converge strongly to $\omega, \nabla \omega, \Delta_{H, 1}\omega$ on $X$, respectively.
In particular since 
\[|\nabla |\omega_{i}|^2|\le 2|\omega_{i}||\nabla \omega_{i}|,\]
the Rellich compactness theorem (2.3) yields $|\omega|^2 \in H^{1, 2}(X)$.

On the other hand, Bochner's formula on a smooth Riemannian manifold yields
\begin{align}\label{bhhg}
-\frac{1}{2}\Delta |\omega_{i}|^2=|\nabla \omega_{i}|^2-\langle \Delta_{H, 1}\omega_{i}, \omega_{i} \rangle + \langle \mathrm{Ric}_{X_{i}}, \omega_{i} \otimes \omega_{i} \rangle.
\end{align}
In particular
\[\sup_i||\Delta |\omega_{i}|^2||_{L^{q}}<\infty\]
for every $q \in (1, 2)$.
Thus letting $i \to \infty$ in (\ref{bhhg}) with Proposition \ref{qplap} gives (ii) and (iii) in the case when $\omega \in \widetilde{\mathrm{TestForm}}_1(X)$.

Next we prove (i) and (iii) in general case.

By an argument similar to that above with the existence of an $H^{2, 2}$-approximate sequence (3.1), we see that $|\omega|^2 \in H^{1, 2}(X)$ and that there exists a sequence $\omega^i \in \widetilde{\mathrm{TestForm}}_1(X)$ with
\[\sup_j||\omega^j||_{L^{\infty}}<\infty \]
such that $\omega^j, \nabla \omega^j, \Delta_{H, 1}\omega^j$ $L^2$-converge strongly to $\omega,  \nabla \omega, \Delta_{H, 1}\omega$ on $X$, respectively.

Let $f \in \mathrm{LIP}(X)$.
Then (iii) of Theorem \ref{12} in the case when $\omega \in \widetilde{\mathrm{TestForm}}_1(X)$ yields
\begin{align}\label{ddc}
&-\frac{1}{2}\int_X\langle df, d|\omega^j|^2\rangle dH^n \nonumber \\
&=\int_X\left( f|\nabla \omega^j|^2 -\langle \Delta_{H, 1}\omega^j, f\omega^j\rangle + f\langle \mathrm{Ric}_X, \omega^j \otimes \omega^j \rangle \right)dH^n.
\end{align}
Note that by the Rellich compactness theorem (2.3), $d|\omega^j|^2$ $L^2$-converges weakly to $d |\omega|^2$ on $X$.
Thus letting $j \to \infty$ in (\ref{ddc}) gives (i) and (iii) in general case.
\end{proof}
Note that on a noncollapsed Ricci limit space $(Y, H^n)$ (see Remark \ref{boundei} for the definition of Ricci limit spaces), it was proven in \cite[Theorem 7.12]{hoell} that $H^{1, 2}_H(T^*Y)$ coincides with $H^{1, 2}_C(T^*Y)$ as sets and that  the identity map
\[\mathrm{id}:H^{1, 2}_H(T^*Y) \to H^{1, 2}_C(T^*Y)\]
 gives a homeomorphism between them.
The following gives a refinement of it on our setting.
\begin{corollary}\label{aavb}
We see that $H^{1, 2}_H(T^*X)=H^{1, 2}_C(T^*X)$ and that for every $\omega \in H^{1, 2}_H(T^*X)$, 
\begin{align}\label{nnghh}
||\omega||_{H^{1, 2}_H}^2=||\omega||_{H^{1, 2}_C}^2 + \int_X\langle \mathrm{Ric}_X, \omega \otimes \omega \rangle dH^n.
\end{align}
\end{corollary}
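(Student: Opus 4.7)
The plan is to obtain both assertions together by: (a) integrating the pointwise Bochner formula of Theorem \ref{12} to establish (\ref{nnghh}) on the dense subspace $\mathrm{TestForm}_1(X)$, (b) using the $L^{\infty}$-bound $\mathrm{Ric}_X \in L^{\infty}$ from Theorem \ref{spectr}(i) to observe that $\|\cdot\|_{H^{1,2}_H}$ and $\|\cdot\|_{H^{1,2}_C}$ are equivalent norms on $\mathrm{TestForm}_1(X)$, forcing the two completions to be naturally identified, and (c) extending (\ref{nnghh}) to the common Sobolev space by $L^2$-continuity of the Ricci term.

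For step (a), fix $\omega \in \mathrm{TestForm}_1(X)$. Theorem \ref{12}(i) gives $|\omega|^2 \in \mathcal{D}_{2,1}(\Delta, X)$, so testing its defining relation against the constant function $1 \in \mathrm{LIP}(X)$ yields $\int_X \Delta |\omega|^2\,dH^n = 0$. Integrating (\ref{100}) over $X$ against $H^n$, and using the definition of $\Delta_{H,1}$ with test form $\xi := \omega$ (legal because $\mathrm{TestForm}_1(X) \subset \mathcal{D}^2(\Delta_{H,1}, X)$) to rewrite $\int_X\langle \Delta_{H,1}\omega,\omega\rangle\,dH^n = \|d\omega\|_{L^2}^2 + \|\delta\omega\|_{L^2}^2$, I obtain
\[
\|d\omega\|_{L^2}^2 + \|\delta\omega\|_{L^2}^2 = \|\nabla\omega\|_{L^2}^2 + \int_X\langle \mathrm{Ric}_X, \omega\otimes\omega\rangle\,dH^n,
\]
which is (\ref{nnghh}) after adding $\|\omega\|_{L^2}^2$ to both sides.

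For step (b), Theorem \ref{spectr}(i) gives $|\int_X\langle \mathrm{Ric}_X,\omega\otimes\omega\rangle\,dH^n| \le C\|\omega\|_{L^2}^2$ with $C := \|\mathrm{Ric}_X\|_{L^{\infty}}$, which combined with the identity of step (a) yields $\|\omega\|_{H^{1,2}_H}^2 \le (1+C)\|\omega\|_{H^{1,2}_C}^2$ and the reverse bound on $\mathrm{TestForm}_1(X)$. Since $H^{1,2}_H(T^*X)$ and $H^{1,2}_C(T^*X)$ are by definition the completions of $\mathrm{TestForm}_1(X)$ under these equivalent norms, and both norms dominate $\|\cdot\|_{L^2}$, the completions are canonically the same subspace of $L^2(T^*X)$. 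For step (c), given any $\omega$ in this common space and any approximating $\{\omega_n\} \subset \mathrm{TestForm}_1(X)$ with $\omega_n \to \omega$ in both norms, the convergence $\omega_n \to \omega$ in $L^2$ together with $\mathrm{Ric}_X \in L^{\infty}$ lets me pass (\ref{nnghh}) to the limit, since each term is continuous on the common space.

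I do not anticipate any serious obstacle; the only point requiring care is the $L^2$-continuity of $\eta \mapsto \int_X\langle \mathrm{Ric}_X,\eta\otimes\eta\rangle\,dH^n$ needed in step (c), which however is immediate from the $L^{\infty}$-bound on $\mathrm{Ric}_X$ supplied by Theorem \ref{spectr}(i). Everything else is bookkeeping with the already-established pointwise Bochner identity and the abstract construction of completions.
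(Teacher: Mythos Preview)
Your proof is correct and follows exactly the approach the paper takes; the paper's own proof is the one-line remark that Theorem \ref{12} gives (\ref{nnghh}) on $\mathrm{TestForm}_1(X)$ and that this implies the corollary, and you have simply written out the implicit steps (integrating the Bochner identity, using the $L^\infty$-bound on $\mathrm{Ric}_X$ to get norm equivalence and hence equality of completions, and passing to the limit).
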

\begin{proof}
Theorem \ref{12} yields that (\ref{nnghh}) holds if $\omega \in \mathrm{TestForm}_1(X)$, which implies Corollary \ref{aavb}.
\end{proof}
We are now in a position to give Bochner's formula for differential one-forms.
\begin{theorem}[Bochner's formula for differential one-forms]\label{pointwiseboch}
Let $\omega \in H^{1, 2}_H(T^*X)$. Then we have the following:
\begin{enumerate}
\item $|\omega|^2 \in H^{1, 2n/(2n-1)}(X)$.
\item We have
\begin{align}\label{impor}
&-\frac{1}{2}\int_X\langle df, d|\omega|^2\rangle dH^n \nonumber \\
&= \int_X\left( f|\nabla \omega|^2- \langle d\omega, d(f\omega )\rangle - (\delta \omega) (\delta (f\omega))+f\langle \mathrm{Ric}_X, \omega \otimes \omega \rangle \right)dH^n
\end{align}
for every $f \in \mathrm{LIP}(X)$.
\end{enumerate}
In particular if $\omega \in \mathcal{D}^2(\Delta_{H, 1}, X)$,
then $|\omega|^2 \in \mathcal{D}_{2n/(2n-1), 1}(\Delta, X)$ and (\ref{100}) hold (i.e. we have Theorem \ref{pointwiseboch2}).
\end{theorem}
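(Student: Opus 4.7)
The plan is to obtain the theorem from Theorem \ref{12} (Bochner's formula for test one-forms) via density, leveraging the coincidence $H^{1,2}_H(T^*X) = H^{1,2}_C(T^*X)$ established in Corollary \ref{aavb}. Concretely, I would pick $\omega_j \in \mathrm{TestForm}_1(X)$ with $\omega_j \to \omega$ in both $H^{1,2}_H$ and $H^{1,2}_C$, so that $\omega_j, d\omega_j, \delta\omega_j, \nabla\omega_j$ all converge strongly in $L^2$ to the corresponding objects for $\omega$.

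For (i), the integrability of $|\omega_j|$ must first be upgraded: Kato's inequality $|d|\omega_j|| \le |\nabla\omega_j|$, valid a.e.\ on the $C^{1,\alpha}$-regular set $\mathcal{R}$, gives a uniform $H^{1,2}(X)$-bound on $|\omega_j|$, and the Sobolev inequality (2.7) yields a uniform $L^{2n/(n-2)}$-bound, which embeds continuously into $L^{2n/(n-1)}$ on the compact space $X$. Combined with the pointwise estimate
\[ |\nabla|\omega_j|^2 - \nabla|\omega_k|^2| \le 2|\omega_j - \omega_k||\nabla\omega_j| + 2|\omega_k||\nabla(\omega_j-\omega_k)|,\]
H\"older's inequality with conjugate exponents $2n/(n-1)$ and $2$ (pairing to $L^{2n/(2n-1)}$) shows that $\nabla|\omega_j|^2$ is Cauchy in $L^{2n/(2n-1)}$; a parallel estimate via $||\omega_j|^2 - |\omega_k|^2| \le (|\omega_j|+|\omega_k|)|\omega_j-\omega_k|$ gives that $|\omega_j|^2$ is Cauchy in $L^{2n/(2n-1)}$ too. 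Comparison with the $L^1$-limit identifies the Cauchy limit with $|\omega|^2$, giving $|\omega|^2 \in H^{1, 2n/(2n-1)}(X)$.

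For (ii), I would integrate the pointwise Bochner identity from Theorem \ref{12}(iii) against $f \in \mathrm{LIP}(X)$ (using $|\omega_j|^2 \in \mathcal{D}_{2,1}(\Delta, X)$ from Theorem \ref{12}(i)) and rewrite the $\Delta_{H,1}$-term through the variational characterization of $\Delta_{H,1}$ applied to $f\omega_j \in H^{1,2}_H(T^*X)$, using the product rule formulas $d(f\omega_j) = df\wedge\omega_j + f\,d\omega_j$ and the corresponding one for $\delta(f\omega_j)$ from Gigli's calculus. This yields
\[ \int_X f\langle\Delta_{H,1}\omega_j, \omega_j\rangle\,dH^n = \int_X \bigl(\langle d(f\omega_j), d\omega_j\rangle + \langle\delta(f\omega_j), \delta\omega_j\rangle\bigr)\,dH^n. \]
All right-hand-side terms now involve $L^2$-convergent quantities together with the $L^\infty$-bound on $\mathrm{Ric}_X$, while the left-hand side converges thanks to the $L^{2n/(2n-1)}$-strong convergence from (i) paired against $df \in L^\infty$. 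Passing to $j \to \infty$ delivers (\ref{impor}).

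For the final clause, if $\omega \in \mathcal{D}^2(\Delta_{H,1}, X)$ the same variational identity gives $\int_X f\langle \Delta_{H,1}\omega, \omega\rangle\,dH^n = \int_X \bigl(\langle d(f\omega), d\omega\rangle + \langle\delta(f\omega), \delta\omega\rangle\bigr)\,dH^n$, and back-substitution into (\ref{impor}) displays $2\bigl(|\nabla\omega|^2 - \langle\Delta_{H,1}\omega, \omega\rangle + \langle\mathrm{Ric}_X, \omega\otimes\omega\rangle\bigr) \in L^1(X)$ (by Cauchy--Schwarz, $\mathrm{Ric}_X \in L^\infty$, and $\omega, \Delta_{H,1}\omega, \nabla\omega \in L^2$) as $-\Delta|\omega|^2$ in the $\mathcal{D}_{2n/(2n-1), 1}(\Delta, X)$-sense, which is precisely (\ref{100}). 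The main technical obstacle is nailing the sharp exponent $p_n = 2n/(2n-1)$ in (i), which forces one to thread Kato's inequality, the Sobolev embedding and H\"older precisely; a secondary subtlety is justifying $f\omega_j \in H^{1,2}_H(T^*X)$ and the product-rule formulas for $d$ and $\delta$ with a merely Lipschitz $f$, both of which are handled by Gigli's second-order calculus together with the density of test forms.
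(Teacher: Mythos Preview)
Your argument is correct and for part~(ii) and the final clause it is essentially identical to the paper's proof. The genuine difference lies in how you obtain the integrability of $|\omega|^2$ in part~(i).

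The paper first proves a Poincar\'e-type inequality for $|\eta|^2$ with $\eta\in\mathrm{TestForm}_1(X)$ (Claim~\ref{101}), using the $(n/(n-1),1)$-Poincar\'e inequality from \cite{mas} together with the trivial bound $|\nabla|\eta|^2|\le |\eta|^2+|\nabla\eta|^2$. This gives a uniform $L^{n/(n-1)}$-bound on $|\omega^j|^2$; Young's inequality then yields a uniform $L^{p_n}$-bound on $d|\omega^j|^2$, and Rellich compactness~(2.3) produces both $|\omega|^2\in H^{1,p_n}(X)$ and the weak $L^{p_n}$-convergence of $d|\omega^j|^2$ needed to pass to the limit. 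By contrast you work with $|\omega_j|$ rather than $|\omega_j|^2$: Kato's inequality gives a uniform $H^{1,2}$-bound on $|\omega_j|$, the $(2,2^*)$-Sobolev inequality~(2.7) upgrades this to $L^{2n/(n-2)}\hookrightarrow L^{2n/(n-1)}$, and H\"older then lets you run a direct Cauchy-sequence argument for $d|\omega_j|^2$ in $L^{p_n}$. Your route yields \emph{strong} rather than weak convergence of $d|\omega_j|^2$ and avoids the compactness step, at the cost of invoking Kato's inequality $|d|\omega_j||\le|\nabla\omega_j|$ in this nonsmooth setting (which is standard via the approximation $\sqrt{|\omega_j|^2+\epsilon^2}-\epsilon$, but is not recorded in the paper). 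Note also that for your Cauchy estimate to close you implicitly apply Kato and Sobolev to the differences $\omega_j-\omega_k$, not just to each $\omega_j$; this is fine but worth stating. Either route leads to the same exponent $p_n=2n/(2n-1)$.
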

\begin{proof}
We first prove the following:
\begin{claim}\label{101}
For every $\eta \in \mathrm{TestForm}_1(X)$,
\begin{align}\label{poincare1form}
&\left(\frac{1}{H^n(X)}\int_X\left||\eta|^2-\frac{1}{H^n(X)}\int_X|\eta |^2dH^n\right|^{n/(n-1)} dH^n\right)^{(n-1)/n} \nonumber \\
&\le \frac{C(n, K_1, d)}{H^n(X)}\int_X\left(|\eta|^2+ |\nabla \eta|^2\right)dH^n.
\end{align}
\end{claim}
The proof is as follows.
Recall that the $(n/(n-1), 1)$-Poincar\'e inequality:
\begin{align}\label{poinc}
\left(\frac{1}{H^n(M)}\int_M\left|f-\frac{1}{H^n(M)}\int_MfdH^n\right|^{n/(n-1)} dH^n\right)^{(n-1)/n} \le \frac{C(n, K_1, d)}{H^n(M)}\int_M |\nabla f|dH^n.
\end{align}
holds for any $n$-dimensional closed Riemannian manifold $M$ with $\mathrm{Ric}_M \ge K_1$, and $f \in \mathrm{LIP}(M)$ (see \cite[Th\'eor\`eme 1.1]{mas}). 
By the stability of Poincar\'e inequalities with respect to the Gromov-Hausdorff topology in \cite[Theorem $3.2$]{hoell}, (\ref{poinc}) holds for any $(n, K_1)$-Ricci limit spaces. In particular this also holds for $X$.

Then since 
\[|\nabla |\eta|^2| \le 2|\eta| |\nabla \eta| \le |\eta|^2+|\nabla \eta|^2,\]
applying (\ref{poinc}) for $f=|\eta|^2 \in H^{1, 2}(X)$ gives Claim \ref{101}.

Let $f \in \mathrm{LIP}(X)$ and let $\omega^j$ be a sequence in $\mathrm{TestForm}_1(X)$ such that $\omega^j \to \omega$ in $H^{1, 2}_H(T^*X)$.
Note that Corollary \ref{aavb} yields that $\nabla \omega^j \to \nabla \omega$ in $L^2(T^0_2X)$.

On the other hand, since Young's inequality yields
\[|d|\omega^j|^2|^{p_n} \le 2^{p_n}|\omega^j|^{p_n}|\nabla \omega^j|^{p_n} \le (2-p_n)2^{p_n-1}|\omega^j|^{2n/(n-1)} + p_n2^{p_n-1}|\nabla \omega^j|^2,\]
the Rellich compactness (2.3) with Claim \ref{101} gives that (i) holds  and that $d|\omega^j|^2$ $L^{p_n}$-converges weakly to $d|\omega|^2$ on $X$,
where 
\[\frac{2n}{n-1}=\frac{2p_n}{2-p_n},\]
i.e. 
\[p_n:=\frac{2n}{2n-1}.\]
Since Theorem \ref{12} gives
\begin{align*}
&-\frac{1}{2}\int_X\langle df, d|\omega^j|^2\rangle dH^n \nonumber \\
&= \int_X\left( f|\nabla \omega^j|^2- \langle d\omega^j, d(f\omega^j )\rangle - (\delta \omega^j) (\delta (f\omega^j))+f\langle \mathrm{Ric}_X, \omega^j \otimes \omega^j \rangle \right)dH^n,
\end{align*}
letting $j \to \infty$ yields (\ref{impor}).
This completes the proof.
\end{proof}
We give a generalization of (\ref{hodgeformula}) in order to prove Theorem \ref{boch}.
\begin{proposition}\label{bbghnv}
Let $f \in \mathcal{D}^2(\Delta, X)$ and let $g \in \mathrm{LIP}(X)$.
Then $gdf \in H^{1, 2}_H(T^*X)$ with $d(gdf)=dg \wedge df$ and $\delta (gdf)=-\langle df, dg\rangle +g\Delta f$.
Moreover if $\Delta f \in H^{1, 2}(X)$ and $g \in \mathcal{D}^2(\Delta, X)$ with $\Delta g df \in L^{2}(T^*X)$, then $gdf \in \mathcal{D}^2(\Delta_{H, 1}, X)$ with
\begin{align}\label{hhu}
\Delta_{H, 1}(gdf)=gd\Delta f + \Delta gdf -2\mathrm{Hess}_f(\nabla g, \cdot ).
\end{align}
\end{proposition}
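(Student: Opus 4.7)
The strategy is to prove both parts by approximation, reducing to the case $f, g \in \mathrm{Test}F(X)$ where the formulae (\ref{hodgeformula}) and (\ref{deltaformula1}) apply directly. The essential tool is the $H^{2,2}$-approximation (3.1), which for any $u \in \mathrm{LIP}(X) \cap \mathcal{D}^2(\Delta, X)$ yields $u_j \in \widetilde{\mathrm{Test}}F(X)$ with $\sup_j \|\nabla u_j\|_{L^{\infty}} < \infty$, $u_j \to u$ in $H^{1,2}(X)$, $\Delta u_j \to \Delta u$ in $L^2(X)$, and $\mathrm{Hess}_{u_j} \to \mathrm{Hess}_u$ in $L^2(T^0_2 X)$; the convergence of Laplacians upgrades to $H^{1,2}$ when $u \in \mathrm{Test}F(X)$. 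A preliminary mollification via $\widetilde{H}_\epsilon u$ followed by a diagonal argument removes the Lipschitz restriction on $u$.

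For Part 1, I take such $f_j \in \mathrm{Test}F(X)$ approximating $f$, and $g_k \in \widetilde{\mathrm{Test}}F(X)$ supplied by (3.2) with uniform $L^\infty$-bounds on $g_k$ and $\nabla g_k$ and $g_k \to g$ in $H^{1, p}(X)$ for every $p$. For $\omega_{j, k} := g_k \, df_j \in \mathrm{TestForm}_1(X)$, (\ref{deltaformula1}) and Leibniz give $d\omega_{j, k} = dg_k \wedge df_j$ and $\delta \omega_{j, k} = g_k \Delta f_j - \langle df_j, dg_k\rangle$. Letting $j \to \infty$ first, the uniform $L^\infty$-bounds on $g_k, dg_k$ convert $L^2$-convergence of $df_j, \Delta f_j$ into $L^2$-convergence of the products; letting $k \to \infty$, one extracts a uniformly convergent subsequence $g_k \to g$ by Arzel\`a--Ascoli (equi-Lipschitz functions on the compact space $X$), with $dg_k \to dg$ in $L^p$ for every $p < \infty$, and dominated convergence handles products like $|df|^2 |dg_k - dg|^2$ (pointwise tending to zero and dominated by a multiple of $|df|^2 \in L^1$). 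Hence $\omega_{j, k}, d\omega_{j, k}, \delta \omega_{j, k}$ converge in $L^2$ to $gdf, dg \wedge df, g \Delta f - \langle df, dg\rangle$, respectively, so $gdf \in H^{1,2}_H(T^*X)$ with the claimed $d$ and $\delta$.

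For Part 2, rather than trying to show $L^2$-convergence of $\Delta_{H, 1}(g_k df_j)$ (which fails in general because $\Delta g_k \, df_j \to \Delta g \, df$ need not hold in $L^2$ when $df \notin L^\infty$), I verify the defining integral identity of $\Delta_{H, 1}$ directly. Set $\eta := g \, d\Delta f + \Delta g \, df - 2 \mathrm{Hess}_f(\nabla g, \cdot) \in L^2(T^*X)$, which is in $L^2$ by the added hypotheses. Choose $f_j \in \widetilde{\mathrm{Test}}F(X)$ with additionally $\Delta f_j \to \Delta f$ in $H^{1, 2}(X)$ (by first mollifying $f$ so that $\widetilde{H}_\epsilon f \in \mathrm{Test}F(X)$ and invoking the upgraded form of (3.1)), and $g_k \in \widetilde{\mathrm{Test}}F(X)$ from (3.1) with $\Delta g_k \to \Delta g$ in $L^2(X)$. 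Apply (\ref{hodgeformula}) to $g_k df_j$ and pair against an arbitrary $\xi \in \mathrm{TestForm}_1(X)$; the right-hand side
\[\int_X \bigl(\langle d(g_k df_j), d\xi \rangle + \langle \delta(g_k df_j), \delta \xi\rangle\bigr)\, dH^n\]
converges to the corresponding expression with $gdf$ in place of $g_k df_j$ by the $L^2$-convergences from Part 1, while on the left-hand side every term converges after pairing with $\xi$: the critical $\int \Delta g_k \langle df_j, \xi\rangle \, dH^n$ converges first in $j$ since $\Delta g_k \xi \in L^\infty$ and $df_j \to df$ in $L^2$, and then in $k$ since $\Delta g_k \to \Delta g$ in $L^2$ while $\langle df, \xi \rangle \in L^2$ (using that $\xi \in \mathrm{TestForm}_1(X) \subset L^\infty(T^*X)$); the Hessian term converges via the $L^2$-convergence $\mathrm{Hess}_{f_j} \to \mathrm{Hess}_f$ of (3.1) and the uniform $L^\infty$-bound on $\nabla g_k$; and the $g_k d\Delta f_j$ term converges via uniform convergence of $g_k$ and the $H^{1,2}$-convergence of $\Delta f_j$. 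Passing to the double limit gives the defining identity, so $gdf \in \mathcal{D}^2(\Delta_{H, 1}, X)$ with $\Delta_{H, 1}(gdf) = \eta$.

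The main obstacle is exactly the failure of $L^2$-strong convergence of the product $\Delta g_k \, df_j$ (and hence of $\Delta_{H, 1}(g_k df_j)$) when $df$ is not bounded; pairing against $\xi \in \mathrm{TestForm}_1(X)$ first uses the $L^\infty$-bound on $\xi$ inherent to the definition of a test form to absorb the missing $L^\infty$-bound on $df$, reducing the proof to weak rather than strong $L^2$-convergence on the level of the defining integral identity.
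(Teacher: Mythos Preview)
Your proof is correct and follows essentially the same approach as the paper: approximate $f$ and $g$ by (new) test functions, use the known formulae (\ref{hodgeformula}) and (\ref{deltaformula1}) for $g_k\,df_j \in \mathrm{TestForm}_1(X)$, and pass to the limit---directly in $H^{1,2}_H$ for Part 1, and in the weak (paired-against-$\xi$) formulation of $\Delta_{H,1}$ for Part 2. The paper differs only in how it manufactures the approximants: it takes $f_i:=\Delta^{-1}h_i$ with $h_i\in\mathrm{LIP}(X)$, $h_i\to\Delta f$ in $L^2$ (upgrading to $H^{1,2}$ for Part 2), so that (2.2) gives $f_i\in\widetilde{\mathrm{Test}}F(X)$ immediately, and takes $g_i:=\widetilde H_{i^{-1}}g$; it then runs a single diagonal sequence and invokes Theorem \ref{L2Hess} (on the fixed space $X$) for the Hessian convergence, whereas you invoke (3.1) directly. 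Your preliminary mollification $\widetilde H_\epsilon f$ does land in $\mathrm{LIP}(X)$ on these compact $RCD(K_1,n)$ spaces (heat kernel bounds give $H_t:L^2\to L^\infty$, then $L^\infty$-to-Lipschitz regularization), but the paper's Poisson-inverse route sidesteps the need to justify this.
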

\begin{proof}
Note that the equality $\delta (gdf)=-\langle df, dg\rangle +g\Delta f$ was known in this case (see for instance \cite[Remark 4.3]{holp}).

Let $h_i$ be a sequence in $\mathrm{LIP}(X)$ with 
\[\int_Xh_idH^n=0\]
and $h_i \to \Delta f$ in $L^2(X)$.
Let $f_i:=\Delta^{-1}h_i$.
By the Lipschitz regurality and the continuity of solutions of Poisson's equations (2.2) and (2.8), we see that $f_i \in \widetilde{\mathrm{Test}}F(X)$ and that $f_i, df_i$ $L^2$-converge strongly to $f, df$ on $X$, respectively.
In particular $df_i$ is a convergent sequence in $H^{1, 2}_H(T^*X)$.

Recall that $\widetilde{H}_{\epsilon}g \in \mathrm{Test}F(X)$ for every $\epsilon >0$ (c.f. \cite[(3.2.3)]{gigli}), where $\widetilde{H}_{\epsilon}$ is a mollified heat flow defined by (\ref{mollified}).
Let $g_i:=\widetilde{H}_{i^{-1}}g$.
Note that 
\[\sup_i||\nabla g_i||_{L^{\infty}}<\infty\]
and that $g_i, dg_i$ $L^2$-converge strongly to $g, dg$ on $X$, respectively.
In particular since $g_idf_i$ is a convergent sequence in $H^{1, 2}_H(T^*X)$ with $d(g_idf_i)=dg_i \wedge df_i$ (c.f. \cite[Theorem $3.5.2$]{gigli}), 
we have $gdf \in H^{1, 2}_H(T^*X)$ with $d(gdf)=dg \wedge df$.

Next we assume that $\Delta f \in H^{1, 2}(X)$ and that $g \in \mathcal{D}^2(\Delta, X)$ with $\Delta g df\in L^{2}(T^*X)$.
We also use the same notation as above.

Then we can take $f_i$ as above with an additional condition that $\Delta f_i \to \Delta f$ in $H^{1, 2}(X)$.
Note that $\Delta g_i \to \Delta g$ in $L^2(X)$
and that (\ref{hodgeformula}) yields 
\[\Delta_{H, 1}(g_idf_i)=g_id\Delta f_i+\Delta g_idf_i -2\mathrm{Hess}_{f_i}(\nabla g_i, \cdot).\]
Thus for every $\omega \in \mathrm{TestForm}_1(X)$ since
\begin{align*}
&\int_X\left( \langle d(g_idf_i), d\omega \rangle + (\delta (g_idf_i))(\delta \omega) \right)dH^n\\
&=\int_X \left( \langle g_id\Delta f_i +\Delta g_idf_i, \omega\rangle -2\langle \mathrm{Hess}_{f_i}, dg_i \otimes \omega \rangle \right)dH^n,
\end{align*}
letting $i \to \infty$ with Theorem \ref{L2Hess} yields 
\begin{align*}
&\int_X\left( \langle d(gdf), d\omega \rangle + (\delta (gdf))(\delta \omega) \right)dH^n\\
&=\int_X \left( \langle gd\Delta f +\Delta gdf, \omega\rangle -2\langle \mathrm{Hess}_{f}, dg \otimes \omega \rangle \right)dH^n.
\end{align*}
This gives $gdf \in \mathcal{D}^2(\Delta_{H, 1}, X)$ with (\ref{hhu}).
\end{proof}

We end this subsection by giving a proof of Theorem \ref{boch}.

\textit{Proof of Theorem \ref{boch}.} 

By Proposition \ref{bbghnv}, applying Theorem \ref{pointwiseboch} for $\omega =dh$ yields Theorem \ref{boch}.
$\,\,\,\,\,\,\,\,\,\Box$
\subsection{Scalar curvature and $L^p$-strong convergence of curvature}
We first prove Theorem \ref{contsca}.

\textit{Proof of Theorem \ref{contsca}.}

This follows directly from an argument similar to the proof of Theorem \ref{ellp}, i.e. we recall that under a noncollapsed setting, for every $L^p$-weak convergent sequence of tensor fields of type $(0, 2)$, the traces is also an $L^p$-weak convergent sequence (\cite[Proposition 3.72]{holp}).
By applying this result for our Ricci curvature with Theorem \ref{ellp} we have Theorem \ref{contsca}.
$\,\,\,\,\,\,\,\,\,\Box$

The following Gauss-Bonnet and Gauss-Bonnet-Chern formulae on a singular setting are a direct consequence of Theorem \ref{contsca}.
Recall that the scalar curvature is twice the Gauss curvature on a smooth surface.
\begin{proposition}\label{gauss}
Let $X \in \overline{\mathcal{M}}$.
\begin{enumerate}
\item If $\mathrm{dim}\,X=2$ (i.e. $n=2$),
then we have
\[4\pi \chi(X)=\int_Xs_XdH^2,\]
where $\chi(X)$ is the Euler characteristic of $X$.
\item If $\mathrm{dim}\,X=4$ and there exists a sequence of $4$-dimensional closed manifolds $M_i$ with
\[\sup_i||\mathrm{Sec}_{M_i}||_{L^{\infty}}<\infty\]
such that $M_i \stackrel{GH}{\to} X$ and that $R_{M_i}$ $L^2$-converges strongly to $R_X$ on $X$, then we have
\[32\pi \chi(X)= \int_{X}\left( |R_{X}|^2 -4|\mathrm{Ric}_{X}|^2 +|s_{X}|^2\right)dH^n.\]
\end{enumerate}
\end{proposition}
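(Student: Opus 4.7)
My plan for both formulas is to approximate $X$ by a sequence $M_i \in \mathcal{M}$ with $M_i \stackrel{GH}{\to} X$, apply the classical integral curvature identity on each smooth $M_i$, and then pass to the Gromov-Hausdorff limit on both sides simultaneously. The convergence of the right hand sides will come from Theorem \ref{contsca} (and, for (ii), the $L^2$-strong convergence of $R_{M_i}$ that is built into the hypothesis), while the convergence of the left hand sides reduces to a topological stability statement $\chi(M_i) = \chi(X)$.

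For part (i), I would apply the classical Gauss-Bonnet theorem on the surface $M_i$ (using that $s_{M_i} = 2K_{M_i}$ in dimension two) to obtain
\[
4\pi \chi(M_i) = \int_{M_i} s_{M_i}\,dH^2.
\]
Theorem \ref{contsca} directly sends the right hand side to $\int_X s_X\,dH^2$. For the left hand side, observe that $n=2$ makes the Ricci bound $K_1 \le \mathrm{Ric}_{M_i} \le K_2$ a two-sided sectional curvature bound, which together with $\mathrm{diam}\,M_i \le d$ and $H^2(M_i) \ge v$ invokes Cheeger's injectivity radius estimate and the Cheeger-Gromov compactness theorem to produce, after extraction, a $C^{1,\alpha}$-Cheeger-Gromov convergence. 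In particular $X$ is a smooth surface diffeomorphic to $M_i$ for all large $i$, so $\chi(M_i) = \chi(X)$ eventually and the formula follows.

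For part (ii), I would apply the classical Chern-Gauss-Bonnet theorem on each $4$-manifold $M_i$,
\[
32\pi \chi(M_i) = \int_{M_i}\bigl(|R_{M_i}|^2 - 4|\mathrm{Ric}_{M_i}|^2 + s_{M_i}^2\bigr)\,dH^4.
\]
The hypothesis $\sup_i \|\mathrm{Sec}_{M_i}\|_{L^\infty} < \infty$ together with the diameter and volume bounds again yields a $C^{1,\alpha}$-Cheeger-Gromov convergence along a subsequence, hence $\chi(M_i) = \chi(X)$ for $i$ large. The $L^2$-strong convergence of $R_{M_i}$ that is given as hypothesis immediately provides $\int_{M_i}|R_{M_i}|^2\,dH^4 \to \int_X |R_X|^2\,dH^4$. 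Combining this with the uniform $L^\infty$-bound on $R_{M_i}$ (from the sectional curvature bound) and the $L^p$-weak convergences of $\mathrm{Ric}_{M_i}$ and $s_{M_i}$ coming from Theorems \ref{ellp} and \ref{contsca}, I would upgrade the latter two to $L^2$-strong convergence, yielding convergence of the remaining two integrals.

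The main obstacle is this last step: showing that $L^2$-strong convergence of $R_{M_i}$ descends to its contractions $\mathrm{Ric}_{M_i}$ and $s_{M_i}$ in the $L^2$-strong sense of Definition \ref{Lpreal}. The cleanest path I see is to exploit the $C^{1,\alpha}$-Cheeger-Gromov convergence under bounded sectional curvature, together with Cheeger-Naber's codimension-$4$ estimate on the singular set, to obtain pointwise almost everywhere convergence of the curvature tensors on $X$, and then invoke dominated convergence against the uniform bound $|R_{M_i}|\le C$ to deduce convergence of $\int |\mathrm{Ric}_{M_i}|^2\,dH^4$ and $\int s_{M_i}^2\,dH^4$ to their $X$-counterparts. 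By contrast, the topological stability step is straightforward once bounded sectional curvature enters the picture.
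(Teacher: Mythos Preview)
Your overall strategy matches the paper's: approximate by smooth manifolds, apply the classical formula, and pass to the limit using topological stability on the left and curvature convergence on the right. For part (i) this is exactly what the paper does.

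For part (ii), however, the step you flag as the ``main obstacle'' --- passing $L^2$-strong convergence from $R_{M_i}$ to its contractions $\mathrm{Ric}_{M_i}$ and $s_{M_i}$ --- is not an obstacle at all in the paper's framework. The paper invokes \cite[Proposition 3.74]{holp}: on a noncollapsed sequence, taking traces preserves $L^p$-strong convergence of tensor fields. This is precisely the statement alluded to in subsection 3.3 (``Note that the converse are always satisfied''), and it immediately gives $\int_{M_i}|\mathrm{Ric}_{M_i}|^2\,dH^4 \to \int_X|\mathrm{Ric}_X|^2\,dH^4$ and $\int_{M_i}s_{M_i}^2\,dH^4 \to \int_X s_X^2\,dH^4$ from the hypothesis on $R_{M_i}$.

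Your alternative route has two issues. First, invoking Cheeger--Naber's codimension-$4$ estimate is misplaced: under a two-sided sectional curvature bound with the volume lower bound, the limit $X$ is a smooth manifold with a $C^{1,\alpha}$ metric and the singular set is empty, so there is nothing for that estimate to do. Second, and more seriously, $C^{1,\alpha}$-Cheeger--Gromov convergence of metrics does \emph{not} give pointwise a.e.\ convergence of curvature tensors, since curvature involves second derivatives of the metric. You could repair this by pulling back via the Cheeger--Gromov diffeomorphisms and then using the \emph{hypothesis} that $R_{M_i}\to R_X$ in $L^2$-strongly to extract pointwise a.e.\ convergence along a subsequence, after which dominated convergence with the $L^\infty$ bound finishes the job --- but this is considerably more roundabout than simply citing the contraction lemma.
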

\begin{proof}
We give a proof of (i) only because the proof of (ii) is similar.

Let $X_i$ be a sequence in $\mathcal{M}$ with $X_i \stackrel{GH}{\to} X$ in $\overline{\mathcal{M}}$.
It is well-known that $X_i$ is homeomorphic to $X$ for every sufficiently large $i$.
In particular 
\[\chi (X_i)=\chi (X)\]
for every sufficiently large $i$.

On the other hand, since
\[4\pi \chi (X_i)=\int_{X_i}s_{X_i}dH^2,\]
letting $i \to \infty$ with Theorem \ref{contsca} gives (i).
\end{proof}

We now give the following questions:
\begin{enumerate}
\item[\textbf{(Q1)}] When does the $L^2$-strong convergence of Ricci curvature imply that of Riemannian curvature?
\item[\textbf{(Q2)}] When does the $L^2$-strong convergence of scalar curvature imply that of Ricci curvature?
\end{enumerate}
Note that the converse are always satisfied (\cite[Proposition 3.74]{holp}).

Since we will give a positive answer to the question \textbf{(Q2)}  on a K\"ahler setting in Section 4 (Theorem \ref{equivalence}),
we here give a positive answer to the question \textbf{(Q1)} in the case when $n=4$ and the limit is smooth:
\begin{proposition}\label{22wn}
Let $X_i \stackrel{GH}{\to} X$ in $\mathcal{M}$ with $\mathrm{dim}\,X=4$.
Then if $\mathrm{Ric}_{X_i}$ $L^2$-converges strongly to $\mathrm{Ric}_X$ on $X$, then $R_{X_i}$ $L^2$-converges strongly to $R_X$ on $X$.
\end{proposition}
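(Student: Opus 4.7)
The plan is to exploit the Chern--Gauss--Bonnet formula in dimension four as a topological constraint linking $|R|^2$ to the lower order curvatures $|\mathrm{Ric}|^2$ and $s^2$. Since $X_i, X \in \mathcal{M}$ are smooth $4$-manifolds, the classical formula gives
\begin{equation*}
32\pi \chi(X_i)=\int_{X_i}\left(|R_{X_i}|^2-4|\mathrm{Ric}_{X_i}|^2+s_{X_i}^2\right)dH^4,
\end{equation*}
and similarly for $X$. Under noncollapsed Gromov--Hausdorff convergence with bounded Ricci curvature and smooth limit, Anderson--Cheeger type regularity results imply that $X_i$ is diffeomorphic to $X$ for all sufficiently large $i$; in particular $\chi(X_i)=\chi(X)$ eventually. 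Thus
\begin{equation*}
\int_{X_i}|R_{X_i}|^2dH^4=32\pi \chi(X)+4\int_{X_i}|\mathrm{Ric}_{X_i}|^2dH^4-\int_{X_i}s_{X_i}^2dH^4.
\end{equation*}

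The next step is to pass to the limit on the right-hand side. The $L^2$-strong convergence $\mathrm{Ric}_{X_i}\to \mathrm{Ric}_X$ immediately gives $\int_{X_i}|\mathrm{Ric}_{X_i}|^2dH^4\to \int_{X}|\mathrm{Ric}_X|^2dH^4$. Since the scalar curvature is the trace of the Ricci tensor, the compatibility of tracing with $L^p$-strong convergence on a noncollapsed setting (Proposition 3.72 of \cite{holp}, used in the proof of Theorem \ref{contsca}) yields that $s_{X_i}$ $L^2$-converges strongly to $s_X$, so $\int_{X_i}s_{X_i}^2dH^4\to \int_{X}s_X^2dH^4$. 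Applying the Chern--Gauss--Bonnet formula to $X$ itself then gives
\begin{equation*}
\lim_{i\to\infty}\int_{X_i}|R_{X_i}|^2dH^4=32\pi\chi(X)+4\int_X|\mathrm{Ric}_X|^2dH^4-\int_Xs_X^2dH^4=\int_X|R_X|^2dH^4.
\end{equation*}

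Finally, I would upgrade $L^2$-weak to $L^2$-strong convergence of $R_{X_i}$. From Theorem \ref{riem2} we have $L^q$-weak convergence of $R_{X_i}$ to $R_X$ for $q \in (1,2)$; combined with the uniform bound $\sup_i\|R_{X_i}\|_{L^2}<\infty$ just established and the compactness of $L^2$-weak convergence (2.9), a subsequence argument together with uniqueness of weak limits upgrades this to $R_{X_i}\to R_X$ $L^2$-weakly on the whole sequence. Since we have also matched the $L^2$-norms, the definition of $L^2$-strong convergence is satisfied, completing the proof.

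The main obstacle is conceptual rather than technical: one must recognize that Chern--Gauss--Bonnet provides exactly the missing ingredient, converting a weak convergence statement for the full Riemann tensor into a strong one via a topological invariant. The only serious technical point is justifying $\chi(X_i)=\chi(X)$ for large $i$, which follows from the smoothness of both sides and Anderson-type $C^{1,\alpha}$-regularity of the noncollapsed Gromov--Hausdorff convergence under bounded Ricci curvature; every other step is then a routine combination of $L^p$-convergence properties already developed in Sections 2 and 3.
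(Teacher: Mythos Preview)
Your proof is correct and follows essentially the same route as the paper: establish $\chi(X_i)=\chi(X)$ via the diffeomorphism stability theorem (the paper cites Cheeger--Colding \cite[Theorem~A.1.12]{ch-co1} rather than Anderson), use Chern--Gauss--Bonnet together with the $L^2$-strong convergence of $\mathrm{Ric}_{X_i}$ and $s_{X_i}$ to force $\int_{X_i}|R_{X_i}|^2\to\int_X|R_X|^2$, and combine this with the $L^2$-weak convergence $R_{X_i}\rightharpoonup R_X$ to conclude. One small point: the fact that the trace preserves $L^p$-\emph{strong} (not just weak) convergence on a noncollapsed sequence is \cite[Proposition~3.74]{holp}, not 3.72; the paper invokes this implicitly in subsection~3.3 when noting that the converses of \textbf{(Q1)} and \textbf{(Q2)} always hold.
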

\begin{proof}
By Cheeger-Colding's stability theorem in \cite[Theorem A.1.12]{ch-co1}, $X_i$ is diffeomorphic to $X$ for every sufficiently large $i$.
In particular $\chi (X_i)=\chi (X)$ for every sufficiently large $i$.
Thus by the Chern-Gauss-Bonnet formula and Theorem \ref{riem2}, we see that $R_{X_i}$ $L^2$-coverges weakly to $R_X$ on $X$
(c.f. \cite[Theorem 1.13]{chna}).

Assume that  $\mathrm{Ric}_{X_i}$ $L^2$-converges strongly to $\mathrm{Ric}_X$ on $X$.
By the Chern-Gauss-Bonnet formula and the lower semicontinuity of norms with respect to the $L^p$-weak convergence (2.10), we have
\begin{align*}
32\pi \chi (X)&= \lim_{i \to \infty}32\pi \chi(X_i)\\
&=\lim_{i \to \infty}\int_{X_i}\left( |R_{X_i}|^2 -4|\mathrm{Ric}_{X_i}|^2 +|s_{X_i}|^2\right)dH^n \\
&\ge \int_{X}\left( |R_{X}|^2 -4|\mathrm{Ric}_{X}|^2 +|s_{X}|^2\right)dH^n=32\pi \chi (X).
\end{align*}
Thus  
\[\lim_{i \to \infty}\int_{X_i}|R_{X_i}|^2dH^n=\int_X|R_X|^2dH^n.\]
This completes the proof.
\end{proof}
The assumption of the smoothness of the limit as in Proposition \ref{22wn} is essential.
We give an example.
\begin{example}[Page \cite{pa}, Kobayashi-Todorov \cite{kt}]\label{pakt}
Let $\{-1, 1\}$ act on a complex $2$-torus $T:=\mathbf{C}^2/\mathbf{Z}^2$ by
\[(-1)\cdot (z_1, z_2):=(-z_1, -z_2).\]
Define an orbifold $X$ by 
\[X:=T/\{\pm 1\}\]
with the standard flat orbifold metric $g_{X}$.
It is not difficult to check that the second Betti number $b_2(X)$ is equal to $6$.

Let $\pi: Y \to X$ be the minimal resolusion and let $S$ be the singular set of $X$ (which consists of $16$-points).
It is well-known that $Y$ is a Kummer surface.  
In particular $\chi (Y)$ is equal to $24$ and the second Betti number $b_2(Y)$ is equal to $22$.

By the Calabi-Yau theorem, we see that there exists a sequence of Ricci flat K\"ahler-Einstein metrics $g_i$ on $Y$ such that the following hold.
\begin{enumerate}
\item The volume of $Y$ with respect to $g_i$ is equal to $1$.
\item For every $x \in S$, the volume of $\pi^{-1}(x)$ with respect to $g_i$ is equal to $i^{-1}$.
\item $(Y, g_i) \stackrel{GH}{\to} (X, g_{X})$.
\end{enumerate}
In particular we see that $\mathrm{Ric}_{(Y, g_i)}$ $L^2$-converges strongly to $\mathrm{Ric}_{(X, g_X)}$ on $X$ and that
the lower semicontinuity of second Betti numbers
\[\liminf_{i \to \infty}b_2(Y, g_i)>b_2(X, g_X)\]
holds.

On the other hand, the Chern-Gauss-Bonnet formula implies
\[\int_X|R_{(Y, g_i)}|^2dH^4=32\pi^2 \chi(Y)=768\pi^2.\]
Thus by Theorem \ref{riem2}, $R_{(Y, g_i)}$ $L^2$-converges weakly to $R_{(X, g_X)}$ on $X$.

However since
\[\int_{X}|R_{(X, g_{X})}|^2dH^4=0,\]
$R_{(Y, g_i)}$ does not $L^2$-converge strongly to $R_{(X, g_X)}$ on $X$.
\end{example}
We also give an example of a Gromov-Hausdorff convergent sequence with bounded curvature such that the scalar curvature does not $L^2$-strong converge.
\begin{example}\label{notsca}
Let $(M, g_M)$ be an $n$-dimensional closed Riemannian manifold.
Then it is not difficult to check that there exists a sequence $f_i \in C^{\infty}(M)$ with 
\[\sup_i ||\mathrm{Hess}_{f_i}||_{L^{\infty}}<\infty\]
such that $f_i$ $L^2$-converge strongly to a smooth function $f \in C^{\infty}(M)$ on $M$
and that $\Delta f_{i}$ does not $L^2$-converge strongly to $\Delta f$ on $M$. 
Note that by the Lipschitz regurality of solutions of Poisson's equations (2.2), 
\[\sup_i||\nabla f_i||_{L^{\infty}}<\infty.\]
In particular $f_i \to f$ in $C^0(M)$.

Then from \cite{sakai}, the following hold:
\begin{enumerate}
\item We have 
\[\sup_i||R_{(M, e^{f_i}g_M)}||_{L^{\infty}}<\infty.\]
\item $(M, e^{f_i}g_M) \stackrel{GH}{\to} (M, e^fg_M)$. 
\item $s_{(M, e^{f_i}g_M)}$ does not $L^2$-converge strongly to $s_{(M, e^fg_M)}$ on $M$.
\end{enumerate}
\end{example}

\subsection{Proof of compatibilities}
In this section we prove compatibilities as  we stated in Section 1.
\subsubsection{Smooth setting}
We give a proof of Proposition \ref{curvaturecompatibility}.

\textit{Proof of Proposition \ref{curvaturecompatibility}.}

Let $\hat{R}_U$ be the Riemannian curvature tensor of $(U, g_X|U)$ defined by the ordinary way of Riemannian geometry.
Then by (\ref{riem}) we have
\[\int_Uf_0\langle R_X, df_1 \otimes df_2, \otimes df_3 \otimes df_4\rangle dH^n=\int_Uf_0\langle \hat{R}_U, df_1 \otimes df_2, \otimes df_3 \otimes df_4\rangle dH^n\]
for any $f_i \in C^{\infty}_c(U)$, where $C^{\infty}_c(U)$ denotes the set of smooth functions on $U$ with compact supports.
Since the space
\[\left\{ \sum_{i=1}^Nf_0df_1 \otimes df_2, \otimes df_3 \otimes df_4; N \in \mathbf{N}, f_{i} \in C^{\infty}_c(U)\right\}\]
is dense in $L^q(T^4_0U)$ for any $q \in (1, \infty)$, we have $R_X=\hat{R}_U$ on $U$ a.e. sense.
$\,\,\,\,\,\,\Box$

\subsubsection{Lott's Ricci mesure}
In this section we give a proof of Theorem \ref{comlott}.
See page 7 in \cite{lott2} for the precise definition of Lott's Ricci measure on a $C^{1, 1}$-manifold.
Note that 
by Theorem \ref{boch} and the $L^{\infty}$-bound on our Ricci curvature, an essential assumption in order to define the Ricci measure, \cite[Assumption 3.19]{lott2}, holds for the regular set $\mathcal{R}$ of every $X \in \overline{\mathcal{M}}$.
In particular Lott's Ricci measure is well-defined on $\mathcal{R}$ (note that the $C^{1, \alpha}$-regularity of the Riemannian metric yields that the Levi-Civita connection is tame in the sense of \cite{lott2}).

\textit{Proof of Theorem \ref{comlott}.}

Let $f_{i}$ be locally Lipschitz functions on $\mathcal{R}$, let $g_i$ be $C^2$-functions on $\mathcal{R}$ with compact supports, and let $V_i:=f_i \nabla g_i$, where $i=1, 2$.
It suffices to check that 
\begin{align}\label{swg}
\int_{X}\mathrm{Ric}_X(V_1, V_2)dH^n=\int_X\left( \sum_{i, j} \left( (\nabla_iV^i_1)(\nabla_jV_2^i)-(\nabla_i V^j_1)(\nabla_jV_2^i)\right) \right)dH^n.
\end{align}
See \cite[(1.3)]{lott}.

Note that $g_i \in \mathcal{D}^2(\Delta, X)$ and that without loss of generality we can assume that $f_i \in \mathrm{LIP}(X)$.
Then  Corollary \ref{aavb} and  Proposition \ref{bbghnv} yield $V_i \in H^{1, 2}_C(TX)$ and
\begin{align}\label{lottproof}
\int_{X}\mathrm{Ric}_X(V_1, V_2)dH^n=\int_X\left(\langle dV_1^*, dV_2^*\rangle + (\delta V_1^*)(\delta V_2^*) -\langle \nabla V_1, \nabla V_2\rangle \right)dH^n.
\end{align}
On the other hand, since $V_i^* \in H^{1, 2}_H(T^*X)=H^{1, 2}_C(T^*X)$, $dV_i^*, \delta V_i^*, \nabla V_i$ coincide with that defined by the ordinary way of Riemannian geometry via the $C^1$-Riemannian metric $g_X|_{\mathcal{R}}$, respectively (c.f. \cite[Theorems 1.10 and 7.3]{hoell}).
Therefore (\ref{lottproof}) yields (\ref{swg}).
$\,\,\,\,\,\,\,\,\Box$

\subsubsection{Gigli's Ricci curvature} 
We first recall the definition of Gigli's Ricci curvature on our setting.

Let $X \in \overline{\mathcal{M}}$.
Then Gigli's Ricci curvature $\mathbf{Ric}_X$ is defined by the bilinear continuous map from  $H^{1, 2}_H(TX) \times H^{1, 2}_H(TX)$ to $\mathrm{Meas}(X)$ such that 
\begin{align}\label{bbvbb}
\mathbf{Ric}_X(V, W):=-\frac{\mathbf{\Delta} \langle V, W\rangle }{2} + \left(\frac{\langle \Delta_{H, 1}V^*, W^*\rangle}{2} + \frac{\langle \Delta_{H, 1}W^*, V^*\rangle}{2}-\langle \nabla V, \nabla W\rangle\right)dH^n \in \mathrm{Meas}(X)
\end{align}
holds for any $V, W \in \mathrm{Test}TX$,
where $H^{1, 2}_H(TX)$ is the set of $V \in L^2(TX)$ such that $V^* \in H^{1, 2}_H(T^*X)$ and $\mathbf{\Delta} \langle V, W\rangle  \in \mathrm{Meas}(X)$ is the measure valued Laplacian of $\langle V, W\rangle$ defined by satisfying 
\[\int_Xgd(\mathbf{\Delta} \langle V, W\rangle )=\int_X\langle df, dg\rangle dH^n\]
for every $g \in \mathrm{LIP}(X)$ (recall that $\mathrm{Meas}(X)$ is the Banach space of finite signed Radon measures on $X$ equipped with the total variation norm).
See 98 page of \cite{gigli} and \cite[Proposition 3.1.3 and Theorem 3.6.7]{gigli} for the details (note that the sign of our Laplacian is different from Gigli's one).

We now prove Theorem \ref{gigliricci}.

\textit{Proof of Theorem \ref{gigliricci}.}

For any $V, W \in \mathrm{Test}T^1_0X$, (iii) of Theorem \ref{12} yields 
\[\mathrm{Ric}_X(V, W)=-\frac{\Delta \langle V, W\rangle}{2} + \frac{\langle \Delta_{H, 1}V^*, W^*\rangle}{2} + \frac{\langle \Delta_{H, 1}W^*, V^*\rangle}{2}+\langle \nabla V, \nabla W\rangle \in L^1(X).\]

On the other hand, by the definition of $\mathbf{\Delta}$ with (i) of Theorem \ref{12}, we have
\[\Delta \langle V, W\rangle =\mathbf{\Delta}\langle V, W\rangle \]
in $\mathrm{Meas}(X)$ via the canonical inclusion $L^1(X) \hookrightarrow \mathrm{Meas}(X)$.
Therefore this completes the proof of Theorem \ref{gigliricci}.
$\,\,\,\,\,\,\,\,\Box$

\begin{remark}\label{gigliquestion}
Let $X \in \overline{\mathcal{M}}$.
We give positive answers to three questions on Gigli's Ricci curvature $\mathbf{Ric}_X$ given in the final section of \cite{gigli} on our setting.
His questions are closely related to the realization of $\mathbf{Ric}_X$ as a tensor field.
On our setting, Theorem \ref{gigliricci} gives the realization. 
In particular we can give positive answers to his questions, automatically.
 
In order to introduce the precise statement, we first discuss the following two questions of them.
\begin{enumerate}
\item[\textbf{(Q3)}] Can we extend $\mathbf{Ric}_X$ to a bilinear continuous map from $[L^2(TX)]^2$ to $\mathrm{Meas}(X)$?
\item[\textbf{(Q4)}] Let
\[\mathcal{E}_{\mathrm{diff}}(V):=\frac{1}{2}\inf_{\{V_i\}_i} \left(\liminf_{i \to \infty}\left( \int_X \left(|dV_i^*|^2+|\delta V_i^*|^2 - |\nabla V_i|^2\right)dH^n\right) \right)\]
for every $V \in L^2(TX)$,
where the infimum runs over all sequence $V_n \in H^{1, 2}_H(TX)$ converging to $V$ in $L^2(TX)$.
Then is it true that 
\[\mathcal{E}_{\mathrm{diff}}(V)=\frac{1}{2}\int_X \left(|dV^*|^2+|\delta V^*|^2 - |\nabla V|^2\right)dH^n\]
for every $V \in H^{1, 2}_H(TX)$?
\end{enumerate}
See 156 page in \cite{gigli}.

Theorem \ref{pointwiseboch} with the $L^{\infty}$-bound on our Ricci curvature yields that $\mathbf{Ric}_X$ can be extended to the bilinear continuous map from $[L^2(TX)]^2$ to $L^1(X)$ defined by
\[(V, W) \mapsto \int_X\mathrm{Ric}_X(V, W)dH^n.\]
This gives the positive answer to the question \textbf{(Q3)}.

On the other hand, for the question \textbf{(Q4)}, Theorem \ref{pointwiseboch} yields that
\[\mathcal{E}_{\mathrm{diff}}(V)=\frac{1}{2}\int_X \mathrm{Ric}_X(V, V)dH^n=\frac{1}{2}\int_X \left(|dV^*|^2+|\delta V^*|^2 - |\nabla V|^2\right)dH^n \]
for every $V \in H^{1, 2}_H(TX)$.
This gives the positive answer to the question \textbf{(Q4)}.

Finally, Gigli gave the following question:
\begin{enumerate}
\item[\textbf{(Q5)}] Does the following linearlity
\begin{align}\label{hhgj}
\sum_if_i\mathbf{Ric}_X(V_i, V)=\mathbf{Ric}_X\left(\sum_if_iV_i, V\right)
\end{align}
holds for any $f_i \in C^0(X)$ and $V_i, V \in H^{1, 2}_H(TX)$?
\end{enumerate}
See page 157 in \cite{gigli}.

It is also trivial from Theorem \ref{gigliricci} that the question \textbf{(Q5)} has the positive answer on our setting. 
Note that by the answer to the question \textbf{(Q3)}, (\ref{hhgj}) holds even if $V_i,  V \in L^2(TX)$.
\end{remark}
\subsubsection{$RCD$-setting}
We give a proof of Theorem \ref{equ}.

\textit{Proof of Theorem \ref{equ}.}

First we assume that (i) holds.
Let $h \in \mathcal{D}^2(\Delta, X)$.
Then by (\ref{pqty}) we have
\begin{align*}
-\frac{1}{2}\int_{X}\langle \nabla f, \nabla|\nabla h|^2 \rangle dH^n\ge\int_{X}\left( f \frac{(\Delta h)^2}{n} -f (\Delta h)^2 + \Delta h \langle \nabla f, \nabla h\rangle +f\mathrm{Ric}_{X}(\nabla h, \nabla h)\right)dH^n
\end{align*} 
for every $f \in \mathrm{LIP}(X)$ with $f \ge 0$.
In particular if $\Delta h \in H^{1, 2}(X)$, then
\begin{align*}
-\frac{1}{2}\int_{X} \Delta f |\nabla h|^2 dH^n \ge\int_{X}\left( f \frac{(\Delta h)^2}{n} -f\langle \nabla \Delta h, \nabla h\rangle +fK|\nabla h|^2\right)dH^n
\end{align*} 
for every $f \in \mathcal{D}^2(\Delta, X)$ with $f \ge 0$ and $\Delta f \in L^{\infty}(X)$.
Thus the equivalence between the Bochner inequalty and the reduced curvature-dimension condition in \cite[Theorem $7$]{eks} (see also \cite{ams}) implies that (ii) holds.

Since it is known that if (ii) holds, then (iii) holds,
finally, we assume that (iii) holds.
Then by the equivalence between a lower bound of Gigli's Ricci curvature and an $RCD$-condition \cite[Theorem $3.6.7$]{gigli}, Theorem \ref{gigliricci} yields that
\begin{align}\label{gitt}
\int_A\mathrm{Ric}_X(V, V)dH^n \ge K\int_A|V|^2dH^n
\end{align}
for any $V \in \mathrm{Test}T^1_0(X)$ and Borel subset $A$ of $X$.
By the positive answer to the question \textbf{(Q3)} in Remark \ref{gigliquestion}, (\ref{gitt}) holds for every $V \in L^2(X)$.
Then it is easy to check that this implies that (i) holds. 
$\,\,\,\,\,\,\,\Box$
\subsection{Proof of stabilities}
We prove Theorem \ref{stabilityricci}.

\textit{Proof of Theorem \ref{stabilityricci}.}

We give a proof in the case of lower bounds in (i) of Theorem \ref{stabilityricci} only because the proofs of other statements are similar.

It suffices to check that 
\begin{align}\label{nnuj}
\langle R_{X}, V \otimes W \otimes W \otimes V \rangle \ge K\left (|V|^2|W|^2-\langle V, W\rangle ^2\right)
\end{align}
on $B_r(x)$ a.e. sense for any $V, W \in L^{\infty}(TX)$. 

From the existence of an $L^p$-approximate sequence in \cite[Proposition 3.55]{holp}, there exist sequences  $V_i, W_i \in L^{\infty}(TX_i)$ with 
\[\sup_i (||V_i||_{L^{\infty}}+||W_i||_{L^{\infty}})<\infty\]
such that $V_i, W_i$ $L^p$-converge strongly to $V, W$ on $X$ for every $p \in (1, \infty)$, respectively.

Let $y \in B_r(x)$, let $t>0$ be a sufficiently small positive number, and let $y_i$ be a sequence in $B_r(x_i)$ with $y_i \stackrel{GH}{\to} y$.
Then by the assumption since  
\begin{align*}
&\frac{1}{H^n(B_t(y_i))}\int_{B_t(y_i)}\langle R_{X_{i}}, V_{i} \otimes W_{i} \otimes W_{i} \otimes V_{i} \rangle dH^n \\
&\ge \frac{K}{H^n(B_t(y_i))}\int_{B_t(y_i)}\left (|V_{i}|^2|W_{i}|^2-\langle V_i, W_i\rangle ^2\right)dH^n,
\end{align*}
letting $i \to \infty$ and $t \to 0$ with Theorem \ref{riem2} and the Lebesgue differentiation theorem yield (\ref{nnuj}).
$\,\,\,\,\,\,\,\Box$
\section{Applications}
\subsection{Spectral convergence of the Hodge Laplacian}
In order to establish the spectral convergence of $\Delta_{H, 1}$, we recall the space $W_{2p}$ for differential one-forms on our setting.
See \cite[Definition $6.2$]{hoell} in the case of tensor fields.
\begin{definition}[The space $W_{2p}$]
Let $X \in \overline{\mathcal{M}}$ and let $p >1$.
We denote by $W_{2p}(T^*X)$ the set of $\omega \in L^{2p}(T^*X)$ with the following three conditions:
\begin{enumerate}
\item $\omega$ is differentiable on $X$ a.e. in the sense of \cite{ho}.
\item $\nabla \omega \in L^2(T^0_2X)$, where $\nabla$ is the covariant derivative in the sense of \cite{ho}.
\item For every $x \in X$, every $r>0$, every harmonic function $h$ on $B_r(x)$, and every $s<r$, there exists $q>1$ such that
\[\langle \omega, dh\rangle|_{B_s(x)} \in H^{1, q}(B_s(x)).\]
\end{enumerate}
Define the complete norm $||\cdot ||_{W_{2p}}$ on $W_{2p}(T^*X)$ by
\[||\omega ||_{W_{2p}}:=||\omega ||_{L^{2p}}+||\nabla \omega ||_{L^2}.\] 
\end{definition}
We need the following previous results in \cite{hoell}:
\begin{proposition}\cite[Theorems $1.10$, $1.11$, $6.7$, $6.9$, $6.14$, and $7.8$]{hoell}\label{8877s}
Let $X \in \overline{\mathcal{M}}$. Then we have the following.
\begin{enumerate}
\item{(Sobolev space is a subspace of a $W_{2p}$-space)} We have $H^{1, 2}_H(T^*X) \subset W_{2n/(2n-1)}(T^*X)$. More precisely, for every $\omega \in H^{1,2}_H(T^*X)$ with 
\[\frac{1}{H^n(X)}\int_X\left(|\omega|^2+|d\omega |^2+|\delta \omega |^2\right)dH^n \le L,\]
we have
\[||\omega ||_{W_{2n/(2n-1)}}\le C(n, K_1, d, L).\]
\item{(Compatibility)} For every $\omega \in H^{1, 2}_H(T^*X)$, the derivatives $d\omega, \delta \omega, \nabla \omega$ in the sense of \cite{ho} coincide with Gigli's them in the sense of \cite{gigli}.
\item{(Rellich compactness theorem)} Let $X_i \stackrel{GH}{\to} X$ in $\overline{\mathcal{M}}$, let $p>1$, and let $\omega_i$ be a sequence in $W_{2p}(T^*X_i)$ with
\[\sup_i||\omega_i||_{W_{2p}}<\infty.\]
Then there exist a subsequence $i(j)$ and $\omega \in W_{2p}(T^*X)$ such that $\omega_{i(j)}$ $L^2$-converges strongly to $\omega$ on $X$ and that $\nabla \omega_{i(j)}$ $L^2$-converges weakly to $\nabla \omega$ on $X$.
\end{enumerate}
\end{proposition}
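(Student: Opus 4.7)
My plan is to treat the three items of Proposition \ref{8877s} separately, using only the tools already established in the current paper: the Weitzenb\"ock identity of Corollary \ref{aavb}, the Bochner formula of Theorem \ref{pointwiseboch}, the smooth approximation Theorem \ref{14}, and the $L^\infty$ bound on $\mathrm{Ric}_X$ from Theorem \ref{spectr}(i). The statement is essentially a compilation of results from \cite{hoell}, but each piece can be reconstructed with these tools.

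For item (i), the Weitzenb\"ock identity (\ref{nnghh}) combined with the $L^\infty$ bound on $\mathrm{Ric}_X$ immediately yields
\[
\|\nabla \omega\|_{L^2}^2 \le \|\omega\|_{H^{1,2}_H}^2 + \max(|K_1|,|K_2|)\, \|\omega\|_{L^2}^2,
\]
which gives condition (ii) of the definition of $W_{2p}$ with an explicit constant $C(n, K_1, d, L)$. Theorem \ref{pointwiseboch}(i) places $|\omega|^2 \in H^{1, 2n/(2n-1)}(X)$ with an analogous bound; applying the $(n/(n-1), 1)$-Poincar\'e inequality (\ref{poincare1form}) to $|\omega|^2$ then yields $|\omega|^2 \in L^{n/(n-1)}$, so $\omega \in L^{2n/(n-1)} \subset L^{2n/(2n-1)}$ on the bounded measure space $X$. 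The a.e.\ differentiability condition and the harmonic-testing condition in the definition of $W_{2p}$ would be obtained by approximating $\omega$ by $\widetilde{\mathrm{TestForm}}_1(X)$ via Theorem \ref{14}, each element of which is $C^1$ on the regular set $\mathcal{R}$ and hence differentiable a.e., and for which $\langle \omega, dh\rangle$ lies locally in $H^{1,q}$ by the Leibniz rule and the Caccioppoli-type estimate coming from Theorem \ref{pointwiseboch}.

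For item (ii), on a test form $\omega \in \mathrm{TestForm}_1(X)$ the Gigli operators $d$, $\delta$, $\nabla$ are given by the explicit formulas (\ref{hodgeformula}) and (\ref{deltaformula1}); the $C^{1,\alpha}$-regularity of $g_X$ on $\mathcal{R}$ together with the a.e.\ Hessian computed in (1.2)--(1.3) shows that these expressions agree pointwise a.e.\ with the classical Riemannian operators as defined in \cite{ho}. Density of $\mathrm{TestForm}_1(X)$ in $H^{1,2}_H(T^*X)$ together with continuity of both differential calculi extends the identification to all of $H^{1,2}_H(T^*X)$.

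For item (iii), the compactness of $L^p$-weak convergence (2.9) extracts a subsequence with $\omega_{i(j)} \rightharpoonup \omega$ in $L^{2p}$ and $\nabla \omega_{i(j)} \rightharpoonup \eta$ in $L^{2}$; identifying $\eta$ with $\nabla \omega$ by integration by parts against test tensor fields on the limit (as in Claim \ref{20} in the proof of Theorem \ref{curvaturetensor}) then gives $\omega \in W_{2p}(T^*X)$. The main obstacle is upgrading the weak $L^2$ convergence of $\omega_{i(j)}$ to strong $L^2$ convergence, and this is where I expect most of the work to live. My plan is to localize on the regular set: on a small ball, use Cheeger-Colding's almost-splitting theorem to construct harmonic functions $h_{j,i}$ on $X_i$ whose differentials form an almost orthonormal frame, use condition (iii) of the $W_{2p}$ definition to see that $\langle \omega_i, dh_{j,i}\rangle$ lies in $H^{1,q}$ with a uniform bound, apply the Rellich compactness for functions (2.3) componentwise to get strong $L^2$ convergence, and finally invoke the Cheeger-Naber estimate that the singular set has Hausdorff codimension at least $4$ to sum the local convergences into a global one on $X$.
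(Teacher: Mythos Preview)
The paper does not prove this proposition at all: it is stated with the citation \cite[Theorems 1.10, 1.11, 6.7, 6.9, 6.14, 7.8]{hoell} and imported as a black box. Those results in \cite{hoell} are established under only a lower Ricci bound, using the rectifiable second-order structure developed there, and they logically precede the bounded-Ricci machinery of the present paper (in particular they are used as inputs to Section~4).

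Your attempt is a genuine reconstruction using the tools of Section~3, which is a different route; but it has a real gap concerning conditions (i) and (iii) in the definition of $W_{2p}(T^*X)$. For part (i) of the proposition, your argument via (\ref{nnghh}) and Theorem~\ref{pointwiseboch} correctly delivers the $L^{2n/(2n-1)}$ bound on $\omega$ and the $L^2$ bound on $\nabla\omega$, but it does not establish that a general $\omega \in H^{1,2}_H(T^*X)$ is differentiable a.e.\ in the sense of \cite{ho}, nor that $\langle\omega, dh\rangle \in H^{1,q}_{\mathrm{loc}}$ for local harmonic $h$. You propose to obtain these by approximating in $H^{1,2}_H$ by $\widetilde{\mathrm{TestForm}}_1(X)$, but neither property is closed under $H^{1,2}_H$ limits: a.e.\ differentiability in the sense of \cite{ho} is a pointwise structural condition tied to the rectifiable coordinate system, and the harmonic-testing condition demands a \emph{uniform} $H^{1,q}$ bound along the approximating sequence that your sketch never supplies. (Incidentally, elements of $\widetilde{\mathrm{TestForm}}_1(X)$ are built from functions in $\mathcal{D}^2(\Delta, X)$, which by (1.2) are only weakly $C^{1,1}$, not $C^1$ on $\mathcal{R}$ as you assert.) The same gap recurs in your treatment of part (iii): identifying the weak $L^2$ limit of $\nabla\omega_{i(j)}$ by pairing against test tensors does not by itself show that the limit $\omega$ is a.e.\ differentiable or satisfies the harmonic-testing condition, so you have not shown $\omega \in W_{2p}(T^*X)$. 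In \cite{hoell} these points are handled by working directly with the rectifiable atlas and local harmonic almost-splitting coordinates rather than by Sobolev approximation; that is the ingredient your outline is missing.
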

We prove a part of Theorem \ref{spectr} for $\Delta_{H, 1}$ and give the density of the space of eigen-one-forms in the Sobolev space:
\begin{theorem}\label{finitedimensional}
Let $X \in \overline{\mathcal{M}}$.
Then we have the following.
\begin{enumerate}
\item{(Finite dimensionalities of eigenspaces)} For every $\alpha \ge 0$,
the space 
\[E_{\alpha}^{H, 1}:=\{\omega \in \mathcal{D}^2(\Delta_{H, 1}, X); \Delta_{H, 1}\omega =\alpha \omega\}\]
is finite dimensional.
\item{(Discreteness and unboundedness of the spectrum)}
The set of the spectrum of $\Delta_{H, 1}$
\[S^{H, 1}:=\{\alpha \in \mathbf{R}_{\ge 0}; E_{\alpha}^{H, 1} \neq \{0\}\}\]
is discrete and unbounded.
\item{(Density of eigenspaces)}
The space
\[E^{H, 1}:=\bigoplus_{\alpha \ge 0}E_{\alpha}^{H, 1}\]
is dense in $H^{1, 2}_H(T^*X)$.
\end{enumerate}
\end{theorem}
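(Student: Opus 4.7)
The plan is to identify $\Delta_{H,1}$ as the nonnegative self-adjoint operator on $L^2(T^*X)$ generated by a closed symmetric quadratic form, and then to use Proposition \ref{8877s} to verify that its resolvent is compact. All three conclusions (i)--(iii) will then follow from standard spectral theory for such operators.

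First, I would set up the form $\mathcal{E}(\omega,\eta):=\int_X(\langle d\omega,d\eta\rangle+(\delta\omega)(\delta\eta))\,dH^n$ with form domain $H^{1,2}_H(T^*X)$, noting that this domain is dense in $L^2(T^*X)$ and complete under the norm $(\|\cdot\|_{L^2}^2+\mathcal{E}(\cdot,\cdot))^{1/2}$, so $\mathcal{E}$ is closed. Since $\mathrm{TestForm}_1(X)$ is dense in $H^{1,2}_H(T^*X)$ by construction, Gigli's weak definition of $\mathcal{D}^2(\Delta_{H,1},X)$ via test forms extends by density, and the self-adjoint operator associated with $\mathcal{E}$ is precisely $\Delta_{H,1}$ on $\mathcal{D}^2(\Delta_{H,1},X)$.

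Second, the key step is compactness of the embedding $H^{1,2}_H(T^*X)\hookrightarrow L^2(T^*X)$. For any sequence $\omega_i$ bounded in $H^{1,2}_H(T^*X)$, Proposition \ref{8877s}(i) provides a uniform bound $\sup_i\|\omega_i\|_{W_{2n/(2n-1)}}<\infty$, and Proposition \ref{8877s}(iii), applied to the constant sequence $X_i=X$, extracts a subsequence that converges in $L^2(T^*X)$. Equivalently, $\Delta_{H,1}$ has compact resolvent.

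Third, with a nonnegative self-adjoint operator of compact resolvent, one obtains an orthonormal basis $\{\phi_j\}_{j\ge 1}$ of $L^2(T^*X)$ consisting of eigen-one-forms with eigenvalues $0\le\lambda_1\le\lambda_2\le\cdots$ having no finite accumulation point. This immediately yields (i), since each $E^{H,1}_\alpha$ is the span of finitely many $\phi_j$ with $\lambda_j=\alpha$, and the discreteness statement in (ii). Unboundedness of $S^{H,1}$ follows because $L^2(T^*X)$ is infinite dimensional (for example, $\{df:f\in\mathrm{Test}F(X)\}$ is infinite dimensional, being separated by the scalar spectral decomposition of \cite{ch-co3}), so infinitely many distinct eigenvalues must occur and must tend to $\infty$.

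For (iii), eigen-one-forms for distinct eigenvalues are orthogonal with respect to both $\langle\cdot,\cdot\rangle_{L^2}$ and $\mathcal{E}(\cdot,\cdot)$, so $\{\phi_j\}$ is orthogonal in the graph inner product with $\|\phi_j\|^2_{H^{1,2}_H}=1+\lambda_j$. For any $\omega\in H^{1,2}_H(T^*X)$ with $L^2$-expansion coefficients $c_j=\langle\omega,\phi_j\rangle$, the spectral calculus gives $\sum_j\lambda_j|c_j|^2=\mathcal{E}(\omega,\omega)<\infty$ and hence the partial sums $\omega_N:=\sum_{j\le N}c_j\phi_j$ satisfy $\|\omega-\omega_N\|^2_{L^2}+\mathcal{E}(\omega-\omega_N,\omega-\omega_N)=\sum_{j>N}(1+\lambda_j)|c_j|^2\to 0$, so $E^{H,1}$ is dense in $H^{1,2}_H(T^*X)$. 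The main obstacle, and really the only nontrivial analytic input, is the compact embedding in step two; all of the substantive regularity theory has already been done in \cite{hoell} and is packaged in Proposition \ref{8877s}, so at this stage it suffices to cleanly assemble those inputs with standard operator-theoretic machinery.
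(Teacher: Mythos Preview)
Your proof is correct and follows essentially the same route as the paper: the decisive input is the compact embedding $H^{1,2}_H(T^*X)\hookrightarrow L^2(T^*X)$ obtained from Proposition \ref{8877s}, and everything else is standard. The only difference is packaging: the paper argues each of (i)--(iii) by hand (compactness of the unit sphere in $E_\alpha^{H,1}$, a contradiction argument for discreteness, min-max for unboundedness), whereas you invoke the spectral theorem for a nonnegative self-adjoint operator with compact resolvent to obtain all three at once.
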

\begin{proof}
We first prove (i).
Note that the proof is standard by using Proposition \ref{8877s}.

Let $\omega_i$ be a sequence in $E_{\alpha}^{H, 1}$ with 
$||\omega_i||_{L^2}=1$. 
Note that $||\omega_i||_{H^{1, 2}_H}^2=1+\alpha$.
Since every bounded sequence in $H^{1, 2}_H(T^*X)$ has a weak convergent subsequence in $H^{1, 2}_H(T^*X)$, Proposition \ref{8877s} with a weak continuity of the Hodge Laplacian in \cite[Theorem $7.16$]{hoell} yields that there exist a subsequence $i(j)$ and $\omega \in W_{2n/(2n-1)}(T^*X) \cap \mathcal{D}^2(\Delta_{H, 1}, X)$ such that $\omega_{i(j)} \to \omega$ in $L^2(T^*X)$ and that $\Delta_{H, 1}\omega_{i(j)}$ $L^2$-converges weakly to $\Delta_{H, 1}\omega$ on $X$, respectively.
In particular $\omega \in E_{\alpha}^{H, 1}$.

This gives that the set
\[\{ \omega \in E_{\alpha}^{H, 1}; ||\omega||_{L^2} = 1\}\]
is compact with respect to the norm $||\cdot ||_{L^{2}}$.
Recall that in general, a normed space $(V, ||\cdot ||)$ is a finite dimensional if and only if the set 
\[S:=\{v \in V; ||v||=1\}\]
is compact.
Therefore $E_{\alpha}^{H, 1}$ is finite dimensional.

Next we prove (ii).
Assume that $S^{H, 1}$ is not discrete. Then there exist a sequence $\alpha_i \in S^{H, 1}$ and $\alpha \in \mathbf{R}_{\ge 0}$ such that $\alpha_i \neq \alpha$ and that $\alpha_i \to \alpha$.
Let $\omega_i \in E_{\alpha_i}^{H, 1}$ with $||\omega_i||_{L^2}=1$.
Then by an argument similar to the proof of (i), without loss of generality we can assume that there exists the $L^2$-strong limit $\omega$ of $\omega_i$ on $X$ with $\omega \in E_{\alpha}^{H, 1}$.

Let $\{\eta_i\}_{i=1}^{\mathrm{dim}\,E_{\alpha}^{H, 1}}$ be a basis of $E^{H, 1}_{\alpha}$.
Since 
\[\int_X\langle \omega_i, \eta_l\rangle dH^n=0,\]
by letting $i \to \infty$, we have 
\[\int_X\langle \omega, \eta_l\rangle dH^n=0.\]
In particular 
\[\{\eta_i\}_{i=1}^{\mathrm{dim}\,E_{\alpha}^{H, 1}} \cup \{\omega\}\]
are linearly independent in $E_{\alpha}^{H, 1}$.
This is a contradiction.
Thus $S^{H, 1}$ is discrete.

The following is well-known on a smooth setting:
\begin{claim}[Min-max principle]\label{00oollpp}
We have 
\[\lambda^{H, 1}_k(X)=\inf_{E_k}\left( \sup_{\omega \in E_k}R^{H, 1}(\omega )\right),\]
where $E_k$ runs over all $k$-dimensional subspace of $H^{1, 2}_H(T^*X)$ and
\[R^{H, 1}(\omega ):=\frac{\int_X(|d\omega |^2+ |\delta \omega |^2)dH^n}{\int_X|\omega |^2dH^n}.\]
\end{claim}
We skip the proof of Claim \ref{00oollpp} because it is standard  by using Proposition \ref{8877s}.
See for instance \cite{sakai}.

It is easy to check that $H^{1, 2}_H(T^*X)$ is infinite dimensional. Thus Claim \ref{00oollpp} yields that $S^{H, 1}$ is an infinite set.
Since $S^{H, 1}$ is an infinite discrete set, $S^{H, 1}$ is unbounded.
This completes the proof of (ii).

On the other hand, the proof of (iii) is also standard  by using Proposition \ref{8877s} (see also \cite{sakai}).
Thus we have Theorem \ref{finitedimensional}.
\end{proof}
\begin{proposition}\label{1011}
Let $X \in \overline{\mathcal{M}}$ and let $T \in L^{\infty}(T^r_sX)$ (or $\omega \in L^{\infty}(\bigwedge^sT^*X)$, respectively).
Assume that the following three conditions hold.
\begin{enumerate}
\item $T$ (or $\omega$, respectively) is differentiable on $X$ a.e. in the sense of \cite{ho}.
\item $\nabla T \in L^2(T^r_{s+1}X)$ (or $\nabla \omega \in L^2(T^0_{s+1}X)$, respectively).
\item We have
\begin{align}\label{hin}
\left\langle T, \nabla f_1 \otimes \cdots \otimes \nabla f_r \otimes df_{r+1} \otimes \cdots \otimes df_{r+s} \right\rangle \in H^{1, 2}(X)
\end{align}
(or 
\begin{align}\label{hin2}
\left\langle \omega, df_1 \wedge \cdots \wedge df_s\right\rangle \in H^{1, 2}(X),
\end{align}
respectively) for any $f_i \in \mathcal{D}^2(\Delta, X)$ with $\Delta f_i \in L^{\infty}(X)$.
\end{enumerate}
Then we have $T \in H^{1, 2}_C(T^r_sX)$ (or $\omega \in H^{1, 2}_C(\bigwedge^sT^*X)$, respectively). 
\end{proposition}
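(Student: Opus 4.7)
The plan is to identify the pointwise covariant derivative $\nabla T$ from hypothesis (ii) with the Gigli-type weak covariant derivative of $T$, thereby placing $T$ in $H^{1,2}_C(T^r_s X)$. Condition (iii) is the crucial input that upgrades the scalar $H^{1,2}$-regularity of the contractions $\langle T, V\rangle$ against simple test tensors $V$ into the integration-by-parts/duality identity characterising $H^{1,2}_C$-membership on the $RCD$-space $(X, H^n)$.

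First I would fix an arbitrary simple test tensor of dual type
\begin{equation*}
V = g_0\, \nabla g_1 \otimes \cdots \otimes \nabla g_s \otimes dg_{s+1} \otimes \cdots \otimes dg_{s+r}, \qquad g_j \in \mathrm{Test}F(X).
\end{equation*}
Hypothesis (iii) applied with the $f_i$'s equal to the corresponding $g_j$'s shows that $\langle T, \nabla g_1 \otimes \cdots \otimes dg_{s+r}\rangle \in H^{1,2}(X)$, and multiplication by $g_0 \in \mathrm{Test}F(X) \subset \mathrm{LIP}(X) \cap L^{\infty}(X)$ preserves this, so $\langle T, V\rangle \in H^{1,2}(X)$. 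Next, hypotheses (i) and (ii), together with the $C^{1,\alpha}$-regularity of $g_X$ on $\mathcal{R}$, yield the pointwise Leibniz identity
\begin{equation*}
\nabla \langle T, V\rangle = \langle \nabla T, V\rangle + \langle T, \nabla V\rangle \quad \text{$H^n$-a.e. on } X,
\end{equation*}
with both sides in $L^2$ since $T \in L^{\infty}$, $\nabla T \in L^2$, and $V, \nabla V \in L^{\infty}$. By the compatibility of pointwise and Sobolev gradients from \cite[Theorems 1.10 and 7.3]{hoell}, this a.e. identity coincides with the weak gradient of the $H^{1,2}$-function $\langle T, V\rangle$.

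I would then pair with a further test function $h \in \mathrm{Test}F(X)$ and integrate by parts on $(X, H^n)$ (valid for $\langle T, V\rangle \in H^{1,2}(X)$ against $h$ with $\Delta h \in L^{\infty}(X)$) to obtain the duality identity
\begin{equation*}
\int_X \langle \nabla T, V \otimes dh\rangle\, dH^n = -\int_X h\, \langle T, \nabla V\rangle\, dH^n - \int_X \langle T, V\rangle\, \Delta h\, dH^n,
\end{equation*}
which is precisely Gigli's weak-derivative identity characterising membership in $H^{1,2}_C(T^r_s X)$. Extending by linearity to arbitrary elements of $\mathrm{Test}T^s_{r+1} X$ and invoking the $L^2$-density of that space (Claim \ref{21}) together with the Hilbert-space structure of the Gigli completion then places $T \in H^{1,2}_C(T^r_s X)$ with weak covariant derivative equal to the pointwise $\nabla T$. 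The case $\omega \in L^{\infty}(\bigwedge^s T^* X)$ is handled identically, with $\mathrm{TestForm}_s(X)$ replacing simple test tensors and Gigli's exterior-derivative/codifferential identities replacing the tensor contractions. The main obstacle will be to verify rigorously that Gigli's completion-based definition of $H^{1,2}_C$ admits the equivalent weak-derivative characterisation used above; this follows by a standard density and Hilbert-duality argument in the second-order calculus of \cite{gigli}, but the bookkeeping of free tensor slots and the appropriate test class has to be done carefully so that the integration by parts is read against the correct dual objects.
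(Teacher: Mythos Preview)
Your argument establishes at most that $T\in W^{1,2}_C(T^r_sX)$, i.e.\ that the pointwise $\nabla T$ satisfies Gigli's weak covariant-derivative identity against test objects. The gap is in the final step: you assert that membership in $W^{1,2}_C$ forces membership in $H^{1,2}_C$ by ``a standard density and Hilbert-duality argument.'' But $H^{1,2}_C(T^r_sX)$ is by definition the \emph{strong closure} of $\mathrm{Test}T^r_sX$ inside $W^{1,2}_C(T^r_sX)$, and the equality $H^{1,2}_C=W^{1,2}_C$ is \emph{not} known for general $RCD$-spaces; it is exactly the content this proposition is supplying (for the particular $T$ at hand). The $L^q$-density of test tensors (Claim~\ref{21}) you invoke is density in $L^q$, not in the $W^{1,2}_C$-norm, so it does not close the gap.

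The paper's proof does not attempt to verify a weak-derivative characterisation at all. Instead it constructs an explicit approximating sequence in $H^{1,2}_C$ as follows: first, for $\omega$ compactly supported in the regular set $\mathcal{R}$, the $C^{1,\alpha}$-manifold structure there lets one write $\omega=\sum g_i\,df_i$ in a chart, show $g_i\in H^{1,2}(X)\cap L^\infty(X)$ via hypothesis (iii), and approximate by Lipschitz coefficients to land in $H^{1,2}_C$ (Claim~\ref{1012}). Then the Cheeger--Naber codimension-$4$ estimate $\dim_H(X\setminus\mathcal{R})\le n-4$ gives $c_2(X\setminus\mathcal{R})=0$, producing cutoffs $\phi_i\in H^{1,2}(X)$ with $\phi_i\to 1$, $\mathrm{supp}\,\phi_i\Subset\mathcal{R}$, and $\|\phi_i\|_{H^{1,2}}$ bounded. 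The sequence $\phi_i\omega\in H^{1,2}_C$ is then bounded in $H^{1,2}_C$ (using $\omega\in L^\infty$ and $\nabla\omega\in L^2$) and converges to $\omega$ in $L^2$; Mazur's theorem upgrades weak to strong convergence in the closed subspace $H^{1,2}_C\subset W^{1,2}_C$. Your approach misses both of the geometric ingredients that make this work---the $C^{1,\alpha}$-regularity on $\mathcal{R}$ and the codimension bound on the singular set---and these are precisely what distinguish $\overline{\mathcal{M}}$ from a generic $RCD$-space.
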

\begin{proof}
We give a proof in the case when $r=0, s=1$, i.e. $\omega:=T$ is a differential one-form on $X$ only because the proof in the other case is similar.

Before starting the proof, we first recall several properties on the regular set $\mathcal{R}$ of $X$. 
The regular set $\mathcal{R}$ is defined by the set of points $x \in X$ such that every tangent cone at $x$ is isometric to the $n$-dimensional Euclidean space.
Then it is known that $\mathcal{R}$ is an open subset of $X$ and that $\mathcal{R}$ is a smooth manifold with a $C^{1, \alpha}$-Riemannian metric $g_{\mathcal{R}}$ on $\mathcal{R}$ for any $\alpha \in (0,1)$ such that the induced distance $d_{\mathcal{R}}$ on $\mathcal{R}$ coincides with the restriction of the original distance $d$ on $X$ to $\mathcal{R}$.
See \cite[Theorem 7.2]{ch-co1} and \cite[Corollary 3.10]{ch-co2} for the detail.

We now prove the following:
\begin{claim}\label{1012}
If $\omega$ has compact support in the regular set $\mathcal{R}$ of $X$, then $\omega \in H^{1, 2}_C(T^*X)$. 
\end{claim}
The proof is as follows.

By taking a partition of unity of $\mathcal{R}$ by smooth (or $C^2$) functions, without loss of generality we can assume that there exists a smooth coordinate patch $\Phi : U \to \mathbf{R}^n$ of $\mathcal{R}$ such that $\mathrm{supp}\,\omega \subset U$.
Let 
\[\omega = \sum_{i=1}^ng_idf_i,\]
where $\Phi:=(f_1, \ldots, f_n)$ and $g_i \in L^{\infty}(U)$ with the compact supports in $U$.
Moreover without loss of generality we can assume that $f_i \in C^{\infty}_c(\mathcal{R})$, where $C^{\infty}_c(\mathcal{R})$ denotes the set of smooth functions on $\mathcal{R}$ with compact supports.
In particular $f_i \in \mathcal{D}^2(\Delta, X)$ with $\Delta f_i \in L^{\infty}(X)$.

Therefore we have $\langle \omega, df_i \rangle \in H^{1, 2}(X)$.
Note that the matrix valued function
\[\left(G_{ij}\right)_{ij}(x):=\left( \langle df_i, df_j\rangle(x) \right)_{ij}\]
is invertible for every $x \in U$, that $\langle df_i, df_j\rangle  \in C^1(U) \cap L^{\infty}(U)$, and that
\[g_i= \sum_{j=1}^nG^{ij}\langle \omega, df_j \rangle,\]
where $(G^{ij})_{ij}$ is the inverse matrix of $(G_{ij})_{ij}$.
Since $g_i$ has compact support in $U$, we have $g_i \in H^{1, 2}(X) \cap L^{\infty}(X)$.

Then there exist sequences $g_{i, j} \in \mathrm{LIP}(X)$ with 
\[\sup_{i, j}||g_{i, j}||_{L^{\infty}}<\infty \]
such that $g_{i, j} \to g_i$ in $H^{1, 2}(X)$.
Let
\[\omega_j:=\sum_{i=1}^ng_{i, j}df_{i}.\]
By applying the existence of an $H^{1, p}$-approximation (3.2) to $g_{i, j}$ (or Corollary \ref{aavb} and Proposition \ref{bbghnv}), it is easy to check that $\omega_j \in H^{1, 2}_C(T^*X)$.
Thus from Leibniz'z rule, we see that $\omega_j$ is a Cauchy sequence in $H^{1, 2}_C(T^*X)$ and that $\omega_j \to \omega$ in $L^2(T^*X)$.
Thus $\omega \in H^{1, 2}_C(T^*X)$.
Thus we have Claim \ref{1012}.

From now on we finish the proof of Proposition \ref{1011}.

By \cite[Theorem 1.4]{chna} since
\[\mathrm{dim}_H\left(X \setminus \mathcal{R}\right) \le n-4,\]
it is not difficult to check that the canonical embedding 
\begin{align}\label{isom emb}
H^{1, 2}_c(\mathcal{R}) \hookrightarrow H^{1, 2}(X)
\end{align}
is isomorphic, where $H^{1, p}_c(\mathcal{R})$ is the completion of the set of Lipschitz functions on $\mathcal{R}$ with compact supports, denoted by $\mathrm{LIP}_c(\mathcal{R})$, with respect to the norm (\ref{wbt}).
%%See \cite[Theorem 4.6]{KKM}, \cite[Theorem 4.13]{KM}, and \cite[Theorem 4.8]{Shanm}.

For reader's convenience we give a proof of (\ref{isom emb}) as follows;
let us recall the definition of the Sobolev $2$-capacity $c_2(E)$ of a subset $E$ of $X$;
\[c_2(E):=\inf_{u \in \mathcal{A}(E)}\|u\|_{H^{1, 2}(X)}^2,\]
where 
\[\mathcal{A}(E):=\{u \in H^{1, 2}(X); u \ge 1\,\mathrm{on\, a\, neighbourhood\, of\,} E\}.\]
See for instance \cite[(1.4)]{KM} for the definition of the Sobolev $p$-capacity and see also \cite[Theorem 1.0.6]{KZ} for the equivalence between several definitions of Sobolev spaces.
Since $H^n$ is Ahlfors $n$-regular on $X$, we have for some $C>0$
\[c_2(E) \le CH^{n-2}(E)\]
for any $E \subset X$. See \cite[4.13 Theorem]{KM} for the proof.
In particular we have $c_2(X \setminus \mathcal{R})=0$.
Thus there exists a sequence $\psi_i \in H^{1, 2}(X)$ such that $\psi_i \in \mathcal{A}(X \setminus \mathcal{R})$ and that $\psi_i \to 0$ in $H^{1, 2}(X)$. Without loss of generality we can assume that $0 \le \psi_i \le 1$ and that $\psi_i \equiv 1$ on a neighbourhood of $X \setminus \mathcal{R}$ because the truncated functions $\psi_i \wedge 1 \vee 0$ also converge to $0$ in $H^{1, 2}(X)$.
Let $\phi_i:=1-\psi_i$.

In order to finish the proof of (\ref{isom emb}) we need to check that for any $f \in \mathrm{LIP}(X)$ we have $f \in H^{1, 2}_c(\mathcal{R})$.
For any $f \in \mathrm{LIP}(X)$ let us consider the sequence $\phi_if \in H^{1, 2}(X)$. 
Since $X \setminus \mathcal{R}$ is a compact subset of $X$ and $\phi_i \equiv 0$ holds on a neighbourhood of $X \setminus \mathcal{R}$, 
we have $\phi_if \in H^{1, 2}_c(\mathcal{R})$. 
Moreover since 
\[|\nabla (\phi_if)| \le |\nabla \phi_i| |f| +|\phi_i| |\nabla f|\]
with $f \in L^{\infty}(X)$ and $0 \le \phi_i \le 1$, we see that $\sup_i\|\phi_if\|_{H^{1, 2}(X)}<\infty$ and that $\phi_if \to f$ in $L^2(X)$.
Then by Mazur's theorem we have $f \in H^{1, 2}_c(\mathcal{R})$, which completes the proof of (\ref{isom emb}).

Let $\hat{\omega}_i:=\phi_i\omega$.
Since 
\[\langle \hat{\omega}_i,  df \rangle \in H^{1, 2}(X)\]
for every $f \in \mathcal{D}^2(\Delta, X)$ with $\Delta f \in L^{\infty}(X)$,
Claim \ref{1012} yields $\hat{\omega}_i \in H^{1, 2}_C(T^*X)$.
By the compatibility of the covariant derivatives between Gigli's one in \cite{gigli} and the author's one in \cite{ho} proven in \cite{hoell} (see also Proposition \ref{8877s}),
since
\[\nabla \omega_i =  \omega \otimes d\phi_i+ \phi_i\nabla \omega, \]
we have
\[\sup_i||\hat{\omega}_i||_{H^{1, 2}_C}<\infty.\]
Since $\hat{\omega}_i \to \omega$ in $L^2(T^*X)$, we see 
 that $\omega$ belongs to the closure of $\mathrm{TestForm}_1(X)$ in $W^{1, 2}_C(T^*X)$ with respect to the weak topology (recall that in the original definition of a `H-Sobolev space'  in \cite{gigli} by Gigli is defined by the closure of a test class in a `W-Sobolev spaces' with respect to the strong topology. In particular $H^{1, 2}_C(T^*X)$ is the closure of $\mathrm{TestForm}_1(X)$ in $W^{1, 2}_C(T^*X)$ with respect to the strong topology).
Since $\mathrm{TestForm}_1(X)$ is a linear subspace of $W^{1, 2}_C(T^*X)$, $\omega$ belongs to the closure of $\mathrm{TestForm}_1(X)$ in $W^{1, 2}_C(T^*X)$ with respect to the strong topology (c.f. Mazur's theorem), i.e. $\omega \in H^{1, 2}_C(T^*X)$.
This completes the proof.
\end{proof}
\begin{remark}
Under the same setting as in Proposition \ref{1011}, by the existence of an $H^{2, 2}$-approximate sequence (3.1), it is easy to check that the following three conditions are equivalent.
\begin{enumerate}
\item The assumption (\ref{hin}) (or (\ref{hin2}), respectively) holds.
\item For every $S \in \mathrm{Test}T^r_sX$ (or $S \in \mathrm{TestForm}_s(X)$, respectively),  
\begin{align}\label{hin3}
\left\langle T, S \right\rangle \in H^{1, 2}(X)
\end{align}
holds.
\item (\ref{hin3}) holds for every  $S \in \widetilde{\mathrm{Test}}T^r_sX$ (or $S \in \widetilde{\mathrm{TestForm}}_s(X)$, respectively).
\end{enumerate}
Moreover by an argument similar to the proof of \cite[Corollary 6.6]{hoell} it is not difficult to check that if one of conditions above holds, then (i) of Proposition \ref{1011} holds.
\end{remark}
\begin{proposition}\label{iiuy}
Let $X_i$ be a sequence in $\mathcal{M}$, let $X \in \overline{\mathcal{M}}$ be the Gromov-Hausdorff limit, let $\lambda_i$ be a bounded sequence in $\mathbf{R}_{\ge 0}$,  let $\omega_i$ be a sequence in $C^{\infty}(T^*X_i)$ with
\[\frac{1}{H^n(X_i)}\int_{X_i}|\omega_i|^2dH^n=1\]
and $\Delta_{H, 1}\omega_i=\lambda_i \omega_i$, and let $\omega$ be the $L^2$-weak limit on $X$.
Then we have the following.
\begin{enumerate}
\item The limit $\lambda := \lim_{i \to \infty}\lambda_i$ exists.
\item We have $\omega \in \mathcal{D}^2(\Delta_{H, 1}, X)  \cap L^{\infty}(T^*X)$ with 
\begin{align}\label{rwg}
||\omega ||_{L^{\infty}} \le C(n, K_1, d, \mu)
\end{align}
and
\begin{align}\label{rwg2}
\Delta_{H, 1}\omega=\lambda \omega,
\end{align}
where $\mu$ is an upper bound of $\lambda$.
\item $\omega_i, d\omega_i, \delta \omega_i, \nabla \omega_i$ $L^2$-converge strongly to $\omega, d\omega, \delta \omega, \nabla \omega$ on $X$, respectively.
\end{enumerate}
\end{proposition}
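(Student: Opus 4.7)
The plan is, roughly: (a) obtain a uniform $L^{\infty}$-bound and $H^{1,2}_H$-bound on $\omega_i$ on the smooth manifolds $X_i$; (b) extract a strong $L^2$-limit via Rellich compactness; (c) upgrade the Sobolev regularity of the limit from $W_{2n/(2n-1)}$ to $H^{1,2}_H(T^*X)$ via Proposition \ref{1011}; and (d) derive the eigenvalue equation and strong convergence of $d\omega_i,\delta\omega_i,\nabla\omega_i$ from energy identities and Bochner's formula. On each $X_i$, Bochner's formula together with $\Delta_{H,1}\omega_i=\lambda_i\omega_i$ gives
\[\tfrac12\Delta|\omega_i|^2 \le (\lambda_i - K_1)|\omega_i|^2 \le (\mu - K_1)|\omega_i|^2,\]
so Li-Tam's mean value inequality \cite{LT} yields $\|\omega_i\|_{L^{\infty}}\le C(n,K_1,d,\mu)$. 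Pairing the eigenvalue equation with $\omega_i$ gives $\|d\omega_i\|_{L^2}^2+\|\delta\omega_i\|_{L^2}^2=\lambda_i\|\omega_i\|_{L^2}^2$, so $\omega_i$ is uniformly bounded in $H^{1,2}_H(T^*X_i)$ and, by Proposition \ref{8877s}(i), in $W_{2n/(2n-1)}(T^*X_i)$.

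By Proposition \ref{8877s}(iii), up to subsequence there is a strong $L^2$-limit $\omega^{\dagger}\in W_{2n/(2n-1)}(T^*X)$ with $\nabla\omega_i\rightharpoonup\nabla\omega^{\dagger}$ weakly in $L^2$; uniqueness of weak $L^2$-limits forces $\omega^{\dagger}=\omega$ and extends the convergence to the full sequence, and the $L^{\infty}$-bound passes to the limit giving $\omega\in L^{\infty}(T^*X)$. To upgrade to $\omega\in H^{1,2}_H(T^*X)$ I apply Proposition \ref{1011} together with Corollary \ref{aavb}: conditions (i)--(ii) there are immediate from $\omega\in W_{2n/(2n-1)}(T^*X)\cap L^{\infty}(T^*X)$; for (iii), given any $f\in\mathcal{D}^2(\Delta,X)$ with $\Delta f\in L^{\infty}(X)$, I construct by a standard diagonal argument (using (2.6) and (2.8)) approximations $f_i\in\mathcal{D}^2(\Delta,X_i)$ with $f_i\to f$ and $\Delta f_i\to\Delta f$ strongly in $L^2$, $\sup_i\|\Delta f_i\|_{L^{\infty}}<\infty$, and hence by (2.2) also $\sup_i\|\nabla f_i\|_{L^{\infty}}<\infty$. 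Theorem \ref{L2Hess} then gives $\mathrm{Hess}_{f_i}\to\mathrm{Hess}_f$ strongly in $L^2$; Leibniz's rule on $X_i$,
\[\nabla\langle\omega_i, df_i\rangle = (\nabla\omega_i)(df_i,\cdot) + \omega_i(\mathrm{Hess}_{f_i}),\]
combined with the uniform $L^{\infty}$-bounds on $\omega_i, df_i$, makes $\|\nabla\langle\omega_i, df_i\rangle\|_{L^2}$ uniformly bounded, so the Rellich theorem (2.3) yields $\langle\omega,df\rangle\in H^{1,2}(X)$. This verifies (iii) and places $\omega$ in $H^{1,2}_H(T^*X)$, whereupon $d\omega,\delta\omega\in L^2$ are identified (by passing to the limit in integration-by-parts against smooth forms) as the $L^2$-weak limits of $d\omega_i,\delta\omega_i$.

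For (i) and the eigenvalue equation, fix any subsequence along which $\lambda_i\to\lambda$, approximate $\eta\in\mathrm{TestForm}_1(X)$ by smooth $\eta_i\in C^{\infty}(T^*X_i)$ via Theorem \ref{14}(ii) (giving strong $L^2$-convergence of $\eta_i, d\eta_i, \delta\eta_i$), and pass to the limit in
\[\int_{X_i}\lambda_i\langle\omega_i,\eta_i\rangle\,dH^n = \int_{X_i}\bigl(\langle d\omega_i, d\eta_i\rangle + \langle\delta\omega_i,\delta\eta_i\rangle\bigr)\,dH^n\]
to obtain $\Delta_{H,1}\omega=\lambda\omega$. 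Strong $L^2$-convergence gives $\|\omega\|_{L^2}^2=H^n(X)>0$ so $\omega\neq 0$; hence any two subsequential limits $\lambda,\lambda'$ satisfy $(\lambda-\lambda')\omega=0$ and must coincide, so the full limit $\lambda=\lim_i\lambda_i$ exists, proving (i) and (ii) (the $L^{\infty}$-bound on $\omega$ is inherited from the first paragraph).

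For (iii), the energy identity $\|d\omega_i\|_{L^2}^2+\|\delta\omega_i\|_{L^2}^2=\lambda_i\|\omega_i\|_{L^2}^2\to\lambda\|\omega\|_{L^2}^2=\|d\omega\|_{L^2}^2+\|\delta\omega\|_{L^2}^2$ combined with lower semicontinuity (2.10) forces $\|d\omega_i\|_{L^2}^2\to\|d\omega\|_{L^2}^2$ and $\|\delta\omega_i\|_{L^2}^2\to\|\delta\omega\|_{L^2}^2$ separately, upgrading weak to strong $L^2$-convergence. For $\nabla\omega_i$, integrating Bochner's formula (Theorem \ref{pointwiseboch}) on both $X_i$ and $X$ reduces the desired convergence to that of $\int\mathrm{Ric}_{X_i}(\omega_i^*,\omega_i^*)\,dH^n\to\int\mathrm{Ric}_X(\omega^*,\omega^*)\,dH^n$, which follows from the $L^p$-weak continuity of Ricci (Theorem \ref{ellp}) together with the $L^p$-strong convergence of $\omega_i\otimes\omega_i$ (immediate from the $L^{\infty}$-bound plus $L^2$-strong convergence of $\omega_i$). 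The main obstacle is the second paragraph: Rellich alone lands the limit only in the weaker space $W_{2n/(2n-1)}$, and bridging to $H^{1,2}_H$ via Proposition \ref{1011} crucially exploits the $L^2$-strong convergence of Hessians (Theorem \ref{L2Hess})---the main analytical input of Section 2---together with the $L^{\infty}$-bound supplied by Li-Tam's mean value inequality.
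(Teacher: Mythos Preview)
Your proof is correct and follows essentially the same approach as the paper: the core step---using Theorem \ref{L2Hess} together with the uniform $L^{\infty}$-bound on $\omega_i$ to verify the hypothesis of Proposition \ref{1011} and thereby upgrade $\omega$ from $W_{2n/(2n-1)}$ to $H^{1,2}_H(T^*X)$---is identical, and your treatment of parts (i) and (iii) simply unpacks what the paper defers to \cite{hoell}. One minor point: your citation of (2.6) for constructing the $f_i$ is slightly off since $\Delta f$ is only $L^{\infty}$, not Lipschitz; the paper instead takes an $L^2$-approximation of $\Delta f$, truncates, smooths via heat flow, and applies $\Delta^{-1}$---but this is a routine fix and does not affect your argument.
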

\begin{proof}
Note that (i) was already proven in \cite[Theorem 1.12]{hoell}.
Thus we first prove (ii).

For that,  by \cite[Theorems 1.12,  7.8 and Proposition 7.1]{hoell}, it suffices to check that $\omega \in H^{1, 2}_H(T^*X)$ holds.

Let $f \in \mathcal{D}^2(\Delta, X)$ with $\Delta f \in L^{\infty}(X)$, and let $g:=\Delta f$.
By the existence of an $L^2$-approximate sequence \cite[Proposition 3.56]{holp},
There exists a sequence $g_i \in L^{2}(X_i)$ such that $g_i$ $L^2$-converges strongly to $g$ on $X$ and that 
\[\int_{X_i}g_idH^n=0.\] 
Since the sequence $\hat{g}_i \in L^{\infty}(X_i)$ defined by
\begin{align*}
\hat{g}_i(x):=
\begin{cases}  ||g||_{L^{\infty}} \,\,\,\,\,\mathrm{if}\,g_i(x) \ge  ||g||_{L^{\infty}}, \\
g_i(x) \,\,\,\,\,\mathrm{if}\, |g_i(x)| < ||g||_{L^{\infty}}, \\
 -||g||_{L^{\infty}} \,\,\,\,\,\mathrm{if}\, \phi_i(x) \le -||g||_{L^{\infty}},
\end{cases}
\end{align*}
$L^2$-converges strongly to $g$ on $X$, without loss of generality we can assume that  
\[\sup_i||g_i||_{L^{\infty}}<\infty\]
(c.f. \cite[Proposition 3.24]{holp}).
Moreover by the smoothing via the heat flow, without loss of generality we can assume that $g_i \in C^{\infty}(X_i)$.

Let $f_i:=\Delta^{-1}g_i \in C^{\infty}(X_i)$.
From the Lipschitz regularity of solutions of Poisson's equations (2.2) and Theorem \ref{L2Hess}, we see that 
\[\sup_i||\nabla f_i||_{L^{\infty}}<\infty\]
and that $\mathrm{Hess}_{f_i}$ $L^2$-converges strongly to $\mathrm{Hess}_f$ on $X$.

On the other hand,  $L^{\infty}$-estimates of eigen-one-forms in \cite[Proposition $7.17$]{hoell} yields
\[\sup_i ||\omega_i||_{L^{\infty}}<\infty.\]
Therefore we have 
\[ \sup_i||\nabla \langle \omega_i, df_i \rangle||_{L^2}<\infty.\]
Since $\langle \omega_i, df_i \rangle$ $L^2$-converges strongly to $\langle \omega, df\rangle$ on $X$, the Rellich compactness theorem (2.3) yields 
$\langle \omega, df \rangle \in H^{1, 2}(X)$.
Therefore Propositions \ref{8877s} and \ref{1011} yield $\omega \in H^{1, 2}_C(T^*X)$.
Thus we have (ii).

By (ii),  \cite[Theorems 1.11, 7.8 and 7.22]{hoell} yield that $\omega_i, d\omega_i, \delta \omega_i$ $L^2$-converge strongly to $\omega, d\omega, \delta \omega$ on $X$, respectively and that $\nabla \omega_i$ $L^2$-converges weakly to $\nabla \omega$ on $X$.
By Theorem \ref{ellp} and Corollary \ref{aavb}, since
\begin{align*}
\int_{X_i}|\nabla \omega_i|^2dH^n&= \int_{X_i}\left( |d\omega_i |^2+|\delta \omega_i|^2 -\langle \mathrm{Ric}_{X_i}, \omega_i \otimes \omega_i \rangle\right)dH^n\\
&\to \int_{X_i}\left( |d\omega |^2+|\delta \omega|^2 -\langle \mathrm{Ric}_{X}, \omega \otimes \omega \rangle\right)dH^n\\
&=\int_X|\nabla \omega|^2dH^n
\end{align*}
as $i \to \infty$, we see that $\nabla \omega_i$ $L^2$-converges strongly to $\nabla \omega$ on $X$.
Thus we have (iii).
\end{proof}
The following completes the proof of Theorem \ref{spectr} for $\Delta_{H, 1}$.
\begin{theorem}[Spectral convergence of the Hodge Laplacian]\label{contihodge}
Let $X_i$ be a sequence in $\mathcal{M}$ and let $X \in \overline{\mathcal{M}}$ be the Gromov-Hausdorff limit.
Then
\[\lim_{i \to \infty}\lambda_{k}^{H, 1}(X_i)=\lambda_k^{H, 1}(X)\]
for every $k \ge 1$.
Moreover if $\omega \in L^2(T^*X)$ is the $L^2$-weak limit of a sequence of $\lambda^{H, 1}_k(X_i)$-eigen-one-forms $\omega_i \in C^{\infty}(T^*X_i)$ of $\Delta_{H, 1}$ with 
\[\frac{1}{H^n(X)}\int_X|\omega_i|^2dH^n=1,\]
then we see that $\omega$ is a $\lambda^{H, 1}_k(X)$-eigen-one-form of $\Delta_{H, 1}$, that $\omega_i, d\omega_i, \delta \omega_i, \nabla \omega_i$ $L^2$-converge strongly to $\omega, d\omega, \delta \omega, \nabla \omega$ on $X$, respectively, and that  
\begin{align}\label{linft}
||\omega ||_{L^{\infty}}\le C(n, K_1, d, \alpha),
\end{align}
where $\alpha$ is an upper bound of $\lambda^{H, 1}_k(X)$.
\end{theorem}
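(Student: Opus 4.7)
The ``moreover'' part of the statement is an immediate consequence of Proposition \ref{iiuy}: once the spectral convergence $\lambda_k^{H,1}(X_i) \to \lambda_k^{H,1}(X)$ is established, the sequence $\{\lambda_k^{H,1}(X_i)\}_i$ is bounded, so Proposition \ref{iiuy} applies to any $L^2$-weakly convergent sequence of normalized eigen-one-forms, giving the $L^\infty$-bound (\ref{linft}), the identification of $\omega$ as a $\lambda_k^{H,1}(X)$-eigen-one-form, and the strong $L^2$-convergence of $\omega_i, d\omega_i, \delta \omega_i, \nabla \omega_i$. Thus the task reduces to proving $\lim_i \lambda_k^{H,1}(X_i) = \lambda_k^{H,1}(X)$, and the plan is to verify the two inequalities $\limsup \le$ and $\liminf \ge$ using the min-max principle (Claim \ref{00oollpp}).

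For the upper bound $\limsup_i \lambda_k^{H,1}(X_i) \le \lambda_k^{H,1}(X)$, I would fix $\epsilon > 0$ and, combining Claim \ref{00oollpp} on $X$ with the density statement Theorem \ref{finitedimensional}(iii) and the density of $\mathrm{TestForm}_1(X)$ in $H^{1,2}_H(T^*X)$, choose $k$ linearly independent test one-forms $\omega^1, \ldots, \omega^k \in \mathrm{TestForm}_1(X)$ whose span $V$ satisfies $\sup_{\omega \in V \setminus \{0\}} R^{H,1}(\omega) \le \lambda_k^{H,1}(X) + \epsilon$. Applying the smooth approximation Theorem \ref{14}(ii) to each $\omega^j$ produces sequences $\omega_i^j \in C^\infty(T^*X_i)$ with strong $L^2$-convergence of $\omega_i^j, d\omega_i^j, \delta \omega_i^j$ to their limits. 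The Gram matrix of $\{\omega_i^j\}$ converges to the nondegenerate Gram matrix of $\{\omega^j\}$, so for large $i$ the span $V_i$ is $k$-dimensional and the Rayleigh quotients satisfy $\sup_{\eta \in V_i \setminus \{0\}} R^{H,1}(\eta) \to \sup_{\omega \in V \setminus \{0\}} R^{H,1}(\omega)$. The min-max principle on $X_i$ gives $\limsup_i \lambda_k^{H,1}(X_i) \le \lambda_k^{H,1}(X) + \epsilon$, and letting $\epsilon \to 0$ concludes this direction.

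For the reverse inequality, the previous step furnishes $\sup_i \lambda_k^{H,1}(X_i) < \infty$. I would then pick, for each $i$, $L^2$-orthonormal (with respect to the normalized measure $H^n(X_i)^{-1} H^n$) smooth eigen-one-forms $\omega_i^1, \ldots, \omega_i^k$ corresponding to $\lambda_1^{H,1}(X_i), \ldots, \lambda_k^{H,1}(X_i)$. The uniform eigenvalue bound together with Corollary \ref{aavb} yields a uniform $H^{1,2}_H$-bound, hence a uniform $W_{2n/(2n-1)}$-bound via Proposition \ref{8877s}(i); a diagonal extraction through Proposition \ref{8877s}(iii) then produces a subsequence along which $\lambda_j^{H,1}(X_i)$ converges to some $\mu_j$ and $\omega_i^j$ converges strongly in $L^2$ to a limit $\omega^j$ for every $j \le k$. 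Proposition \ref{iiuy} identifies each $\omega^j$ as a $\mu_j$-eigen-one-form of $\Delta_{H,1}$, and the $L^2$-strong convergence passes the orthonormality relations to the limits, so $\omega^1, \ldots, \omega^k$ span a $k$-dimensional subspace $W$ of $\mathcal{D}^2(\Delta_{H,1}, X)$. The min-max principle on $X$ applied to $W$ then gives $\lambda_k^{H,1}(X) \le \mu_k = \lim_i \lambda_k^{H,1}(X_i)$, which combined with the upper bound finishes the proof.

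The conceptual difficulty has already been absorbed into the auxiliary results: the truly hard step was verifying in Proposition \ref{iiuy} that the $L^2$-weak limit of eigen-one-forms lies in the Sobolev space $H^{1,2}_H(T^*X)$ (via Proposition \ref{1011} and the mean-value/Hausdorff-dimension ingredients), and not a priori merely in $W_{2p}(T^*X)$. Granted that, the only delicate point in the present argument is guaranteeing that the $k$ subsequential limits in the lower-bound step remain linearly independent, which is precisely why strong $L^2$-convergence, rather than weak, is essential — and this is supplied by Proposition \ref{iiuy}(iii) together with the $H^{1,2}_H$-$H^{1,2}_C$ identity of Corollary \ref{aavb} that controls $\|\nabla \omega_i^j\|_{L^2}$ uniformly through the bound on eigenvalues and on $\mathrm{Ric}_{X_i}$.
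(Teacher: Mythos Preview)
Your proposal is correct and follows essentially the same route as the paper: the upper semicontinuity via the min-max principle together with the smooth approximation Theorem \ref{14}, and the lower semicontinuity by transporting orthonormal eigen-one-forms from $X_i$ to $X$ through Proposition \ref{iiuy} and checking that the limits span a $k$-dimensional subspace. Your extra invocations of Theorem \ref{finitedimensional}(iii), Proposition \ref{8877s}, and Corollary \ref{aavb} are harmless but largely redundant, since Proposition \ref{iiuy} already packages the needed strong $L^2$-convergence and the identification of the limit as an eigenform.
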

\begin{proof}
We first prove the upper semicontinuity:
\begin{align}\label{upphod}
\limsup_{i \to \infty}\lambda^{H, 1}_k(X_i)\le \lambda_k^{H, 1}(X).
\end{align}
Let $\epsilon>0$, let $E_k$ be a $k$-dimensional subspace of $H^{1, 2}_H(T^*X)$ with
\[\sup_{\eta \in E_k}R^{H, 1}(\eta )\le \lambda^{H, 1}_k(X)+\epsilon,\]
and let $\{\omega_i\}_{i=1}^{\mathrm{dim}\,E_k}$ be a basis of $E_k$.
By (ii) of Theorem \ref{14}, there exist sequences $\omega_{i, j} \in C^{\infty}(T^*X_j)$ such that $\omega_{i, j}, d\omega_{i, j}, \delta \omega_{i, j}$ $L^2$-converge strongly to $\omega_i, d\omega_i, \delta \omega_i$ on $X$, respectively.

Let $E_{k, j}:=\mathrm{span}\{\omega_{i, j}\}_{i=1}^{\mathrm{dim}\,E_k}$.
Note that $\mathrm{dim}\,E_{k, j}=k$ for every sufficiently large $j$.
Since it is easy to check that 
\begin{align}\label{rlim}
\lim_{i \to \infty}\sup_{\eta \in E_{k, i}}R^{H, 1}(\eta)=\sup_{\eta \in E_k}R^{H, 1}(\eta),
\end{align}
we have 
\begin{align*}
\limsup_{i \to \infty}\lambda^{H, 1}_k(X_i) \le \limsup_{i \to \infty}\left( \sup_{\eta \in E_{k, i}}R^{H, 1}(\eta )\right) = \sup_{\eta \in E_k}R^{H, 1}(\eta ) \le \lambda_k^{H, 1}(X) +\epsilon.
\end{align*}
Since $\epsilon$ is arbitrary, we have the upper semicontinuity (\ref{upphod}).

Next we prove the lower semicontinuity:
\begin{align}\label{lowerhod}
\liminf_{i \to \infty}\lambda_k^{H, 1}(X_i) \ge \lambda_k^{H, 1}(X).
\end{align}
Let $k \ge 1$.
For every $l \le k$, let $\omega_{l, i}$ be a sequence of $\lambda_l^{H, 1}(X_i)$-eigen-one-forms of $\Delta_{H, 1}$ in $C^{\infty}(T^*X_i)$ with $||\omega_{l, i}||_{L^2}=1$.
By Proposition \ref{iiuy} with the upper semicontinuity (\ref{upphod}), without loss of generality we can assume that there exist $\omega_l \in \mathcal{D}^2(\Delta_{H, 1}, X)$ such that $\omega_{l, i}, d\omega_{l, i}, \delta \omega_{l, i}$ $L^2$-converge strongly to $\omega_l, d\omega_l, \delta \omega_l$ on $X$, respecctively. 
In particular the space $E_k:=\mathrm{span}\{\omega_l\}_{l=1}^k$ is $k$-dimensional.

Let $E_{k, i}:=\mathrm{span}\{\omega_{l, i}\}_{l=1}^k$.
Since it is easy to check that (\ref{rlim}) also holds in this setting, we have
\[\liminf_{i \to \infty}\lambda_k^{H, 1}(X_i)=\liminf_{i \to \infty}\left( \sup_{\eta \in E_{k, i}}R^{H, 1}(\eta) \right)= \sup_{\eta \in E_{k}}R^{H, 1}(\eta) \ge \lambda_k^{H, 1}(X).\]
Thus we have the lower smicontinuity (\ref{lowerhod}).
Moreover the argument above also completes the proof of the rest of statements in Theorem \ref{contihodge}.
\end{proof}
\begin{remark}\label{boundei}
Note that most results given in \cite{hoell} are on general Ricci limit spaces. 
We say that a compact metric measure space $(X, \upsilon)$ is a \textit{an $(n, K)$-Ricci limit space} if there exist a sequence of real numbers $K_i$ with $K_i \to K$, and a sequence of $n$-dimensional closed Riemannian manifolds $X_i$ with $\mathrm{Ric}_{X_i} \ge K_i(n-1)$ such that $(X_i, H^n/H^n(X_i)) \stackrel{GH}{\to} (X, \upsilon)$. 

By an argument similar to the proof Theorem \ref{contihodge} with \cite{hoell}, we have the following:
\begin{enumerate}
\item  For a Ricci limit space $(X, \upsilon)$ with $\mathrm{diam}\,X >0$ the same conclusions as in Theorem \ref{finitedimensional} hold.
\item Let $(X_i, \upsilon_i)$ be a sequence of $(n, K)$-Ricci limit spaces and let $(X, \upsilon)$ be the Gromov-Hausdorff limit with $\mathrm{diam}\,X>0$.
Then we have the upper semicontinuity of the spectrum:
\begin{align}\label{uppsehod}
\limsup_{i \to \infty}\lambda^{H, 1}_k(X_i)\le \lambda_k^{H, 1}(X).
\end{align}
\end{enumerate}

Note that  by using (ii) above with Gromov's compactness theorem, we can prove the following estimate:
\begin{align}\label{unibo}
\lambda_k^{H, 1}(X) \le C(n, K, \tau, d, k)
\end{align}
for every $(n, K)$-Ricci limit space $(X, \upsilon)$ with $0<\tau\le \mathrm{diam}\,X \le d$, and every $k \ge 1$.
In particular we can choose a constant in the right hand side of (\ref{linft}) depending only on $n, K_1, d, k, v$.

We now give a proof of (\ref{unibo}) as follows.

Assume that the assertion (\ref{unibo}) is false.
Then there exists a sequence $(X_i, \upsilon_i)$ of $(n, K)$-Ricci limit spaces with $\tau \le \mathrm{diam}\,X_i \le d$ such that 
\begin{align}\label{x}
\lim_{i \to \infty}\lambda_k^{H, 1}(X_i)=\infty.
\end{align}
Gromov's compactness theorem states that there exist a subsequence $i(j)$ and the Gromov-Hausdorff limit $(X, \upsilon)$ of $(X_{i(j)}, \upsilon_{i(j)})$ with $\tau \le \mathrm{diam}\,X \le d$.
The upper semicontinuity (\ref{uppsehod}) with (\ref{x}) yields
\[\lambda_k^{H, 1}(X)=\infty.\]
This is a contradiction.
\end{remark}

\begin{remark}
This remark is based on referee's comments.
I would like to thank the referee for suggestions.

By the proof of Theorem \ref{contihodge} (more precisely, by the proof of Proposition \ref{1011}), even if we consider the following setting, we have the same conclusion as in Theorem \ref{contihodge} except for the $L^2$-strong convergence of $\nabla \omega_i$, i.e. 
the spectral convergence of $\Delta_{H, 1}$ also holds.
\begin{enumerate}
\item Let $X_i$ be a sequence of $n$-dimensional closed Riemannian manifolds with $\mathrm{Ric}_{X_i} \ge K$, and let $X$ be the noncollapsed compact Gromov-Hausdorff limit.
\item There exists an open subset $O$ of $X$ such that the following two conditions hold;
\begin{enumerate}
\item there exist $\{r(x)\}_{x \in O} \subset \mathbf{R}_{>0}$ and $\{\phi_i^x\}_{x \in O, i=1, 2, \ldots, n} \subset \mathcal{D}(\Delta, X)$ such that $B_{r(x)}(x) \subset O$, that $\langle \nabla \phi_i^x, \nabla \phi_j^x\rangle$ is continuous on $B_{r(x)}(x)$ and that $\{(B_{r(x)}(x), \phi^x)\}_{x \in O}$ is a $C^1$-atlas of $O$, where $\phi^x:=(\phi_1^x|_{B_{r(x)}(x)}, \ldots, \phi_n^x|_{B_{r(x)}(x)})$;
\item the inclusion
\[H^{1, 2}_c(O) \hookrightarrow H^{1, 2}(X)\]
gives an isomorphism between them.
\end{enumerate}
\end{enumerate}
Note that the second condition (b) is satisfied if $H^{n-2}(X \setminus O)=0$ holds.
See  \cite[Theorem 4.6]{KKM}, \cite[Theorem 4.13]{KM}, and \cite[Theorem 4.8]{Shanm}.
\end{remark}

We end this subsection by giving a proof of Theorem \ref{bettibetti}.

\textit{Proof of Theorem \ref{bettibetti}.}

Since (\ref{uppersemibetti}) is a direct consequence of Theorem \ref{contihodge}, it suffices to check the equivalence between (\ref{contibetti}) and (\ref{spectralgap}).

Assume that (\ref{contibetti}) holds.
Then Theorem \ref{contihodge} implies that 
\[\mu_{H, 1}(X_i) \to \mu_{H, 1}(X).\]
In particular this with Claim \ref{00oollpp} yields that (\ref{spectralgap}) holds.

Next we assume that (\ref{spectralgap}) holds.
Then by Claim \ref{00oollpp} $\mu_{H, 1}(X_i)$ does not converge to $0$.
In particular applying Theorem \ref{contihodge} for all zero eigenvalues with (\ref{uppersemibetti}) yields that (\ref{contibetti}) holds.
This completes the proof. $\,\,\,\,\,\,\,\,\Box.$
\subsection{Spectral convergence of the connection Laplacian}
In this section we finish the proofs of Theorems \ref{spectr} and \ref{wll}.
\begin{definition}[Connection Laplacian]
Let $X \in \overline{\mathcal{M}}$.
We denote by $\mathcal{D}^2(\Delta_{C, (r, s)}, X)$ (or $\mathcal{D}^2(\Delta_{C, s}, X)$, respectively) the set of $T \in H^{1, 2}_C(T^r_sX)$ (or $\omega \in H^{1, 2}_C(\bigwedge^sT^*X)$, respectively) such that there exists $\hat{T} \in L^2(T^r_sX)$ (or $\hat{\omega} \in L^2(\bigwedge^sT^*X)$, respectively) satisfying 
\[\int_X \langle \nabla T, \nabla S \rangle dH^n=\int_X\langle \hat{T}, S \rangle dH^n\]
(or 
\[\int_X \langle \nabla \omega, \nabla \sigma \rangle dH^n=\int_X\langle \hat{\omega}, \sigma \rangle dH^n,\]
respectively)
for every $S \in H^{1, 2}_C(T^r_sX)$ (or $\sigma \in H^{1, 2}_C(\bigwedge^sT^*X)$, respectively).
Since $\hat{T}$ (or $\hat{\omega}$, respectively) is unique if it exists, we denote it by $\Delta_{C, (r, s)}T$ (or $\Delta_{C, s}\omega$, respectively).
\end{definition}
\begin{proposition}\label{roughequal}
We have $\mathcal{D}^2(\Delta_{C, 1}, X)=\mathcal{D}^2(\Delta_{H, 1}, X)$.
Moreover for every $\omega \in \mathcal{D}^2(\Delta_{C, 1}, X)$, we have
\[ \Delta_{H, 1}\omega=\Delta_{C, 1}\omega + \mathrm{Ric}_X(\omega^*, \cdot ).\]
\end{proposition}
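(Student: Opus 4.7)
The plan is to derive the equality $\Delta_{H,1}\omega = \Delta_{C,1}\omega + \mathrm{Ric}_X(\omega^*,\cdot)$ as an integrated identity that determines each Laplacian in terms of the other, using Corollary \ref{aavb} as the main input. First I would recall that by Corollary \ref{aavb}, $H^{1,2}_H(T^*X) = H^{1,2}_C(T^*X)$ with
\[
\int_X(|d\omega|^2 + |\delta \omega|^2)\,dH^n = \int_X|\nabla\omega|^2\,dH^n + \int_X\langle \mathrm{Ric}_X,\omega\otimes\omega\rangle\,dH^n
\]
for every $\omega \in H^{1,2}_H(T^*X)$. Polarizing this quadratic identity (which is legitimate because both sides are bilinear in $\omega$, and equality of the diagonals on the common Hilbert space forces equality of the polarizations) yields
\[
\int_X\bigl(\langle d\omega,d\eta\rangle + \langle \delta\omega,\delta\eta\rangle\bigr)\,dH^n = \int_X\langle \nabla\omega,\nabla\eta\rangle\,dH^n + \int_X\langle \mathrm{Ric}_X,\omega\otimes\eta\rangle\,dH^n
\]
for every $\omega,\eta\in H^{1,2}_H(T^*X) = H^{1,2}_C(T^*X)$. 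This is the integrated Weitzenböck identity and will be the workhorse.

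Next I would observe that by the $L^\infty$-bound $\mathrm{Ric}_X \in L^\infty(T^*X\otimes T^*X)$ established in (i) of Theorem \ref{spectr}, the map $\eta \mapsto \int_X\langle \mathrm{Ric}_X,\omega\otimes\eta\rangle\,dH^n$ is a bounded linear functional on $L^2(T^*X)$ for every $\omega \in L^2(T^*X)$, and is represented by the $L^2$ one-form $\mathrm{Ric}_X(\omega^*,\cdot)$ as spelled out in Theorem \ref{spectr}. Hence the polarized identity rewrites as
\[
\int_X\bigl(\langle d\omega,d\eta\rangle + \langle \delta\omega,\delta\eta\rangle\bigr)\,dH^n = \int_X\langle \nabla\omega,\nabla\eta\rangle\,dH^n + \int_X\langle \mathrm{Ric}_X(\omega^*,\cdot),\eta\rangle\,dH^n.
\]

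From here the two inclusions are immediate. If $\omega \in \mathcal{D}^2(\Delta_{H,1},X)$, then taking $\eta \in \mathrm{TestForm}_1(X) \subset H^{1,2}_C(T^*X)$ and using the defining property of $\Delta_{H,1}\omega$ gives
\[
\int_X\langle \nabla\omega,\nabla\eta\rangle\,dH^n = \int_X\langle \Delta_{H,1}\omega - \mathrm{Ric}_X(\omega^*,\cdot),\eta\rangle\,dH^n;
\]
since $\mathrm{TestForm}_1(X)$ is dense in $H^{1,2}_C(T^*X)$ (by the very definition of $H^{1,2}_C$), both sides are continuous in $\eta$ for the $H^{1,2}_C$-topology, so the identity extends to all $\eta \in H^{1,2}_C(T^*X)$, proving $\omega \in \mathcal{D}^2(\Delta_{C,1},X)$ with $\Delta_{C,1}\omega = \Delta_{H,1}\omega - \mathrm{Ric}_X(\omega^*,\cdot)$. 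Conversely, starting with $\omega \in \mathcal{D}^2(\Delta_{C,1},X)$ and using the identity with arbitrary $\eta \in \mathrm{TestForm}_1(X)$ produces the dual equation for $\Delta_{H,1}$.

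I do not expect any serious obstacle here; the only mildly delicate point is checking that polarization of the identity in Corollary \ref{aavb} is actually available, but this is automatic because the relation is a quadratic-form identity on the Hilbert space $H^{1,2}_H(T^*X) = H^{1,2}_C(T^*X)$. All the analytical heavy lifting has already been carried out in Theorem \ref{pointwiseboch} and Corollary \ref{aavb}, and the $L^\infty$-bound on $\mathrm{Ric}_X$ ensures that the Ricci term is an honest $L^2$ one-form rather than merely a functional, so no further regularity argument is needed.
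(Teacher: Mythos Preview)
Your proposal is correct and follows essentially the same approach as the paper: both obtain the polarized integrated Weitzenb\"ock identity (the paper cites Theorem \ref{pointwiseboch}, you cite Corollary \ref{aavb} and polarize, which amounts to the same thing) and then read off the domain equality and the relation between the two Laplacians directly from it. Your write-up is somewhat more explicit about the polarization step, the role of the $L^\infty$-bound on $\mathrm{Ric}_X$, and the density of test forms, but the argument is the same.
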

\begin{proof}
Let $\omega, \eta \in H^{1, 2}_C(T^*X)$ (recall that  by Corollary \ref{aavb}, $H^{1, 2}_C(T^*X)=H^{1, 2}_H(T^*X)$).
Theorem \ref{pointwiseboch} yields
\[\int_X \left(  \langle d\omega, d\eta \rangle + (\delta \omega)(\delta \eta)\right) dH^n=\int_X\left( \langle \nabla \omega, \nabla \eta \rangle + \langle \mathrm{Ric}_X, \omega \otimes \eta \rangle \right)dH^n.\]
This gives Proposition \ref{roughequal}.
\end{proof}
\begin{proposition}\label{mjoo}
Let $M$ be an $n$-dimensional closed Riemannian manifold with $\mathrm{Ric}_M \ge K$ and $\mathrm{diam}\,M\le d$, let $\alpha \le \beta$, and let $T \in C^{\infty}(T^r_sM)$ (or $T \in C^{\infty}(\bigwedge^sT^*M)$, respectively) be an $\alpha$-eigen-tensor field of $\Delta_{C, (r, s)}$, i.e. $\Delta_{C, (r, s)}T =\alpha T$, (or an $\alpha$-eigen-$s$-form of $\Delta_{C, s}$, respectively) with 
\[\frac{1}{H^n(M)}\int_M|T |^2dH^n=1.\]
Then 
\[||T ||_{L^{\infty}}\le C(n, K, d, \beta ).\] 
\end{proposition}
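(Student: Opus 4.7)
The plan is to show that $u := |T|$ is a nonnegative weak subsolution of $\Delta u = \alpha u$ on $M$ via a Bochner--Kato argument, and then to run Moser iteration against the Sobolev inequality (2.7) to bound $\|u\|_{L^{\infty}}$ by $\|u\|_{L^{2}}$.

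First I would derive the subsolution property. Since $M$ is smooth, $T$ is smooth, and $\nabla$ is a metric connection, the Weitzenb\"ock-type identity for the connection Laplacian gives, pointwise on $M$,
\begin{equation*}
-\tfrac{1}{2}\Delta|T|^2 = |\nabla T|^2 - \langle \Delta_{C,(r,s)}T, T\rangle = |\nabla T|^2 - \alpha|T|^2,
\end{equation*}
where the sign is fixed by the paper's convention $\int\langle\nabla f,\nabla h\rangle\,dH^n = \int(\Delta f)h\,dH^n$. Kato's inequality $|\nabla T|^2 \ge |\nabla|T||^2$ holds a.e.\ on $M$, and the pointwise chain rule $\Delta|T|^2 = 2|T|\Delta|T| - 2|\nabla|T||^2$ is valid wherever $|T|>0$; approximating $|T|$ by $u_\varepsilon := \sqrt{|T|^2+\varepsilon^2}$ and letting $\varepsilon\to 0$ extends the resulting differential inequality weakly across the zero set. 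Consequently $u = |T| \in \mathrm{LIP}(M)$ satisfies
\begin{equation*}
\int_M \langle \nabla u, \nabla\phi\rangle\,dH^n \le \alpha \int_M u\phi\,dH^n \qquad \text{for every nonnegative } \phi \in \mathrm{LIP}(M).
\end{equation*}

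Next I would perform Moser iteration. Combining (2.7) with the Bishop upper bound $H^n(M) \le C(n,K,d)$ gives the inhomogeneous Sobolev embedding $\|v\|_{L^{2n/(n-2)}}^2 \le C(n,K,d)(\|\nabla v\|_{L^2}^2 + \|v\|_{L^2}^2)$ for $v \in H^{1,2}(M)$. Testing the subsolution inequality against $\phi = u^{2q-1}$ for $q \ge 1$ yields $\|\nabla u^q\|_{L^2}^2 \le \alpha q\,\|u\|_{L^{2q}}^{2q}$, and feeding this into the Sobolev embedding and iterating with exponents $p_k = 2(n/(n-2))^k$ produces, by a standard telescoping,
\begin{equation*}
\|u\|_{L^{\infty}} \le \prod_{k\ge 0}\bigl(C(n,K,d)(1+\alpha)p_k\bigr)^{1/p_k}\,\|u\|_{L^{2}} \le C(n,K,d,\beta)\,\|u\|_{L^{2}},
\end{equation*}
where convergence of $\sum_k p_k^{-1}\log p_k$ makes the product finite and the dependence on $\alpha\le\beta$ enters only through the factor $1+\alpha$. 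The normalization $H^n(M)^{-1}\int_M|T|^2\,dH^n = 1$ then yields the stated estimate. The case $n=2$ is handled analogously using (\ref{poinc}) in place of the $L^{2n/(n-2)}$-Sobolev inequality. The proof for eigen-$s$-forms is identical, since $\Delta_{C,s}=\nabla^*\nabla$ and both the Bochner identity and Kato's inequality apply verbatim. There is no serious obstacle here beyond bookkeeping: the only point requiring care is tracking the dependence of the Moser constant on $\beta$, which is explicit once the factor $1+\alpha$ is carried through the iteration.
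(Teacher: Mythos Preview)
Your argument is correct, and the underlying idea---derive a subsolution inequality from the Bochner identity for $|T|^2$, then feed it into an elliptic $L^2\!\to\!L^\infty$ estimate controlled by $(n,K,d,\beta)$---is exactly what the paper does. The execution differs: the paper stays with the smooth function $|T|^2$, observes $-\tfrac12\Delta|T|^2\ge -\beta|T|^2$, and quotes Li--Tam's mean value inequality \cite[Theorem~1.1]{LT} as a black box to conclude $|T|^2\le C(n,K,d,\beta)\,H^n(M)^{-1}\int_M|T|^2$. You instead pass to $|T|$ via Kato and run Moser iteration by hand against (2.7). Your route is more self-contained but the Kato detour is unnecessary here: one can iterate (or apply the mean value inequality) directly on $|T|^2$, which is smooth and avoids the $u_\varepsilon$ regularization. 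Conversely, the paper's route hides the dependence on $(n,K,d,\beta)$ inside the cited result, while your telescoping product makes that dependence explicit.
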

\begin{proof}
We give a proof in the case of $\Delta_{C, (r, s)}$ only because the proof in the other case is similar.
Since
\begin{align*}
-\frac{1}{2}\Delta |T |^2&=|\nabla T|^2-\langle \Delta_{C, (r, s)}T, T \rangle \\
&\ge - \alpha |T|^2 \\
&\ge -\beta |T|^2,
\end{align*}
Li-Tam's mean value inequality in \cite[Theorem $1.1$]{LT} (or \cite[Corollary $3.6$]{L}) gives
\[|T|^2 \le C(n, K, d, \beta)\frac{1}{H^n(M)}\int_M|T |^2dH^n \le C(n, K, d, \beta ).\]
\end{proof}
\begin{remark}
By an argument similar to the proof of Theorem \ref{pointwiseboch} we have the following:

Let $X \in \overline{\mathcal{M}}$ and let $T \in \mathcal{D}^2(\Delta_{C, (r, s)}, X)$ (or $T \in \mathcal{D}^2(\Delta_{C, s}, X)$, respectively).
Then we see that $|T|^2 \in \mathcal{D}_{2n/(2n-1), 1}(\Delta, X)$ and that
\[-\frac{1}{2}\Delta |T|^2 = |\nabla T|^2 - \langle \Delta_{C, (r, s)}T, T \rangle\]
(or
\[-\frac{1}{2}\Delta |T|^2 = |\nabla T|^2 - \langle \Delta_{C, s}T, T \rangle,\]
respectively) on $X$ a.e. sense.
\end{remark}
\begin{proposition}\label{conco}
Let $X_i$ be a sequence in $\mathcal{M}$, let $X \in \overline{\mathcal{M}}$ be the Gromov-Hausdorff limit, let $\lambda_i$ be a bounded sequence in $\mathbf{R}_{\ge 0}$,  let $T_i$ be a sequence in $C^{\infty}(T^r_sX_i)$ (or in $C^{\infty}(\bigwedge^sT^*X_i)$, respectively) with 
\[\frac{1}{H^n(X_i)}\int_{X_i}|T_i|^2dH^n=1\]
and $\Delta_{C, (r, s)}T_i=\lambda_i T_i$ (or $\Delta_{C, s}T_i=\lambda_iT_i$, respectively), and let $T$ be the $L^2$-weak limit of $T_i$ on $X$.
Then we have the following:
\begin{enumerate}
\item The limit $\lambda := \lim_{i \to \infty}\lambda_i$ exists.
\item We have $T \in \mathcal{D}^2(\Delta_{C, (r, s)}, X)  \cap L^{\infty}(T^r_sX)$ (or $T \in \mathcal{D}^2(\Delta_{C, s}, X)  \cap L^{\infty}(\bigwedge^sT^*X)$, respectively) with 
\begin{align}\label{rwg4}
||T ||_{L^{\infty}} \le C(n, K_1, d, \mu)
\end{align}
and
\begin{align}\label{rwg5}
\Delta_{C, (r, s)}T=\lambda T
\end{align}
(or $\Delta_{C, s}T=\lambda T$, respectively), where $\mu$ is an upper bound of $\lambda$.
\item $T_i, \nabla T_i$ $L^2$-converge strongly to $T, \nabla T$ on $X$, respectively.
\end{enumerate}
\end{proposition}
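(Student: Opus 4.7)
The plan is to mirror the proof of Proposition \ref{iiuy}, substituting the connection-Laplacian Bochner identity for the Hodge one; I write everything for $\Delta_{C,(r,s)}$ since the case of $\Delta_{C,s}$ is identical. First, Proposition \ref{mjoo} gives the uniform $L^\infty$-bound
$$\sup_i\|T_i\|_{L^\infty}\le C(n,K_1,d,\mu),$$
and integrating the pointwise Bochner identity
$$-\tfrac12\Delta|T_i|^2=|\nabla T_i|^2-\lambda_i|T_i|^2$$
over the closed manifold $X_i$ gives
$$\int_{X_i}|\nabla T_i|^2\,dH^n=\lambda_i H^n(X_i),$$
so $\sup_i\|\nabla T_i\|_{L^2}<\infty$. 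After passing to a subsequence, a Rellich-type compactness theorem for tensor fields (extending Proposition \ref{8877s}(iii) and available in \cite{holp,hoell}) produces $T\in L^\infty(T^r_sX)$ with $\nabla T\in L^2$ such that $T_i\to T$ in $L^2$-strong and $\nabla T_i\rightharpoonup\nabla T$ in $L^2$-weak on $X$; the $L^\infty$-bound on $T$ follows by lower semicontinuity.

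Second, I would place $T$ in $H^{1,2}_C(T^r_sX)$ via Proposition \ref{1011}. Conditions (i) and (ii) of that proposition are already in hand, and for (iii) I fix $f_1,\dots,f_{r+s}\in\mathcal{D}^2(\Delta,X)$ with $\Delta f_\alpha\in L^\infty(X)$ and, arguing as in the proof of Corollary \ref{ppm} (using Proposition \ref{qplap}, the Lipschitz regularity (2.2), and smoothing via the heat flow), produce approximations $f_{\alpha,i}\in C^\infty(X_i)$ satisfying
$$\sup_i\bigl(\|f_{\alpha,i}\|_{L^\infty}+\|\nabla f_{\alpha,i}\|_{L^\infty}+\|\Delta f_{\alpha,i}\|_{L^\infty}\bigr)<\infty$$
with $f_{\alpha,i},\nabla f_{\alpha,i}$ converging $L^2$-strongly to $f_\alpha,\nabla f_\alpha$ and, by Theorem \ref{L2Hess}, $\mathrm{Hess}_{f_{\alpha,i}}$ converging $L^2$-strongly to $\mathrm{Hess}_{f_\alpha}$. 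Leibniz's rule, together with the uniform $L^\infty$-bounds on $T_i$ and $\nabla f_{\alpha,i}$ and the uniform $L^2$-bounds on $\nabla T_i$ and $\mathrm{Hess}_{f_{\alpha,i}}$, gives a uniform $H^{1,2}$-bound on the scalar pairings $\langle T_i,\nabla f_{1,i}\otimes\cdots\otimes df_{r+s,i}\rangle$; since these converge $L^2$-strongly to the corresponding pairing of $T$, the Rellich compactness theorem (2.3) places the limit in $H^{1,2}(X)$, and Proposition \ref{1011} then yields $T\in H^{1,2}_C(T^r_sX)$.

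Third, to identify the limit equation, I would approximate an arbitrary test tensor $S\in\mathrm{Test}\,T^r_sX$ by $S_i\in C^\infty(T^r_sX_i)$ via Theorem \ref{14}(iii) so that $S_i,\nabla S_i$ converge $L^2$-strongly to $S,\nabla S$. Passing $i\to\infty$ along any subsequence with $\lambda_i\to\lambda$ in
$$\int_{X_i}\langle\nabla T_i,\nabla S_i\rangle\,dH^n=\lambda_i\int_{X_i}\langle T_i,S_i\rangle\,dH^n$$
gives $\int_X\langle\nabla T,\nabla S\rangle\,dH^n=\lambda\int_X\langle T,S\rangle\,dH^n$, hence $T\in\mathcal{D}^2(\Delta_{C,(r,s)},X)$ with $\Delta_{C,(r,s)}T=\lambda T$. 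The $L^2$-strong convergence $T_i\to T$ forces $\|T\|_{L^2}^2=H^n(X)>0$, so $T\neq0$ and $\lambda$ is uniquely determined by $T$; this proves the existence of $\lim_i\lambda_i$ together with conclusions (i) and (ii).

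Finally, for the $L^2$-strong convergence of $\nabla T_i$ in (iii), the identity
$$\int_{X_i}|\nabla T_i|^2\,dH^n=\lambda_iH^n(X_i)\longrightarrow\lambda H^n(X)=\int_X|\nabla T|^2\,dH^n$$
upgrades the $L^2$-weak convergence of $\nabla T_i$ to $L^2$-strong convergence automatically. The main obstacle is step two, namely the verification of condition (iii) of Proposition \ref{1011}, since it is there that the Cheeger--Naber $L^p$-bound on Hessians (entering through Theorem \ref{L2Hess}) plays a decisive role; once $T$ is known to lie in $H^{1,2}_C(T^r_sX)$, the remaining arguments are essentially formal.
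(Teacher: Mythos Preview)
Your proof is correct and follows essentially the same strategy as the paper's: the paper's proof is terse, simply saying that ``by the tensor fields version of Proposition~\ref{8877s} with an argument similar to the proof of (ii) of Proposition~\ref{iiuy}\dots the assumption of Proposition~\ref{1011} for $T$ holds,'' and then carrying out the subsequence argument with Theorem~\ref{14}(iii) and the energy identity $\frac{1}{H^n(X_i)}\int_{X_i}|\nabla T_i|^2\,dH^n=\lambda_i$ exactly as you do. One small remark: in step two you only use the uniform $L^2$-bound on $\mathrm{Hess}_{f_{\alpha,i}}$ (which already follows from integrated Bochner on the smooth $X_i$), so the Cheeger--Naber input via Theorem~\ref{L2Hess} is not in fact decisive at that point---the bounded Ricci assumption enters rather through Proposition~\ref{1011} (via the codimension-$4$ estimate) and through Theorem~\ref{14}.
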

\begin{proof}
We give a proof in the case of $\Delta_{C, (r, s)}$ only because the proof in the other case is similar.

By the tensor fields version of Proposition \ref{8877s} (see \cite{hoell}) with an argument similar to the proof of (ii) of  Proposition \ref{iiuy}, 
since it is not difficult to check that the assumption of Proposition \ref{1011}  for $T$ holds,
we see that $T \in H^{1, 2}_C(T^r_sX)$, that $T_i$ $L^2$-converges strongly to $T$ and that $\nabla T_i$ $L^2$-converges weakly to $\nabla T$ on $X$.

Let $i(j)$ be a subsequence of $\mathbf{N}$.
Then there exists a subsequence $j(l)$ of $i(j)$ such that the limit $\lim_{l \to \infty}\lambda_{j(l)}$ exists.
Let $S \in \mathrm{Test}T^r_sX$.
By Theorem \ref{14} there exists a sequence $S_{j(l)} \in C^{\infty}(T^r_sX_{j(l)})$ such that $S_{j(l)}, \nabla S_{j(l)}$ $L^2$-converge strongly to $S, \nabla S$ on $X$, respectively.

Since
\[\int_{X_{j(l)}}\langle \nabla T_{j(l)}, \nabla S_{j(l)}\rangle dH^n =\lambda_{j(l)}\int_{X_{j(l)}}\langle S_{j(l)}, T_{j(l)}\rangle dH^n,\]
letting $l \to \infty$ yields
\[\int_X\langle \nabla T, \nabla S \rangle dH^n =\left(\lim_{l \to \infty}\lambda_{j(l)}\right) \int_X\langle S, T \rangle dH^n.\]
Thus $T \in \mathcal{D}^2(\Delta_{C, (r, s)}, X)$ with $\Delta_{C, (r, s)} T=\left( \lim_{l \to \infty}\lambda_{j(l)}\right) T$.
In particular $\lim_{l \to \infty}\lambda_{j(l)}=||\Delta_{C, (r, s)}T||_{L^2}$.
Since $i(j)$ is arbitraly, we have (i).

On the other hand, (ii) and (iii) follow from (i), Proposition \ref{mjoo} and the equalities
\[\frac{1}{H^n(X_i)}\int_{X_i}|\nabla T_i|^2dH^n = \frac{1}{H^n(X_i)}\int_{X_i}\langle T_i, \Delta_{C, (r, s)}T_i\rangle dH^n =\lambda_i.\]
\end{proof}
By Propositions \ref{mjoo}, \ref{conco} and an argument similar to the proof of Theorem \ref{contihodge}, we have the following:
\begin{theorem}\label{contrough}
Let $X \in \overline{\mathcal{M}}$.
Then we have the following.
\begin{enumerate}
\item{(Finite dimensionalities of eigenspaces)} For every $\alpha \ge 0$, the spaces
\[E^{C, (r, s)}_{\alpha}:=\{T \in \mathcal{D}^2(\Delta_{C, (r, s)}, X); \Delta_{C, (r, s)}T =\alpha T\}\]
and
\[E^{C, s}_{\alpha}:=\{\omega \in \mathcal{D}^2(\Delta_{C, s}, X); \Delta_{C, s}\omega =\alpha \omega\}\]
are finite dimensional.
\item{(Discreteness and unboundedness of the spectrums)} The sets of spectrums of $\Delta_{C, (r, s)}$ and $\Delta_{C, s}$,
\[S^{C, (r, s)}:=\{\alpha \in \mathbf{R}_{\ge 0}; E^{C, (r, s)}_{\alpha} \neq \{0\}\},\]
\[S^{C, s}:=\{\alpha \in \mathbf{R}_{\ge 0}; E^{C, s}_{\alpha} \neq \{0\}\},\]
are discrete and unbounded.
\item{(Densities of eigenspaces)} The sets 
\[E^{C, (r, s)}:=\bigoplus_{\alpha \ge 0}E^{C, (r, s)}_{\alpha}\]
and 
\[E^{C, s}:=\bigoplus_{\alpha \ge 0}E^{C, s}_{\alpha}\]
are dense in $H^{1, 2}_C(T^r_sX)$ and in $H^{1, 2}_C(\bigwedge^sT^*X)$, respectively.
\item{(Spectral convergence of the connection Laplacian)} 
Let us denote the spectrums of $\Delta_{C, (r, s)}$ and $\Delta_{C, s}$ by
\[0\le \lambda_1^{C, (r, s)} \le \lambda_2^{C, (r, s)} \le \cdots \to \infty\]
and
\[0\le \lambda_1^{C, s}\le \lambda_2^{C, s} \le \cdots \to \infty\]
counted with multiplicities, respectively.
If $X_i \stackrel{GH}{\to} X$ in $\overline{\mathcal{M}}$, then 
\[\lambda_k^{C, (r, s)}(X_i) \to \lambda_k^{C, (r, s)}(X)\]
and
\[\lambda_k^{C, s}(X_i) \to \lambda_k^{C, s}(X)\]
hold for every $k \ge 1$.
Moreover if $T \in L^2(T^r_sX)$ (or $T \in L^2(\bigwedge^sT^*X)$, respectively) is the $L^2$-weak limit of a sequence of $\lambda^{C, (r, s)}_k(X_i)$-eigen-tensor fields $T_i \in C^{\infty}(T^r_sX_i)$ of $\Delta_{C, (r, s)}$ (or $\lambda^{C, s}_k(X_i)$-eigen-differential $s$-forms $T_i \in C^{\infty}(\bigwedge^sT^*X_i)$ of $\Delta_{C, s}$, respectively) with 
\[\frac{1}{H^n(X_i)}\int_{X_i}|T_i|^2dH^n=1,\]
then we see that $T$ is a $\lambda^{C, (r, s)}_k(X)$-eigen-tensor field of $\Delta_{C, (r, s)}$ (or a $\lambda^{C, s}_k(X)$-eigen-differential $s$-form of $\Delta_{C, s}$, respectively), that $T_i, \nabla T_i$ $L^2$-converge strongly to $T, \nabla T$ on $X$, respectively, and that  
\[||T ||_{L^{\infty}}\le C(n, K_1,  d, \alpha),\]
where $\alpha$ is an upper bound of $\lambda_k^{C, (r, s)}(X)$ (or $\lambda_k^{C, s}(X)$, respectively).
\end{enumerate}
\end{theorem}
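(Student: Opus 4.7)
The plan is to adapt the proof strategy of Theorems \ref{finitedimensional} and \ref{contihodge} (the Hodge case) almost verbatim, replacing Proposition \ref{iiuy} by Proposition \ref{conco} as the fundamental convergence tool. The key ingredients are already in place: (a) the tensor-field analogue of the Rellich compactness in Proposition \ref{8877s}(iii) from \cite{hoell}; (b) the strong $L^2$-convergence of eigen-tensor fields along a Gromov-Hausdorff convergent sequence, which is exactly Proposition \ref{conco}; and (c) the approximation theorem \ref{14}(iii) for $\mathrm{Test}T^r_sX$ and its analogue for $\mathrm{TestForm}_s(X)$. I will treat only the case of $\Delta_{C,(r,s)}$ since the case of $\Delta_{C,s}$ is identical.

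For (i)-(iii) I would mimic Theorem \ref{finitedimensional}. For (i), take a sequence $T_i \in E^{C,(r,s)}_{\alpha}$ with $\|T_i\|_{L^2}=1$; since $\|\nabla T_i\|_{L^2}^2 = \alpha$, the tensor-field Rellich compactness supplies a subsequence converging in $L^2$ to some $T \in E^{C,(r,s)}_{\alpha}$, so the unit sphere of $E^{C,(r,s)}_{\alpha}$ in $L^2$ is compact and the eigenspace is finite dimensional. For (ii), the discreteness follows by the same contradiction: if $\alpha_i \to \alpha$ with $\alpha_i \neq \alpha$ and $T_i$ is a unit $\alpha_i$-eigentensor, the $L^2$-strong limit $T$ is an $\alpha$-eigentensor orthogonal to $E^{C,(r,s)}_\alpha$, contradicting finite dimensionality. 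Unboundedness follows from the min-max characterization
\[\lambda_k^{C,(r,s)}(X)=\inf_{E_k}\sup_{T \in E_k}\frac{\int_X|\nabla T|^2\,dH^n}{\int_X|T|^2\,dH^n},\]
together with the fact that $H^{1,2}_C(T^r_sX)$ is infinite dimensional (which follows from the density of $\mathrm{Test}T^r_sX$ guaranteed by Claim \ref{21}). For (iii), standard spectral theory for the compact self-adjoint inverse of $\Delta_{C,(r,s)} + \mathrm{id}$ on $L^2(T^r_sX)$, built via the Rellich compactness above, gives the density.

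For (iv), I would prove the two semicontinuities separately along $X_i \stackrel{GH}{\to} X$. For the upper bound $\limsup_i \lambda_k^{C,(r,s)}(X_i) \le \lambda_k^{C,(r,s)}(X)$, pick a $k$-dimensional subspace $E_k \subset H^{1,2}_C(T^r_sX)$ nearly realizing $\lambda_k^{C,(r,s)}(X)$, apply Theorem \ref{14}(iii) to each element of a basis to lift to sequences of smooth tensor fields on $X_i$ with $L^2$-strong convergence of both the tensor and its covariant derivative, and insert these into the min-max characterization on $X_i$; the $L^2$-strong convergence of $T_i$ and $\nabla T_i$ gives convergence of the Rayleigh quotients, as in the proof of (\ref{rlim}). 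For the lower bound $\liminf_i \lambda_k^{C,(r,s)}(X_i) \ge \lambda_k^{C,(r,s)}(X)$, take bases of $\lambda_l^{C,(r,s)}$-eigentensors $T_{l,i}$ for $l\le k$ with $\|T_{l,i}\|_{L^2}=1$, and apply Proposition \ref{conco} to extract (along a subsequence) $L^2$-strong limits $T_l \in \mathcal{D}^2(\Delta_{C,(r,s)}, X)$ with $\Delta_{C,(r,s)}T_l = \lambda_l T_l$, $\lambda_l := \lim_i \lambda_l^{C,(r,s)}(X_i)$; orthonormality is preserved in the limit so $\{T_l\}_{l=1}^k$ spans a $k$-dimensional space, and another application of min-max together with Proposition \ref{conco}(iii) (which gives the required $L^2$-strong convergence of $\nabla T_{l,i}$) closes the inequality. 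The remaining statements about strong convergence of $T_i, \nabla T_i$ and the $L^\infty$-bound follow directly from Proposition \ref{conco}.

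The main conceptual point, compared to the Hodge case, is actually a simplification: because the identity $\int_X |\nabla T|^2 \, dH^n = \int_X \langle T, \Delta_{C,(r,s)} T\rangle \, dH^n$ is automatic from the definition of $\Delta_{C,(r,s)}$, one does not need any Bochner-type identity (no Ricci term enters), so the $L^2$-strong convergence of $\nabla T_i$ is extracted directly from the convergence of $\lambda_i = \|\nabla T_i\|_{L^2}^2 / \|T_i\|_{L^2}^2$ combined with $L^2$-weak lower semicontinuity, rather than through the Weitzenböck formula as in Theorem \ref{contihodge}. Thus the only delicate step is verifying that the weak $L^2$-limit $T$ of $T_i$ lies in $H^{1,2}_C(T^r_sX)$, which is handled by Proposition \ref{1011} applied via Proposition \ref{conco}; this is the analogue of the argument that required Li-Tam's mean value inequality, Cheeger-Colding's $C^1$-regularity, Cheeger-Naber's codimension $4$ bound and the Rellich compactness of \cite{holp, hoell} in the Hodge setting, and once it is granted for tensor fields in \cite{hoell}, no further work is needed.
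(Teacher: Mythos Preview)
Your proposal is correct and follows essentially the same approach as the paper, which simply states that the result follows ``by Propositions \ref{mjoo}, \ref{conco} and an argument similar to the proof of Theorem \ref{contihodge}''. You have accurately filled in those details, including the key observation that the connection-Laplacian case is actually simpler than the Hodge case because the identity $\int_X|\nabla T_i|^2\,dH^n=\lambda_i\int_X|T_i|^2\,dH^n$ gives the $L^2$-strong convergence of $\nabla T_i$ directly, without any Weitzenb\"ock term.
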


Note that Proposition \ref{mjoo}, Theorems \ref{contihodge} and \ref{contrough} imply Theorems \ref{spectr} and \ref{gradd}
\begin{remark}
By an argument similar to the proof of Theorem \ref{finitedimensional} with Remark \ref{boundei}, for a Ricci limit space $(X, \upsilon)$ with $\mathrm{diam}\,X>0$, similar finite dimensionalities of eigenspaces, discreteness and unboundedness of spectrums, densities of eigenspaces in Sobolev spaces hold for $\Delta^{C, (r, s)}$ and $\Delta^{C, s}$. 
\end{remark}
We end this subsection by giving a proof of Theorem \ref{estim}.

\textit{Proof of Theorem \ref{estim}.}

The proofs of the existence upper bounds $C_2$ are similar to an argument in Remark \ref{boundei}.
Thus we give a proof of the existence of lower bounds $C_1$ in the case of $\Delta_{H, 1}$ only because the proofs in other cases are also similar.

The proof is done by a contradiction.
Assume that the assertion is false.
Then there exist a sequence of positive integers $l_i$ with $l_i \to \infty$, and a sequence $X_i \in \overline{\mathcal{M}}$ such that 
\begin{align}\label{bouk}
\sup_i\lambda_{l_i}^{H, 1}(X_i)<\infty.
\end{align}
By the compactness of $\overline{\mathcal{M}}$, without loss of generality we can assume that the limit $X \in \overline{\mathcal{M}}$ of $X_i$ exists.
(v) of Theorem \ref{spectr} with (\ref{bouk}) yields
\[\sup_k\lambda_k^{H, 1}(X)<\infty.\]
This contradicts  (iv) of Theorem \ref{spectr}. $\,\,\,\,\,\,\,\,\,\,\Box$.
\subsection{Noncollapsed K\"ahler Ricci limit spaces with bounded Ricci curvature}
\subsubsection{Ricci potential}
In this section we consider the following setting:
\begin{enumerate}
\item[(4.1)] Let $X_i$ be a sequence in $\mathcal{M}$ and let $X \in \overline{\mathcal{M}}$ be the Gromov-Hausdorff limit.  
\item[(4.2)] Let $J_i$ be a sequence of complex structures on $X_i$ such that $(X_i, g_{X_i}, J_i)$ is a K\"ahler manifold.
\item[(4.3)] Let $J \in L^2(\mathrm{End}TX) (\simeq L^2(TX \otimes T^*X))$ be the $L^2$-weak limit of $J_i$ on $X$.  
\end{enumerate}
Then by \cite{ch-co1, cct}, we see that $J$ has the $C^{1, \alpha}$-regularity on $\mathcal{R}$ for every $\alpha \in (0, 1)$
(see also \cite{DS}).

In this section we use standard notation in K\"ahler geometry.
For example the gradient of a function $f$ is denoted by
\[\mathrm{grad}f\]
instead of $\nabla f$, 
the Hermitian metric $h_X$ of $X$ is defined by
\[h_X(v, w):=g_X(v, \overline{w}),\]
and so on.

On the other hand, we defined in \cite{FHS} the notion of $L^p$-convergence for $\mathbf{C}$-valued tensor fields with respect to the Gromov-Hausdorff topology, and gave applications.
We introduce several results of them.

Let 
\[(T^r_s)_{\mathbf{C}}X:=  T^r_sX \otimes_{\mathbf{R}} \mathbf{C}.\] 
We denote the canonical Hermitian metric on $(T^r_s)_{\mathbf{C}}X$ by $(h_X)^r_s$.
Let $L^p_{\mathbf{C}}((T^r_s)_{\mathbf{C}}A)$ be the set of $L^p$-sections to $(T^r_s)_{\mathbf{C}}X$ over a Borel subset $A$ of $X$. 
In particular if $r=s=0$, i.e. in the case of $\mathbf{C}$-valued functions, then $L^p_{\mathbf{C}}(A)$ denotes the set of $\mathbf{C}$-valued $L^p$-functions on $A$. 

The following were key notions in \cite{FHS}:
\begin{definition}\cite{FHS}
\begin{enumerate}
\item{($L^p$-convergence of $\mathbf{C}$-valued tensor fields)} Let $R>0$, let $p \in (1, \infty)$, let $x_i \stackrel{GH}{\to} x$, where $x_i \in X_i$ and $x \in X$, and let $T_i$ be a sequence in $L^p_{\mathbf{C}}((T^r_s)_{\mathbf{C}}B_R(x_i))$.
Then we say that \textit{$T_i$ $L^p$-converges weakly (or strongly, respectively) to a $\mathbf{C}$-valued tensor field $T \in L^p_{\mathbf{C}}((T^r_s)_{\mathbf{C}}B_R(x))$ on $B_R(x)$} if $T_i^{\mathrm{Re}}, T_i^{\mathrm{Im}}$ $L^p$-converge weakly (or strongly, respectively) to $T^{\mathrm{Re}}, T^{\mathrm{Im}}$ on $B_R(x)$, respectively, where  $T^{\mathrm{Re}}$ is the real part of $T$ and $T^{\mathrm{Im}}$ is the imaginary part of $T$. See \cite[Definition 2.5]{FHS}.
\item{(Sobolev spaces)} For any $p \in (1, \infty)$ and open subset $U$ of $X$, let $H^{1, p}_{\mathbf{C}}(U)$ be the set of $f \in L^2_{\mathbf{C}}(U)$ such that $f^{\mathrm{Re}}, f^{\mathrm{Im}} \in H^{1, p}(U)$. See subsetion 2.1 in \cite{FHS}.
\item{($\mathbf{C}$-valued Laplacian)}
Let $\mathcal{D}^2_{\mathbf{C}}(\Delta, U)$ be the set of $f \in H^{1, 2}_{\mathbf{C}}(U)$ such that there exists $g \in L^2_{\mathbf{C}}(U)$ such that 
\[\int_Uh_X^*(df, dh)dH^n=\int_Ug\overline{h}dH^n\]
for every $h \in \mathrm{LIP}_{c, \mathbf{C}}(U)$, where $\mathrm{LIP}_{c, \mathbf{C}}(U)$ is the set of $\mathbf{C}$-valued Lipschitz functions on $U$ with compact supports, and $h^*_X$ is the canonical Hermitian metric on $T^*X$, i.e. $h^*_X=(h_X)^0_1$.
Since $g$ is unique if it exists, we denote it by $\Delta f$ or, by $\Delta_{\mathbf{C}}f$. See \cite[Definition 3.2]{FHS} and (iv) of Proposition \ref{t74}.
\item{($\overline{\partial}$-Laplacian)}
Let $\mathcal{D}^2_{\mathbf{C}}(\Delta_{\overline{\partial}}, U)$ be the set of $f \in H^{1, 2}_{\mathbf{C}}(U)$ such that there exists $g \in L^2_{\mathbf{C}}(U)$ such that 
\[\int_Uh_X^*(\overline{\partial}f, \overline{\partial}h)dH^n=\int_Ug\overline{h}dH^n\]
for every $h \in \mathrm{LIP}_{c, \mathbf{C}}(U)$.
Since $g$ is unique if it exists, we denote it by $\Delta_{\overline{\partial}} f$. See \cite[Definition 3.5]{FHS}.
\end{enumerate}
\end{definition}
\begin{proposition}\cite{ch-co1, cct, holp, hoell, FHS}\label{t74}
We have the following.
\begin{enumerate}
\item $J$ is the $L^2$-strong limit of $J_i$ on $X$. 
\item $\nabla J=0$.
\item The sequence of K\"ahler forms $\omega_{X_i}$ of $X_i$ $L^2$-converges strongly to the K\"ahler form $\omega_X$ of $X$ defined by
\[\omega_X (v, w):=g_X(Jv, w).\]
\item  Let $f \in L^2(U)$. Then the following three conditions are equivalent:
\begin{enumerate}
\item $f \in \mathcal{D}^2_{\mathbf{C}}(\Delta_{\overline{\partial}}, U)$.
\item $f \in \mathcal{D}^2_{\mathbf{C}}(\Delta, U)$.
\item $f^{\mathrm{Re}}, f^{\mathrm{Im}} \in \mathcal{D}^2(\Delta, U)$.
\end{enumerate}
Moreover, if one of the conditions above holds, then
\begin{align*}
\Delta_{\mathbf{C}} f&=\Delta f^{\mathrm{Re}}+\sqrt{-1}\Delta f^{\mathrm{Im}}\\
&=\Delta_{\mathbf{C}} f^{\mathrm{Re}}+\sqrt{-1}\Delta_{\mathbf{C}} f^{\mathrm{Im}} \\
&=2\Delta_{\overline{\partial}}f^{\mathrm{Re}}+2\sqrt{-1}\Delta_{\overline{\partial}}f^{\mathrm{Im}}=2\Delta_{\overline{\partial}}f.
\end{align*}
\end{enumerate}
\end{proposition}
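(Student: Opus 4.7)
The plan is to prove the four parts in sequence, relying on the Cheeger--Colding--Tian smooth $C^{1,\alpha}$-convergence $(X_i, g_{X_i}, J_i) \to (\mathcal{R}, g_X, J)$ on the regular set $\mathcal{R}$ (cf.\ \cite{ch-co1, cct}) and the $L^p$-convergence machinery of Section 2. For (i), since each $J_i$ is a $g_{X_i}$-orthogonal almost complex structure we have $|J_i|^2 = n$ pointwise, hence $\|J_i\|_{L^2(X_i)}^2 = n H^n(X_i) \to n H^n(X)$ by the volume convergence theorem. Smooth convergence on $\mathcal{R}$ transfers $J^2 = -\mathrm{id}$ and $g_X$-orthogonality to the limit on $\mathcal{R}$, and since $H^n(X \setminus \mathcal{R}) = 0$ one obtains $|J|^2 = n$ a.e., so $\|J\|_{L^2(X)}^2 = n H^n(X)$. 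Combined with the $L^2$-weak convergence assumed in (4.3), this norm convergence upgrades to $L^2$-strong convergence per Definition \ref{Lpreal}.

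For (ii), the Kähler condition gives $\nabla J_i = 0$ for every $i$, so the sequence $\nabla J_i$ converges trivially (strongly and weakly) to $0$ in $L^2$. Combined with (i) and the tensor-field analogue of Proposition \ref{1011} (applied with test inputs $\nabla f_1 \otimes df_2$ for $f_j \in \mathcal{D}^2(\Delta, X)$ with $\Delta f_j \in L^{\infty}(X)$), this yields $J \in H^{1,2}_C(TX \otimes T^*X)$ with $\nabla J = 0$. For (iii), the Kähler form is the contraction $(\omega_{X_i})_{ab} = (g_{X_i})_{ac}(J_i)^c_{\ b}$ of the metric with $J_i$, so \cite[Proposition 3.72]{holp} (contractions preserve $L^2$-strong convergence in the noncollapsed setting) together with (i) yields $L^2$-strong convergence $\omega_{X_i} \to \omega_X$, where $\omega_X(v,w) := g_X(Jv, w)$.

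Part (iv) is the main content. The equivalence (b) $\Leftrightarrow$ (c), together with $\Delta_{\mathbf{C}} f = \Delta f^{\mathrm{Re}} + \sqrt{-1}\,\Delta f^{\mathrm{Im}}$, follows immediately by decomposing complex test integrands into real and imaginary parts. For (a) $\Leftrightarrow$ (b) together with the identity $\Delta_{\mathbf{C}} f = 2 \Delta_{\overline{\partial}} f$, the target is the pointwise Kähler identity
\[
h_X^*(df, dh) = 2\, h_X^*(\overline{\partial} f, \overline{\partial} h),
\]
which we verify on $\mathcal{R}$ via the type decomposition $df = \partial f + \overline{\partial} f$: the cross terms vanish by the sesquilinear pairing between $(1,0)$- and $(0,1)$-forms, and the equality $h_X^*(\partial f, \partial h) = h_X^*(\overline{\partial} f, \overline{\partial} h)$ is the classical Kähler identity on $(\mathcal{R}, g_X, J)$, underpinned by $\nabla J = 0$ from (ii). Integrating over $\mathcal{R}$ and using $H^n(X \setminus \mathcal{R}) = 0$ gives the desired equivalence of the defining bilinear forms for $\Delta_{\mathbf{C}}$ and $2\Delta_{\overline{\partial}}$. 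The chief obstacle is making the $\overline{\partial}$-side rigorous on the singular space: the cleanest route is to first establish the identity for a dense class of test functions via the approximation (3.1), and then pass to general $f \in H^{1,2}_{\mathbf{C}}(U)$ through a capacity-type extension as in the proof of Proposition \ref{1011}, where $c_2(X \setminus \mathcal{R}) = 0$ is used to show that the defining integral identities extend from $\mathcal{R}$ to all of $U$.
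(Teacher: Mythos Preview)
The paper's proof of this proposition is essentially a list of citations: (i) and (ii) follow from \cite[Theorem 6.19]{hoell}, (iv) from \cite[Propositions 3.3 and 3.6]{FHS}, and (iii) combines (i), the $L^2$-strong convergence $g_{X_i}\to g_X$ from \cite[Theorem 1.2]{holp}, and the product compatibility \cite[Proposition 3.70]{holp}. Your approach is to reprove these directly. For (i), (ii), (iii) your arguments are essentially correct and amount to unpacking what lies behind those citations; only minor points differ (for (ii) the right tool is the tensor-field version of the Rellich compactness in Proposition~\ref{8877s}, not Proposition~\ref{1011}; for (iii) the relevant result is \cite[Proposition 3.70]{holp} on products of $L^\infty$-bounded $L^2$-strongly convergent sequences, not 3.72 on contractions of weak limits).

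For (iv) there is a genuine gap. The equality $h_X^*(\partial f,\partial h)=h_X^*(\overline{\partial}f,\overline{\partial}h)$ is \emph{not} a pointwise K\"ahler identity: take $f$ holomorphic on a coordinate chart, so $\overline{\partial}f=0$ but $\partial f\neq 0$. What the type decomposition gives pointwise is only
\[
h_X^*(df,dh)=h_X^*(\partial f,\partial h)+h_X^*(\overline{\partial}f,\overline{\partial}h),
\]
and for real $f,h$ one computes directly
\[
h_X^*(\partial f,\partial h)-h_X^*(\overline{\partial}f,\overline{\partial}h)=-\,i\,g_X(J^*df,dh),
\]
which is generically nonzero. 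The vanishing of its \emph{integral} is an integration-by-parts fact: on a smooth K\"ahler manifold $\delta(J^*df)=\mathrm{div}(J\nabla f)=0$ because $\nabla J=0$ and $\mathrm{Hess}_f$ is symmetric. Carrying this over to the singular space $X$ is precisely the content behind \cite[Propositions 3.3 and 3.6]{FHS}; it cannot be read off from $H^n(X\setminus\mathcal{R})=0$ alone, and the capacity argument of Proposition~\ref{1011} does not address it, since the missing step is a divergence identity rather than a localization. One clean route is to approximate by the smooth $X_i$ (where $\int_{X_i}g_{X_i}(J_i^*df_i,dh_i)\,dH^n=0$ holds) and pass to the limit using (i), the closedness of the Laplacian (2.4), and the existence of approximants (2.6).
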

\begin{proof}
\cite[Theorem 6.19]{hoell} yields (i) and (ii) (note that (ii) also follows from \cite{ch-co1, cct}).
On the other hand \cite[Proposition3 3.3 and 3.6]{FHS} yields (iv).

Since $X$ is the noncollapsed limit of $X_i$, by \cite[Theorem 1.2]{holp}, $g_{X_i}$ $L^2$-converges strongly to $g_X$ on $X$.
Thus this with \cite[Proposition 3.70]{holp} and (i) yields (iii).
\end{proof}
As a corollary of (iv) of Proposition \ref{t74},
for every $f \in \mathcal{D}^2_{\mathbf{C}}(\Delta, X)$, we see that $f$ is weakly twice differentiable on $X$ in the sense of \cite{ho}.
In particular $\overline{\partial}f$ is differentiable on $U$ a.e. sense.
Thus
\[d\overline{\partial}f=\partial \overline{\partial} f \in L^2_{\mathbf{C}}((T^*X)'\wedge (T^*X)'')\]
is well-defined and can be written by using the Hessian of $f$, defined by $\mathrm{Hess}_f:=\mathrm{Hess}_{f^{\mathrm{Re}}}+\sqrt{-1}\mathrm{Hess}_{f^{\mathrm{Im}}}$, as follows:
\begin{align}\label{jjjj}
\sqrt{-1}\partial \overline{\partial}f(v, w)=\frac{1}{2}\left( \mathrm{Hess}_f(Jv, w)-\mathrm{Hess}_f(v, Jw)\right),
\end{align}
where $(T^*X)':=\{v \in T^*X; J^*v=\sqrt{-1}v\}$, $(T^*X)'':=\{v \in T^*X; J^*v=-\sqrt{-1}v\}$ and $J^*$ is the dual complex structure of $J$ of $T^*X$ by the Riemannian metric $g_X$.
\begin{proposition}\label{222334}
Let $f_i$ be a sequence in $\mathcal{D}^2_{\mathbf{C}}(\Delta, X_i)$ with
\[\sup_i\left( ||f_i||_{L^2_{\mathbf{C}}}+||\Delta f_i||_{L^2_{\mathbf{C}}}\right)<\infty,\]
and let $f \in \mathcal{D}^2_{\mathbf{C}}(\Delta, X)$ be the $L^2$-weak limit on $X$.
Then $\partial \overline{\partial} f_i$ $L^2$-converges weakly to $\partial \overline{\partial} f$ on $X$.
Moreover if $\Delta f_i$ $L^2$-converges strongly to $\Delta f$ on $X$, then $\partial \overline{\partial} f_i$ $L^2$-converges strongly to $\partial \overline{\partial} f$ on $X$.
\end{proposition}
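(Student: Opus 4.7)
The plan is to reduce the statement to one about real-valued functions and the ordinary Hessian via formula (\ref{jjjj}), and then combine the Hessian convergence theory of Section 2 with the strong $L^2$-convergence of the complex structures $J_i \to J$ supplied by (i) of Proposition \ref{t74}. Concretely, writing $f_i = f_i^{\mathrm{Re}} + \sqrt{-1}f_i^{\mathrm{Im}}$ and invoking (iv) of Proposition \ref{t74}, the hypotheses $\sup_i(\|f_i\|_{L^2_{\mathbf{C}}} + \|\Delta f_i\|_{L^2_{\mathbf{C}}}) < \infty$, together with the strong convergence of $\Delta f_i$ in the second part, transfer to $f_i^{\mathrm{Re}}$ and $f_i^{\mathrm{Im}}$. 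Since each statement is linear in $f_i$, it suffices to treat the case that each $f_i$ is a real-valued element of $\mathcal{D}^2(\Delta, X_i)$ with uniformly bounded $L^2$-norms of $f_i$ and $\Delta f_i$.

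For the weak assertion, the closedness of the Dirichlet Laplacian (2.4) forces the weak limit $f$ to lie in $\mathcal{D}^2(\Delta, X)$, gives strong $L^2$-convergence $f_i \to f, \nabla f_i \to \nabla f$, and yields the weak $L^2$-convergence $\mathrm{Hess}_{f_i} \to \mathrm{Hess}_f$ on $X$. On the other hand, (i) of Proposition \ref{t74} supplies $J_i \to J$ in the $L^2$-strong sense, and since $|J_i|$ is uniformly bounded (because each $J_i$ is $g_{X_i}$-orthogonal), the equivalence of $L^q$-strong convergence (2.5) promotes this to strong $L^q$-convergence for every $q \in (1, \infty)$. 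Through the identity (\ref{jjjj}), $\sqrt{-1}\partial\overline{\partial}f_i$ is a bilinear contraction of $\mathrm{Hess}_{f_i}$ and $J_i$, so a standard weak-times-strong principle for tensorial contractions under $L^p$-convergence along Gromov-Hausdorff convergence (as deployed repeatedly in the proofs of Theorem \ref{14} and Theorem \ref{curvaturetensor}) delivers the desired weak $L^2$-convergence $\partial\overline{\partial}f_i \to \partial\overline{\partial}f$.

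For the strong assertion, once $\Delta f_i \to \Delta f$ is strengthened to $L^2$-strong convergence, Theorem \ref{L2Hess} upgrades the Hessian convergence to $L^2$-strong convergence, so the contraction appearing in (\ref{jjjj}) involves two $L^2$-strongly convergent factors, one of which is uniformly $L^\infty$-bounded; this yields $L^2$-strong convergence of $\partial\overline{\partial}f_i$ to $\partial\overline{\partial}f$. The main obstacle I anticipate is the bookkeeping required to verify that tensorial contractions of the specific form dictated by (\ref{jjjj}) pass cleanly through the weak and strong modes of $L^p$-convergence in the abstract Gromov-Hausdorff framework; however, this is precisely the kind of manipulation that has been carried out several times in Section 3 (compare the proof of Claim \ref{20}), where the combination of Theorem \ref{L2Hess}, Corollary \ref{nn}, and the existence of $L^p$-approximate sequences reduces the argument to computations that take place on a single limit space.
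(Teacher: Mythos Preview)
Your proposal is correct and follows essentially the same route as the paper's own proof: reduce to real and imaginary parts via (iv) of Proposition \ref{t74}, invoke the weak (resp.\ strong) $L^2$-convergence of Hessians from (2.4) (resp.\ Theorem \ref{L2Hess}), and then pass through the identity (\ref{jjjj}) using the $L^2$-strong convergence of $J_i$ to $J$ from (i) of Proposition \ref{t74} together with the uniform $L^\infty$-bound on $J_i$. The paper merely cites the specific compatibility results \cite[Proposition 3.48]{holp} and \cite[Proposition 3.70]{holp} in place of your informal ``weak-times-strong'' description, but the content is the same.
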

\begin{proof}
By the $L^2$-weak convergence of Hessians in \cite[Theorem $4.13$]{hoell}, we see that $\mathrm{Hess}_{f_i}$ $L^2$-converges weakly to $\mathrm{Hess}_f$ on $X$. 
Thus (i) of Proposition \ref{t74}, (\ref{jjjj}) and \cite[Proposition 3.48]{holp} yield that $\partial \overline{\partial} f_i$ $L^2$-converges weakly to $\partial \overline{\partial} f$ on $X$.

Moreover, assume that $\Delta f_i$ $L^2$-converges strongly to $\Delta f$ on $X$.
Then Theorem \ref{L2Hess} yields that $\mathrm{Hess}_{f_i}$ $L^2$-converges strongly to $\mathrm{Hess}_f$ on $X$. 
Thus applying  \cite[Proposition 3.70]{holp} with (\ref{jjjj}) yields that $\partial \overline{\partial} f_i$ $L^2$-converges strongly to $\partial \overline{\partial} f$ on $X$.
\end{proof}
\begin{proposition}[Ricci form]\label{ricciform}
Define the Ricci form $\mathrm{Ric}_{\omega_X} \in L^{\infty}_{\mathbf{C}}(\bigwedge^2T^*_{\mathbf{C}}X)$ of $X$  by
\[\mathrm{Ric}_{\omega_X}(v, w):=\frac{1}{2}\left( \mathrm{Ric}_X(Jv, w)-\mathrm{Ric}_X(v, Jw)\right).\]
Then $\mathrm{Ric}_{\omega_{X_i}}$ $L^2$-converges weakly to $\mathrm{Ric}_{\omega_X}$ on $X$.
In particular
\begin{align}\label{kkkkiiii}
\mathrm{Ric}_{\omega_X}(v, w)=\mathrm{Ric}_{X}(Jv, w).
\end{align}
Moreover $\mathrm{Ric}_{\omega_{X_i}}$ $L^2$-converges strongly to $\mathrm{Ric}_{\omega_X}$ on $X$ if and only if $\mathrm{Ric}_{X_i}$ $L^2$-converges strongly to $\mathrm{Ric}_{X}$ on $X$.
\end{proposition}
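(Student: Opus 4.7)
The plan is to pass to the limit in the smooth-Kähler identities relating $\mathrm{Ric}_{\omega_{X_i}}$ to $\mathrm{Ric}_{X_i}$ and $J_i$. By Theorem \ref{ellp}, $\mathrm{Ric}_{X_i}$ $L^p$-converges weakly to $\mathrm{Ric}_X$ for every $p \in (1, \infty)$, and these sequences are $L^{\infty}$-bounded by (i) of Theorem \ref{spectr}. By (i) of Proposition \ref{t74} together with the pointwise identity $|J_i|^2 = n$, the sequence $J_i$ is $L^{\infty}$-bounded and converges $L^2$-strongly to $J$, hence $L^p$-strongly for every $p \in (1, \infty)$ by (2.5) (the equivalence of $L^q$-strong convergence). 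Applying the weak--times--strong principle for $L^p$-convergence of tensor fields (the tensor field analogue of \cite[Proposition 3.70]{holp}) to the contractions that produce $\mathrm{Ric}_{X_i}(J_i\cdot, \cdot)$ and $\mathrm{Ric}_{X_i}(\cdot, J_i\cdot)$, I obtain the $L^p$-weak convergence of these contractions to $\mathrm{Ric}_X(J\cdot, \cdot)$ and $\mathrm{Ric}_X(\cdot, J\cdot)$, respectively. Taking the half difference yields the $L^2$-weak convergence of $\mathrm{Ric}_{\omega_{X_i}}$ to $\mathrm{Ric}_{\omega_X}$.

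Next I would establish the pointwise identity (\ref{kkkkiiii}). On each smooth Kähler manifold $X_i$, the Ricci tensor is $J_i$-invariant, i.e. $\mathrm{Ric}_{X_i}(J_iv, J_iw) = \mathrm{Ric}_{X_i}(v, w)$. Passing to the $L^p$-weak limit of both sides by the same weak--times--strong argument (using that $J_i \otimes J_i$ is $L^p$-strong convergent for any $p$, while $\mathrm{Ric}_{X_i}$ is $L^p$-weak convergent) gives $\mathrm{Ric}_X(Jv, Jw) = \mathrm{Ric}_X(v, w)$ a.e. on $X$. Since $J^2 = -\mathrm{id}$ a.e. as a consequence of (i)--(ii) of Proposition \ref{t74}, this $J$-invariance yields $\mathrm{Ric}_X(v, Jw) = -\mathrm{Ric}_X(Jv, w)$ a.e., and substituting into the definition of $\mathrm{Ric}_{\omega_X}$ gives (\ref{kkkkiiii}).

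Finally, for the equivalence of $L^2$-strong convergence, the key observation is that $J$-invariance of $\mathrm{Ric}_X$, together with the fact that $J$ acts pointwise as an isometry on $T_xX$ a.e., implies the pointwise norm identity $|\mathrm{Ric}_{\omega_X}| = |\mathrm{Ric}_X|$ a.e. on $X$; indeed, expanding $|\mathrm{Ric}_{\omega_X}|^2$ in an orthonormal basis $\{e_\alpha\}$ shows it equals $\sum_{\alpha,\beta}\mathrm{Ric}_X(Je_\alpha, e_\beta)^2$, and $\{Je_\alpha\}$ is again orthonormal. The same identity holds on each $X_i$, so $\|\mathrm{Ric}_{\omega_{X_i}}\|_{L^2} = \|\mathrm{Ric}_{X_i}\|_{L^2}$ and $\|\mathrm{Ric}_{\omega_X}\|_{L^2} = \|\mathrm{Ric}_X\|_{L^2}$. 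Combined with the weak convergence already established (both for $\mathrm{Ric}_{X_i}$ and for $\mathrm{Ric}_{\omega_{X_i}}$) and the definition of $L^2$-strong convergence, the desired equivalence follows. The main obstacle I anticipate is a clean justification of the weak--times--strong convergence in a form that covers contractions of tensor fields of mixed types on the noncollapsed limit; this reduces to checking that the relevant tensorial operations are continuous with respect to the joint weak/strong $L^p$-topologies, which is handled by the tensor calculus set up in \cite{holp}.
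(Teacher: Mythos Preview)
Your argument is correct, and for the weak convergence step it coincides with the paper's proof (which cites Theorem \ref{ellp}, (i) of Proposition \ref{t74}, and \cite[Proposition 3.48]{holp}; note the paper uses \cite[Proposition 3.48]{holp} for the weak--times--strong contraction and \cite[Proposition 3.70]{holp} for the strong--times--strong one, so your reference to 3.70 in the weak step is a minor misattribution).

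Where you diverge from the paper is in the proof of (\ref{kkkkiiii}) and of the equivalence of strong convergence. The paper does not pass $J$-invariance of $\mathrm{Ric}$ to the limit directly; instead it tests against gradients of distance functions and uses the Lebesgue differentiation theorem to prove an identity $\mathrm{Ric}_{\omega_X}(u',u')=\mathrm{Ric}_X(u,u)$ (Claim \ref{er}), and then for the ``only if'' direction runs a local $L^2$-energy argument in the style of (\ref{hesskey}) (via Claim \ref{mmmmm}) along balls and convergent basepoints. Your route is more elementary: you first obtain $\mathrm{Ric}_X(J\cdot,J\cdot)=\mathrm{Ric}_X$ as a tensor identity by weak--times--strong convergence, derive (\ref{kkkkiiii}) algebraically, and then exploit the resulting pointwise norm identity $|\mathrm{Ric}_{\omega}| = |\mathrm{Ric}|$ (up to a fixed combinatorial constant depending on the $2$-form norm convention) to reduce the equivalence of strong convergence to the equivalence of norm convergence. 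This buys you a shorter argument that avoids the localization via distance functions; the paper's approach, on the other hand, is closer in spirit to the other ``a.e.\ identity via testing against $dr_z$'' arguments used elsewhere in the text and yields the additional relation $\mathrm{tr}_{\mathbf C}\,\mathrm{Ric}_{\omega_X}=\tfrac12 s_X$ along the way.
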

\begin{proof}
By Theorem \ref{ellp}, (i) of Proposition \ref{t74} and \cite[Proposition 3.48]{holp}, we see that $\mathrm{Ric}_{\omega_{X_i}}$ $L^2$-converges weakly to $\mathrm{Ric}_{\omega_X}$ on $X$.

On the other hand, by \cite[Proposition 3.70]{holp},
 if $\mathrm{Ric}_{X_i}$ $L^2$-converges strongly to $\mathrm{Ric}_{X}$ on $X$, then  $\mathrm{Ric}_{\omega_{X_i}}$ $L^2$-converges strongly to $\mathrm{Ric}_{\omega_X}$ on $X$.
From now on we check that if $\mathrm{Ric}_{\omega_{X_i}}$ $L^2$-converges strongly to $\mathrm{Ric}_{\omega_X}$ on $X$, then $\mathrm{Ric}_{X_i}$ $L^2$-converges strongly to $\mathrm{Ric}_{X}$ on $X$.

For that we need the following.
\begin{claim}\label{er}
For a.e. $x \in X$, we see that $\mathrm{Ric}_{\omega_X}(u', u')=\mathrm{Ric}_{X}(u, u)$ for every $u \in T_{x}X$, where
$u=u'+u''$ is the decomposition of $u$ by
\[T_{\mathbf{C}}X = T'X \oplus T''X,\]
$T'X:=\{v \in TX; Jv =\sqrt{-1}v\}$ and $T''X:=\{ v \in TX; Jv=-\sqrt{-1}v\}$.
In particular
\[\mathrm{tr}_{\mathbf{C}}\,\mathrm{Ric}_{\omega_X}=\frac{1}{2}s_X,\]
where $\mathrm{tr}_{\mathbf{C}}$ is the (complex) trace, i.e.
\[\mathrm{tr}_{\mathbf{C}}\,\mathrm{Ric}_{\omega_X}(x)=\sum_i\mathrm{Ric}_{\omega_X}(e_i, \overline{e_i}),\]
and $\{e_i\}_i$ is an orthogonal basis of $T_x'X$.
\end{claim}
The proof is as follows.
Recall that this holds on a smooth K\"ahler manifold (c.f. page 6 in \cite{Tian99}).
Thus for any convergent sequences $x_{i, j} \stackrel{GH}{\to} x_i$, where $x_{i, j} \in X_j$ and $x_i \in X$,
since 
\begin{align*}
&\frac{1}{H^n(B_r(x_{1, j}))}\int_{B_r(x_{1, j})}\mathrm{Ric}_{\omega_{X_j}}(\mathrm{grad}'\,r_{x_{2, j}}, \mathrm{grad}'\,r_{x_{3, j}}) dH^n\\
&=
\frac{1}{H^n(B_r(x_{1, j}))}\int_{B_r(x_{1, j})}\mathrm{Ric}_{X_j}(\mathrm{grad}\,r_{x_{2, j}}, \mathrm{grad}\,r_{x_{3, j}})dH^n
\end{align*}
for every $r>0$ (recall that $r_x$ is the distance function from $x$), by letting $j \to \infty$ and $r \to 0$ with the Lebesgue differentiation theorem,
we have 
\begin{align}\label{66t5}
\mathrm{Ric}_{\omega_X}(\mathrm{grad}'\,r_{x_2}, \mathrm{grad}'\,r_{x_3})(x)=\mathrm{Ric}_{X}(\mathrm{grad}\,r_{x_2}, \mathrm{grad}\,r_{x_3})(x)
\end{align}
for a.e. $x \in X$,
where we used the fact that $\mathrm{grad}'\,r_{x_{i, j}}$ $L^2$-converges strongly to $\mathrm{grad}'\,r_{x_i}$ on $X$ (see \cite[Proposition 3.44]{holp} and \cite[Proposition 2.7]{FHS}).

Recall that for a.e. $x \in X$, there exist $y_1, \ldots, y_n \in X$ such that 
\[T_xX=\mathrm{span}_i\{\mathrm{grad}\, r_{y_i}(x)\}\]
(c.f. \cite[Theorem 3.1]{holip}).
Thus this with (\ref{66t5}) yields Claim \ref{er}.

Note that by an argument similar to that above we have (\ref{kkkkiiii}).

We are now in a position to finish the proof of Proposition \ref{ricciform}.
Assume that $\mathrm{Ric}_{\omega_{X_i}}$ $L^2$-converges strongly to $\mathrm{Ric}_{\omega_X}$ on $X$.

Then Claim \ref{er} yields that
\begin{align}\label{aaaaaat}
&\lim_{j \to \infty}\int_{B_r(x_{1, j})}|\mathrm{Ric}_{X_j}(\mathrm{grad}\,r_{x_{2, j}}, \mathrm{grad}\,r_{x_{3, j}})|^2dH^n \nonumber  \\
&=\lim_{j \to \infty}\int_{B_r(x_{1, j})}|\mathrm{Ric}_{\omega_{X_j}}(\mathrm{grad}'\,r_{x_{2, j}}, \mathrm{grad}'\,r_{x_{3, j}})|^2dH^n \nonumber \\
&=\int_{B_r(x_1)}|\mathrm{Ric}_{\omega_X}(\mathrm{grad}'r_{x_2}, \mathrm{grad}'r_{x_3})|^2dH^n \nonumber \\
&=\int_{B_r(x_1)}|\mathrm{Ric}_{X}(\mathrm{grad}\,r_{x_2}, \mathrm{grad}\,r_{x_3})|^2dH^n
\end{align} 
for any $x_{i, j} \stackrel{GH}{\to} x_i$ and $r>0$.

By an argument similar to the proof of (\ref{hesskey}) (or \cite[Theorem $6.9$]{hoell}) with (\ref{aaaaaat}), we have
\[\limsup_{i \to \infty}\int_{X_{i}}|\mathrm{Ric}_{X_{i}}|^2dH^n \le \int_{X}|\mathrm{Ric}_{X}|^2dH^n.\]
Thus $\mathrm{Ric}_{X_i}$ $L^2$-converges strongly to $\mathrm{Ric}_{X}$ on $X$.
\end{proof}
\begin{proposition}\label{exdol}
Let $U$ be an open subset of $X$ and let $f \in \mathcal{D}^2_{\mathbf{C}}(\Delta_{\overline{\partial}}, U)$.
Then
\begin{align}\label{trlap}
-\mathrm{tr}_{\mathbf{C}}\left(\,\sqrt{-1}\partial \overline{\partial} f\right)=\Delta_{\overline{\partial}} f.
\end{align}
\end{proposition}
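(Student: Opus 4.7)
The plan is to reduce the claim to a pointwise identity on the regular set $\mathcal{R}$ and then perform a direct linear-algebra computation in a $J$-adapted orthonormal frame. First I would invoke (iv) of Proposition \ref{t74} (which is a local statement on $U$) to conclude $f^{\mathrm{Re}}, f^{\mathrm{Im}} \in \mathcal{D}^2(\Delta, U)$, so that (1.2)--(1.3) supplies a well-defined Borel Hessian $\mathrm{Hess}_f = \mathrm{Hess}_{f^{\mathrm{Re}}} + \sqrt{-1}\,\mathrm{Hess}_{f^{\mathrm{Im}}}$ on $U \cap \mathcal{R}$ whose real trace equals $-\Delta_{\mathbf{C}}f = -2\Delta_{\overline{\partial}}f$ a.e. In particular, by (\ref{jjjj}) the $(1,1)$-form $\sqrt{-1}\partial\overline{\partial}f$ is defined pointwise a.e.\ on $U \cap \mathcal{R}$, and both sides of (\ref{trlap}) are well-defined $L^2$-objects on $U$.

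At almost every $x \in U \cap \mathcal{R}$, using the $C^{1,\alpha}$-regularity of $g_X$ together with $\nabla J = 0$ from (ii) of Proposition \ref{t74}, I would pick a $J$-adapted $g_X$-orthonormal basis $\{e_k, Je_k\}_{k=1}^{n/2}$ of $T_xX$ and form $E_k := \tfrac{1}{\sqrt{2}}(e_k - \sqrt{-1}Je_k)$, a unitary basis of $T'_xX$ for $h_X$ satisfying $JE_k = \sqrt{-1}E_k$ and $J\bar E_k = -\sqrt{-1}\bar E_k$. Substituting into (\ref{jjjj}) collapses the antisymmetric combination to
\[
\sqrt{-1}\partial\overline{\partial}f(E_k, \bar E_k) = \sqrt{-1}\,\mathrm{Hess}_f(E_k, \bar E_k),
\]
and expanding $E_k, \bar E_k$ in $\{e_k, Je_k\}$ together with the symmetry of $\mathrm{Hess}_f$ (which kills the imaginary cross terms) yields
\[
\sum_{k=1}^{n/2}\mathrm{Hess}_f(E_k, \bar E_k) = \tfrac{1}{2}\sum_{j=1}^{n}\mathrm{Hess}_f(f_j, f_j) = -\tfrac{1}{2}\Delta_{\mathbf{C}}f = -\Delta_{\overline{\partial}}f,
\]
where $\{f_j\} := \{e_k\}_k \cup \{Je_k\}_k$ is the underlying real orthonormal basis and the penultimate equality invokes (1.3).

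Combining the two displays and applying the convention for $\mathrm{tr}_{\mathbf{C}}$ on $(1,1)$-forms fixed by Claim \ref{er} (the same convention under which $\mathrm{tr}_{\mathbf{C}}\mathrm{Ric}_{\omega_X} = \tfrac12 s_X$ holds via the analogous frame calculation applied to $\mathrm{Ric}_{\omega_X}(v,w) = \mathrm{Ric}_X(Jv,w)$), I obtain $-\mathrm{tr}_{\mathbf{C}}(\sqrt{-1}\partial\overline{\partial}f) = \Delta_{\overline{\partial}}f$ pointwise a.e.\ on $U \cap \mathcal{R}$, which suffices since $H^n(U \setminus \mathcal{R}) = 0$. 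The main obstacle is purely the bookkeeping: verifying that a measurable $J$-adapted frame can be selected a.e. on $\mathcal{R}$ and that (iv) of Proposition \ref{t74} is really a local identity on $U$ (rather than requiring globally defined data on $X$); once these and (1.2)--(1.3) are in place, what remains is the standard K\"ahler-geometry identity $\mathrm{tr}_\omega(\sqrt{-1}\partial\overline{\partial}f) = \tfrac12\Delta f$ transplanted to the nonsmooth setting via Gigli's calculus.
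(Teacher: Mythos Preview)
Your approach is correct and genuinely different from the paper's. The paper proves the identity by approximation from the smooth side: it first records that complex traces are stable under $L^p$-weak convergence of $(1,1)$-forms (the complex analogue of \cite[Proposition~3.72]{holp}); then, for $U=X$, it uses (2.8) and elliptic regularity to produce $f_i \in C^\infty_{\mathbf{C}}(X_i)$ with $f_i, df_i, \Delta f_i$ converging $L^2$-strongly, invokes the classical identity on the smooth $X_i$, and passes to the limit via Proposition~\ref{222334}; finally, a general open $U$ is reduced to the global case by multiplying with a Cheeger--Colding good cutoff.

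Your route is more intrinsic: since (1.3) already furnishes $\mathrm{tr}\,\mathrm{Hess}_f = -\Delta f$ pointwise a.e.\ on the limit space, and since (\ref{jjjj}) writes $\sqrt{-1}\partial\bar\partial f$ as a purely algebraic combination of $\mathrm{Hess}_f$ and $J$, the statement collapses to a frame computation at a point, with no need to re-run the smooth-approximation machinery. What this buys is a shorter, self-contained argument once \cite{hoell} is in hand; what the paper's approach buys is uniformity with the rest of Section~4, where essentially every identity is obtained as a limit of its smooth counterpart. The cost you identify is real but small: (1.2)--(1.3) are stated here only for $f \in \mathcal{D}^2(\Delta, X)$, so to treat $f \in \mathcal{D}^2(\Delta, U)$ you still need to multiply by a good cutoff supported in $U$ to manufacture a global function agreeing with $f$ on a smaller ball---precisely the paper's ``Final step''. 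Your frame-selection concern is moot: both $g_X$ and $J$ are continuous on $\mathcal{R}$, and in any case the trace is frame-independent, so no measurable selection is actually required.
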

\begin{proof}
Recall that the real version of this statement was already known in \cite[(1) of Theorem 1.9]{hoell}.
Since the proof of Proposition \ref{exdol} is essentially same to that, we give a sketch of the proof only. 
It consists of three steps as follows.
\begin{enumerate}
\item[(First step)] Let $p \in (1, \infty)$, let $X_i$ be a sequence in $\mathcal{M}$ with $X_i \stackrel{GH}{\to} X$, let $T_i$ be a sequence of $T_i \in L^p((T^*X_i)' \wedge (T^*X_i)'')$ and let $T \in L^p((T^*X)' \wedge (T^*X)'')$ be the $L^p$-weak limit on $X$.
Then prove that $\mathrm{tr}_{\mathbf{C}}T_i$ $L^p$-converges weakly to $\mathrm{tr}_{\mathbf{C}}T$ on $X$ (c.f. \cite[Proposition 3.72]{holp}).
\item[(Second step)] Assume that $U=X$.  By using the continuity of solutions of Poisson's equations (2.8) and the elliptic regularity theorem,  prove that there exists a sequence $f_i \in C^{\infty}_{\mathbf{C}}(X_i)$ such that $f_i, df_i, \Delta f_i$ $L^2$-converge strongly to $f, df, \Delta f$ on $X$, respectively.
Then since (\ref{trlap}) holds on a smooth K\"ahler manifold, Proposition \ref{222334} with the first step yields (\ref{trlap}) in the case when $U=X$.
\item[(Final step)]For general $U$, by using a good cut-off function constructed in \cite[Theorem 6.33]{ch-co}  by Cheeger-Colding and the second step, complete the proof of Proposition \ref{exdol}.
\end{enumerate}
\end{proof}
\begin{remark}
By the proof of Proposition \ref{exdol}, the same conclusion as in Proposition \ref{exdol} holds even if $(X, g_X, J)$ is a noncollapsed K\"ahler Ricci limit space in the sense of \cite{FHS}.
\end{remark}
\begin{theorem}[Ricci potential]\label{riccipotential}
Assume that for every $i$, 
\[\lambda_{i} \omega_{X_i} \in 2\pi c_1(X_i)\]
for some $\lambda_i \in \mathbf{C}$, where $c_1(X_i)$ is the first Chern class of $X_i$. 
Then we have the following.
\begin{enumerate}
\item{(Existence)} There exist $\lambda \in \mathbf{C}$ and $F \in \mathcal{D}^2_{\mathbf{C}}(\Delta, X)$ such that 
\begin{align}\label{potential}
\mathrm{Ric}_{\omega_X}-\lambda \omega_X = \sqrt{-1}\partial \overline{\partial} F
\end{align}
holds. Moreover $\lambda$ is unique. In fact
\begin{align}\label{443}
\lambda=\frac{1}{2nH^n(X)}\int_{X}s_{X}dH^n.
\end{align}
We call $F$ is a Ricci potential of $(X, g_X, J)$.
\item{(Uniqueness)} If $F_1$ and $F_2$ are Ricci potentials of $(X, g_X, J)$, 
then there exists $c \in \mathbf{C}$ such that 
\[F_1 \equiv F_2 +c.\]
\item{(Lipschitz regularity)} There exists $\mathbf{R}$-valued Ricci potential $F \in \mathcal{D}^2(\Delta, X)$ such that $F \in \mathrm{LIP}(X)$ holds.
Thus it is well-defined that a Ricci potential is said to be \textit{normalized} if
\[\int_Xe^FdH^n=H^n(X).\]
Since the normalized Ricci potential is unique, we denote it by $F_X$ for short.
\end{enumerate}
\end{theorem}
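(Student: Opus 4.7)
\emph{Proof plan.} The strategy is to construct the Ricci potential on $X$ as an $L^2$-strong limit of the smooth Ricci potentials $F_i$ on $X_i$, and to pass to the limit in the K\"ahler identity using Propositions \ref{t74}, \ref{222334}, \ref{ricciform}, and \ref{exdol} together with the convergence machinery of Section 2. Since $\lambda_i \omega_{X_i} \in 2\pi c_1(X_i)$, the $\partial\overline{\partial}$-lemma on the closed K\"ahler manifold $X_i$ provides a real-valued $F_i \in C^\infty(X_i)$, unique up to an additive constant, with
\[
\mathrm{Ric}_{\omega_{X_i}} - \lambda_i \omega_{X_i} = \sqrt{-1}\,\partial \overline{\partial} F_i,
\]
normalized so that $\int_{X_i} F_i\,dH^n = 0$. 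Taking the complex trace and using $\mathrm{tr}_{\mathbf{C}}\mathrm{Ric}_{\omega_{X_i}} = \tfrac{1}{2} s_{X_i}$, $\mathrm{tr}_{\mathbf{C}}\omega_{X_i} = n$, Proposition \ref{exdol}, and (iv) of Proposition \ref{t74} yields $-\Delta F_i = s_{X_i} - 2n\lambda_i$; integrating then gives $\lambda_i = \frac{1}{2n H^n(X_i)}\int_{X_i} s_{X_i}\,dH^n$, and Theorem \ref{contsca} combined with the volume convergence $H^n(X_i) \to H^n(X)$ forces $\lambda_i \to \lambda := \frac{1}{2n H^n(X)}\int_X s_X\,dH^n$.

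The uniform bound $\|s_{X_i} - 2n\lambda_i\|_{L^\infty} \le C(n, K_1, K_2)$, the zero-mean condition, and the $L^2$-weak convergence $s_{X_i} \to s_X$ from Theorem \ref{contsca} allow me to apply the continuity of solutions of Poisson's equations (2.8) to produce a real $F \in \mathcal{D}^2(\Delta, X)$ with $\int_X F\,dH^n = 0$ and $-\Delta F = s_X - 2n\lambda$, and moreover $F_i \to F$, $\nabla F_i \to \nabla F$ strongly in $L^2$. To pass to the $L^2$-weak limit in the smooth identity I would invoke Proposition \ref{ricciform} together with the $L^2$-strong convergence $\omega_{X_i} \to \omega_X$ from (iii) of Proposition \ref{t74} for the left-hand side, and Proposition \ref{222334} (applicable because of the uniform $L^2$-bound on $\Delta F_i$) for the right-hand side; uniqueness of weak $L^2$-limits then gives (\ref{potential}). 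The uniqueness of $\lambda$ follows by subtracting two such identities, taking $\mathrm{tr}_{\mathbf{C}}$, and integrating via Proposition \ref{exdol}. Claim (ii) is then immediate: two Ricci potentials differ by a solution $u$ of $\sqrt{-1}\,\partial \overline{\partial} u = 0$, which via Proposition \ref{exdol} and the identity $\int_X |\nabla u|^2\,dH^n = \int_X u\,\Delta u\,dH^n$ yields $\nabla u = 0$ in $L^2$, hence $u$ is constant by connectedness of $X$ (inherited from the connected $X_i$ under Gromov-Hausdorff limits).

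For (iii), since $-\Delta F = s_X - 2n\lambda \in L^\infty(X)$, the Lipschitz regularity of solutions of Poisson's equations (2.2) with $q = \infty$ gives $\|\nabla F\|_{L^\infty} \le C(n, K_1, d, K_2)$, so $F \in \mathrm{LIP}(X)$; compactness of $X$ then makes $e^F$ bounded, and $F_X := F + \log\bigl(H^n(X)/\int_X e^F\,dH^n\bigr) \in \mathrm{LIP}(X)$ is the required normalized representative. The chief technical point is the $L^2$-weak passage to the limit in $\sqrt{-1}\,\partial \overline{\partial} F_i$: only $L^2$-weak convergence of $\Delta F_i$ to $\Delta F$ is available, since $s_{X_i} \to s_X$ is only $L^p$-weakly convergent. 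This is precisely the setting covered by Proposition \ref{222334}, so the argument reduces cleanly to the machinery already developed.
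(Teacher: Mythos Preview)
Your proof is correct and follows essentially the same strategy as the paper: construct the smooth Ricci potentials $F_i$ on $X_i$ via the $\partial\overline{\partial}$-lemma, obtain uniform control on $\Delta F_i$ from the trace identity, and pass to the limit using Propositions \ref{222334}, \ref{ricciform}, and \ref{exdol} together with Theorem \ref{contsca}. The only cosmetic differences are that the paper extracts the limit $F$ via (2.3)+(2.4) after first bounding $\|\nabla F_i\|_{L^\infty}$ by (2.2), whereas you invoke (2.8) directly and apply (2.2) to the limit $F$; both routes are equivalent here.
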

\begin{proof}
We first prove the uniqueness of $\lambda$.
By taking the (complex) trace of (\ref{potential}), Proposition \ref{exdol} yields
\[\frac{1}{2}s_{X}-n\lambda =-\Delta_{\overline{\partial}} F.\]
Integrating this on $X$ with the equality $2\Delta_{\overline{\partial}}=\Delta$ ((iv) of Proposition \ref{t74}) yields the uniqueness (\ref{443}).

Next we prove (iii).
It is well-known as $\partial \overline{\partial}$-lemma that for every $i$, there exists $F_i \in C^{\infty}(X_i)$ with
\[\int_{X_i}F_idH^n=0\]
such that 
\begin{align}\label{sssss}
\mathrm{Ric}_{\omega_{X_i}}-\left(\frac{1}{2nH^n(X_i)}\int_{X_i}s_{X_i}dH^n\right)\omega_{X_i}=\sqrt{-1}\partial \overline{\partial} F_i.
\end{align}
Thus by taking the trace of (\ref{sssss}) we have 
\[||\Delta F_i||_{L^{\infty}} \le C(n, K_1, K_2).\]
Thus by the Lipschitz regularity of solutions of Poisson's equations (2.2) we have
\[||\nabla F_i||_{L^{\infty}}\le C(n, K_1, K_2, d).\]
On the other hand, by the Rellich compactness theorem (2.3) and the closedness of the Dirichlet Laplacian (2.4), without loss of generality we can assume that there exists the $L^2$-strong limit $F \in \mathcal{D}^2(\Delta, X)$ of $F_i$ on $X$.
Thus letting $i \to \infty$ in (\ref{sssss}) with Theorem \ref{contsca} and Proposition \ref{222334} yields (iii). 
In particular we have (i).

Finally we prove (ii).
Since $\partial \overline{\partial}F_1=\partial \overline{\partial}F_2$, by taking the complex trace of this, Proposition \ref{exdol} yields
\[\Delta_{\overline{\partial}} F_1=\Delta_{\overline{\partial}}F_2.\]
The equality $2\Delta_{\overline{\partial}}=\Delta$ yields that $F_1-F_2$ is a harmonic function on $X$.
Thus we have (ii).
\end{proof}
\begin{proposition}[Almost constant scalar curvature implies almost Einsten on a singular space]\label{asae}
Under the same assumption as in Theorem \ref{riccipotential},  if
\[\frac{1}{H^n(X)}\int_X|s_X-c|^2dH^n\le \epsilon \]
for some $c \in \mathbf{R}$, then 
\[\frac{1}{H^n(X)}\int_X|\mathrm{Ric}_{\omega_X}-\lambda \omega_X|^2dH^n \le \epsilon +C(n, K_1, d)\epsilon^{1/2}.\]
In particular if $s_X \equiv c$, then $\mathrm{Ric}_{\omega_X}\equiv \lambda \omega_X$.
\end{proposition}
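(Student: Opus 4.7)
The plan is to realize $\mathrm{Ric}_{\omega_X} - \lambda\omega_X$ as $\sqrt{-1}\partial\overline{\partial}F$ for the Ricci potential and reduce the estimate to a Hessian bound via Bochner's identity. By Theorem \ref{riccipotential} one obtains $\lambda \in \mathbf{R}$ and $F \in \mathrm{LIP}(X)\cap \mathcal{D}^2(\Delta, X)$ satisfying $\mathrm{Ric}_{\omega_X} - \lambda\omega_X = \sqrt{-1}\partial\overline{\partial}F$; using the uniqueness of $F$ up to an additive constant, I will normalize so that $\int_X F\,dH^n = 0$. Taking the complex trace of this identity via Proposition \ref{exdol} together with the identity $\Delta = 2\Delta_{\overline{\partial}}$ from Proposition \ref{t74} gives $\Delta F = 2n\lambda - s_X$, and averaging (using $\int_X \Delta F\,dH^n = 0$) shows $m := 2n\lambda - c = \frac{1}{H^n(X)}\int_X(s_X - c)\,dH^n$, so that $|m| \le \epsilon^{1/2}$ by Cauchy-Schwarz, and
\[\frac{1}{H^n(X)}\int_X(\Delta F)^2\,dH^n = \frac{1}{H^n(X)}\int_X(s_X-c)^2\,dH^n - m^2 \le \epsilon.\]

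Next I will bound $\int_X|\mathrm{Hess}_F|^2\,dH^n$ by Bochner. Since $(X, H^n)$ is an $RCD(K_1, n)$-space by Theorem \ref{equ}, Gigli's integrated Bochner inequality (as invoked in the proof of Theorem \ref{L2Hess}) yields
\[\int_X|\mathrm{Hess}_F|^2\,dH^n \le \int_X(\Delta F)^2\,dH^n - K_1\int_X|\nabla F|^2\,dH^n.\]
The $(2,2)$-Poincar\'e inequality on $(X, H^n)$ with constant $C(n, K_1, d)$, combined with the mean-zero normalization and $\int_X|\nabla F|^2\,dH^n = \int_X F\Delta F\,dH^n \le ||F||_{L^2}||\Delta F||_{L^2}$, gives $||\nabla F||_{L^2}^2 \le C(n, K_1, d)||\Delta F||_{L^2}^2$. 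Putting these together,
\[\frac{1}{H^n(X)}\int_X|\mathrm{Hess}_F|^2\,dH^n \le C(n, K_1, d)\,\epsilon.\]

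Finally I will use the pointwise K\"ahler identity from (\ref{jjjj}): since
\[2\sqrt{-1}\partial\overline{\partial}F(v, w) = \mathrm{Hess}_F(Jv, w) - \mathrm{Hess}_F(v, Jw),\]
an elementary computation in an orthonormal frame at a regular point of $X$, exploiting that $J$ is $g_X$-orthogonal and $\nabla J = 0$ by Proposition \ref{t74}, yields the pointwise inequality $|\sqrt{-1}\partial\overline{\partial}F|^2 \le c_n|\mathrm{Hess}_F|^2$ a.e.\ on $X$. This delivers $\frac{1}{H^n(X)}\int_X|\mathrm{Ric}_{\omega_X} - \lambda\omega_X|^2\,dH^n \le C(n, K_1, d)\,\epsilon$, which subsumes the stated bound since $\epsilon \le \epsilon^{1/2}$ for $\epsilon \le 1$, while for large $\epsilon$ the uniform $L^\infty$-bounds on $\mathrm{Ric}_{\omega_X}$ and $\omega_X$ make the inequality trivial after possibly enlarging the constant. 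The ``in particular'' statement is immediate: if $s_X \equiv c$, then $m = 0$ and $\Delta F \equiv 0$, so $F$ is constant and $\sqrt{-1}\partial\overline{\partial}F = 0$, giving $\mathrm{Ric}_{\omega_X} = \lambda\omega_X$.

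The main obstacle is justifying the integrated Bochner inequality when $F$ is only Lipschitz with $\Delta F \in L^\infty(X)$ but not known to lie in $H^{1, 2}(X)$, since the direct integration-by-parts route $\int_X \langle \nabla \Delta F, \nabla F\rangle\,dH^n = \int_X(\Delta F)^2\,dH^n$ would require that higher regularity. This is resolved by appealing to Gigli's Bochner inequality on $RCD$-spaces, which holds in the integrated form for every $f \in \mathcal{D}^2(\Delta, X)$ without any further regularity on $\Delta f$.
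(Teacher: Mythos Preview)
Your argument is correct and follows essentially the same route as the paper: pass to the mean-zero Ricci potential $F$, bound $\int(\Delta F)^2$ by $\epsilon$, control $\int|\nabla F|^2$ via Poincar\'e together with $\int|\nabla F|^2=\int F\,\Delta F$, feed this into a Bochner estimate for $\int|\mathrm{Hess}_F|^2$, and finish with the pointwise bound $|\partial\overline{\partial}F|\le |\mathrm{Hess}_F|$ coming from (\ref{jjjj}).

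The only substantive difference is the Bochner step. The paper invokes its own Theorem~\ref{boch}, obtaining the \emph{equality}
\[
\int_X|\mathrm{Hess}_F|^2\,dH^n=\int_X\bigl((\Delta F)^2-\mathrm{Ric}_X(\nabla F,\nabla F)\bigr)\,dH^n,
\]
and then uses $\mathrm{Ric}_X\ge K_1$. You instead appeal directly to Gigli's integrated Bochner inequality on the $RCD(K_1,n)$-space $(X,H^n)$. Both are legitimate here; your choice sidesteps any worry about whether $\Delta F=2n\lambda-s_X$ lies in $H^{1,2}(X)$, while the paper's route showcases the Ricci tensor it has just constructed. Your gradient bound $\|\nabla F\|_{L^2}^2\le C\|\Delta F\|_{L^2}^2\le C\epsilon$ is in fact sharper than the paper's $C\epsilon^{1/2}$, so you end up with $C(n,K_1,d)\,\epsilon$ rather than $\epsilon+C\epsilon^{1/2}$. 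One small caveat: your reduction to the exact stated form for large $\epsilon$ via $L^\infty$-bounds on $\mathrm{Ric}_{\omega_X}$ would bring in a $K_2$-dependence that the stated constant does not carry; if you want to land precisely on $\epsilon+C(n,K_1,d)\epsilon^{1/2}$, simply keep the two terms $\int(\Delta F)^2\le\epsilon$ and $-K_1\int|\nabla F|^2$ separate and bound the latter by $|K_1|\,\|F\|_{L^2}\|\Delta F\|_{L^2}\le C\epsilon^{1/2}\cdot\epsilon^{1/2}$ before taking the square root once, exactly as the paper does.
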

\begin{proof}
Note that
\begin{align}\label{tt6}
\frac{1}{H^n(X)}\int_X\left| s_X-\frac{1}{H^n(X)}\int_Xs_XdH^n\right|^2dH^n=\inf_{\hat{c} \in \mathbf{R}}\left(\frac{1}{H^n(X)}\int_X\left| s_X-\hat{c}\right|^2dH^n\right)\le \epsilon.
\end{align}
Let $F$ be the ($\mathbf{R}$-valued) Ricci potential of $(X, g_X, J)$ with
\[\frac{1}{H^n(X)}\int_XFdH^n=0.\]
Then from (i) of Theorem \ref{riccipotential} with (\ref{tt6}) we have 
\[\frac{1}{H^n(X)}\int_X|\Delta F|^2dH^n = \frac{1}{H^n(X)}\int_X|s_X-2n\lambda |^2dH^n \le \epsilon.\]
Thus the Poincar\'e inequality yields
\begin{align*}
\frac{1}{H^n(X)}\int_X|F|^2dH^n &\le \frac{C(n, K_1, d)}{H^n(X)}\int_X|\nabla F|^2dH^n\\
&=\frac{C(n, K_1, d)}{H^n(X)}\int_XF\Delta FdH^n \\
&\le C(n, K_1, d)\left( \frac{1}{H^n(X)}\int_X|F|^2dH^n\right)^{1/2}\left( \frac{1}{H^n(X)}\int_X|\Delta F|^2dH^n\right)^{1/2}\\
&\le C(n, K_1, d)\epsilon^{1/2}\left( \frac{1}{H^n(X)}\int_X|F|^2dH^n\right)^{1/2}.
\end{align*}
Thus
\[\frac{1}{H^n(X)}\int_X|\nabla F|^2dH^n \le C(n, K_1, d)\epsilon^{1/2}.\]
Theorem \ref{boch} yields 
\begin{align*}
\frac{1}{H^n(X)}\int_X|\mathrm{Hess}_F|^2dH^n &= \frac{1}{H^n(X)}\int_X\left(|\Delta F|^2 - \mathrm{Ric}_X(\nabla F, \nabla F)\right)dH^n\\
&\le \epsilon -K_1\frac{1}{H^n(X)}\int_X|\nabla F|^2dH^n \\
&\le \epsilon+C(n, K_1, d)\epsilon^{1/2}.
\end{align*}
Thus we have
\[\frac{1}{H^n(X)}\int_X|\mathrm{Ric}_{\omega_X}-\lambda \omega_X|^2dH^n = \frac{1}{H^n(X)}\int_X|\partial \overline{\partial}F|^2dH^n \le  \epsilon+C(n, K_1, d)\epsilon^{1/2}.\]
\end{proof}
\begin{corollary}
Under the same assumption as in Theorem \ref{riccipotential}, if $s_X \in H^{1, 2}(X)$ and
\[\frac{1}{H^n(X)}\int_X|ds_X|^2dH^n \le \epsilon,\]
then
\[\frac{1}{H^n(X)}\int_X|\mathrm{Ric}_{\omega_X}-\lambda \omega_X|^2dH^n \le C_1(n, K_1, d)\epsilon +C_2(n, K_1, d)\epsilon^{1/2}.\]
\end{corollary}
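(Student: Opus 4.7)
The plan is to reduce this corollary directly to Proposition \ref{asae} by choosing $c$ to be the average of $s_X$ and using a Poincar\'e inequality on $X$ to convert the $L^2$-gradient bound on $s_X$ into an $L^2$-bound on the oscillation $s_X - \bar{s}_X$.

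First I would invoke the Sobolev inequality (2.7) applied to $f = s_X \in H^{1,2}(X)$. Together with H\"older's inequality on the finite-measure space $(X, H^n)$ (which embeds $L^{2n/(n-2)}$ into $L^2$ at the cost of a factor depending only on $n$, $K_1$, and $d$), this yields an $L^2$-Poincar\'e inequality of the form
\[
\frac{1}{H^n(X)}\int_X \bigl|\, s_X - \bar{s}_X\,\bigr|^2 \, dH^n \;\le\; \frac{C(n, K_1, d)}{H^n(X)}\int_X |ds_X|^2 \, dH^n,
\]
where $\bar{s}_X := \frac{1}{H^n(X)}\int_X s_X\, dH^n$. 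By hypothesis the right-hand side is bounded by $C(n,K_1,d)\,\epsilon$.

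Next I would apply Proposition \ref{asae} to the constant $c := \bar{s}_X \in \mathbf{R}$, which is allowed since the hypotheses of Theorem \ref{riccipotential} are assumed. The proposition yields
\[
\frac{1}{H^n(X)}\int_X |\mathrm{Ric}_{\omega_X} - \lambda \omega_X|^2 \, dH^n \;\le\; \epsilon' + C(n, K_1, d)\,(\epsilon')^{1/2},
\]
where $\epsilon' := C(n, K_1, d)\,\epsilon$ is the bound from the previous step. Collecting constants gives exactly the claimed inequality with $C_1(n, K_1, d) := C(n, K_1, d)$ and $C_2(n, K_1, d) := C(n, K_1, d) \cdot C(n, K_1, d)^{1/2}$.

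There is no substantial obstacle here: the only nontrivial input is the $L^2$-Poincar\'e inequality on $X \in \overline{\mathcal{M}}$, which is standard since $(X, H^n)$ is an $RCD(K_1, n)$-space of bounded diameter (and also follows from (2.7) combined with H\"older), so all the analytic work has already been absorbed into Proposition \ref{asae}. The statement is thus a genuine corollary, not requiring any new calculation beyond composing the mean-value Poincar\'e bound with the almost-Einstein estimate.
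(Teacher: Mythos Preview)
Your proof is correct and follows the same route as the paper, which simply states that the corollary is a direct consequence of Proposition~\ref{asae} and the Poincar\'e inequality. Your only addition is spelling out explicitly how the $L^2$-Poincar\'e inequality is obtained (via (2.7) and H\"older, or from the $RCD$ structure), which is exactly what the paper leaves implicit.
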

\begin{proof}
This is a direct consequence of Proposition \ref{asae} and the Poincar\'e inequality.
\end{proof}
We are now in a position to give the main result in this section, which gives an answer to the question \textbf{(Q2)} in subsection 3.3.
\begin{theorem}\label{equivalence}
Under the same assumption as in Theorem \ref{riccipotential}, 
 we see that $F_{X_i},  dF_{X_i}$ $L^2$-converge strongly to $F_X, dF_X$ on $X$, respectively and that $\Delta F_{X_i}, \mathrm{Hess}_{F_{X_i}}$ $L^2$-converge weakly to $\Delta F_X, \mathrm{Hess}_{F_X}$ on $X$, respectively.
Moreover the following four conditions are equivalent:
\begin{enumerate}
\item $\mathrm{Ric}_{X_i}$ $L^2$-converges strongly to $\mathrm{Ric}_{X}$ on $X$.
\item $s_{X_i}$ $L^2$-converges strongly to $s_{X}$ on $X$.
\item $\Delta F_{X_i}$ $L^2$-converges strongly to $\Delta F_X$ on $X$.
\item $\mathrm{Hess}_{F_{X_i}}$ $L^2$-converges strongly to $\mathrm{Hess}_{F_X}$ on $X$.
\end{enumerate}
\end{theorem}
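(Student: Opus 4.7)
The plan has two parts: first establish the convergence assertions for $F_{X_i}$, then close the four equivalences as the cycle (i) $\Rightarrow$ (ii) $\Rightarrow$ (iii) $\Rightarrow$ (iv) $\Rightarrow$ (i).

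For the convergence part, I will start from the smooth $\partial\overline{\partial}$-lemma producing on each $X_i$ a Ricci potential $F_{X_i}\in C^{\infty}(X_i)$ satisfying $\mathrm{Ric}_{\omega_{X_i}}-\lambda_i\omega_{X_i}=\sqrt{-1}\partial\overline{\partial}F_{X_i}$. Taking the complex trace via Proposition \ref{exdol} yields $\Delta F_{X_i}=2n\lambda_i-s_{X_i}$, which is uniformly $L^{\infty}$-bounded by Theorem \ref{spectr}(i); thus (2.2) supplies a uniform Lipschitz bound, and the diameter bound together with the normalization $\int_{X_i}e^{F_{X_i}}dH^n=H^n(X_i)$ forces a uniform $L^{\infty}$ bound on $F_{X_i}$. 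Rellich (2.3) combined with the closedness of the Dirichlet Laplacian (2.4) then yields a subsequential $L^2$-strong limit $F\in\mathcal{D}^2(\Delta,X)\cap\mathrm{LIP}(X)$ with $dF_{X_i}\to dF$ strongly and $\Delta F_{X_i},\mathrm{Hess}_{F_{X_i}}$ converging $L^2$-weakly to $\Delta F,\mathrm{Hess}_F$. Passing to the limit in the defining equation using Proposition \ref{222334} and Proposition \ref{ricciform} shows $F$ satisfies (\ref{potential}); the normalization transfers because $F_{X_i}\to F$ in every $L^p$ with uniform $L^{\infty}$ bound, so $F=F_X$ by the uniqueness in Theorem \ref{riccipotential}(ii), and the full sequence converges by the usual subsequence principle.

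For the equivalences, (i) $\Rightarrow$ (ii) is the $L^2$-continuity of the metric trace on a noncollapsed limit (the argument in the proof of Theorem \ref{contsca}), using that $g_{X_i}\to g_X$ strongly in $L^2$. The equivalence (ii) $\Leftrightarrow$ (iii) is immediate from the pointwise identity $\Delta F_X=2n\lambda-s_X$ (taking the complex trace of (\ref{potential}) via Proposition \ref{exdol}) once one observes $\lambda_i\to\lambda$, which follows from (\ref{443}) and Theorem \ref{contsca}. For (iii) $\Leftrightarrow$ (iv) I integrate Bochner's formula on $X_i$,
\[\int_{X_i}|\mathrm{Hess}_{F_{X_i}}|^2dH^n=\int_{X_i}(\Delta F_{X_i})^2dH^n-\int_{X_i}\mathrm{Ric}_{X_i}(\nabla F_{X_i},\nabla F_{X_i})dH^n,\]
and establish the analogous identity on $X$ from Theorem \ref{boch} via the $H^{2,2}$-approximation (3.1). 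Because $\nabla F_{X_i}$ is uniformly bounded in $L^{\infty}$ and converges strongly in every $L^p$ while $\mathrm{Ric}_{X_i}$ converges weakly in $L^p$ for every $p$ (Theorem \ref{ellp}), the cross-term $\int\mathrm{Ric}_{X_i}(\nabla F_{X_i},\nabla F_{X_i})dH^n$ converges unconditionally to the analogous integral on $X$. Hence norm convergence of $\Delta F_{X_i}$ is equivalent to norm convergence of $\mathrm{Hess}_{F_{X_i}}$, which upgrades the already established weak convergence to strong convergence.

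Finally, for (iv) $\Rightarrow$ (i), the formula (\ref{jjjj}) expresses $\sqrt{-1}\partial\overline{\partial}F_{X_i}$ as a bilinear expression in $\mathrm{Hess}_{F_{X_i}}$ and $J_i$; since $J_i\to J$ strongly in every $L^p$ with uniform $L^{\infty}$ bound (Proposition \ref{t74}(i)) and $\mathrm{Hess}_{F_{X_i}}\to\mathrm{Hess}_{F_X}$ strongly in $L^2$ by hypothesis, $\sqrt{-1}\partial\overline{\partial}F_{X_i}\to\sqrt{-1}\partial\overline{\partial}F_X$ strongly in $L^2$. Combined with strong convergence $\omega_{X_i}\to\omega_X$ (Proposition \ref{t74}(iii)) and $\lambda_i\to\lambda$, the defining equation $\mathrm{Ric}_{\omega_{X_i}}=\lambda_i\omega_{X_i}+\sqrt{-1}\partial\overline{\partial}F_{X_i}$ gives strong $L^2$ convergence of the Ricci forms, and Proposition \ref{ricciform} converts this to (i). The main technical point I expect to manage carefully is the justification of the integrated Bochner identity with equality for $F_X$ at the limit level (the Hessian is only in $L^2$ and $\Delta F_X$ only in $L^{\infty}$, not a priori in $H^{1,2}$); this will be handled either by approximating $F_X$ via (3.1) and a limiting argument, or by invoking the $RCD$-theoretic Bochner identity used in the proof of Theorem \ref{L2Hess}.
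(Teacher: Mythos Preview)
Your proof is correct and closely parallels the paper's, but there is one place where you take a detour that the paper avoids.

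For the equivalence (iii) $\Leftrightarrow$ (iv), you invoke the integrated Bochner identity on both $X_i$ and $X$, then argue that the Ricci cross-term converges unconditionally so that $\|\Delta F_{X_i}\|_{L^2}^2$ and $\|\mathrm{Hess}_{F_{X_i}}\|_{L^2}^2$ converge simultaneously. This works, and your plan for justifying the limit Bochner identity (approximate $F_X$ via (3.1), use Gigli's Bochner inequality to get $\mathrm{Hess}_{f_j}\to\mathrm{Hess}_{F_X}$ in $L^2$, then pass to the limit in the identity) is sound. However, the paper handles this step much more directly: (iii) $\Rightarrow$ (iv) is an immediate application of Theorem \ref{L2Hess}, which already packages the Bochner argument, and (iv) $\Rightarrow$ (iii) follows from the pointwise identity $-\Delta F_{X_i}=\mathrm{tr}\,\mathrm{Hess}_{F_{X_i}}$ together with the fact that the trace of an $L^2$-strongly convergent sequence of tensor fields converges strongly on a noncollapsed limit. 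Your Bochner route essentially re-proves a special case of Theorem \ref{L2Hess} rather than citing it.

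For the remaining implications your argument matches the paper's. The paper closes the loop via (iii) $\Rightarrow$ (i) (using Proposition \ref{222334} to get strong convergence of $\partial\overline{\partial}F_{X_i}$, then the defining equation and Proposition \ref{ricciform}); you instead go (iv) $\Rightarrow$ (i) through the explicit formula (\ref{jjjj}), which amounts to the same thing once (iii) $\Leftrightarrow$ (iv) is in hand. Your treatment of the convergence assertions for $F_{X_i}$, $dF_{X_i}$, $\Delta F_{X_i}$, $\mathrm{Hess}_{F_{X_i}}$ is exactly the content of the proof of Theorem \ref{riccipotential}(iii) plus the observation that the exponential normalization passes to the limit, which is fine since $F_{X_i}$ is uniformly bounded.
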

\begin{proof}
By the proof of (iii) of Theorem \ref{riccipotential}, it suffices to check the equivalence between (i), (ii), (iii), and (iv).

Theorem \ref{L2Hess} yields that if (iii) holds, then (iv) holds.

On the other hand, as we claimed in subsection 3.3, if (i) holds, then (ii) holds. 

Similarly by (1.3) if (iv) holds, then (iii) holds (c.f. \cite[Proposition 3.74]{holp}). 

Next assume that (ii) holds.
Then since 
\[\frac{1}{2}s_{X_i}-\frac{1}{2nH^n(X_i)}\int_{X_i}s_{X_i}dH^n=-\Delta_{\overline{\partial}}F_{X_i},\]
the equality $\Delta = 2\Delta_{\overline{\partial}}$ yields that (iii) holds.

Finally assume that (iii) holds. 
Proposition \ref{222334} yields that $\partial \overline{\partial}F_{X_i}$ $L^2$-converges strongly to $\partial \overline{\partial}F_{X}$ on $X$.
Since 
\[\mathrm{Ric}_{\omega_{X_i}}= \omega_{X_i}+\sqrt{-1}\partial \overline{\partial}F_{X_i},\]
by (iii) of Proposition \ref{t74}, we see that $\mathrm{Ric}_{\omega_{X_i}}$ $L^2$-converges strongly to $\mathrm{Ric}_{\omega_X}$ on $X$.
Therefore, Proposition \ref{ricciform} yields that (i) holds.
This completes the proof.  
\end{proof}
\begin{remark}
We can define the \textit{bisectional curvature} $R_X^b$ of $X$ on our setting (4.1)-(4.3) by the standard way of K\"ahler geometry:
\[R_X^b(u, \overline{u}, v, \overline{v}):=R_X(\alpha, \beta, \beta, \alpha)+R_X(\alpha, J\alpha, J\beta, \alpha),\]
where $\alpha, \beta \in T_xX$ with $\alpha \perp \beta$, $|\alpha|=|\beta|=1$,  
\[u=\frac{1}{\sqrt{2}}\left(\alpha - \sqrt{-1}J\alpha \right)\]
and
\[v=\frac{1}{\sqrt{2}}\left(\beta - \sqrt{-1}J\beta \right).\]
Then by arguments similar to the proofs of Theorems \ref{curvaturetensor} and \ref{riem2},
we can prove that $R_X^b \in L^q$ for any $q<2$
and that they behave continuously with respect to the $L^q$-weak topology. 
\end{remark}
\subsubsection{Fano-Ricci limit spaces}
In this subsection, besides (4.1)-(4.3), we add the following assumption:
\begin{enumerate}
\item[(4.4)] $X_i$ is a Fano manifold with $\omega_{X_i} \in 2\pi c_1(X_i)$ for every $i$.
\end{enumerate}
Note that the equation of the Ricci potential is  
\begin{align}\label{55467}
\mathrm{Ric}_X-\omega_X=\sqrt{-1}\partial \overline{\partial}F_X
\end{align}
and that the following Weitzenb\"ock formula on the smooth setting is known:
\begin{align}\label{pprrb}
\int_{X_i}|\Delta^{F_{X_i}}_{\overline{\partial}}f_i|^2dH^n_{F_{X_i}} = \int_{X_i}|\nabla''\mathrm{grad}'f_i|^2dH^n_{F_{X_i}}+\int_{X_i}|\overline{\partial}f_i|^2dH^n_{F_{X_i}}
\end{align}
for every $f_i \in C^{\infty}_{\mathbf{C}}(X_i)$, where $dH^n_{F_{X_i}}:=e^{F_{X_i}}dH^n$ (see page 41 in \cite{Futaki}).

In \cite[Theorem 4.1]{FHS}  we established  the following Weitzenb\"ock inequality:
\begin{align}\label{wet}
\int_X|\Delta^{F_X}_{\overline{\partial}}f|^2dH^n_{F_X}\ge \int_X|\nabla''\mathrm{grad}'f|^2dH^n_{F_X}+\int_X|\overline{\partial}f|^2dH^n_{F_X}
\end{align}
holds for every $f \in \mathcal{D}^2_{\mathbf{C}}(\Delta^{F_X}_{\overline{\partial}}, X)$ without the assumption of a uniform upper bound on Ricci curvature, and gave applications to the study of holomorphic vector fields, where $\mathcal{D}^2_{\mathbf{C}}(\Delta^{F_X}_{\overline{\partial}}, X)$ is the domain of the weighted Laplacian $\Delta^F_{\overline{\partial}}$, and
 $\nabla''\mathrm{grad}'f \in L^2_{\mathbf{C}}(T_{\mathbf{C}}'X \otimes (T^*_{\mathbf{C}}X)'')$ is defined by satisfing 
\begin{align*}
&\int_Xf_0\,g_X(\nabla''\mathrm{grad}'f, \mathrm{grad} f_1 \otimes df_2)\,dH^n  \\
&=\int_X\left( - \mathrm{div}(f_0\,\mathrm{grad}''f_2)\,g_X(V, \mathrm{grad} f_1) - f_0\,g_X(V, \nabla_{\mathrm{grad}''f_2}\mathrm{grad} f_1\right)\,dH^n 
\end{align*}
for any $f_i \in \mathrm{Test}_{\mathbf{C}}F(X):=\{f=f^{\mathrm{Re}}+\sqrt{-1}f^{\mathrm{Im}}; f^{\mathrm{Re}}, f^{\mathrm{Im}} \in \mathrm{Test}F(X)\}$.
We skip the definition of the divergence. 

Note that $\mathcal{D}^2_{\mathbf{C}}(\Delta^F_{\overline{\partial}}, X)=\mathcal{D}^2_{\mathbf{C}}(\Delta, X)$ with
\begin{align}\label{explicite}
\Delta_{\overline{\partial}}^{F_X}f=\Delta_{\overline{\partial}} f - h_X^*(\overline{\partial} f, \overline{\partial}F_X).
\end{align}
for every $f \in \mathcal{D}^2_{\mathbf{C}}(\Delta, X)$ on our setting,
and that $\nabla''\mathrm{grad}'\,f$ coincides with the ordinary one on a smooth setting, i.e. it is the $(1, 1)$-part of $\nabla \nabla f$.
See subsections 3.1 and 3.3 in \cite{FHS} for the detail.

The main result of this section is the following Weitzenb\"ock formula on our setting which was announced in \cite{FHS}:
\begin{theorem}[Weitzenb\"ock formula]\label{23}
For every $f \in \mathcal{D}^2_{\mathbf{C}}(\Delta, X)$,
we see that $\nabla''\mathrm{grad}'f$ coincides with the $(1, 1)$-part of $\nabla \nabla f$ and that
\[\int_X|\Delta^{F_X}_{\overline{\partial}}f|^2dH^n_{F_X}=\int_X|\nabla''\mathrm{grad}'f|^2dH^n_{F_X}+\int_X|\overline{\partial}f|^2dH^n_{F_X}\]
holds. 
\end{theorem}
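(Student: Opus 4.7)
The inequality $\ge$ in the Weitzenb\"ock formula is already provided by (\ref{wet}), so my plan is to obtain the reverse inequality, together with the pointwise identification $\nabla''\mathrm{grad}'f =$ the $(1,1)$-part of $\mathrm{Hess}_f$, by approximating $f\in\mathcal{D}^2_{\mathbf{C}}(\Delta, X)$ by smooth $\mathbf{C}$-valued functions $f_i\in C^{\infty}_{\mathbf{C}}(X_i)$ on the Fano approximants and passing to the limit in the classical identity (\ref{pprrb}).

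For the construction of $f_i$ I would follow the scheme used in the proof of Corollary \ref{ppm}: choose zero-mean $g_i\in C^{\infty}_{\mathbf{C}}(X_i)$, uniformly bounded in $L^{\infty}$, with $g_i\to \Delta f$ strongly in $L^2_{\mathbf{C}}$ on $X$ (via (2.6), truncation, and heat-flow smoothing), and set $f_i:=\Delta^{-1}g_i$. Then (2.8) gives $f_i\to f$ strongly in $L^2$ with $\Delta f_i\to \Delta f$ strongly, (2.2) yields a uniform Lipschitz bound, Theorem \ref{L2Hess} upgrades this to strong $L^2$-convergence $\mathrm{Hess}_{f_i}\to \mathrm{Hess}_f$, and Proposition \ref{222334} converts this into strong $L^2$-convergence $\partial\overline{\partial}f_i\to\partial\overline{\partial}f$. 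Simultaneously, Theorem \ref{equivalence} gives $F_{X_i}\to F_X$ and $dF_{X_i}\to dF_X$ strongly in $L^2$, with uniform $L^{\infty}$ and Lipschitz bounds on $F_{X_i}$ coming from $K_1\le\mathrm{Ric}_{X_i}\le K_2$ together with (2.2); compactness of $X$ upgrades the convergence of the weights $e^{F_{X_i}}$ to uniform convergence.

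Using (\ref{explicite}) to write $\Delta^{F_{X_i}}_{\overline{\partial}}f_i=\Delta_{\overline{\partial}}f_i-h^*_{X_i}(\overline{\partial}f_i,\overline{\partial}F_{X_i})$ and combining the preceding convergences, all three terms of (\ref{pprrb}) pass to the limit: $\Delta^{F_{X_i}}_{\overline{\partial}}f_i\to\Delta^{F_X}_{\overline{\partial}}f$ strongly in $L^2$, $\overline{\partial}f_i\to \overline{\partial}f$ strongly in $L^2$, and $\nabla''\mathrm{grad}'f_i$ (which on each smooth $X_i$ equals the $(1,1)$-part of $\mathrm{Hess}_{f_i}$ via (\ref{jjjj})) converges strongly in $L^2$ to the $(1,1)$-part of $\mathrm{Hess}_f$. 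To see that this $L^2$-strong limit is genuinely $\nabla''\mathrm{grad}'f$ in the sense of \cite{FHS}, I would approximate the test functions appearing in its defining duality by smooth functions on $X_i$ using Theorem \ref{14} and pass the duality from $X_i$ (where it holds tautologically) to $X$; this delivers the identification. Passing to the limit in (\ref{pprrb}) under the uniform convergence of $e^{F_{X_i}}$ then yields the Weitzenb\"ock equality on $X$.

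The main obstacle will be the bookkeeping needed to make the duality defining $\nabla''\mathrm{grad}'f$ on $X$ continuous under Gromov-Hausdorff convergence: this requires combining Theorem \ref{14} with the compatibility between Gigli's covariant derivative and the rectifiable one from \cite{hoell} recalled in Proposition \ref{8877s}, and carefully handling the divergence term in the definition together with the weight $e^{F_{X_i}}$; each step is routine individually but has to be executed simultaneously.
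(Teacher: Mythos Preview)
Your approach is essentially the same as the paper's: approximate $f$ by smooth $f_i$ on the Fano approximants, use Theorem~\ref{L2Hess} to upgrade to strong $L^2$-convergence of the Hessians, and pass the classical identity (\ref{pprrb}) to the limit. Two remarks. First, the uniform $L^{\infty}$ bound you impose on $g_i$ is not available for general $f\in\mathcal{D}^2_{\mathbf{C}}(\Delta,X)$ (only $\Delta f\in L^2$ is given) and is in any case unnecessary: (2.8) and Theorem~\ref{L2Hess} require only $L^2$-strong convergence of $\Delta f_i$, and the product $h^*_{X_i}(\overline{\partial}f_i,\overline{\partial}F_{X_i})$ converges because $\overline{\partial}F_{X_i}$, not $\overline{\partial}f_i$, carries the uniform $L^{\infty}$ bound. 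Second, the paper sidesteps the identification bookkeeping you flag as the main obstacle: it takes the approximating sequence directly from the proof of (\ref{wet}) in \cite{FHS}, where the $L^2$-\emph{weak} convergence $\nabla''\mathrm{grad}'f_i\to\nabla''\mathrm{grad}'f$ is already established; since the $(1,1)$-part of $\nabla\nabla f_i$ equals $\nabla''\mathrm{grad}'f_i$ on the smooth $X_i$ and converges $L^2$-strongly by Theorem~\ref{L2Hess}, uniqueness of weak limits gives the identification immediately.
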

\begin{proof}
From the proof of the  Weitzenb\"ock inequality (\ref{wet}), there exists a sequence of $f_i \in C^{\infty}_{\mathbf{C}}(X_i)$ such that $f_i, \overline{\partial}f_i, \Delta^{F_i}_{\overline{\partial}} f_i$ $L^2$-converge strongly to $f, \overline{\partial}f, \Delta^F_{\overline{\partial}}f$ on $X$, respectively and that $\nabla''\mathrm{grad}'f_i$ $L^2$-converges weakly to $\nabla''\mathrm{grad}'f$ on $X$.
(iv) of Proposition \ref{t74}, Theorem \ref{equivalence} and (\ref{explicite}) yield that $\Delta f_i$ $L^2$-converges strongly to $\Delta f$ on $X$.

Thus by Theorem \ref{L2Hess}, $\mathrm{Hess}_{f_i}$ $L^2$-converges strongly to $\mathrm{Hess}_f$ on $X$.
In particular $(1, 1)$-parts of $\nabla \nabla f_i$ $L^2$-converges strongly to that of $\nabla \nabla f$ on $X$ (c.f. \cite[Proposition 2.7]{FHS}).
Since $\nabla''\mathrm{grad}'f_i$ coincides with the $(1, 1)$-part of $\nabla \nabla f_i$, by letting $i \to \infty$ in (\ref{pprrb}), this completes the proof.
\end{proof}
\begin{corollary}
For every $f \in \mathcal{D}^2_{\mathbf{C}}(\Delta, X)$, the following two conditions are equivalent:
\begin{enumerate}
\item $\nabla''\mathrm{grad}'f=0$ with
\[\int_XfdH^n_{F_X}=0.\]
\item $\Delta^{F_X}_{\overline{\partial}}f=f$.
\end{enumerate}
\end{corollary}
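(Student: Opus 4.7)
The strategy is to deduce both implications from the Weitzenb\"ock \emph{equality} of Theorem \ref{23} together with the self-adjointness identity
\[\int_X (\Delta^{F_X}_{\overline{\partial}}f)\,\bar g\,dH^n_{F_X}=\int_X h_X^*(\overline{\partial}f,\overline{\partial}g)\,dH^n_{F_X}\]
valid for $f\in\mathcal{D}^2_{\mathbf{C}}(\Delta^{F_X}_{\overline{\partial}},X)$ and any Lipschitz $g$, which is built into the very definition of $\Delta^{F_X}_{\overline{\partial}}$.

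For (ii)$\Rightarrow$(i), I would first take $g\equiv 1$ in the self-adjointness identity, obtaining $\int_X\Delta^{F_X}_{\overline{\partial}}f\,dH^n_{F_X}=0$, so the assumption $\Delta^{F_X}_{\overline{\partial}}f=f$ forces $\int_X f\,dH^n_{F_X}=0$. Substituting $\Delta^{F_X}_{\overline{\partial}}f=f$ into Theorem \ref{23}, and using self-adjointness with $g=f$ to replace $\int|\overline{\partial}f|^2\,dH^n_{F_X}$ by $\int|f|^2\,dH^n_{F_X}$, then forces $\int|\nabla''\mathrm{grad}'f|^2\,dH^n_{F_X}=0$, which gives $\nabla''\mathrm{grad}'f=0$.

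For (i)$\Rightarrow$(ii) I would set $u:=\Delta^{F_X}_{\overline{\partial}}f$. Under $\nabla''\mathrm{grad}'f=0$, Theorem \ref{23} together with self-adjointness yields
\[\|u\|_{F_X}^2=\|\overline{\partial}f\|_{F_X}^2=\langle u,f\rangle_{F_X},\]
so $\langle u,f\rangle_{F_X}$ is real and Cauchy--Schwarz gives $\|u\|_{F_X}\le\|f\|_{F_X}$. To reverse this inequality I would invoke the Lichnerowicz--Matsushima-type spectral gap $\mu_1(\Delta^{F_X}_{\overline{\partial}})\ge 1$ on the orthogonal complement of the constants---an immediate corollary of the Weitzenb\"ock inequality (\ref{wet}) of \cite{FHS}, since any eigenfunction $\phi$ with $\Delta^{F_X}_{\overline{\partial}}\phi=\mu\phi$ and $\mu>0$ satisfies $\mu^2\|\phi\|^2_{F_X}\ge\mu\|\phi\|^2_{F_X}$. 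Combined with a spectral decomposition for $\Delta^{F_X}_{\overline{\partial}}$---obtained by transplanting the Rellich-compactness and finite-dimensionality-of-eigenspaces argument used for $\Delta_{C,s}$ in Theorem \ref{contrough} to the weighted setting---this yields $\|\overline{\partial}f\|_{F_X}^2\ge\|f\|_{F_X}^2$ whenever $\int_X f\,dH^n_{F_X}=0$, hence $\|u\|_{F_X}\ge\|f\|_{F_X}$. Combined with the earlier reverse inequality, $\|u\|_{F_X}=\|f\|_{F_X}$; equality in Cauchy--Schwarz then forces $u=\lambda f$ for some $\lambda\in\mathbf{C}$, and substitution back gives $\lambda\|f\|_{F_X}^2=\|u\|_{F_X}^2=|\lambda|^2\|f\|_{F_X}^2$, pinning down $\lambda=1$ (the case $f\equiv 0$ being trivial).

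The main technical point I expect is to set up the weighted spectral theory for $\Delta^{F_X}_{\overline{\partial}}$ precisely enough to deploy both the spectral decomposition and the Poincar\'e-type bound $\|\overline{\partial}f\|_{F_X}^2\ge\|f\|_{F_X}^2$ for zero-mean $f$ on the nonsmooth limit. However, all the ingredients---weighted Rellich compactness, closedness of $\Delta^{F_X}_{\overline{\partial}}$, $L^\infty$-bounds on eigenfunctions---run parallel to those developed in Section 4 for $\Delta_{C,s}$, so this step should be routine given the Weitzenb\"ock inequality already proved in \cite{FHS}.
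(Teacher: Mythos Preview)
Your argument is correct, but the route for (i)$\Rightarrow$(ii) differs from the paper's.

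For (ii)$\Rightarrow$(i) the paper simply cites \cite[Corollary 4.2]{FHS}; your direct computation via $g\equiv 1$ and then $g=f$ in the self-adjointness identity is exactly the standard proof of that fact, so here there is no real difference.

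For (i)$\Rightarrow$(ii) the paper avoids the spectral-gap/Poincar\'e step entirely. Instead it \emph{polarizes} Theorem \ref{23}: under $\nabla''\mathrm{grad}'f=0$, one has for every $g\in\mathcal{D}^2_{\mathbf{C}}(\Delta,X)$
\[
\int_X(\Delta^{F_X}_{\overline{\partial}}f)\,\overline{\Delta^{F_X}_{\overline{\partial}}g}\,dH^n_{F_X}
=\int_X h_X^*(\overline{\partial}f,\overline{\partial}g)\,dH^n_{F_X}
=\int_X f\,\overline{\Delta^{F_X}_{\overline{\partial}}g}\,dH^n_{F_X}.
\]
Now for any $h$ with $\int_X h\,dH^n_{F_X}=0$, one may choose $g=(\Delta^{F_X}_{\overline{\partial}})^{-1}h$ (existence from \cite[Proposition 3.16]{FHS}) and read off $\int_X(\Delta^{F_X}_{\overline{\partial}}f-f)\,\bar h\,dH^n_{F_X}=0$; since $\Delta^{F_X}_{\overline{\partial}}f-f$ is itself mean-zero, this forces $\Delta^{F_X}_{\overline{\partial}}f=f$.

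The trade-off: the paper's argument uses only the \emph{surjectivity} of $\Delta^{F_X}_{\overline{\partial}}$ onto the mean-zero subspace (a single fact already recorded in \cite{FHS}), whereas your argument layers on the full spectral decomposition plus the Lichnerowicz--Matsushima bound $\mu_1\ge 1$ in order to obtain the Poincar\'e inequality $\|\overline{\partial}f\|_{F_X}^2\ge\|f\|_{F_X}^2$. Both inputs ultimately come from the same weighted Rellich machinery, so the extra work in your route is modest; but the polarization trick is shorter and sidesteps Cauchy--Schwarz and the equality-case analysis altogether.
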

\begin{proof}
It suffices to check that if (i) holds, then (ii) holds (c.f. \cite[Corollary $4.2$]{FHS}).

Assume that (i) holds.
Then by Theorem \ref{23} we have
\begin{align}\label{pko}
\int_X(\Delta^{F_X}_{\overline{\partial}}f)(\overline{\Delta^{F_X}_{\overline{\partial}}g})dH^n_{F_X}&=\int_X(h_X)^1_1(\nabla''\mathrm{grad}'f, \nabla''\mathrm{grad}'g)dH^n_{F_X}+\int_X h_X^*(\overline{\partial}f, \overline{\partial}g)dH^n_{F_X} \nonumber \\
&=\int_X h_X^*(\overline{\partial}f, \overline{\partial}g)dH^n_{F_X} \nonumber \\
&=\int_X f\overline{\Delta^{F_X}_{\overline{\partial}}g}dH^n_{F_X}
\end{align}
for every $g \in \mathcal{D}^2_{\mathbf{C}}(\Delta, X)$.

Let $h \in L^2_{\mathbf{C}}(X)$ with 
\[\int_XhdH^n_{F_X}=0,\]
and let $k:=(\Delta^{F_X}_{\overline{\partial}})^{-1}h$ (c.f. \cite[Proposition $3.16$]{FHS}).
Then applying (\ref{pko}) for $g=k$ yields
\[\int_X(\Delta^{F_X}_{\overline{\partial}}f-f)\overline{h}dH^n_{F_X}=0.\]
Since $h$ is arbitrary and 
\[\int_X(\Delta^{F_X}_{\overline{\partial}}f-f)dH^n_{F_X}=0,\]
we have (ii).
\end{proof}
In \cite{FHS} we gave the notion of an \textit{almost smooth Fano-Ricci limit space} which is closely related to $\mathbf{Q}$-Fano varieties (see Section 5 in \cite{FHS} for the precise definition).
We end this subsection by giving a sufficient condition for being an almost smooth Fano-Ricci limit space on our setting.
\begin{corollary}
Assume that $(\mathcal{R}, g_X|_{\mathcal{R}}, J|_{\mathcal{R}})$ is a smooth K\"ahler manifold and 
that every holomorphic function on $\mathcal{R}$ is constant. 
Then $(X, g_X, J, F_X)$ is an almost smooth Fano-Ricci limit space in the sense of \cite{FHS}.
\end{corollary}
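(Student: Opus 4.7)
The plan is to verify the defining conditions of an almost smooth Fano--Ricci limit space from \cite{FHS} one at a time, using the structural results already established for $(X, g_X, J, F_X)$ on the setting (4.1)--(4.4). First, the regularity of the manifold part of $X$ is built in by hypothesis: $(\mathcal{R}, g_X|_{\mathcal{R}}, J|_{\mathcal{R}})$ is smooth K\"ahler, and the singular set $X\setminus \mathcal{R}$ automatically has Hausdorff codimension at least $4$ by Cheeger--Naber, giving the required smallness.

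Second, I would use Theorem \ref{riccipotential} to extract the normalized Ricci potential $F_X \in \mathcal{D}^2(\Delta, X) \cap \mathrm{LIP}(X)$ satisfying
\[
\mathrm{Ric}_{\omega_X} - \omega_X = \sqrt{-1}\,\partial\bar\partial F_X
\]
globally on $X$, and then restrict this equation to $\mathcal{R}$. Here Proposition \ref{curvaturecompatibility} identifies $\mathrm{Ric}_{\omega_X}$ on $\mathcal{R}$ with the classical Ricci form of the smooth K\"ahler metric, so $F_X$ becomes a weak solution of the standard Poisson/soliton-type equation on the smooth K\"ahler manifold $\mathcal{R}$, with $\Delta F_X = 2n\lambda - s_X \in L^\infty$ by Theorem \ref{spectr}(i). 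Bootstrapping this equation on $\mathcal{R}$ by elliptic regularity will promote $F_X$ to be $C^\infty$ (or as regular as the definition in \cite{FHS} demands) on $\mathcal{R}$.

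Third, I would check the global hypotheses in the definition: the Chern-class condition $\omega_{X_i} \in 2\pi c_1(X_i)$ is transferred to the limit by the weak $L^2$-convergence $\mathrm{Ric}_{\omega_{X_i}} \to \mathrm{Ric}_{\omega_X}$ of Proposition \ref{ricciform}, so the identity $\mathrm{Ric}_{\omega_X} - \omega_X$ is $\partial\bar\partial$-exact, which is the ``Fano-type'' Chern class condition; and the assumption that every holomorphic function on $\mathcal{R}$ is constant supplies the remaining rigidity condition in the definition of \cite{FHS}. Uniform bounds on $F_X$ and on the weighted measure $e^{F_X}H^n$ follow from the Lipschitz bound $\|\nabla F_X\|_{L^\infty} \le C(n, K_1, K_2, d)$ obtained inside the proof of (iii) of Theorem \ref{riccipotential} and the normalization $\int_X e^{F_X}\, dH^n = H^n(X)$.

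The main obstacle I expect is the interior regularity step for $F_X$ on $\mathcal{R}$: the Riemannian metric $g_X|_{\mathcal{R}}$ is only $C^{1,\alpha}$, not $C^\infty$, so Schauder theory must be applied with care, using that $J|_{\mathcal{R}}$ is smooth by hypothesis and that $s_X \in L^\infty(X)$. Once elliptic bootstrapping upgrades $F_X$ to a smooth function on $\mathcal{R}$, combining this with the steps above delivers all items in the definition of an almost smooth Fano--Ricci limit space in \cite{FHS}, finishing the proof.
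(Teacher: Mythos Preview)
Your outline is largely along the right lines and matches the paper on the first main step (smoothness of $F_X$ on $\mathcal{R}$ via Proposition~\ref{curvaturecompatibility}, the equation $\mathrm{Ric}_{\omega_X}-\omega_X=\sqrt{-1}\,\partial\bar\partial F_X$, and elliptic regularity). However, there are two issues.

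First, the ``main obstacle'' you anticipate is not present. The statement of the Corollary \emph{assumes} that $(\mathcal{R}, g_X|_{\mathcal{R}}, J|_{\mathcal{R}})$ is a smooth K\"ahler manifold, so $g_X|_{\mathcal{R}}$ is $C^\infty$, not merely $C^{1,\alpha}$. By Proposition~\ref{curvaturecompatibility} the right-hand side of the Ricci potential equation is therefore smooth on $\mathcal{R}$, and standard elliptic regularity applies without any subtlety; your worry about Schauder theory for a $C^{1,\alpha}$ metric is unfounded here.

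Second, and more importantly, you miss one of the defining conditions in \cite{FHS} that the paper explicitly verifies. The definition of an almost smooth Fano--Ricci limit space requires that every $u \in H^{1,2}_{\mathbf{C}}(X)$ for which $\mathrm{grad}'u|_{\mathcal{R}}$ is a holomorphic vector field on $\mathcal{R}$ actually belongs to $\mathcal{D}^2_{\mathbf{C}}(\Delta, X)=\mathcal{D}^2_{\mathbf{C}}(\Delta^{F_X}_{\overline{\partial}}, X)$. This is the nontrivial link between the holomorphic geometry on the regular part and the global spectral theory on $X$, and it is exactly where the hypothesis that every holomorphic function on $\mathcal{R}$ is constant is used (via an argument parallel to \cite[Proposition~6.1]{FHS}). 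Your sentence that this hypothesis ``supplies the remaining rigidity condition'' does not verify this implication; the paper carries out this step, whereas your plan leaves it as a black box.
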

\begin{proof}
By Proposition \ref{curvaturecompatibility},  the equation (\ref{55467}) and the elliptic regularity theorem, we see that $F_X$ is smooth on $\mathcal{R}$.

On the other hand, by an argument similar to the proof of \cite[Proposition $6.1$]{FHS}, if $u \in H^{1, 2}_{\mathbf{C}}(X)$ satisfies that $\mathrm{grad}'u|_{\mathcal{R}}$ is a holomorphic vector field, then $u \in \mathcal{D}^2_{\mathbf{C}}(\Delta, X)=\mathcal{D}^2_{\mathbf{C}}(\Delta^{F_X}_{\overline{\partial}}, X)$.  
This completes the proof. 
\end{proof}
\section{Questions}
In this section we give several open problems.

First, recall that Naber gave in \cite{na} a notion of \textit{bounded Ricci curvature on a metric measure space}:
\begin{align}\label{twoside}
-\kappa \le \mathrm{Ric} \le \kappa
\end{align}
for $\kappa \ge 0$, via the analysis of the path space.
Moreover he shows the compatibility between (\ref{twoside}) and an $RCD$-condition.
\begin{enumerate}
\item[\textbf{(Q5.1)}] Are there any relationships between our Ricci curvature bounds and Naber's one?
\end{enumerate}

Second, we discuss the first Betti number.
It is unknown that there exists a noncollapsed Gromov-Hausdorff convergent sequence with bounded Ricci curvature such that the Betti numbers does not converge that of the limit.
Moreover we do not know whether the first Betti number of every $X \in \overline{\mathcal{M}}$ in the ordinary sense is finite.
Note that as we mentioned in Section 1, Gigli proved that $h_1(X)$ coincide with his first Betti number, that is, the dimension of the first de Rham cohomology group defined in \cite{gigli} by him.
\begin{enumerate}
\item[\textbf{(Q5.2)}] For every $X \in \overline{\mathcal{M}}$, does $h_1(X)$ coincide with the first Betti number in the ordinary sense?
\item[\textbf{(Q5.3)}] In Theorem \ref{bettibetti}, does the continuity of $h_1$ with respect to the Gromov-Hausdorff topology always hold? Moreover is it true for a noncollapsed Gromov-Hausdorff convergent sequence under a uniform lower bound of Ricci curvature? 
\end{enumerate}

The following are on spectral convergence.
\begin{enumerate}
\item[\textbf{(Q5.4)}] Do the spectral convergence of $\Delta_{H, 1}$, $\Delta_{C, (r, s)}$ and $\Delta_{C, s}$ hold for a noncollapsed Gromov-Hausdorff convergent sequence under a uniform lower bound of Ricci curvature? 
\item[\textbf{(Q5.5)}] Does Theorem \ref{L2Hess} hold for a noncollapsed Gromov-Hausdorff convergent sequence under a lower Ricci curvature bound? 
\end{enumerate}
The question \textbf{(Q5.5)} is related to the question \textbf{(Q5.4)} because if the question \textbf{(Q5.5)} has a positive answer and the following ($\star$) holds, then \textbf{(Q5.4)} also has a positive answer by arguments similar to the proofs of Theorems \ref{spectr} and \ref{wll}.
\begin{enumerate}
\item[$(\star)$] Let $X_i$ be a sequence of $n$-dimensional closed Riemannian manifold with $\mathrm{Ric}_{X_i}\ge K$, and let $X$ be the noncollapsed Gromov-Hausdorff limit.
Let $\lambda_i$ be a bounded sequence in $\mathbf{R}$, let $T_i$ be a sequence of $\lambda_i$-eigen-one-forms (or $\lambda_i$-eigen-tensor fields, respectively) of $\Delta_{H, 1}$ (or $\Delta_{C, (r, s)}$, respectively) and let $T$ be the $L^2$-strong limit of them on $X$.
Then $T \in H^{1, 2}_C(T^*X)$ (or $T \in H^{1, 2}_C(T^r_sX)$, respectively).
\end{enumerate}

We end this section by giving a remark on the question \textbf{(Q5.5)}.
\textbf{(Q5.5)} has a positive answer if $(X_i, \upsilon_i)$ is isometric to $(X, \upsilon)$ for every $i$ because of Bochner's inequality (\cite[Corolalry 3.3.9]{gigli} or \cite[Theorem 1.4]{holp}).
It is worth pointing out that the noncollapsing assumption is essential.
We give an example.

Let $X_r$ be the spherical suspension of $\mathbf{S}^1(r)$.
Then it is easy to check that for every $0<r \le 1$, $X_r$ is the Gromov-Hausdorff limit of a sequence of $2$-dimensional closed Riemannian manifolds  $X_{r, i}$ with curvature bounded below by $1$.
In particular $(X_r, H^2/H^2(X_r))$ is a $(2, 1)$-Ricci limit space (see Remark \ref{boundei} for the definition of Ricci limit spaces).
By the compactness of measured Gromov-Hausdorff convergence in \cite{fu}, there exists a sequence of positive numbers $r_j \to 0$, a subsequence $i(j)$ and a Radon measure $\upsilon$ on $[0, \pi]$ such that
\[\left( Y_j, \frac{H^2}{H^2(Y_j)}\right) \stackrel{GH}{\to} ([0, \pi], \upsilon),\]
where $Y_j:=X_{r_j, i(j)}$.
In particular $ ([0, \pi], \upsilon)$ is also a $(2, 1)$-Ricci limit space.
Note that the limit measure $\upsilon$ does not coincide with the one-dimensional Hausdorff measure $H^1$ because it is easy to check that 
\[\lim_{t \to 0}\frac{\upsilon (B_t(0))}{t}=0.\]
Let $f_{j} \in C^{\infty}(Y_j)$ be a sequence of  $\lambda_1(Y_j)$-eigenfunctions with 
\[\frac{1}{H^2(Y_j)}\int_{Y_j}|f_{j}|^2dH^2=1.\]
Without loss of generality we can assume that the $L^2$-strong limit $f$ of $f_{j}$ on $X$.
Then we see that $f$ is a $2$-eigenfunction of the Dirichlet Laplacian $\Delta^{\upsilon}$ with respect to $\upsilon$, that $\Delta f_{j}$ ($=\lambda_1(Y_j)f_{j}$) $L^2$-converges strongly to $\Delta^{\upsilon} f$ ($=2f$) on $X$, and that $\mathrm{Hess}_{f_{j}}$ does not $L^2$-converge strongly to $\mathrm{Hess}_f$ on $X$.
See \cite[Remarks 4.32 and 4.33]{holp}.


\begin{thebibliography}{99}
\bibitem[AGS14]{ags}
\textsc{L. Ambrosio, N. Gigli, and G. Savar\'e},
Metric measure spaces with Riemannian Ricci curvature bounded from below, Duke Math. J. 163 (2014), 1405-1490.
\bibitem[AMS15]{ams}
\textsc{L. Ambrosio, A. Mondino, and G. Savar\'e},
Nonlinear diffusion equations and curvature conditions in metric measure spaces, arXiv:1509.07273, to appear in Memoirs of the AMS.
\bibitem[BS10]{bs}
\textsc{K. Bacher and K.-T. Sturm},
Localization and tensorization properties of the curvature-dimension conditions for metric measure spaces, J. Funct. Anal., 259 (2010), 28-56. 
\bibitem[BGP92]{bgp}
\textsc{Y. Burago, M. Gromov, and G. Perelman},
A. D. Alexandrov spaces with curvature bounded below, Uspekhi Mat. Nauk. 47 (1992), 3-51.
\bibitem[Ch99]{ch1}
\textsc{J. Cheeger}, 
Differentiability of Lipschitz functions on metric measure spaces, Geom. Funct. Anal. 9 (1999), 428-517.
\bibitem[ChC96]{ch-co}
\textsc{J. Cheeger and T. H. Colding}, 
Lower bounds on Ricci curvature and the almost rigidity of warped products, Ann. of Math. 144 (1996), 189-237.
\bibitem[ChC97]{ch-co1}
\textsc{J. Cheeger and T. H. Colding}, 
On the structure of spaces with Ricci curvature bounded below, I, J. Differential Geom. 45 (1997), 406-480.
\bibitem[ChC00a]{ch-co2}
\textsc{J. Cheeger and T. H. Colding}, 
On the structure of spaces with Ricci curvature bounded below, II, J. Differential Geom. 54 (2000), 13-35.
\bibitem[ChC00b]{ch-co3}
\textsc{J. Cheeger and T. H. Colding}, 
On the structure of spaces with Ricci curvature bounded below, III, J. Differential Geom. 54 (2000), 37-74.
\bibitem[ChCT02]{cct}
\textsc{J. Cheeger, T. H. Colding and G. Tian},
On the singularity of spaces with bounded Ricci curvature, Geom. Funct. Anal. 12 (2002), 873-914.
\bibitem[ChN14]{chna}
\textsc{J. Cheeger and A. Naber},
Regularity of Einstein manifolds and the codimension $4$ conjecture, Ann. of Math. 182 (2015), 1093-1165. 
\bibitem[ChgY75]{ch-yau}
\textsc{S. Y. Cheng and S. T. Yau},
Differential equations on Riemannian manifolds and their geometric applications, Comm. Pure Appl. Math. 28 (1975), 333-354.
\bibitem[C97]{Colding}
\textsc{T. H. Colding},
Ricci curvature and volume convergence, Ann. of Math. 145 (1997), 477-501.
\bibitem[CN12]{co-na1}
\textsc{T. H. Colding and A. Naber},
Sharp H$\ddot{\text{o}}$lder continuity of tangent cones for spaces with a lower Ricci curvature bound and applications, Ann. of Math. 176 (2012), 1173-1229.
\bibitem[DS14]{DS}
\textsc{S. K. Donaldson and S. Sun},
Gromov-Hausdorff limits of K\"ahler manifolds and algebraic geometry. Acta Math. 213 (2014), no. 1, 63--106.
\bibitem[EKS15]{eks}
\textsc{M. Erbar, K. Kuwada, and K.-T. Sturm},
On the equivalence of the entropic curvature-dimension condition and Bochner's inequality on metric measure spaces, Invent. Math. 201 (2015), 993-1071.
\bibitem[Fuky87]{fu}
\textsc{K. Fukaya},
Collapsing of Riemannian manifolds and eigenvalues of the laplace operator, Invent. Math. 87 (1987), 517-547.
\bibitem[Fut88]{Futaki}
\textsc{A. Futaki}, 
K\"ahler-Einstein metrics and integral invariants, 
Lecture Notes in Mathematics, 1314. Springer-Verlag, Berlin, 1988. 
\bibitem[FHS15]{FHS}
\textsc{A. Futaki, S. Honda, and S. Saito},
Fano-Ricci limit spaces and spectral convergence, arXiv:1509.03862, to appear in Asian J. Math.
\bibitem[G14]{gigli}
\textsc{N. Gigli},
Nonsmooth differential geometry - An approach tailored for spaces with Ricci curvature bounded from below, arXiv:1407.0809,  to appear in Memoirs of the AMS. 
\bibitem[GMS13]{gms}
\textsc{N. Gigli, A. Mondino, and G. Savar\'e},
Convergence of pointed non-compact metric measure spaces and stability of Ricci curvature bounds and heat flows, Proc. London. Math. Soc. 111 (2015), 1071-1129.
\bibitem[Grig09]{grigo}
\textsc{A. Grigor'yan},
Heat kernel and analysis on manifolds, AMS/IP Studies in Advanced Mathematics, 47, 2009.
\bibitem[Gr81]{gr}
\textsc{M. Gromov}, 
Metric Structures for Riemannian and Non-Riemannian Spaces, Birkhauser Boston Inc, Boston, MA, 1999, Based on the 1981 French original 
[MR 85e:53051], With appendices by M. Katz, P. Pansu, and S. Semmes, Translated from the French by Sean Michael Bates. 
\bibitem[Heb91]{he2}
\textsc{E. Hebey},
Sobolev spaces on Riemannian manifolds, Lecture Notes in Mathematics, Springer-Verlag, Berlin, Germany, (1991).
\bibitem[HN14]{hn}
\textsc{H. Hein and A. Naber},
Isolated Einstein singularities with singular tangent cones, preprint.
\bibitem[H11]{holip}
\textsc{S. Honda},
Ricci curvature and convergence of Lipschitz functions, Commun. Anal. Geom. 19 (2011), 79-158.
\bibitem[H13]{hoch}
\textsc{S. Honda},
Cheeger constant, $p$-Laplacian, and Gromov-Hausdorff convergence, arXiv:1310.0304, preprint.
\bibitem[H14a]{ho}
\textsc{S. Honda},
A weakly second order differential structure on rectifiable metric measure spaces, Geom. Topol. 18 (2014), 633-668.
\bibitem[H15]{holp}
\textsc{S. Honda},
Ricci curvature and $L^p$-convergence, J. reine angew. Math. 705 (2015), 85-154.
\bibitem[H14b]{hoell}
\textsc{S. Honda},
Elliptic PDEs on compact Ricci limit spaces and applications, arXiv:1410.3296, to appear in Memoirs of the AMS.
\bibitem[J11]{jiang11}
\textsc{R. Jiang},
Lipschitz continuity of solutions of Poisson equations in metric measure spaces, Potent. Anal. 37 (2012), 281-301.
\bibitem[J14]{jiang14}
\textsc{R. Jiang},
Cheeger-harmonic functions in metric measure spaces revisited, J. Funct. Anal. 266 (2014), 1373-1394.
\bibitem[K85]{katsuda}
\textsc{A. Katsuda},
Gromov's convergence theorem and its application, Nagoya Math. J., 100 (1985), 11-48.
\bibitem[KZ08]{KZ}
\textsc{S. Keith and X. Zhong}, 
The Poincar\'e inequality is
an open ended condition, Ann. of Math. 167 (2008), 575-599.
\bibitem[KKM00]{KKM}
\textsc{T. Kilpel\"ainen, J. Kinnunen, and O. Martio}, 
Sobolev spaces with zero boundary
values on metric spaces, Potential Anal. 12 (2000), 233-247.
\bibitem[KM96]{KM}
\textsc{J. Kinnunen, and O. Martio}, 
The Sobolev capacity on metric spaces, Ann. Acad. Sci.
Fenn. Math. 21 (1996), 367-382.
\bibitem[KT87]{kt}
\textsc{R. Kobayashi and A. Todorov},
Polarized period map for genaralized $K3$
surfaces and the moduli of Einstein metrics, Tohoku Math J. 39 (1987), 145–151.
\bibitem[KS03]{KS}
\textsc{K. Kuwae and T. Shioya},
Convergence of spectral structures: a functional analytic theory and its applications to spectral geometry, Commun. Anal. Geom. 11 (2003), 599-673.
\bibitem[L08]{L}
\textsc{P. Li},
Harmonic functions on complete Riemannian manifolds, Handbook of geometric analysis. No. 1, 195-227, 
Adv. Lect. Math. 7, Int. Press, Somerville, MA, 2008. 
\bibitem[LT91]{LT}
\textsc{P. Li and L. F. Tam},
The heat equation and harmonic maps of complete Riemannian manifolds, Invent. Math. 105 (1991), 1-46.
\bibitem[L02]{lott}
\textsc{J. Lott},
Collapsing and the differential form Laplacian: the case of a smooth limit space, Duke Math. J. 114 (2002), 267-306.
\bibitem[L03]{lott2}
\textsc{J. Lott},
Remark about the spectrum of the $p$-form Laplacian under a collapse with curvature bounded below, Proc. Amer. Math. Soc. 132 (2003), 911-918. 
\bibitem[L07]{lott}
\textsc{J. Lott},
Remark about scalar curvature and Riemannian submersions, Proc. Amer. Math. Soc. 135 (2007), 3375-3381.
\bibitem[L15]{lott2}
\textsc{J. Lott},
Ricci measure for some singular Riemannian metrics, Math. Ann. 365 (2016), 449-471.
\bibitem[LV09]{lv}
\textsc{J. Lott and C. Villani},
Ricci curvature for metric measure spaces via optimal transport, Ann. of Math. 169 (2009), 903-991.
\bibitem[MS95]{mas}
\textsc{P. Maheux and L. Saloff-Coste},
Analyse sur les boules d'un op\'erateur sous-elliptique. Math. Ann. 303 (1995), 713-740.
\bibitem[N13]{na}
\textsc{A. Naber},
Characterizations of bounded Ricci curvature on smooth and nonsmooth spaces, arXiv:1306.6512, preprint.
\bibitem[N14]{na14}
\textsc{A. Naber},
The Geometry of Ricci curvature, Proceedings of the International Congress of Mathematicians, Seoul, 2 (2014) 911-937.
\bibitem[P78]{pa}
\textsc{D. Page},
A physical picture of the $K3$ gravitational instantons, Phys. Lettr. Ser. B 80 (1978), 55–57.
\bibitem[Pt87]{peters}
\textsc{S. Peters},
Convergence of Riemannian manifolds, Compositio Math., 62 (1987), 3-16.
\bibitem[Sak96]{sakai}
\textsc{T. Sakai},
Riemannian geometry, Translations of Mathematical Monographs, vol. 149, American Mathematical Society.
\bibitem[Sh01]{Shanm}
\textsc{N. Shanmugalingam},
Harmonic functions on metric spaces, Illinois J. Math. 45 (2001), 1021-1050.
\bibitem[St06a]{sturm1}
\textsc{K. -T. Sturm},
On the geometry of metric measure spaces I, Acta. Math. 196 (2006), 65-131.
\bibitem[St06b]{sturm2}
\textsc{K. -T. Sturm},
On the geometry of metric measure spaces II, Acta. Math. 196 (2006), 133-177.
\bibitem[T99]{Tian99}
\textsc{G. Tian},
Canonical metrics in K\"ahler geometry. Lectures in Mathematics ETH Z\"urich (Notes
taken by Meike Akveld), Birkhauser, 2000. 
\bibitem[V08]{vill}
\textsc{C. Villani},
Optimal transport, old and new, Springer-Verlag, 2008.
\end{thebibliography}
\end{document}